\newcommand{\R}{\mathbb{R}}
\newcommand{\Z}{\mathbb{Z}}
\newcommand{\N}{{\mathbb N}}
\newcommand{\E}[1]{{\mathbf E}\left[#1\right]}
\newcommand{\e}{{\mathbf E}}
\newcommand{\V}[1]{{\mathbf{Var}}\left\{#1\right\}}
\newcommand{\p}[1]{{\mathbf P}\left\{#1\right\}}
\newcommand{\I}[1]{{\mathbf 1}_{[#1]}}
 \newcommand{\bag}{\begin{align}}
\newcommand{\bags}{\begin{align*}}
\newcommand{\eag}{\end{align*}}
\newcommand{\eags}{\end{align*}}
\newtheorem{thm}{Theorem}[section]
\newtheorem{lem}[thm]{Lemma}
\newtheorem{prop}[thm]{Proposition}
\newtheorem{cor}[thm]{Corollary}
\newtheorem{dfn}[thm]{Definition}
\newtheorem{fact}[thm]{Fact}
\newtheorem{claim}[thm]{Claim}
\newtheorem{rem}[thm]{Remark}
\newcommand\cB{\mathcal B}
\newcommand\cC{\mathcal C}
\newcommand\cM{\mathcal M}
\newcommand\cO{\mathcal O}
\newcommand\cR{{\mathcal R}}
\newcommand\cT{{\mathcal T}}
\newcommand{\pran}[1]{\left(#1\right)}
\newcommand\urladdrx[1]{{\urladdr{\def~{{\tiny$\sim$}}#1}}}
\DeclareRobustCommand{\SkipTocEntry}[5]{}
\newcommand{\pl}{\ensuremath{\preceq_{\mathrm{lex}}}}
\newcommand{\plt}{\ensuremath{\preceq_{\mathrm{lex,T}}}}
\newcommand{\pc}{\ensuremath{\preceq_{\mathrm{ctr}}}}
\newcommand{\pcst}{\ensuremath{\prec_{\mathrm{ctr}}}}
\newcommand{\pct}{\ensuremath{\preceq_{\mathrm{ctr,T}}}}
\newcommand{\pcy}{\ensuremath{\preceq_{\mathrm{cyc}}}}
\newcommand{\bbrcy}[1]{\ensuremath{[#1]_{\mathrm{cyc}}}}
\newcommand{\pcyt}{\ensuremath{\preceq_{\mathrm{cyc,T}}}}
\newcommand{\eps}{\epsilon}
\newcommand{\angles}[1]{\langle #1 \rangle}
\newcommand{\eqdist}{\ensuremath{\stackrel{\mathrm{d}}{=}}}
\newcommand{\convdist}{\ensuremath{\stackrel{\mathrm{d}}{\rightarrow}}}
\newcommand{\convas}{\ensuremath{\stackrel{\mathrm{a.s.}}{\rightarrow}}}
\newcommand{\aseq}{\ensuremath{\stackrel{\mathrm{a.s.}}{=}}}
\newcommand{\be}{\mathbf{e}}
\newcommand{\rM}{\mathrm{M}} 
\newcommand{\rP}{\mathrm{P}} 
\newcommand{\rX}{\mathrm{X}} 
\newcommand{\dghp}{\ensuremath{d_{\mathrm{GHP}}}}
\newcommand{\dgh}{\ensuremath{d_{\mathrm{GH}}}}
\newcommand{\bbr}[1]{\ensuremath{\llbracket #1 \rrbracket}}
\providecommand{\S}{}
\renewcommand{\S}{\mathbb{S}}
\newcommand{\diam}{\mathrm{diam}}
\newcommand{\dis}{\mathrm{dis}}
\providecommand{\v}{}
\renewcommand{\v}{\mathrm{v}}
\newcommand{\psym}{\ensuremath{\bar}}
\newcommand{\branch}[2]{\ensuremath{\mathrm{path}(#1,#2)}}
\newcommand{\LGW}{\ensuremath{\mathrm{LGW}}}
\newcommand{\ssq}{{\begin{picture}(4,3)(0,0)\put(1,0){\line(0,1){3}}\put(1,0){\line(1,0){3}}\put(1,3){\line(1,0){3}}\put(4,0){\line(0,1){3}}\end{picture}\,} } 
\newcommand{\cTq}[1]{\ensuremath{\cT_{\ssq,#1}}}
\newcommand{\kl}[2]{\ensuremath{\kappa^{\ell}({#1},{#2})}}
\newcommand{\kr}[2]{\ensuremath{\kappa^r({#1},{#2})}}
\newcommand{\floor}[1]{\ensuremath\lfloor#1\rfloor}
\def \tend{\longrightarrow}
\newcommand{\rG}{\ensuremath{\mathrm{G}}}
\newcommand{\rT}{\ensuremath{\mathrm{T}}}
\newcommand{\rt}{\ensuremath{\mathrm{t}}}
\newcommand{\rr}{\ensuremath{\mathrm{r}}}\newcommand{\rR}{\ensuremath{\mathrm{R}}}
\newcommand{\rQ}{\ensuremath{\mathrm{Q}}}
\newcommand{\rV}{\ensuremath{\mathrm{V}}}
\newcommand{\rW}{\ensuremath{\mathrm{W}}}
\begin{document}

\title[The scaling limit of random simple triangulations]{The scaling limit of random simple triangulations and random simple quadrangulations}
\author{Louigi Addario-Berry \and Marie Albenque}
\address{Department of Math and Stats, McGill University, Montr\'eal, Qu\'ebec, H3A 2K6, Canada}
\address{LiX, Ecole Polytechnique, 91120 Palaiseau -- France}
\email{louigi.addario@mcgill.ca}
\email{albenque@lix.polytechnique.fr}
\date{June 20, 2013 } 
\urladdrx{http://www.problab.ca/louigi/}
\urladdrx{http://www.lix.polytechnique.fr/~albenque/}

\subjclass[2010]{60F17,05C12,82B41} 

\begin{abstract} 
Let $M_n$ be a simple triangulation of the sphere $\S^2$, drawn uniformly at random from all such triangulations with $n$ vertices. Endow $M_n$ with the uniform probability measure on its vertices. 
After rescaling graph distance by $(3/(4n))^{1/4}$, the resulting random measured metric space converges in distribution, in the Gromov--Hausdorff--Prokhorov sense, to the Brownian map. In proving the preceding fact, we introduce a labelling function for the vertices of $M_n$. Under this labelling, distances to a distinguished point are essentially given by vertex labels, with an error given by the winding number of an associated closed loop in the map. We establish similar results for simple quadrangulations. 
\end{abstract}
\maketitle
\graphicspath{{Pictures/}}

\begin{figure}[ht]
\vspace{-1cm}
\hspace{-0.5cm}
\includegraphics[width=.85\linewidth]{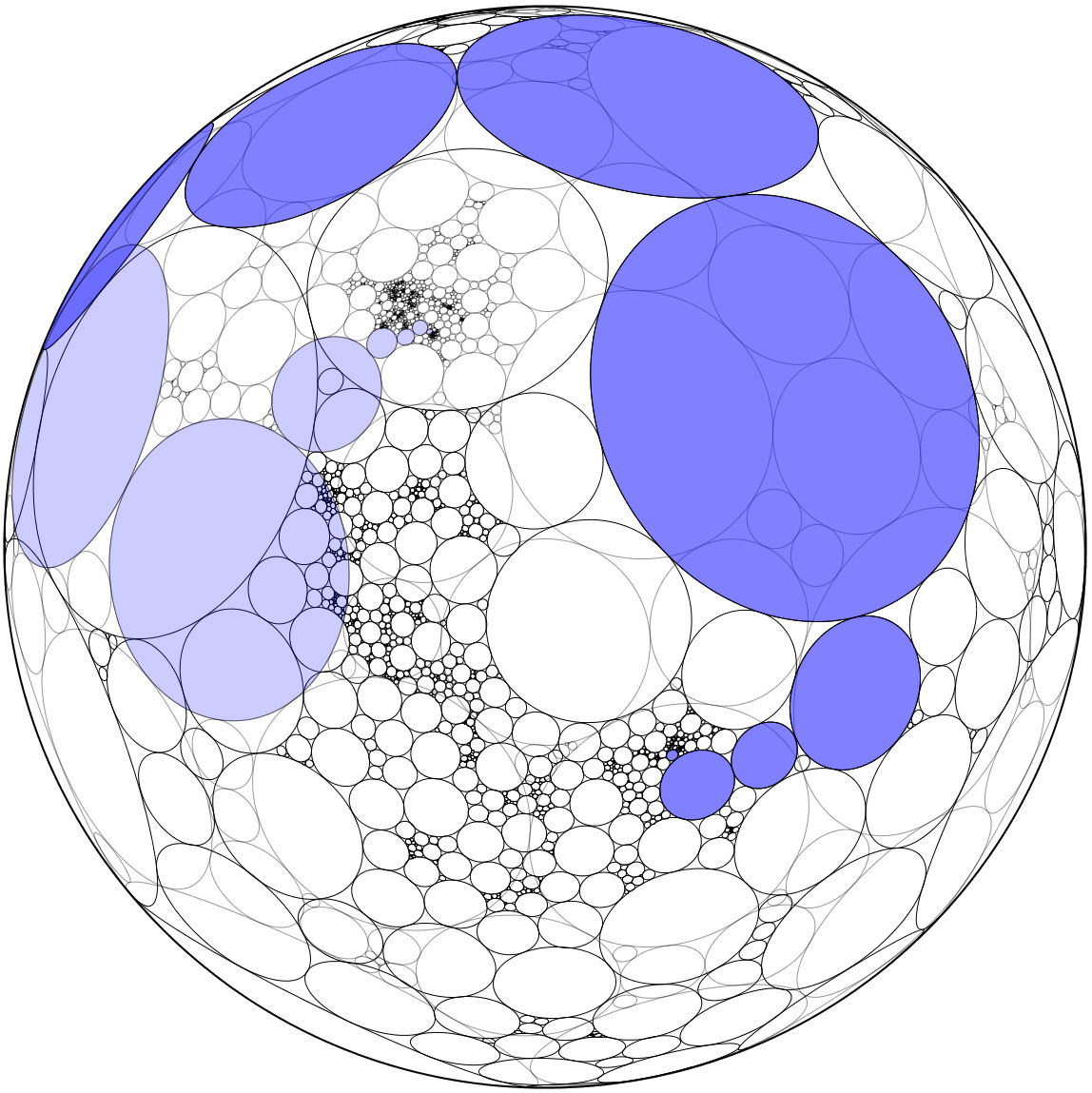}
\caption{\Small The circle packing associated to a uniformly random simple triangulation of $\S^2$ with $10^5$ vertices. Blue shaded circles form a shortest path between two uniformly random vertices (circles). 
Created using Ken Stephenson's CirclePack program.}
\label{fig:first}
\end{figure}

\tableofcontents
\section{Introduction}\label{sec:intro} 
We begin by heading straight for a statement of our main result.\footnote{Precise definitions of almost all the terminology used in the introduction appear in Sections~\ref{sec:pgpt} and~\ref{sec:notion_convergence}. After stating our main result, the remainder of introduction provides motivation and an overview of its proof, particularly the novel aspects of said proof.} A graph is {\em simple} if it has no loops or multiple edges. For integer $n \ge 3$, let $\triangle_n^{\circ}$ be the set of pairs $(M,\xi)$, where $M$ is an $n$-vertex simple triangulation of the sphere $\S^2$, and $\xi$ is a corner of $M$.
 Also, for integer $n \ge 4$, let 
$\Box_n^{\circ}$ be the set of pairs $(M,\xi)$ with $M$ an $n$-vertex simple quadrangulation of $\S^2$ and $\xi$ a corner of $M$. Then let $\cM=(\cM_n,n \ge 4)$ be one of the sequences $(\triangle_n^{\circ},n \ge 4)$ or $(\Box_n^{\circ},n \ge 4)$. 
\begin{thm}\label{thm:main}
For $n \ge 4$, let $(M_n,\xi_n)$ be a uniformly random element of $\cM_n$. Write $V(M_n)$ for the set of vertices of $M_n$, 
let $d_n:V(M_n) \to \N$ be graph distance in $M_n$ and let $\mu_n$ be the uniform probability measure on $V(M_n)$. Finally, let 
$c = (3/4)^{1/4}$ if $\cM=(\triangle_n^{\circ},n \ge 4)$ and let $c=(3/8)^{1/4}$ if 
$\cM=(\Box_n^{\circ},n \ge 4)$. Then, as $n \to \infty$, 
\[
(V(M_n),cn^{-1/4}d_n,\mu_n) \convdist (S,d,\mu),
\]
for the Gromov--Hausdorff--Prokhorov distance, where $(S,d,\mu)$ is the Brownian map. 
\end{thm}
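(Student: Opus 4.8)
My plan is to prove \refT{thm:main} by encoding $(M_n,\xi_n)$ explicitly via a labelled plane tree, in the spirit of the Cori--Vauquelin--Schaeffer and Bouttier--Di Francesco--Guitter correspondences but adapted to the \emph{simple} setting, where those classical bijections do not directly apply. For simple triangulations one uses the Poulalhon--Schaeffer bijection, which turns a rooted $n$-vertex simple triangulation into a plane tree carrying a system of ``buds'' subject to a closure rule; there is an analogous bud-decorated tree description of simple quadrangulations. From the tree one reads off (i) a distinguished vertex $v_*\in V(M_n)$, determined by the root, and (ii) an integer labelling $\ell\colon V(M_n)\to\Z$ --- the new labelling function advertised in the abstract. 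The first technical step is to show that, after dividing labels by $n^{1/4}$ and running the contour exploration of the tree at time-scale $n^{1/2}$, the pair of rescaled contour and label processes converges in distribution to $(\be,Z)$, with $\be$ a normalised Brownian excursion and $Z$ the head of the Brownian snake driven by $\be$; equivalently, the labelled tree converges to the Brownian continuum random tree $\cT_\be$ decorated by Brownian labels. Given the offspring and label-increment laws dictated by the bijection, this is a now-routine consequence of invariance principles for conditioned random walks, and it is here that the constants $c=(3/4)^{1/4}$ and $c=(3/8)^{1/4}$ are pinned down.

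The crux --- and the place where the simple setting genuinely departs from the classical one --- is the comparison of $d_n$ with $\ell$. Unlike in the Schaeffer bijection, where graph distance from the distinguished vertex equals a label exactly, here one has only
\[
d_n(v_*,v)\;=\;\ell(v)-\min_{u\in V(M_n)}\ell(u)\;+\;w_n(v),
\]
where $w_n(v)$ is the winding number of a canonically associated closed loop in $M_n$ obtained by iterating a ``successor'' rule from $v$. I would show that $\sup_{v}|w_n(v)|$ is of order strictly smaller than $n^{1/4}$ --- one expects only polylogarithmic growth --- through a careful analysis of how the closure operation of the bijection produces these loops and of the oscillation of $\ell$ along them. \textbf{This uniform control of the winding term is the main obstacle}: the rest adapts standard technology, but this step has no analogue in the classical proofs and is genuinely specific to the combinatorics of simple maps. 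In parallel one establishes two purely combinatorial inequalities: a ``cactus''-type lower bound relating $d_n(u,v)$ to the minimum of $\ell$ along the tree path from $u$ to $v$, and an upper bound showing that any two vertices are joined by a path whose length is controlled by the variation of $\ell$ along the tree --- the discrete counterpart of Le Gall's bounding function $D^{\circ}$.

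Combining the snake convergence with the upper bound yields tightness of $\bigl(V(M_n),cn^{-1/4}d_n,\mu_n\bigr)$ for the Gromov--Hausdorff--Prokhorov topology, and shows that every subsequential limit is of the form $(\cT_\be/\!\approx,\mathbf D,\mu)$, where $\mu$ is the image of the mass measure on $\cT_\be$ and $\mathbf D$ is a random pseudo-metric with $\mathbf D\le D^{\circ}$, hence $\mathbf D\le D$ (the Brownian map pseudo-metric, which is the largest pseudo-metric dominated by $D^{\circ}$). The winding estimate together with the displayed identity gives moreover $\mathbf D(x_*,x)=Z_x-\min Z=D(x_*,x)$ for all $x$, where $x_*$ is the image of the a.s.\ unique minimiser of $Z$. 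I would then invoke the re-rooting invariance of $(\cT_\be,Z)$ of Le Gall--Weill: since $\mathbf D$ arises as a limit of root-independent graph metrics and, in distribution, $x_*$ is a $\mu$-uniform point of the limit, the identity $\mathbf D(x_*,\cdot)=D(x_*,\cdot)$ propagates to $\mathbf D(x,\cdot)=D(x,\cdot)$ for $\mu$-almost every $x$, whence continuity of $\mathbf D$ and $D$ together with $\mathbf D\le D$ forces $\mathbf D=D$ almost surely. Thus every subsequential limit is the Brownian map, which upgrades to convergence along the full sequence. Throughout, the quadrangulation case is run in parallel, the only differences being the bijection used, the offspring and label laws, and the value of the constant $c$.
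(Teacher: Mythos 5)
Your outline follows the same architecture as the paper (Poulalhon--Schaeffer opening, labels approximating distances with a winding-number error, snake convergence, then tightness plus identification of subsequential limits via domination and rerooting), but it glosses over precisely the two places where new ideas are actually required. First, the snake convergence is \emph{not} a routine consequence of existing invariance principles here: under the bijection the displacement vector from a vertex to its children is a uniformly random \emph{non-decreasing} vector in $\{-1,0,1\}^k$, so the marginal increments are not locally centred (e.g.\ the first coordinate for $k=2$ has mean $-1/6$), while the offspring law is an unbounded modification of a geometric; Janson--Marckert and Marckert--Miermont require local centring, and Marckert's globally-centred result requires bounded offspring. One needs a device such as the paper's partial symmetrization (randomly permuting children outside a spanned subtree and comparing with the symmetrized, locally centred law) to get Proposition~\ref{prop:snake}; asserting this step is routine is a genuine gap. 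Second, your central estimate $\sup_v|w_n(v)|=o(n^{1/4})$ is left without a workable mechanism. Note also that the exact identity $d_n(v_*,v)=\ell(v)-\min\ell+w_n(v)$ with a loop canonically attached to $v$ is stronger than what is true: what one gets deterministically is that labels are lengths of leftmost oriented paths (so an upper bound on distance), and that for \emph{any} shortest path $Q$ the deficit is controlled by the winding of the loop formed by $Q$ and the leftmost path. The paper then controls this winding not by analysing oscillations of $\ell$ along the loop, but by showing that a large winding number forces a cycle of length $O(\mathrm{diam}(G)/|w|)$ separating two regions whose label spread (hence, by snake continuity, volume) is macroscopic, combining this with a diameter bound $O(n^{1/4})$ from the snake, and then bounding the probability of a short separating cycle with two macroscopic sides by $O(n^{-5/2})$ via Brown's enumeration of triangulations (resp.\ quadrangulations) of polygons. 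Without an argument of this type your ``polylogarithmic winding'' expectation is unsubstantiated.

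The endgame also needs repair. Rerooting invariance of the snake $(\cT_{\be},Z)$ (Le Gall--Weill) is not sufficient to identify the limit: what is needed is rerooting invariance of the Brownian map \emph{metric}, namely $d^*(U,V)\eqdist d^*(U,s^*)\eqdist -\check Z(0,1)$, which is Corollary~7.3 of Le Gall's uniqueness paper and is exactly where the argument depends on the Le Gall/Miermont theorems (the present paper is explicit that it does not give an independent proof of uniqueness). Moreover, ``the identity $\mathbf D(x_*,\cdot)=D(x_*,\cdot)$ propagates to $\mathbf D(x,\cdot)=D(x,\cdot)$ for $\mu$-a.e.\ $x$'' does not follow from a distributional invariance; the correct soft argument is one-dimensional: the uniformly random root corner (converted into a near-uniform root \emph{vertex} using that inner vertices all have outdegree $3$ in the minimal orientation) gives $\mathbf D(U,V)\eqdist\mathbf D(x_*,V)$ in the limit, the label approximation gives $\mathbf D(x_*,V)=Z_V-\min Z$, Corollary~7.3 gives $d^*(U,V)\eqdist Z_V-\min Z$, and then $\mathbf D\le d^*$ together with equality in law at a uniform pair forces $\mathbf D=d^*$ almost surely. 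As written, your propagation step is a non sequitur, and the attribution to snake rerooting hides the essential dependence on the uniqueness results.
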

We recall the definition of the Brownian map in Section~\ref{sec:limitobject}, below. Our proof relies upon the remarkable work of Miermont \cite{miermont13brownian} and, independently, Le Gall \cite{jf}, which both established convergence for general (non-simple) random quadrangulations. In particular, our results do not constitute an independent proof of uniqueness of the limit object. A discussion of the constants in the above theorem, and their relation with those from \cite{miermont13brownian,jf}, appears in Appendix~\ref{sec:notes}.

The part of Theorem~\ref{thm:main} pertaining to simple triangulations (sometimes called {\em type-III} triangulations; see \cite{MR1465433}) answers a question of Le Gall \cite{jf} and Le Gall and Beltran \cite{beltran13quad}. One general motivation for establishing convergence to the Brownian map is its conjectured role as a universal limit object for a wide range of random map ensembles. However, the case of simple triangulations holds additional interest due to the conjectured link between the Brownian map and the Liouville quantum gravity constructed by Duplantier and Sheffield \cite{duplantier2011liouville}; see \cite{garban2012bourbaki} for further discussion of this connection. Le Gall \cite{legall07topological} proved that the Brownian map is almost surely homeomorphic to the $2$-sphere (see also \cite{legall2008scaling,Miermont08}). However, homeomorphism equivalence is too weak, for example, to deduce conformal information or to prove dimensional scaling relations. For these, a canonical embedding of the Brownian map in $\S^2$ is needed (or at least would be very useful). 

For any simple triangulation $M$ of $\S^2$, the Koebe-Andreev-Thurston theorem (see, e.g., \cite{stephenson05circle}, Chapter 7) provides a {\em canonical circle packing} in $\S^2$, unique up to conformal automorphism, whose tangency graph is $M$; see Figure~\ref{fig:first} for an illustration of a random circle packing. (This uniqueness holds only for simple triangulations; for a uniformly random (non-simple) triangulation $N$ with $n$ vertices, for example, the number of degrees of freedom in a circle packing with tangency graph $N$ is typically linear in $n$.) The uniqueness provides hope that the conformal properties of the Brownian map can be accessed by studying the circle packings associated to large random simple triangulations

We deduce Theorem~\ref{thm:main} from a result which provides more general sufficient conditions for a sequence $(M_n,n \in \N)$ of random planar maps to converge in distribution to the Brownian map. 
More precisely, Theorem~\ref{prop:CSTriple} states conditions under which, after suitably rescaling distances, and endowed with the uniform probability measure on its vertex set, $M_n$ converges in distribution to the Brownian map for the Gromov--Hausdorff--Prokhorov distance. 

The approach of Theorem~\ref{prop:CSTriple} is based on bijective codings of maps by labelled plane trees. 
Its proof is a fairly routine generalization of existing arguments (mostly due to Jean-Fran\c{c}ois Le Gall). 
We have formulated Theorem~\ref{prop:CSTriple} in a general form as we expect it to be useful in proving convergence for other random map models, in particular for models falling within the framework of the ``master bijection'' of Bernardi and Fusy \cite{bernardi2012bij} and of the general bijection for blossoming trees recently described by Albenque and Poulalhon \cite{albenque2013generic}. We sketch the conditions under which Theorem~\ref{prop:CSTriple} applies in Section~\ref{sec:csrequirements}.

While the conditions under which we establish convergence to the Brownian map are rather general, {\em verifying} that a discrete random map ensemble satisfies these conditions can be rather involved. In many map ensembles of interest, the primary missing link is a  labelling rule for the vertices of a canonical spanning tree of the map, such that vertex labels encode distances to a specified root vertex. For the case of random simple triangulations and quadrangulations, we provide a labelling that does not {\em precisely} encode distances, but we show that the error is insignificant in the limit. Intriguingly, for distances to a specified root vertex, the error in the label is bounded by the {\em winding number} of an associated closed loop in the map. In Section~\ref{sec:overview}, we briefly describe the bijection between simple triangulations and certain labelled trees, on which our proof of Theorem~\ref{thm:main} is based, and further discuss the role of winding numbers. The appearance of a winding number hints that a discrete complex-analytic perspective may shed further light on the shape of geodesics in random simple triangulations and eventually in the Brownian map. 

One requirement of Theorem~\ref{prop:CSTriple} is the convergence of a suitable spatial branching process, after renormalization, to the Brownian snake. Such convergence is known in many settings, but in others lack of symmetry (symmetry between the labels of children of a single node, in the coding of maps by labelled trees) has posed an obstacle. We introduce a technique we call {\em partial symmetrization}, in which we choose a ``representative subtree'', then randomly permute the children of as many nodes of the subtree as possible without affecting the subtree's plane embedding. This introduces enough symmetry that we may appeal to known results to establish convergence to the Brownian snake. On the other hand, fixing a large subtree allows the partially symmetrized process to be related to the original labelled tree and so to the associated map. A detailed explanation of the partial symmetrization technique is easier to provide for a specific bijection, and we defer it to Section~\ref{sec:snake}.

We believe partial symmetrization may be used to show that the multi-type spatial branching processes coding random $p$-angulations (for odd $p\ge 5$) converge to the Brownian snake. Given the work of Miermont \cite{miermont13brownian} and of Le Gall \cite{jf}, this is the only missing element in a proof that $p$-angulations (and perhaps more general random maps with degrees given by suitable Boltzmann weights) converge to the Brownian map. We expect to return to this in a subsequent work. 

\addtocontents{toc}{\SkipTocEntry}
\subsection{The Brownian map} \label{sec:limitobject}
Given an interval $I\subset \R$ or $I \subset \N$ and a function $f:I \to \R$, for $s,t \in I$ with $s < t$ we \nomenclature[Fcheck]{$\check{f}$}{For a function $f:I \to \R$, $\check{f}(s,t)=\inf_{x \in [s,t] \cap I} f(s,t)$.}
write $\check{f}(s,t) = \inf_{x \in I \cap [s,t]} f(x)$, $\check{f}(t,s) = \inf_{x \in I \setminus (s,t)} f(x)$. We additionally let $\check{f}(s,s)=f(s)$ for all $s \in I$.

Let $\be=(\be(t),0 \le t \le 1)$ be a standard Brownian excursion 
\nomenclature[E]{$\mathbf{e}$}{A standard Brownian excursion, $\mathbf{e}=(\mathbf{e}(t),0 \le t \le 1)$.}
and, conditionally given $\be$, let $Z=(Z(t),0 \le t \le 1)$ be a centred Gaussian process such that $Z(0)=0$ and for $0 \le s \le t \le 1$, 
\[
\mathrm{Cov}(Z(s),Z(t)) = \check{\be}(s,t)\, .
\]
\nomenclature[Z]{$Z$}{``Brownian snake driven by $\mathbf{e}$''.}
We may and shall assume $Z$ is a.s.\ continuous; see~\citep[Section IV]{LeGallSnake} for a more detailed description of the construction of the pair $(\be,Z)$. 

Next, define an equivalence relation $\sim_{\be}$ as follows. 
\nomenclature[Asim]{$\sim_{\mathbf{e}}$}{Equivalence relation on $[0,1]$, $x \sim_{\mathbf{e}} y$ if 
$\mathbf{e}(x)=\mathbf{e}(y)=\check{\mathbf{e}}(x,y)$.}
For $0 \le x \le y \le 1$ let $x \sim_{\be} y$ if $\be(x)=\be(y)=\check{\be}(x,y)$. The Brownian Continuum Random Tree $(\cT_{\be},d_{\cT_{\be}})$ introduced in~\cite{AldCRT2} is defined as $[0,1]/\sim_{\be}$ equipped with distance $d_{\cT_{\be}}(x,y)=\mathbf{e}(x)+\mathbf{e}(y)-\check{\mathbf{e}}(x,y)$ for $0 \le x \le y \le 1$.

It can be verified that almost surely, for all $x,y \in [0,1]$, if $x \sim_{\be} y$ then $Z(x)=Z(y)$, so we may view $Z$ as having domain $\cT_{\be}$. Furthermore, $Z$ remains a.s.\ continuous on this domain.
Next, for $x,y \in [0,1]$ let 
\nomenclature[Dz]{$d_Z$}{For $x,y \in [0,1]$, $d_Z(x,y)=Z(x)+Z(y) - 2\max(\check{Z}(x,y),\check{Z}(y,x))$.}
\begin{equation}\label{eq:dzdef}
d_Z(x,y) = Z(x)+Z(y) - 2\max(\check{Z}(x,y),\check{Z}(y,x))\, .
\end{equation}
Then let $d^*$ be the largest pseudo-metric on $[0,1]$ satisfying that (a) for all $s,t \in [0,1]$, if $s \sim_{\be} t$ then $d_Z(s,t)=0$, and (b) $d^* \le d_Z$. 
\nomenclature[Dstar]{$d^*$}{Largest pseudo-metric on $[0,1]$ compatible with $\sim_{\mathbf{e}}$, with $d^* \le d_Z$.}
Let $S=[0,1]/\{d^*=0\}$, and let $d$ be the push-forward of $d^*$ to $S$. Finally, let $\mu$ be the push-forward of Lebesgue measure on $[0,1]$ to $S$. 
\nomenclature[Szdmu]{$(S,d,\mu)$}{The Brownian map}
The (measured) {\em Brownian map} is (a random variable with the law of) the triple $(S,d,\mu)$. This name was first used by Marckert and Mokkadem \cite{MR2294979}, who considered a notion of convergence for random maps different from that of the present work. 

For later use, let $\rho \in S$ be the equivalence class of the point $0$, and, 
\nomenclature[Rho]{$\rho$}{Equivalence class of $0$ in $S$}
writing $s^*\in [0,1]$ for the point where $Z$ attains its minimum value (this point is almost surely unique), 
\nomenclature[Ustar]{$u^*$}{Equivalence class in $S$ of point in $[0,1]$ where $Z$ attains its minimum value.} let $u^*\in S$ be the equivalence class of $s^*$.
Then Corollary~7.3 of \cite{jf} states that for $U$ and $V$ uniformly distributed on $[0,1]$, independent of $Z$ and of each other, 
\begin{equation}\label{eq:invariance_rerooting}
d^*(U,V) \eqdist d^*(U,s^{*}) \eqdist -\check{Z}(0,1) \eqdist Z(V)-\check{Z}(0,1). 
\end{equation}

\addtocontents{toc}{\SkipTocEntry}
\subsection{Sufficient conditions for convergence to the Brownian Map} \label{sec:csrequirements}
Our argument leans heavily on the {\em rerooting invariance} of the Brownian map ((\ref{eq:invariance_rerooting}), above). Given the convergence of some discrete ensemble to the Brownian map, if the discrete ensemble possesses rerooting invariance then this can be transferred to the Brownian map. However, to date this is the only known technique for establishing rerooting invariance of the Brownian map (and the key reason why our results depend on those of \cite{miermont13brownian,jf}). 

Informally, to prove convergence we need that the random rooted map $M_n$ can in some sense be described by a suitable pair of random functions $C_n:[0,1] \to [0,\infty)$ and $Z_n:[0,1] \to \R$. Often $C_n$ will be the (spatially and temporally rescaled, clockwise) contour process of some canonical rooted spanning tree $(T_n,\xi_n)$ of $M_n$, and for the sake of this informal description we assume this to be so. To establish convergence we require (versions of) the following. In what follows let $r_n \in [0,1]$ be such that $Z_n(r_n)=\min(Z_n(x),0 \le x \le 1)$, and write $d_{M_n}$ for (suitably rescaled) graph distance on $V(M_n)$. 
\begin{enumerate}
\item[1.] {\bf Distances to the minimum given by $Z_n$.} There is a vertex $u_n \in V(M_n)$ such that for all vertices $v$, if a clockwise contour exploration of $T_n$ visits $v$ at time $t$ then 
$Z_n(t)-Z_n(r_n)$ is $d_{M_n}(v,u_n)+o_n(1)$, where $o_n(1)$ represents an error that tends to zero in probability as $n \to \infty$. 
\item[2.] {\bf Distance bound via clockwise geodesics to the minimum.} For any pair of vertices $v,v'$ of $M_n$, if a clockwise contour exploration of $T_n$ visits $v$ and $v'$ at times $t$ and $t'$, respectively, then $d_{M_n}(v,v')$ is bounded from above by 
\[
Z_n(t)+Z_n(t')-2\max(\check{Z}_n(t,t'),\check{Z}_n(t',t)) + o_n(1). 
\]
\item[3.] {\bf Coding by the Brownian snake.} The pair $(C_n,Z_n)$ converges in distribution to $(\be,Z)$, for the topology of uniform convergence on $C([0,1],\R)^2$. 
\item[4.] {\bf Invariance under rerooting.} If $U_n,V_n$ are independent, uniformly random vertices of $M_n$, then $d_{M_n}(U_n,V_n)$ is asymptotically equal in distribution to $d_{M_n}(u_n,V_n)$.
\end{enumerate}
Briefly, given these properties the proof then proceeds as follows. Our argument closely follows one used by Le Gall to prove convergence of rescaled random (non-simple) triangulations to the Brownian map, once convergence for quadrangulations is known (\cite[Section 8]{jf}). It is useful to reparameterize so that all the metrics and pseudo-metrics under consideration are functions from $[0,1]^2$ to $[0,\infty)$; this can be accomplished by identifying the vertices of each metric space $M_n$ with a subset of $[0,1]$ and using bilinear interpolation. 

First, 1.\ and 2.\ together can be used to prove tightness of the sequence of laws of the functions $(d_{M_n},n \in \N)$, which implies convergence along subsequences. Thus, let $d:[0,1]^2 \to [0,\infty)$ be a subsequential limit of $d_{M_n}$. Our aim is to show that almost surely $d$ and $d^*$ (defined in Section~\ref{sec:limitobject}) are equal in law. 

Next,~1.\ says that distances to the point of minimum label are given by $Z_m$, a limiting analogue of which is also true in the Brownian map. Invariance under rerooting 4.\ and (\ref{eq:invariance_rerooting}) then yields that for $U,V$ independent and uniform on $[0,1]$, $d(U,V)$ is the limit in distribution of $-Z_n(r_n)$, so by 3.\ we obtain  $d(U,V)\eqdist -\min(Z(x),0 \le x \le 1)=d^*(U,V)$. 

Finally, 2.\ gives a bound for $d_{M_n}$ that is a finite-$n$ analogue of the bound (\ref{eq:dzdef}) for $d_Z$. Since $d^*$ is maximal subject to $d^* \le d_Z$, 3.\ then yields that $d$ is stochastically dominated by $d^*$. In other words, by working in a suitable probability space, we may assume $d(x,y) \le d^*(x,y)$ for almost every $(x,y) \in [0,1]^2$. The fact that $d(U,V) \eqdist d^*(U,V)$ then implies $d$ and $d^*$ are almost everywhere equal, so have the same law. 

\addtocontents{toc}{\SkipTocEntry}
\subsection{Labels and geodesics, and an overview of the proof}\label{sec:overview}
In this section (and throughout much of the rest of the paper), we restrict our attention to simple triangulations, as the details for simple quadrangulations are nearly identical. 

Fix a pair $(G,\xi)$ with $G$ a simple triangulation of $\S^2$ and $\xi$ a corner of $G$. View $G$ as embedded in $\R^2$ so the face containing $\xi$ is the unique unbounded (outer) face. With this embedding, list the vertices of the face containing $\xi$ in clockwise order as $v,A,B$, with $v$ incident to $\xi$. 
\nomenclature[3orient]{$3$-orientation}{Orientation of a triangulation so all vertices not on a distinguished face have outdegree $3$; also see Section~\ref{sec:def-ori}}
A \emph{$3$-orientation} of $(G,\xi)$ is an orientation $\overrightarrow{E}$ of $E(G)$ such that in $\overrightarrow{E}$, $A,B$, and $v$ have outdegrees $0,1$, and $2$, respectively, and all other vertices have outdegree three.\footnote{This is equivalent to, but differs very slightly from, the standard definition.}
Schnyder \cite{Schnyder} showed $(G,\xi)$ admits a 3-orientation if and only if $G$ is simple, and in this case admits a {\em 	unique} 3-orientation containing no counterclockwise cycles (we say an oriented cycle is {\em clockwise} if $\xi$ is on its left, and otherwise say it is counterclockwise); this $3$-orientation is called {\em minimal}. Let $\overrightarrow{E}$ be the minimal 3-orientation of $(G,\xi)$. 

The definitions of the following paragraph are illustrated in Figure~\ref{fig:intro}.
A subtree of $G$ containing the vertex $v$ incident to $\xi$ is {\em oriented} if all edges of the subtree are oriented towards $v$ in $\overrightarrow{E}$. It turns out there is a unique oriented subtree $T$ of $G$ on vertices $V(G)\setminus\{A,B\}$ which is {\em minimal} in the sense that for all edges $uw \in \overrightarrow{E}$ with $\{u,w\} \not\in E(T)$, if $uw$ attaches to $u$ and $w$ in corners $c$ and $c'$, respectively, then $c$ precedes $c'$ in a clockwise contour exploration of $T$ starting from $\xi$. We endow this tree $T$ with a labelling $Y:V(T) \to \N$ as follows. For $e=uw \in \overrightarrow{E}$ with $\{u,w\} \in E(G)$, the {\em leftmost oriented path} from $e$ to $A$ is the unique oriented path 
$(u_0,u_1,\ldots,u_k)$ with the following two properties: (i) $u_0=u$, $u_1=w$; (ii) for $1 \le i <k$, if $\{u_i,y\} \in E(G)$ and this edge attaches to the path $(u_0,\ldots,u_k)$ on the left, then $yu_i \in \overrightarrow{E}$. For each vertex $u \in V(T)$ distinct from $v$, there are three such paths starting at $u$ (since $u$ has outdegree three in $\overrightarrow{E}$); we let $P(u)=P_{G,\xi}(u)$ be one of the shortest such paths. Then let $Y(u) = |P(u)|$, the number of vertices in $P(u)$. 

\begin{figure}[ht]
\hspace{-0.5cm}
 \subfigure[A simple triangulation endowed with its unique 3-orientation with no counterclockwise cycles.]{\includegraphics[width=.28\linewidth,page=1]{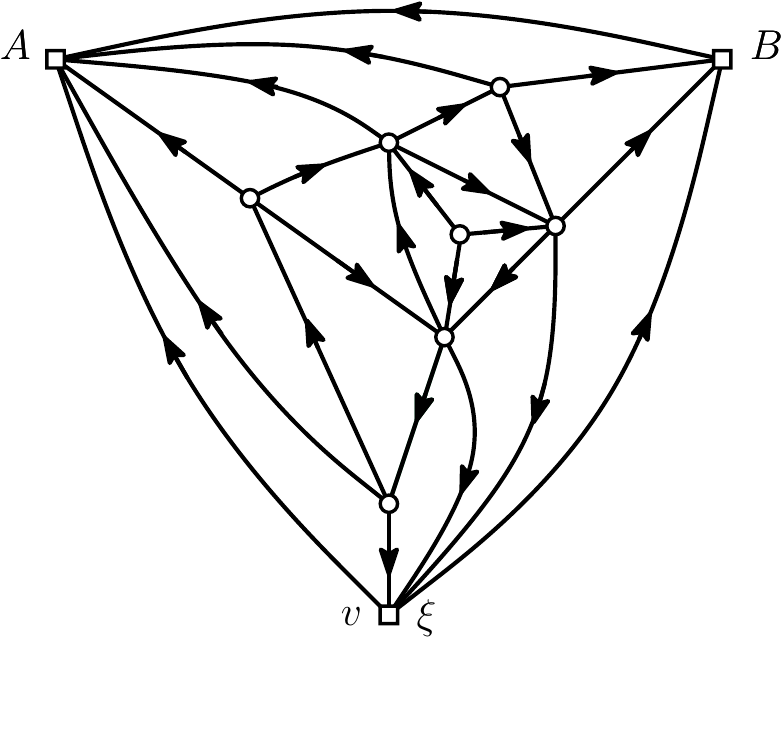}}\qquad
  \subfigure[The minimal oriented tree is drawn in dashed blue lines.]{\includegraphics[width=.28\linewidth,page=3]{Pictures/triangulation-intro.pdf}}
\qquad
\subfigure[The thick green paths are both leftmost oriented paths from $u$ to $A$; the solid path is $P(u)$, so $Y(u)=3$.]{\includegraphics[width=.28\linewidth,page=2]{Pictures/triangulation-intro.pdf}}
\caption{Orientations, spanning trees, and leftmost paths in simple triangulations} 
\label{fig:intro}
\end{figure}

Surprisingly, $(G,\xi)$ may be recovered from the pair $(T,Y)$. More strongly, the above transformation is a bijection mapping planted simple planar triangulations to a certain set of ``validly labelled'' planted plane trees. This bijection is essentially due to Poulalhon and Schaeffer~\cite{PouSch06}, but the connection of vertex labels with the lengths of certain oriented paths is new. 

Since $Y(u)$ is the number of vertices on a certain path from $u$ to $A$, $Y(u)-1$ is an upper bound on $d_G(u,A)$, the graph distance between $u$ and $A$ in $G$. It turns out that $Y(u)-d_G(u,A)-1$ is bounded by twice the number of times a {\em shortest} path in $G$ from $u$ to $A$ winds clockwise around the {\em leftmost} path $P_{G,\xi}(u)$. More strongly, if $P(u)=(u_0,u_1,\ldots,u_k)$ and $Q$ is a path from $u_i$ to $u_j$ disjoint from $P(u)$ except at its endpoints, then $|Q| \ge j-i-1$, and $|Q|\le j-i+1$ (i.e.\ $Q$ is a shortcut from $u_i$ to $u_j$) only if $Q$ leaves $u_i$ on the right and rejoins $u_j$ on the left. This fact allows $Y(u)-d_G(u,A)-1$ to be controlled as follows. 

Let $n=|V(G)|$. If $Q$ is a shortcut from $u_i$ to $u_j$ then the union of $Q$ and $u_{i+1},\ldots,u_{j-1}$ forms a cycle $C$ with $2(j-i)-1$ or $2(j-i)-2$ vertices. If there are $2k$ shortcuts between $u$ and $A$ and $Q$ is the $k$'th one, then all vertices of $C$ have distance at least $k$ both from $A$ and from $u$. It will follow that typically (i.e., for random $G$), when $k$ and $d_G(u_j,A)$ are both large (of order $n^{1/4}$) then $j-i$ should also be large (of order $n^{1/4}$), or else $G$ would contain a cycle of length $o(n^{1/4})$ separating two macroscopic regions. On the other hand, a ``shortcut'' of length of order $n^{1/4}$ is rather long; we will straightforwardly show that typically the diameter of $G$ will be $O(n^{1/4})$, in which case there can be at most a bounded number of such long shortcuts on any path. A rigorous version of this argument allows us to show that typically, for all $u \in V(T)\setminus\{v\}=V(G)\setminus\{v,A,B\}$, $Y(u)-d_G(u,A)-1$ is much smaller than $n^{1/4}$. In other words, after rescaling, the labels $Y$ with high probability provide good approximations for distances to the root $A$. This essentially proves 1.\ from Section~\ref{sec:csrequirements}. 

A modification of the above argument 
establishes without too much difficulty that for $u,w \in V(T)$ with $u$ preceding $w$ in lexicographic order, $d_G(u,w)$ is bounded by $Y(u)+Y(w)-2\check{Y}(u,w)+2$, where $\check{Y}(u,w)$ is the smallest value $Y(y)$ for any vertex $y$ following $u$ and preceding $w$ in lexicographic order. This will establish (2) from Section~\ref{sec:csrequirements}. 

To establish (3) we use ``partial symmetrization'' as previously discussed. Finally, rerooting invariance, (4), will be a straightforward consequence of choosing a random root corner. Having verified all the conditions of our general convergence result (whose proof was already sketched), Theorem~\ref{thm:main} for simple triangulations then follows immediately. An essentially identical development establishes Theorem~\ref{thm:main} for simple quadrangulations. 

\addtocontents{toc}{\SkipTocEntry}
\subsection{Outline}
We conclude the introduction by fixing some basic notation, in Section~\ref{sec:notation}. In Section~\ref{sec:pgpt} we provide definitions related to planar maps and plane trees, many of which are standard. In Section~\ref{sec:notion_convergence} we introduce the Gromov--Hausdorff distance and mention some of its basic properties. 
In Section~\ref{sec:cstriples} we formally state our ``universality'' result, providing general sufficient conditions for a random map ensemble to converge to the Brownian map; proofs are deferred to Appendix~\ref{sec:cstripleproof}. 
In Section~\ref{sec:bij-bloss-trees} we describe the bijections for simple triangulations and quadrangulations on which our proof of Theorem~\ref{thm:main} is based. In Section~\ref{sec:snake} we prove convergence of the spatial branching process associated to a random simple triangulation to the Brownian snake; this is where partial symmetrization appears. In Section~\ref{sec:label_dist_determinist} we study the relation of distances with labels; this is where winding numbers appear. In Section~\ref{sec:label_approx}, we use the bounds of Section~\ref{sec:label_dist_determinist} to show that our labelling provides a sufficiently close approximation of distances in random simple triangulations that the associated conditions of Theorem~\ref{prop:CSTriple} are satisfied. In Section~\ref{sec:mainthm} we establish rerooting invariance and so complete the proof of Theorem~\ref{thm:main}. Finally, Section~\ref{seq:quad} proves Theorem~\ref{thm:main} for quadrangulations, and Appendix~\ref{sec:notes} contains a derivation of the numerical constants from Theorem~\ref{thm:main}.

\addtocontents{toc}{\SkipTocEntry}
\subsection{Notation}\label{sec:notation}
For the remainder of the paper, all graphs are connected, finite, simple (i.e. without loops nor multiple edges) and planar. Let $G=(V(G),E(G))$ be such a graph. Given a vertex $v \in V(G)$ we write $\deg_G(v)=|\{e
\in E(G): v \in e\}|$ for the degree of $v$ in $G$. If $v \in e$ we say $e$ is {\em incident} to
$v$. We write $d_G:V(G) \times V(G) \to \N$ for graph distance on $G$. Given $W \subset V(G)$, we write $G[W]$ for the graph with vertices $W$ and edges $\{\{u,v\} \in E(G): u,v \in W\}$. 

An {\em oriented edge} of $G$ is an ordered pair $uw$, where $\{u,w\} \in E(G)$; we call $uw$ an {\em orientation} of $\{u,w\}$. 
\nomenclature[Edirect]{$\overrightarrow{E}$}{An orientation of the edges of a graph $G$; usually, the minimal $3$-orientation of a planted triangulation. See also Section~\ref{sec:def-ori}.}
An orientation of $G$ is a set $\overrightarrow{E}=\{\overrightarrow{e}:e \in E(G)\}$, where for each $e \in E(G)$, $\overrightarrow{e}$ is an orientation of $e$. The {\em outdegree} of $v \in V(G)$ (with respect to $\overrightarrow{E}$) is $\deg^+(v)=\deg^+_{\overrightarrow{E}}(v)=|\{w \in V(G): vw \in \overrightarrow{E}\}|$. 

If $S=(s_1,\ldots,s_r)$ is any sequence of objects, we say that $S$ has length $r$ and write $|S|=r$. A {\em path} in $G$ is a sequence $P=(u_0,u_1,\ldots,u_k)$ of vertices of $G$ with $\{u_i,u_{i+1}\} \in E(G)$ for $0 \le i < k$; we say $P$ is a path from $u_0$ to $u_k$, and note that $|P|=k+1$.  A path is {\em simple} if all its vertices are distinct.  A {\em cycle} in $G$ is a path $(u_0,u_1,\ldots,u_k,u_{k+1})$ such that $u_{k+1}=u_0$; it is {\em simple} if $(u_0,\ldots,u_k)$ is a simple path. If $G$ is a {\em tree} (connected and acyclic) then for $u,w \in G$ we write $\bbr{u,v}$ for the unique (shortest) path in $G$ from $u$ to $v$. 
Finally, for a non-negative integer $k$, write $[k]=\{0,1,\ldots,k\}$, 

\section{Planar maps and plane trees}\label{sec:pgpt}
\addtocontents{toc}{\SkipTocEntry}
\subsection{Planar maps}\label{sec:pg}
A {\em planar embedding} of $G$ is a function $\phi:V(G) \cup E(G) \to \S^2$ satisfying the following properties. 
\begin{enumerate}
\item The restriction $\phi|_{V(G)}$ is injective.
\item For each $e=uv \in E(G)$, $\phi(e)$ is a simple curve with endpoints $\phi(u)$ and $\phi(v)$. 
\item For any two distinct edges $e,f \in E(G)$, the curves $\phi(e)$ and $\phi(f)$ are disjoint except possibly at their endpoints. 
\end{enumerate}
The pair $(G,\phi)$ is called a {\em planar map}. The faces of $(G,\phi)$ are the connected components of $\S^2\setminus \bigcup_{x \in V(G) \cup E(G)} \phi(x)$. 
Given a face $f$ the vertices and edges incident to $f$ are given by the set $\phi^{-1}(\partial f)$, where $\partial f$ is the boundary of $f$. 

Two planar maps are {\em isomorphic} if there exists an orientation-preserving
homeomorphism of $\S^2$ that sends one to the other. It is easily verified
that planar map isomorphism is an equivalence relation. 

For any planar map $(G,\phi)$, for each vertex $v \in V(G)$ there is a unique cyclic (clockwise) ordering $\cO_v$ of the edges incident to $v$. Furthermore, up to isomorphism, the set of orderings $\{\cO_v:v \in V(G)\}$ uniquely determines $(G,\phi)$. We may therefore specify the isomorphism equivalence class of $(G,\phi)$ by providing $G$ and the set of cyclic orderings associated to $(G,\phi)$. We will henceforth denote (a representative from the isomorphism equivalence class of) a planar map simply by $G$, leaving implicit both $\phi$ and its associated cyclic orderings. 

For the remainder of Section~\ref{sec:pg}, consider a fixed planar map $G$. A {\em corner} of $G$ is an ordered pair $\xi=(e,e')$ where $e$ and $e'$ are incident to a common vertex $v$, and $e'$ immediately follows $e$ in the clockwise order around $v$.\footnote{We allow that $e=e'$, which can happen if $d_G(v)=1$.}  
\nomenclature[v]{$\mathrm{v}$}{$\mathrm{v}(\xi)=\mathrm{v}_G(\xi)$ is the vertex incident to corner $\xi$ in $G$}
We write $\v(\xi)=\v_{G}(\xi)=v$ and say that $\xi$ is incident to $v$ (and also to $e$ and $e'$). We write $\cC(G)$ for the set of corners of $G$. 
\nomenclature[Cg]{$\mathcal{C}(G)$}{Set of corners of planar map $G$}
For $\xi,\xi' \in \cC(G)$ we let $d_G(\xi,\xi') = d_G(\v(\xi),\v(\xi'))$ be the graph distance between the vertices incident to $\xi$ and $\xi'$, and likewise let $d_G(\xi,w)=d_G(\v(\xi),w)$ for $w \in V(G)$.

If $e=\{u,v\}$ and $e'=\{v,w\}$, and $f$ is the face on the left when following $e$ and $e'$ from $u$ through $v$ to $w$, then we say $\xi=(e,e')$ is incident to $f$ and vice-versa. The {\em degree} of $f$ is the number of corners incident to $f$. The planar map $G$ is a {\em triangulation} or a {\em quadrangulation} if all its faces have respectively degree 3 or degree 4. 

Given $e=\{u,v\} \in E(G)$, write $\kappa^{\ell}(u,v)=\kappa^{\ell}_G(u,v)$ (respectively, $\kappa^{r}(u,v)=\kappa^{r}_G(u,v)$) for the corner incident to $u$ and to $\{u,v\}$ that is on the left (respectively, on the right) when following $e$ from $u$ to $v$. 

A {\em planted planar map} is a pair $(G,\xi)$, where $G$ is a planar map and $\xi \in \cC(G)$. We call $\xi$ the {\em root corner} of $(G,\xi)$, call $\v(\xi)$ its {\em root vertex}, and call the face of $G$ incident to $\xi$ its {\em root face}. If $G'$ is a connected subgraph of $G$ containing $\xi$, then 
$(G',\xi)$ is again a planar map, and we call it a {\em planted submap} of $(G,\xi)$. 

\addtocontents{toc}{\SkipTocEntry}
\subsection{Plane trees}\label{sub:PlaneTrees}
A plane tree (resp. planted plane tree)  is a planar map $G$ (resp.\ planted planar map $(G,\xi)$) such that $G$ is a tree\footnote{It is relatively common to define a planted plane tree as a pair $(T,v)$ where $T$ is a plane tree and $v$ is a degree-one vertex of $T$. Our definition, which is equivalent, can be recovered by deleting the plant vertex and its incident edge, and rooting at the corner thereby created.}.
If $\rT=(T,\xi)$ is a planted plane tree then recalling that $\v(\xi)$ is the root vertex of $\rT$, we may speak of parents, children, ancestors, descendants in the usual way. For each $w \in V(T)$ we write $|w|=d_T(\xi,w)$, and call $|w|$ the generation of $w$. We also write $k(w)=k_T(w)$ for the number of children of $w$, and if $w \ne \v(\xi)$ then we write $p(w)=p_{\rT}(w)$ for the parent of $w$.

\nomenclature[Ut]{$\mathrm{U}_{\mathrm{T}}$}{Ulam--Harris encoding of $\mathrm{T}$}
The {\em Ulam--Harris encoding} is the injective function $U=U_{\rT}:V(T) \to \bigcup_{i \ge 0} \mathbb{N}^i$ defined as follows (let $\N^0=\{\emptyset\}$ by convention). First, set $U(\v(\xi)) = \emptyset$. 
For every other vertex $w \in V(T)$, consider the unique path $\v(\xi)=v_0,v_1,\ldots,v_k=w$ from $\v(\xi)$ to $w$. 
For $1 \le i \le k$ let $n_i$ be such that $v_i$ is the $n_i$'th child of $v_{i-1}$, in cyclic order around $v_{i-1}$ starting from $\kr{v_{i-1}}{v_{i-2}}$ if $i \ge 2$ or from $\xi$ if $i=1$. Then set $U(w)=n_1n_2\ldots n_k \in \N^{k}$. 
In other words, the root receives label $\emptyset$ and for each $i \ge 1$ the label of any $i$'th child is obtained recursively by concatenating the integer $i$ to the label of its parent. It is easily verified that (the isomorphism class of) $\rT$ can be recovered from the set of labels $\{U(v): v \in V(T)\}$. 

When there is no ambiguity, we identify planted plane trees with their Ulam-Harris encodings. In particular, in this case the root vertex is denoted $\emptyset$ and if $v$ is a vertex of $T$, then its children are denoted $v1,\ldots,vk$, where $k=k_T(v)$. 
\smallskip

The {\em lexicographic ordering} $\pl=\plt$ of $V(T)$ is the total order of $V(T)$ induced by the lexicographic order on $\{U(v): v \in V(T)\}$. This ordering induces a lexicographic ordering of $E(T)$ 
\nomenclature[Apl]{$\preceq_{\mathrm{lex,T}}$}{Lexicographic ordering of vertices or edges of planted plane tree $\mathrm{T}$}
(also denoted $\pl=\plt$ by a slight abuse of notation) by defining $\{u,v\}\plt \{u',v'\}$ if and only if $u,v\plt u'$ or $u,v\plt v'$. These are the orders in which a clockwise contour exploration of the plane tree $T$ starting from $\xi$ first visits the vertices and edges of $T$, respectively.

\nomenclature[Rt]{$r_{\mathrm{T}}$}{Contour exploration of planted plane tree $\mathrm{T}$, $r_{\mathrm{T}}:[2\lvert V(T)\rvert-2] \to V(T)$.}
The {\em contour} exploration $r=r_{\rT}:[2|V(T)|-2] \to V(T)$ is inductively defined as follows. Let $r(0)=\v(\xi)$. Then, for $1 \le i \le 2|V(T)|-2$, let $r(i)$ be the lexicographically first child of $r(i-1)$ that is not an element of $\{r(0),\ldots,r(i-1)\}$, or let $r(i)$ be the parent of $r(i-1)$ if no such node exists.
Note that each vertex $v \in V(T)\setminus \{\v(\xi)\}$ appears $\deg_T(v)$ times in the contour exploration, and $\v(\xi)$ appears $\deg_T(\v(\xi))+1$ times.

The contour exploration induces an ordering of $\cC(T)$, as follows. 
\nomenclature[Apl]{$\preceq_{\mathrm{ctr,T}}$}{Contour ordering of corners of planted plane tree $\mathrm{T}$}
For $0 \le i < 2|V(T)|-2$, let $e(i)=e_{\rT}(i)=\{r(i),r(i+1)\}$. 
Then let $\xi(0)=\xi_{\rT}(0)=\xi$, and for $1 \le i < 2|V(T)|-2$ let $\xi(i)=\xi_{\rT}(i)=(e(i-1),e(i))$.  The {\em contour ordering}, denoted $\pc=\pct$, is the total order of $\cC(T)$ induced by $(\xi(i),0 \le i < 2|V(T)|-2)$.  
 For convenience, also let $\xi(2|V(T)|-2)=\xi_{\rT}(2|V(T)|-2) = \xi$. 
 \nomenclature[Apl]{$\preceq_{\mathrm{cyc,T}}$}{Cyclic ordering of corners of $\mathrm{T}$ induced by $\preceq_{\mathrm{ctr,T}}$}
 Finally, write $\pcy=\pcyt$ for the cyclic order on $\cC(T)$ induced by $\pct$. It can be verified that $\pcy$ does not depend on the choice of root corner $\xi$. We define cyclic intervals accordingly: for $c,c' \in \cC(T)$, let
 \[
 \bbrcy{c,c'} = 	\begin{cases}
 				\{c'': c \pc c'' \pc c'\}& \mbox{ if } c \pc c'\, , \\
				\{c'': c'' \pc c \mbox{ or } c'\pc  c''\} & \mbox{ if } c' \pc c\, .
 			\end{cases}
 \]

Given $u, v \in V(T)$, we say that $v$ is the {\em successor} of $u$ if $u \pl v$ and for all $w \in V(T)$, if $u \pl w \pl v$ then $w=u$ or $w=v$. We define successorship for corners similarly. 

Given a plane tree $\rT=(T,\xi)$ and a set $R \subset V(T)$ with $\v(\xi) \in R$, 
\nomenclature[Tred]{$\mathrm{T}\angles{R}$}{Subtree of $\mathrm{T}$ spanned by $R$.}
Also, the subtree of $\rT$ {\em spanned} by $R$, denoted $\rT\angles{R}$, is the subtree of $\rT$ induced by the union of the shortest paths between all pairs of vertices in $R$. Note that $\rT\angles{R}$ naturally inherits a planted plane tree structure from $\rT$.

\addtocontents{toc}{\SkipTocEntry}
\subsection{The contour process and spatial plane trees}\label{sec:contourproc}
A {\em spatial plane tree} is a triple $\rT=(T,\xi,D)$, where $(T,\xi)$ is a planted plane tree and $D:E(T) \to \R$ is an arbitrary function. Given a labelled plane tree, define a function 
\nomenclature[Xt]{$X_{\mathrm{T}}$}{For $(T,\xi,D)$ a spatial planted plane tree and $v \in V(T)$, $X(v)$ is sum of displacements on root-to-$v$ path.}
$X=X_{\rT}:V(T)\rightarrow \R$ as follows. First, let $X(\v(\xi))=0$. Next, given $u \in V(T)$ with $X(u)$ already defined, for $1 \le i \le k_\rT(u)$ let $X(ui)=X(u)+D(u,ui)$. We call $X_{\rT}$ the \emph{labelling function} of $\rT$.

Now define $C([0,1],\R)$ functions $C_{\rT}$ and $Z_{\rT}$ by setting 
\[
C_{\rT}(i/(2|V(T)|-2)) = d_{T}(\xi,r_{(T,\xi)}(i)) \quad \mbox{and} \quad Z_{\rT}(i/(2|V(T)|-2))= X_{\rT}(r_{(T,\xi)}(i))\, ,
\]
for $i \in \{0,1,\ldots,2|V(T)|-2\}$, and extending each function to $[0,1]$ by linear interpolation. 
\nomenclature[Ct]{$C_{\mathrm{T}}$}{Contour process of $\mathrm{T}$}
\nomenclature[Zt]{$Z_{\mathrm{T}}$}{The spatial process of $\mathrm{T}$; $Z_{\mathrm{T}}$ is $X_{\mathrm{T}}$, continuized, temporally rescaled to have domain $[0,1]$.}
We refer to $C_{\rT}$ and $Z_{\rT}$ as the {\em contour} and {\em labelling} processes of $\rT$, respectively. Note that the definition of $C_{\rT}$ does not depend on the function $D$.
\addtocontents{toc}{\SkipTocEntry}
\subsection{Spanning trees in planar maps.}\label{sec:spantree}

Given a planar map $G$, a  {\em spanning tree} of $\rG$ is a subgraph $T$ of $G$ such that $T$ is a tree with $V(T)=V(G)$. 
If $(G,\xi)$ is a planted planar map and $T$ is a spanning tree of $G$ then we call $(T,\xi)$ a planted spanning tree of $(G,\xi)$. 
 
 Finally, given a planted planar map $\rG=(G,\xi)$ and an orientation $\overrightarrow{E}$ of $E(G)$, we say that a planted spanning tree $(T,\xi)$ of $\rG$ is {\em oriented} with respect to $\overrightarrow{E}$ if in the orientation of $E(T)$ obtained from $\overrightarrow{E}$ by restriction, all edges are oriented towards $\v(\xi)$.

\section{Distances between metric spaces: Gromov, Hausdorff, and Prokhorov}\label{sec:notion_convergence}

\addtocontents{toc}{\SkipTocEntry}
\subsection*{The Gromov--Hausdorff distance}

For proofs of the assertions in this section, and for further details, we refer the reader to \cite{bbi,MiermontTessellations}. 
Let $\mathrm X = (X,d)$ and $\mathrm X' = (X',d')$ be compact metric spaces.
Given $C \subset X\times X'$, the {\em distortion} of $C$, denoted $\mathrm{dis}(C)$, is the quantity  
\[
\mathrm{dis}(C) = \sup\{|d(x,y)-d'(x',y')|: (x,x') \in C, (y,y') \in C\}.
\]
\nomenclature[Dis]{$\mathrm{dis}(C)$}{Distortion of the correspondence $C$; equal to $\sup\{\lvert d(x,y)-d'(x',y')\rvert: (x,x') \in C, (y,y') \in C\}$.}
A \emph{correspondence} between $\rX$ and $\rX'$ is a set $C \subset X \times X'$ such that for every $x \in X$ 
there is $x' \in X'$ such that $(x,x') \in C$ and vice versa.  
We write $C(X,X')$ for the set of correspondences between $X$ and $X'$. 
\nomenclature[Cxx]{$C(X,X')$}{Set of correspondences between $X$ and $X'$.}
The Gromov--Hausdorff distance $\dgh(\rX,\rX')$ between metric spaces $\rX=(X,d)$ and $\rX'=(X',d')$ is
\nomenclature[Dgh]{$\dgh(\mathrm X,\mathrm X')$}{Gromov--Hausdorff distance between $\rX$ and $\rX'$; equal to $\frac{1}{2}\inf\{ \dis(C): C \in C(X,X')\}$. Same section for $\dgh^k$, $\dghp$.}
\[
\dgh(\rX,\rX') = \frac{1}{2}\inf\{ \dis(C): C \in C(X,X')\}.
\]
We list without proof some basic properties of $\dgh$. Let $\cM$ be the set of isometry classes of compact metric spaces. 
\begin{enumerate}
\item Given metric spaces $\rX=(X,d)$ and $\rX'=(X',d')$, there exists $C \in C(X,X')$ such that $\dgh(\rX,\rX')=\dis(C)/2$. 
\item If $\rX_1$ and $\rX_2$ are isometric, and $\rX_1'$ and $\rX_2'$ are isometric, then $\dgh(\rX_1,\rX_1')=\dgh(\rX_2,\rX_2')$. In other words, $\dgh$ is a {\em class function} for $\cM$. 
\item The push-forward of $\dgh$ to $\cM$ (which we continue to denote $\dgh$) is a distance on $\cM$, and $(\cM,\dgh)$ is a complete separable metric space. 
\end{enumerate}
\nomenclature[Md]{$(\mathcal{M},d_{\mathrm{GH}})$}{Set of isometry classes of compact metric spaces with GH distance; see same section for $(\mathcal{M}^{(k)},d_{\mathrm{GH}}^{k})$ and $(\mathcal{M}_w,d_{\mathrm{GHP}})$.}

A {\em $k$-pointed metric space} is a triple $(X,d,(x_1,\ldots,x_k))$ where 
$(X,d)$ is a metric space and $x_i \in X$ for $1 \le i \le k$. 
We say $k$-pointed metric spaces $\rX=(X,d,(x_1,\ldots,x_k))$ and $\rX'=(X',d',(x'_1,\ldots,x'_k))$
are {\em isometry-equivalent} if there exists a bijective isometry $f:X\to X'$ such that $f(x_i)=x_i'$ for $1 \le i \le k$. 
The {\em $k$-pointed Gromov--Hausdorff distance} $\dgh^k$ between $\rX,\rX'$ is given by 
\[
\dgh^k(\mathrm X, \mathrm X') = \frac{1}{2}\inf\left\{\dis(C) :C \in
  C(X,X') \mbox{ and } (x_i,x'_i) \in C, 1 \leq i\leq
  k\right\}.
\]
Much as before, if $\cM^{(k)}$ is the set of isometry-equivalence classes of $k$-pointed compact metric spaces, then $\dgh^k$ is a class function for $\cM^{(k)}$ so may be viewed as having domain $\cM^{(k)}$, and $(\cM^{(k)},\dgh^k)$ then forms a complete separable metric space. 

\addtocontents{toc}{\SkipTocEntry}
\subsection*{The Gromov--Hausdorff--Prokhorov distance}
Following \cite{MiermontTessellations}, a {\em weighted metric space} is a triple $(X,d,\mu)$ such that $(X,d)$ is a metric space and $\mu$ is a Borel probability measure on $(X,d)$. Weighted metric spaces $(X,d,\mu)$ and $(X',d',\mu')$ are {\em isometry-equivalent} if there exists a measurable bijective isometry $\phi:X \to X'$ such that $\phi_*\mu=\mu'$, where $\phi_*\mu$ denotes the push-forward of $\mu$ under $\phi$. Write $\cM_{w}$ for the set of isometry-equivalence classes of weighted compact metric spaces. 

Given weighted metric spaces $\rX=(X,d,\mu)$ and $\rX'=(X',d',\mu')$, a {\em coupling} between $\mu$ and $\mu'$ is a Borel measure $\nu$ on $X\times X'$ (for the product metric) with $\pi_* \nu=\mu$ and $\pi'_* \nu=\mu'$, where $\pi:X\times X' \to X$ and $\pi':X \times X' \to X'$ are the projection maps. 
\nomenclature[Cyoupling]{Coupling}{A coupling of prob.\ measures $\mu$ on $X$, $\mu'$ on $X'$ is a prob.\ measure $\nu$ on $X\times X'$ with marginals $\mu,\mu'$.}
Let $M(\mu,\mu')$ be the set of couplings between $\mu$ and $\mu'$. The {\em Gromov--Hausdorff--Prokhorov distance} is defined by 
\[
\dghp(\rX,\rX')=\inf\left\{\eps> 0: \exists C \in C(X,X'),\exists \nu \in M(\mu,\mu'), \nu(C) \ge 1-\eps,\dis(C) \le 2\eps  \right\}.
\]
The push-forward of $\dghp$ to $\cM_w$, which we again denote $\dghp$, is a distance on $\cM_w$, and $(\cM_w,\dghp)$ is a complete separable metric space (see \cite[Section 6]{MiermontTessellations} and \cite[Section 2]{EvansWinter}).

 \section{Map encodings}\label{sec:cstriples}
The purpose of this section is to state sufficient conditions for a family of random maps to converge to the Brownian map after rescaling. The framework we describe enables us to use use the convergence argument the same line of argument as in Le Gall \cite{jf} with only minor modifications (which are essentially to ensure that the convergence holds in the Gromov-Hausdorff-Prokhorov sense and not only in the Gromov-Hausdorff sense). Our choice to work in a slightly more abstract setting was motivated by potential applications to several models of maps for which convergence to the Brownian map is yet to be established. We return to this point at the end of the section.
\smallskip

\nomenclature[P]{$\mathrm{P}$}{Map encoding, $\mathrm{P}=(\mathrm{M},\mathrm{T})$.}
A {\em map encoding} is a pair $\rP=(\rM,\rT)$ where  
$\rM=(M,\zeta)$ is a planted planar map and $\rT=(T,\xi,D)$ is a spatial plane tree with $V(T) \subset V(M)$.
Note that although $T$ shares its vertices with $M$, it need not be a subgraph of $M$. 

Fix a sequence $\rP=(\rP_n,n \ge 1)$ of random map encodings. 
Write $\rP_n=(\rM_n,\rT_n)$, write $C_n$ and $Z_n$ for the contour and label processes of $\rT_n$, respectively, and write $X_n$ and $r_n$ for the labelling function of $\rT_n$ and for the contour exploration of $\rT_n$, respectively. The sequence $\rP$ is {\em good} 
if there exist sequences $(a_n,n \in \N)$ and $(b_n,n \in \N)$ such that 
 the following three properties hold. 
 
\begin{framed}
\noindent {\bf 1.} As $n \to \infty$, $\pran{a_nC_n,b_nZ_n} \convdist (\be,Z)$ in the topology of uniform convergence on $C([0,1],\R)^2$, 
where $(\be,Z)$ is as described in Section~\ref{sec:limitobject}. 

\noindent {\bf 2.} (i) 
For all $\eps > 0$, 
\[
\lim_{n \to \infty} \p{b_n\cdot \max_{v \in V(M_n)} d_{M_n}(v,V(T_n)) > \eps } = 0\, .
\]
(ii) Write $d_{\mathrm{Prok}}$ for the Prokhorov distance between Borel measures on $\mathbb{R}$. For each $n$, conditionally given $\rP_n$, let $U_n,V_n$ be independent uniformly random elements of $V(T_n)$. Then 
\[
\lim_{n \to \infty} b_n \cdot d_{\mathrm{Prok}}(d_{M_n}(\zeta_n,\xi_n),d_{M_n}(U_n,V_n))
=0\, .
\] 
\noindent {\bf 3.}  
(i) Let $m=m(n)=2|V(T_n)|-2$. Then for all $\eps > 0$, 
\begin{align*}
 \lim_{n \to \infty} & \mathbf{P}\left\{\exists i,j \in [m]:d_{M_n}(r_n(i),r_n(j)) \ge 
 \right.  \\
& \qquad
\left.   
Z_n(i/m) + Z_n(j/m) - 
2\max\left(\check{Z}_n(i/m,j/m),\check{Z}_n(j/m,i/m)\right) + \eps b_n^{-1}\right\} \\
& =0\, . 
\end{align*}
(ii) For all $\eps > 0$, 
\[
\lim_{n \to \infty} 
\p{\exists j \in [m]:d_{M_n}(r_n(j),\zeta_n) \le 
Z_n(j/m) - \check{Z}_n(0,1) - \eps b_n^{-1}} = 0\, .
\]
\end{framed}
For later use, we note one consequence of {\bf 3.\ } 
Let $I_n$ be minimal such that $Z_{n}(I_n/m) = \check{Z}_n(0,1)$, {\bf 3.}(ii) implies that 
\[
\lim_{n \to \infty}\p{d_{M_n}(r_n(I_n),\zeta_n)| > \eps b_n^{-1}} = 0\, .
\]
Together with {\bf 3.}(i) and {\bf 3.}(ii) this yields that, for all $\eps>0$, 
\begin{align}\label{eq:3cons}
\lim_{n \to \infty} \p{\exists j \in [m]:|d_{M_n}(r_n(j),\zeta_n) - (X_n(r_n(j))-X_n(r_n(I_n)))| > \frac{\eps}{b_n}} = 0\, .
\end{align}
In other words, for $u \in V(T_n)$, the distance $d_{M_n}(u,\zeta_n)$ is essentially given by the difference between the label of $u$ and the infimum of labels in $T_n$. 
\begin{thm}\label{prop:CSTriple}
If $\rP$ is a good sequence of random map encodings then, writing $\mu_n$ for the uniform probability measure on $V(T_n)\subset V(M_n)$, we have 
\[
(V(M_n),b_nd_{M_n},\mu_n) \convdist (S,d,\mu)
\]
for $\dghp$, where $(S,d,\mu)$ is the Brownian map, as defined in Section~\ref{sec:limitobject}. 
\end{thm}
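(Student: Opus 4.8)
The plan is to follow the well-trodden route pioneered by Chassaing--Schaeffer and refined by Le Gall and Miermont, reparameterising everything on $[0,1]$ via the reduced tree $\rT_n(R_n)$ so that all distances become (interpolated) functions on $[0,1]^2$, and then carrying out the argument sketched in Section~\ref{sec:csrequirements}. Write $m=m(n)=2|R_n|-2$, identify $R_n$ with a subset of $[0,1]$ through the map $i/m \mapsto r_{\rT_n(R_n)}(i)$, and let $D_n:[0,1]^2 \to [0,\infty)$ be $b_n d_{M_n}$ extended by bilinear interpolation.

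\textbf{Step 1: Tightness.} Using {\bf 3.}(i), for lex-ordered $i,j$ one has $D_n(i/m,j/m) \le b_n(Z_{\rP_n}(i/m)+Z_{\rP_n}(j/m)-2\check Z_{\rP_n}(i/m,j/m)) + \eps$ with probability tending to $1$; combined with the convergence $b_nZ_{\rP_n} \to Z$ from {\bf 1.} and the a.s.\ uniform continuity of $Z$, this gives an equicontinuity (in probability) estimate for $D_n$. Together with the uniform bound on $\mathrm{diam}$ (which follows from {\bf 2.}(i) and the $Z$-bound on $R_n$) this yields tightness of the laws of $(D_n, a_n C_{\rP_n}, b_n Z_{\rP_n})$ in $C([0,1]^2,\R)\times C([0,1],\R)^2$, using the Arzel\`a--Ascoli criterion. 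Hence along a subsequence $(D_n, a_nC_{\rP_n}, b_nZ_{\rP_n}) \convdist (D, \be, Z)$; by Skorokhod we may assume a.s.\ convergence. Here we must also check that $D$ is a.s.\ a genuine pseudo-metric compatible with $\sim_{\be}$ (if $s\sim_{\be}t$ then $D(s,t)=0$), which follows by passing to the limit in {\bf 3.}(i) along a pair of indices pinching at a $\sim_{\be}$-pair.

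\textbf{Step 2: Identifying the limit.} Passing to the limit in {\bf 3.}(i) gives $D \le d_Z$ pointwise a.s., hence $D \le d^*$ by maximality of $d^*$ (after checking $D$ descends to $[0,1]/\sim_{\be}$ and is a pseudo-metric, so that it is a candidate). Next, {\bf 3.}(ii) and its consequence~\eqref{eq:3cons} show that $b_n d_{M_n}(\cdot,u_n) \to Z - \check Z(0,1)$ uniformly; in particular, for $V$ uniform on $[0,1]$, $b_n d_{M_n}(u_n,V_n) \convdist Z(V)-\check Z(0,1)$. By {\bf 2.}(ii), $b_n d_{M_n}(U_n,V_n)$ has the same limiting law, so $D(U,V) \eqdist Z(V)-\check Z(0,1) \eqdist d^*(U,V)$ by the rerooting identity~\eqref{eq:invariance_rerooting}. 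Since $D \le d^*$ a.s.\ and $D(U,V)\eqdist d^*(U,V)$ with $U,V$ uniform and independent of the coupled pair $(D,d^*)$, the nonnegative quantity $d^*(U,V)-D(U,V)$ has mean zero, forcing $D=d^*$ for a.e.\ $(x,y)$, a.s. As both are continuous this gives $D=d^*$ identically, a.s. Because the subsequential limit is always $d^*$, the full sequence $D_n$ converges in distribution to $d^*$.

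\textbf{Step 3: From $D_n$ to GHP convergence.} Finally, transfer the convergence of $D_n$ to $\dghp$-convergence of $(V(M_n),b_nd_{M_n},\mu_n)$ to $(S,d,\mu)=([0,1]/\{d^*=0\}, d^*, \mathrm{Leb}_*)$. The correspondence is the obvious one: relate a point $x\in[0,1]$ to the vertex $r_{\rT_n(R_n)}(\lfloor xm\rfloor/m)$ on the $M_n$ side and to its $d^*$-class on the $S$ side; its distortion is controlled by $\|D_n - d^*\|_\infty$ plus the modulus of continuity of $d^*$, both $o(1)$ in probability. The measure $\mu_n$ on $R_n$ is the push-forward under $i\mapsto i/m$ of (approximately) the uniform measure on $[m]$, i.e.\ approximately $\mathrm{Leb}$, so the coupling requirement is met. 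This is the standard final step (cf.\ \cite[Section 8]{jf}, \cite{miermont13brownian}) and presents no real difficulty.

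\textbf{Main obstacle.} The delicate point is Step 1 together with the compatibility check at the start of Step 2: showing that the subsequential limit $D$ genuinely descends to $[0,1]/\sim_{\be}$ and is a pseudo-metric there, so that it can legitimately be compared with $d^*$ via maximality. This requires carefully passing to the limit in the discrete inequalities {\bf 3.}(i) along sequences of index pairs that pinch together, using that the discrete trees $\rT_n(R_n)$, suitably rescaled, converge to the CRT coded by $\be$. Everything else is bookkeeping once {\bf 1.}--{\bf 3.} and the rerooting invariance~\eqref{eq:invariance_rerooting} are in hand; indeed the authors note the proof is "a fairly straightforward generalization of existing arguments."
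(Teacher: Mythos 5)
Your overall architecture is the same as the paper's (interpolate $b_nd_{M_n}$ to a function $D_n$ on $[0,1]^2$, prove tightness from \textbf{3.}(i) and \textbf{1.}, show any subsequential limit $D$ satisfies $D\le d^*$ by maximality, identify $D(U,V)\eqdist d^*(U,V)$ via (\ref{eq:3cons}), \textbf{2.}(ii) and (\ref{eq:invariance_rerooting}), and transfer to $\dghp$ by an explicit correspondence and coupling). However, there is a genuine flaw at exactly the step you flag as the main obstacle: you claim that $D(s,t)=0$ whenever $s\sim_{\be}t$ ``follows by passing to the limit in \textbf{3.}(i) along a pair of indices pinching at a $\sim_{\be}$-pair.'' This cannot work. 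Passing to the limit in \textbf{3.}(i) only gives $D\le d_Z$, and $d_Z$ does \emph{not} vanish on $\sim_{\be}$-pairs: for a typical pair $s\sim_{\be}t$ with $s\ne t$ one has $Z(s)=Z(t)$ but $\max(\check{Z}(s,t),\check{Z}(t,s))<Z(s)$, so $d_Z(s,t)>0$. This is precisely why the definition of $d^*$ imposes vanishing on $\sim_{\be}$-pairs as a separate requirement in addition to $d^*\le d_Z$. Without $D=0$ on $\sim_{\be}$-pairs you are not allowed to invoke maximality of $d^*$, and the identification $D\le d^*$ — hence the whole of your Step 2 — collapses.

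The correct mechanism uses condition \textbf{1.} through the \emph{contour} function, not the label bound: writing $m=2|R_n|-2$, if $s\sim_{\be}t$ with $s<t$, the (Skorohod-coupled) convergence $a_nC_{\rP_n}\to\be$ lets you choose integers $x_n,y_n$ with $x_n/m\to s$, $y_n/m\to t$ and $C_{\rP_n}(x_n/m)=C_{\rP_n}(y_n/m)=\min\{C_{\rP_n}(z):x_n\le mz\le y_n\}$; this forces $r_{\rT_n(R_n)}(x_n)=r_{\rT_n(R_n)}(y_n)$, i.e.\ the two contour times visit the \emph{same} vertex of $M_n$, so $D_n(x_n/m,y_n/m)=0$ exactly, and uniform convergence along the subsequence yields $D(s,t)=0$. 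Two smaller repairs: condition \textbf{2.}(ii) compares $d_{M_n}(\v_{M_n}(\zeta_n),u_n)$ (root to minimum) with $d_{M_n}(U_n,V_n)$, so you should compute the limit of the former, via (\ref{eq:3cons}) at $j=0$ where $X_n(r_{\rT_n(R_n)}(0))=0$; your quantity $b_nd_{M_n}(u_n,V_n)$ only has the same limiting law because of (\ref{eq:invariance_rerooting}). Also, converting uniformly random vertices $U_n,V_n\in R_n$ into (asymptotically) uniform times in $[0,1]$ requires comparing lexicographic rank with contour time, which is the content of the paper's Lemma~\ref{lem:cscontour} and is used implicitly in your Step 2. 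With these corrections your argument coincides with the paper's proof.
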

The proof of Theorem~\ref{prop:CSTriple}, which closely follows an argument of Le Gall \cite{jf} (as mentioned above), appears in Appendix~\ref{sec:cstripleproof}.  We conclude the section by mentioning one corollary of the theorem; we are slightly informal to avoid notational excess and as the argument is straightforward.  For $n,k \ge 1$, conditionally given $\rP_n$, let $U_{n,1},\ldots,U_{n,k}$ be independent with law $\mu_n$. Proposition 10 of \cite{MiermontTessellations} implies that if the convergence in Theorem~\ref{prop:CSTriple} holds then also 
\[
(V(M_n),b_nd_{M_n},(U_{n,1},\ldots,U_{n,k})) \convdist (S,d,(U_1,\ldots,U_k))\, ,
\]
for $\dgh^k$, where conditionally given $(S,d,\mu)$, $U_1,\ldots,U_k$ are independent with law $\mu$. 
By Proposition 8.2 of \cite{LeGallGeo}, conditionally given $(S,d,\mu)$, the points $\rho,u^* \in S$ are independent with law $\mu$; by {\bf 2.}(ii) it follows that $(V(M_n),b_nd_{M_n},(\xi_n,\zeta_n)) \convdist (S,d,(\rho,u^*))$ 
for $\dgh^2$.
\begin{rem}
The motivation underlying the introduction of good random map encodings is to define a general framework which can be used in future work as a ``black box'' to establish the convergence of various families of maps towards the Brownian map. In order to justify this, we provide some specific examples (though not an exhaustive list) of settings where we believe our generalization will be of use. 

Condition~{\bf 2.}(i) states that after rescaling distances by $b_n$, all vertices of the map $M_n$ are with high probability close to some tree vertex. In the present work, it turns out that only two vertices of the $M_n$ do not belong to $T_n$. In some models of maps, however (e.g. simple maps, see \cite{cartessimples}), it only holds that at least one vertex per face of the map belongs to the associated tree. The maximum face degree in a random simple map is typically logarithmic in the size of the map, so in that setting the strength of Condition~{\bf 2.}(i) is useful. 

Condition~{\bf 2.}(ii) requires the distance between the root of the map and the root of the tree to be asymptotically equal in distribution to the distance between two uniform vertices of the map. In the case of simple triangulations, the distance between the two roots is actually exactly distributed as the distance between two uniformly random points. However, it happens frequently that in bijections between maps and trees, the root of the map plays a special role, and is not precisely uniformly distributed. For example, in studying $3$-connected maps, a family of maps naturally arises for which all non-root faces are quadrangles, but the root face is a hexagon \cite{FusyPoulalhonSchaeffer}. 

Finally, for the classical case of uniform quadrangulations, conditions~{\bf 3.}(i) and~{\bf 3.}(ii) hold true without the term $\epsilon b_n^{-1}$. However in the present work, we can only prove that labels of the tree control distances in the maps up to an error term which is $o(b_n)$ in probability, so we require the full strength of {\bf 3.}(i) and~{\bf 3.}(ii). 
\end{rem}

\section{Bijections for simple triangulations}\label{sec:bij-bloss-trees} 
We start with a summary of the results of the section; to do so some definitions are needed. 
\nomenclature{Blossoming tree}{$T$ is $k$-blossoming if each non-leaf is incident to exactly $k$ leaves.}
For integer $k \ge 1$, a plane tree $T$ is a {\em $k$-blossoming tree} if each vertex of degree greater than one is incident to exactly $k$ vertices of degree one. 
If $T$ is a $k$-blossoming tree (for some $k$), 
\nomenclature[Bt]{$\mathcal{B}(T)$}{The blossoms of blossoming tree $T$}
we write $\cB=\cB(T)$ for the set of degree-one vertices of $T$. When it causes no ambiguity, we identify vertices of $\cB$ with their incident corners. Note that both $k$ and $\cB$ are uniquely determined by $T$. We call $\cB$ the {\em blossoms} of $T$, and $V(T)\setminus \cB$ the inner vertices of $T$. Also, an edge between two inner vertices is called an inner edge, and an edge between an inner vertex and a blossom is a \emph{stem}. A corner $c$ is an inner corner if $c \not \in \cB$. A {\em planted} $k$-blossoming tree is a planted plane tree $(T,\xi)$ such that $T$ is a $k$-blossoming tree and $\xi$ is an inner corner of $T$.
The bijections of Section~\ref{sec:bij-bloss-trees} concern $2$-blossoming trees, which we simply call blossoming trees for the remainder of the section. 

Write $\cT_n$ for the set of planted blossoming trees $(T,\xi)$ with $n$ inner vertices. 
\nomenclature[Tn]{$\mathcal{T}_n$}{Blossoming trees with $n$ inner vertices, planted at an inner corner.}
Fix $(T,\xi) \in \cT_n$, and note that $|E(T)|=|V(T)|-1=3n-1$ so $|\cC(T)|=6n-2=3|\cB(T)|-2$. 
We say $(T,\xi)$ is {\em balanced} if $\xi=(e,e')$ for distinct stems $e,e'$, and for all $c \in \cC(T)$, 
\begin{equation}\label{eq:balanced}
3\big|\bbrcy{\xi, c} \cap\cB\}\big| +1 \ge \big|\bbrcy{\xi, c}\big|
\end{equation}
(recall the definition of $\bbrcy{\xi, c}$ from Section~\ref{sub:PlaneTrees}). 
\nomenclature[Tna]{$\mathcal{T}_n^{\circ}$}{Balanced $2$-blossoming trees with $n$ inner vertices}
For $n \ge 1$ let $\cT_n^{\circ}\subset \cT_n$ be the set of balanced blossoming trees with $n$ inner vertices. 

A {\em valid labelling} of a planted plane tree $\rT=(T,\xi)$
is a labelling $d=(d_e, e \in E(T))$ of the edges of $T$ by elements of $\{-1,0,1\}$ such that for all $v \in V(T)$, writing $k=k_{\rT}(v)$, the sequence $d_{\{v,v1\}},\ldots,d_{\{v,vk\}}$ is non-decreasing. 
\nomenclature[Tnvl]{$\mathcal{T}_n^{\mathrm{vl}}$}{Validly labelled plane trees with $n$ vertices}
Let $\cT^{\mathrm{vl}}_n$ be the set of validly labelled plane trees with $n$ vertices. We emphasize that a validly labelled plane tree is a ``normal'' tree, not a blossoming tree.

Finally, recall that for $n \ge 3$, 
\nomenclature[Tzri]{$\triangle_n^{\circ}$}{Planted triangulations of $\S^2$ with $n$ vertices; see also Section~\ref{sec:intro}.}
$\triangle_n^{\circ}$ is the set of planted triangulations with $n$ inner vertices. 
\medskip
The following diagram summarizes the bijective relations between 
$\cT_n,\cT_n^{\circ}$, and $\triangle_{n+2}^{\circ}$ established in \cite{PouSch06} and recalled in the current section.
\begin{equation}\label{eq:diagram}
\begin{CD}
\cT_n^{\mathrm{vl}}		@<\phi_n;~\text{Prop.\ref{prop:label_bij}}<\mathrm{bij}<
\cT_n				@>\mathrm{projection}>(4n-2)-\mathrm{to}-2>
\cT_n^{\circ} @>\mathrm{bij}>\chi_n;~\text{Prop.\ref{prop:bijGilles}}>\triangle_{n+2}^{\circ}
\end{CD}
\end{equation}

After concluding with bijective arguments, in Section~\ref{sec:GW-trig} we explain how to sample uniformly random triangulations using conditioned Galton-Watson trees. We end the section by describing the inverse of the bijection $\chi_n:\cT_n^{\circ}\to \triangle_{n+2}^{\circ}$, which we use later.

\addtocontents{toc}{\SkipTocEntry}
\subsection{A bijection between triangulations and blossoming trees}\label{sec:clos-tree}

We first describe a bijection of Poulalhon and Schaeffer~\cite{PouSch06} between balanced blossoming trees and simple, planted triangulations of the sphere (see Figure~\ref{fig:clo-tri}; the orientations of the arrows in the figure are explained in Section~\ref{sec:def-ori}). Fix a blossoming tree $T$. Given a stem $\{b,u\}$ with $b \in \cB(T)$, if $bu$ is followed by two inner edges in a clockwise contour exploration of $T$ -- $uv$ and $vw$, say -- then the \emph{local closure} of $\{b,u\}$ 
consists in removing the blossom $b$ 
and its stem, and adding a new edge $\{u,w\}$ (such that $\kr u w =(\{u,w\},\{u,v\})$ and $\kl w u =(\{w,v\},\{w,u\})$). After performing the local closure, $uw$ always has a triangle on its right. The edge $\{u,w\}$ is considered to be an inner edge in subsequent local closures.

The \emph{partial closure} of a blossoming tree is the planar map obtained by performing all possible local closures. 
\nomenclature[Sc]{$s(c)$}{The ``successor'' of corner $c$ in a blossoming tree}
Equivalently, for each corner $c \in \cB$, let $s(c)$ be the inner corner $c'$ minimizing $|\bbrcy{c,c'}|$ 
subject to the condition that
\begin{equation}\label{eq:label_bijection}
3|\bbrcy{c,c'}\cap \cB| < |\bbrcy{c,c'}|,
\end{equation}
if such a corner exists (recall the definition of $\pcy$ from Section~\ref{sub:PlaneTrees}). 
The partial closure operation identifies $\v(c)$ with $\v(s(c))$ whenever $c \in \cB$ and $s(c)$ is defined; it follows from the latter description that the partial closure does not depend on the order in which local closures take place. Say $\v(c)$ is {\em closed} if $s(c)$ is defined, and otherwise say $\v(c)$ is {\em unclosed}. 

It can be checked that the partial closure is a simple map and contains precisely one face $f$ of degree greater than three, and all unclosed blossoms are incident to $f$. 
Furthermore, simple counting arguments show that each inner corner incident to $f$ is adjacent to at least one unclosed blossom, and that there are 
precisely two corners, say $\xi^C$ and
$\xi^D$, that are incident to two unclosed blossoms. Note that $\xi^C$ and $\xi^D$ are both corners of $T$ (i.e., they are not created while performing the partial closure). Let $C=\v(\xi^C)$ and $D=\v(\xi^D)$.

Let $(T,\xi)$ be a balanced blossoming tree such that $\xi=(e,e')$, $e=(\v(\xi),v)$ and $e'=(\v(\xi),v')$. It follows straightforwardly from \eqref{eq:label_bijection} that $v$ and $v'$ are unclosed, or equivalently $\xi$ is equal to $\xi_C$ or $\xi_D$.

 We now suppose $\xi \in \{\xi^C,\xi^D\}$. 
Let $S_{CD}$ (resp.\ $S_{DC}$) be the set of non-blossom vertices $v$ of the distinguished face $f$ of the partial closure 
such that in the planted tree $(T,C)$ (resp.\ $(T,D)$) we have $v \pc D$ (resp.\ $v \pc C$).
In other words, vertices of $S_{CD}$ lie after $C$ and before $D$ in a clockwise tour of $f$, and likewise for $S_{DC}$. 

To finish the construction, remove the remaining blossoms and their stems. 
Add two additional vertices $A$ and $B$ within $f$, 
then add an edge between $A$ (resp.~$B$) and each of the vertices of $S_{CD}$ (resp.\ of $S_{DC}$). 
In the resulting map, define a corner $c$ by $c=(\{C,B\},\{C,A\})$ if $\v(\xi)=C$ or $c=(\{D,A\},\{D,B\})$ if $\v(\xi)=D$. 
Finally, add an edge between 
$A$ and $B$ in such a way that, after its addition, $A,B$, and $\v(\xi)$ lie on the same face $f$. 
The result is a planar map, rooted at $\xi$, called the {\em closure} of $T$. 
For later use, define a function $s':V(T) \to V(T)$ as follows. 
\nomenclature[Sprime]{$s'$}{A modification of the ``successor'' function $s$ defined for vertices instead of corners.}
\label{sstardef}
First, set $s'(v)=v$ for $v \in V(T)\setminus \cB$. For $v \in \cB$, let $u$ be the unique neighbour of $v$ and let $k$ be the unique corner incident to $v$. If $s(k)$ is defined then let $s'(v)=\v(s(k))$; otherwise, if $u \in S_{CD}$ let $s'(v)=A$ and if $u \in S_{DC}$ let $s'(v)=B$. 

\begin{figure}[ht]
\hspace{-0.5cm}
 \subfigure[A balanced blossoming tree]{\includegraphics[width=.3\linewidth,page=1]{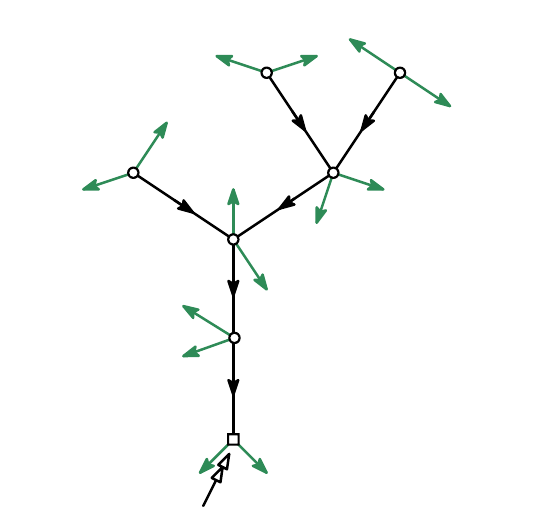}}\qquad
  \subfigure[all the local closures have been
performed]{\includegraphics[width=.3\linewidth,page=3]{Pictures/cloture-triangulation.pdf}}
\qquad
\subfigure[\label{fig:clo-tri-ori}The resulting rooted simple triangulation, endowed with its unique
minimal 3-orientation]{\includegraphics[width=.3\linewidth,page=4]{Pictures/cloture-triangulation.pdf}}
\caption{The closure of a balanced tree into a simple triangulation.} 
\label{fig:clo-tri}
\end{figure}

Write $\chi:\bigcup_{n \ge 1} \cT_n^{\circ} \to \bigcup_{n \ge 1} \triangle_{n+2}^{\circ}$ 
for the function sending a balanced blossoming tree to its closure, and for $n \ge 1$ let 
\nomenclature[Czhin]{$\chi_n$}{Closure bijection from $\mathcal{T}_n^{\circ}$ to $\triangle_{n+2}^{\circ}$; same paragraph for $\chi$}
$\chi_n:\cT_n^{\circ} \to \triangle^{\circ}_{n+2}$ be the restriction of $\chi$ to $\cT_n^{\circ}$. 
\begin{prop}[\cite{PouSch06}]\label{prop:bijGilles}
For all $n \ge 1$, $\chi_n$ is a bijection between $\cT_n^{\circ}$ and $\triangle^{\circ}_{n+2}$. 
\end{prop}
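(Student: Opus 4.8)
The plan is to verify that $\chi_n$ maps $\cT_n^{\circ}$ into $\triangle_{n+2}^{\circ}$ and then to construct an explicit inverse ``opening'' map, following the strategy of Poulalhon and Schaeffer \cite{PouSch06} and Schaeffer \cite{SchaefferPhD}. For well-definedness, fix $(T,\xi)\in\cT_n^{\circ}$ and first check that the partial closure is a simple planar map: each local closure of a stem $\{b,u\}$ replaces $b$ and its stem by an edge $\{u,w\}$ having a triangle on its right, and the equivalent description through the successor function $s$ makes the result independent of the order in which local closures are performed. The counting facts recalled in the text --- every inner corner of the distinguished face $f$ is adjacent to at least one unclosed blossom, and exactly the two corners $\xi^C,\xi^D$ are adjacent to two --- show that $f$ is the unique face of degree $>3$. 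Then one adds $A,B$, joins them to $S_{CD}$ and $S_{DC}$, deletes the leftover stems and blossoms, and adds the edge $AB$; this triangulates $f$ and leaves a simple planar map on $n+2$ vertices (the $n$ inner vertices of $T$, together with $A$ and $B$). Since a simple triangulation on $n+2$ vertices has $3n$ edges and $2n$ faces, a short bookkeeping check (against $|E(T)|=3n-1$ and the number of local closures performed) shows the construction yields exactly these counts, and since every non-root face is a triangle, Euler's formula forces the root face to be a triangle too. So $\chi_n(T,\xi)\in\triangle_{n+2}^{\circ}$.

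Next I would construct the inverse. Given $(G,\xi)\in\triangle_{n+2}^{\circ}$, list the root-face vertices in clockwise order as $v,A,B$ with $v$ incident to $\xi$, and invoke Schnyder's theorem \cite{Schnyder} to equip $(G,\xi)$ with its unique minimal $3$-orientation $\overrightarrow{E}$, the one with no counterclockwise cycle. Delete $A$ and $B$ together with all edges incident to them. As recalled in the introduction, the restriction of $\overrightarrow{E}$ singles out a unique minimal oriented spanning subtree $T$ of $G[V(G)\setminus\{A,B\}]$; each remaining oriented edge $uw\notin E(T)$ is then ``cut'', being replaced by a stem and a blossom attached to its tail $u$ in the corner $\kr u w$, and further blossoms are reinserted in the corners vacated by the edges previously incident to $A$ or $B$. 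An outdegree count ($3$ at every vertex other than $v$, with a compensating count at $v$ from the two edges $vA,vB$) shows each inner vertex ends up incident to exactly two blossoms, so the decorated plane tree is a $2$-blossoming tree with $n$ inner vertices; and the position of $A,B$ forces a canonical root corner, which one checks satisfies the balancedness inequality of Section~\ref{sec:bij-bloss-trees} precisely because $\overrightarrow{E}$ has no counterclockwise cycle. This defines a map $\triangle_{n+2}^{\circ}\to\cT_n^{\circ}$.

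It then remains to show the two maps are mutually inverse. For ``close, then open'', given $(T,\xi)\in\cT_n^{\circ}$ I would orient the inner edges of $T$ towards $\v(\xi)$, orient each edge created by a local closure out of the vertex at which the corresponding stem was attached, and orient every edge incident to $A$ (resp.\ $B$) towards $A$ (resp.\ $B$) except the edge $AB$, oriented from $B$ to $A$; one checks this is a $3$-orientation of $\chi_n(T,\xi)$ with no counterclockwise cycle, hence the minimal one, and that opening along it returns $(T,\xi)$, the blossom reinserted at corner $\kr u w$ being exactly the stem closed into $\{u,w\}$. For ``open, then close'', starting from $(G,\xi)$ one must show that the edges cut during opening are precisely the edges recreated by the local closures; this amounts to proving that, for the blossom attached at a given corner during opening, the successor rule \eqref{eq:label_bijection} --- which picks, in the cyclic order $\pcy$, the first later corner at which enough non-blossom corners have accumulated --- selects exactly the head of the corresponding cut edge, which in turn rests on the fact that in the minimal $3$-orientation leftmost oriented paths never wind clockwise, so no local closure overshoots its intended endpoint.

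The step I expect to be the main obstacle is precisely this last matching: reconciling the purely combinatorial successor function $s$, defined through the cyclic corner order and the balancedness counting, with the geometric data of the minimal $3$-orientation. Pinning it down requires carefully tracking corners through the partial closure in the cyclic order $\pcy$ and translating minimality of $\overrightarrow{E}$ into the statement that local closures do not overshoot; this is exactly the analysis of \cite{SchaefferPhD}, which a complete write-up would either reproduce or, as the statement of the proposition indicates, cite.
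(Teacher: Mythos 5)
Your outline takes essentially the same route as the paper: Proposition~\ref{prop:bijGilles} is stated there as a cited result of \cite{SchaefferPhD} (building on \cite{PouSch06}), and the paper itself only describes the closure (Section~\ref{sec:clos-tree}) and its inverse opening along the unique minimal $3$-orientation (Section~\ref{sec:def-ori}) without proving that they are mutually inverse. The step you single out as the real difficulty --- that the successor rule \eqref{eq:label_bijection} driving the closure is exactly undone by the opening determined by the minimal orientation --- is precisely what the paper, like you, defers to the cited reference, so your proposal is correct at the same level of detail as the paper's own treatment.
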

Note that if $(T,\xi)$ is a blossoming tree and $\chi(T,\xi)=(G,c)$ then it is natural to identify the inner vertices and inner edges of $T$ with subsets of $V(G)$ and $E(G)$, respectively. More formally, we may choose representatives from the isomorphism equivalence classes of the tree and its closure so that $V(T)\setminus \cB(T)=V(G)\setminus\{A,B\}$ and $\{\{u,v\} \in E(T):u,v \not\in\cB\} \subset E(G)$. We will adopt this perspective in the remainder of the paper.

\addtocontents{toc}{\SkipTocEntry}
\subsection{Bijection with labels} \label{sec:bij_with_labels}
We now present an alternative description of the bijection from Proposition~\ref{prop:bijGilles}, 
based on (\ref{eq:label_bijection}). 
Given a blossoming tree $(T,\xi)$, write $\rT=(T,\xi)$ and 
define $\lambda=\lambda_{\rT}:\cC(T) \to \Z$ as follows. 
\nomenclature[Lambdat]{$\lambda_{\mathrm{T}}$}{Corner labelling of planted blossoming tree $\mathrm{T}$}
Recall the definition of the contour ordering $(\xi(i),0 \le i \le 2|V(T)|-2)$ from Section~\ref{sub:PlaneTrees}, and in particular that $\xi(0)=\xi$. 
Let $\lambda(\xi(0))=2$ and, for $0 \le i < 2|V(T)|-3$, set 
\[
\lambda(\xi(i+1)) = 
\begin{cases}
\lambda(\xi(i)) -1  & \text{if } \xi(i)\not\in \cB(T), \xi(i+1) \not\in \cB(T) , \\
\lambda(\xi(i))  & \text{if } \xi(i) \not\in \cB(T), \xi(i+1)\in \cB(T) , \\
\lambda(\xi(i)) +1  & \text{if } \xi(i)\in \cB(T), \xi(i+1) \not\in \cB(T) , 
\end{cases}
\]
This labelling is depicted in Figure~\ref{subfig:LabelTree}. 
Informally, we perform a clockwise contour exploration of the tree and label 
the corners as we go. When leaving an inner vertex and arriving at an inner vertex, decrease the label by one; when leaving an inner vertex and arriving at a blossom, leave the label unchanged; when leaving a blossom and arriving at an inner vertex, increase the label by one. 

It is not hard to see that $\rT=(T,\xi)$ is balanced if and only if $\xi$ is incident to two stems and $\lambda(c) \ge 2$ for all $c \in \cC(T)$ (see Figure~\ref{subfig:LabelTree}). Assume $(T,\xi)$ is balanced and write $\xi'$ for the unique corner in $\cC(T)\setminus \{\xi\}$ for which $(T,\xi')$ is also balanced. Given a corner $c \in \cC(T)$ with $\v(c) \in \cB(T)$, recall the definition of $s(c)$ from (\ref{eq:label_bijection}). 
A counting argument shows that when $s(c)$ is defined, 
it is equal to the first corner $c'$ following $c$ in clockwise order for which $\lambda(c') < \lambda(c)$ (and in fact $\lambda(s(c)) = \lambda(c)-1$).
\label{eq:scdefine}%
Furthermore, $s(c)$ is defined if and only if either $\lambda(c) > 2$ and $c \pc \xi'$, or $\lambda(c)>3$ and $\xi'\pc c$. 

Next, add two vertices, say $A$ and $B$, within the unique face of the partial closure with degree greater than three. For each $c \in \cC(T)$ with $\v(c) \in \cB(T)$ and 
$s(c)$ undefined, identify $\v(c)$ with $A$ if $\lambda(c)=2$, and with $B$ if $\lambda(c)=3$. At this point, the unique face of degree greater than three is incident to $\xi,A,\xi'$ and $B$ in cyclic order. Finally, add a single edge between $A$ and $B$. The following fact, whose straightforward proof is omitted, states that the resulting planar map is $\chi(\rT)$. 

\begin{fact}
The triangulation obtained from a balanced blossoming tree by iterating local
closures and the one obtained by the label procedure coincide. \qed
\end{fact}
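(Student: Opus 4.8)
The plan is to verify the fact by running the local-closure construction (which \emph{defines} $\chi(T,\xi)$) and the label procedure side by side, checking that they agree at each of three stages: (a) a sequence of vertex identifications that convert, into inner edges, the stems of all blossoms possessing a well-defined successor; (b) the deletion or absorption of the remaining (unclosed) blossoms, creating the two new vertices $A$ and $B$; and (c) the insertion of the edge $\{A,B\}$. Since both constructions have exactly this shape, it suffices to match them stage by stage.

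Stage (a) requires essentially nothing new. In both constructions the identifications performed are $\v(c)\sim\v(s(c))$, ranging over all blossom corners $c$ for which the corner $s(c)$ of (\ref{eq:label_bijection}) is defined --- the label procedure uses this same $s$, since (as recorded above) the counting argument shows $s(c)$ is the first corner after $c$ in clockwise order whose $\lambda_{\rT}$-value is strictly smaller, and that $\lambda_{\rT}(s(c))=\lambda_{\rT}(c)-1$; and, as noted before the statement, iterating local closures realizes precisely these identifications, independently of the order of execution. I would also observe that $\v(s(c))$ is always an \emph{inner} vertex: a blossom corner carries the same $\lambda_{\rT}$-value as the inner corner immediately preceding it in the contour, so a blossom corner can never be the first corner of strictly smaller value following $c$. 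Hence after stage (a) both constructions have produced the partial closure of $T$, with the same unclosed blossoms still attached by their stems.

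The real content lies in stage (b). First, the characterization of when $s(c)$ is defined shows that an unclosed blossom is incident to a corner $c$ with $\lambda_{\rT}(c)\in\{2,3\}$ (recall $\lambda_{\rT}\ge 2$ everywhere, as $\rT$ is balanced), so the label procedure indeed sends every unclosed blossom to $A$ or to $B$. In the local-closure construction the unclosed blossoms and their stems are removed and replaced by edges from a new vertex $A$ to every vertex of $S_{CD}$ and from a new vertex $B$ to every vertex of $S_{DC}$ --- equivalently, via the function $s'$, a stem joining an unclosed blossom to a vertex $u$ becomes the edge $\{A,u\}$ when $u\in S_{CD}$ and $\{B,u\}$ when $u\in S_{DC}$. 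So the task reduces to showing that, for an unclosed blossom attached by its stem to $u$ and incident to the corner $c$, one has $u\in S_{CD}$ exactly when $\lambda_{\rT}(c)=2$ and $u\in S_{DC}$ exactly when $\lambda_{\rT}(c)=3$, and that the cyclic orders in which these stems attach around $A$ and around $B$ coincide in the two constructions. To do this I would identify the boundary of the distinguished face $f$ of the partial closure with the cyclic sequence of corners of $T$ that survive stage (a) --- the inner corners incident to $f$ together with the corners at unclosed blossoms --- traversed in clockwise-contour order; the two corners $\xi^C,\xi^D$ incident to two unclosed blossoms each cut this boundary into two arcs. Naming the distinguished corners so that $\xi=\xi^C$, so that $\lambda_{\rT}(\xi^C)=2$ (and, as one verifies, $\lambda_{\rT}(\xi^D)=3$), the arc traversed from $\xi^C$ to $\xi^D$ carries $\lambda_{\rT}$-value $2$ at each of its unclosed-blossom corners, and the complementary arc carries value $3$ --- this is where the characterization of defined/undefined successors on the ranges $c\pct\xi'$ and $\xi'\pct c$, together with $\lambda_{\rT}(\xi)=2$, is used. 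Since every inner corner of $f$ is adjacent to at least one unclosed blossom, and since $S_{CD}$ (resp.\ $S_{DC}$) consists precisely of the inner vertices of $f$ met on the first arc (resp.\ on the second), with $C$ and $D$ as shared endpoints --- each of them incident to one unclosed blossom of label $2$ and one of label $3$, hence receiving an edge to both $A$ and $B$ --- the desired correspondence follows, and because the face-boundary traversal \emph{is} the contour order, the rotation systems around $A$ and $B$ match as well. Finally, in stage (c) both constructions add $\{A,B\}$ inside the unique remaining face of degree greater than three (incident to $\xi,A,\xi',B$ in cyclic order), so the two embedded maps are identical.

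The hardest point is the matching in stage (b): reconciling $S_{CD}$, which is defined through the combinatorics of the partial closure's distinguished face, with the unclosed blossoms whose incident corner has label $2$, and doing so at the level of \emph{embedded} maps rather than merely abstract graphs --- i.e.\ confirming that gluing $A$ onto all the label-$2$ blossom vertices reproduces the same planar map as inserting $A$ into $f$ and joining it to $S_{CD}$ from inside. The bookkeeping near the two distinguished corners $\xi^C,\xi^D$ is the delicate part, and one must separately handle degenerate configurations (for instance $C=D$, or an inner vertex of $f$ carrying several unclosed blossoms); that no multiple edges arise in the end is in any case guaranteed by Proposition~\ref{prop:bijGilles}.
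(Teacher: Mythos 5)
The paper deliberately omits a proof of this Fact (``whose straightforward proof is omitted''), so there is nothing to compare line by line; your stage-by-stage matching -- identifications $\v(c)\sim\v(s(c))$ via the corner characterization of $s$, then the label-$2$/label-$3$ dichotomy for unclosed blossoms versus $S_{CD}$/$S_{DC}$, then the insertion of $\{A,B\}$ -- is exactly the verification the paper has in mind, and its outline is correct, including the useful observation that $s(c)$ is never a blossom corner. One point deserves slightly more care than your appeal to the stated characterization of when $s(c)$ is defined: that characterization alone permits an unclosed blossom corner $c$ with $\xi'\pct c$ and $\lambda_{\rT}(c)=2$, which your arc dichotomy must exclude; you need the additional fact that $\lambda_{\rT}(\xi')=3$ and that all corners after $\xi'$ carry label at least $3$ (equivalently, balancedness of $(T,\xi')$ together with the total label defect of $+2$ around the contour), which you assert (``as one verifies'') but do not derive -- this is part of the delicate bookkeeping near $\xi^C,\xi^D$ that you rightly flag, and filling it in would complete the argument.
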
 
\begin{figure}[ht]
\hspace{-0.5cm}
 \subfigure[\label{subfig:LabelTree}The corner labelling of a balanced blossoming
tree]{\includegraphics[width=.33\linewidth,page=1]{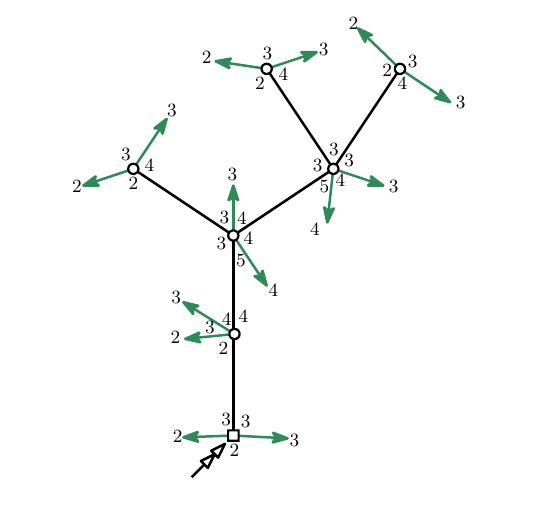}}
  \subfigure[\label{subfig:LabelClos}The labelled partial closure]{\includegraphics[width=.33\linewidth,page=3]{Pictures/triangulation-labeledb.pdf}}
\subfigure[\label{subfig:LabelTrig}The resulting corner-labelled simple triangulation]{\includegraphics[width=.33\linewidth,page=4]{Pictures/triangulation-labeledb.pdf}}
\caption{Closing a balanced tree via the corner labelling.} 
\label{fig:labelconstruct}
\end{figure}
The closure contains corners not present in the blossoming tree, and the new corners are labelled as follows. For any bud corner $c$ for which $s(c)$ isdefined, closing $\v(c)$ may be viewed as splitting a single corner in two, and the two new corners inherit the label of the corner that was split. An example is shown in Figure~\ref{subfig:LabelClos}; the dashed arcs denote corners that are ``split'' by the partial closure operation. Let $f$ be the face of $\chi(T,\xi)$ incident to $\xi$. 
Give the corner of $A$ (resp.~$B$) incident to $f$ label $0$ (resp.~$1$), and give all other corners incident to $A$ (resp.~$B$) label $1$ (resp.~$2$). 
\nomenclature[Lambdastar]{$\lambda^*$}{Push-forward of $\lambda_{\mathrm{T}}$ by $\chi$.}
We write $\lambda^*=\lambda^*_{(T,\xi)}$ for this corner labelling of $\chi(T,\xi)$, and note that $\lambda^*:\cC(\chi(T,\xi)) \to \Z^{\ge 0}$ since we have assumed $(T,\xi)$ is balanced.  
An example of the resulting corner-labelled triangulation is depicted in Figure~\ref{subfig:LabelTrig}.

\addtocontents{toc}{\SkipTocEntry}
\subsection{From labels to displacement vectors}\label{sec:labels-displacements}
We next explain the connection between blossoming trees and validly labelled plane trees. Fix $n \ge 1$, let $(T,\xi) \in \cT_n$ and let $\lambda$ be the labeling of corners as defined 
in Section~\ref{sec:bij_with_labels}. We define a function $Y=Y_{(T,\xi)}:V(T) \to \Z$ by setting 
\nomenclature[Yt]{$Y_{(T,\hat{\xi})}$}{Vertex labelling induced by $\lambda$, $Y_{(T,\hat{\xi})}(v)=\min\{\lambda(c): c \in \cC(T), \v(c)=v\}$}
$Y(v) = \min\{\lambda(c): c \in \cC(T), \v(c)=v\}$ for all $v \in V(T)$. 
Next, for each inner edge $e \in E(T)$, writing $e=\{v,p(v)\}$, with $v\in V(T)\setminus \{\v( \xi)\}$, set $D_e=D_e(T,\xi)=Y(v)-Y(p(v))$. The following easy fact, whose proof is omitted, allows 
us to recover the locations of stems from the edge labels. 

\begin{fact}\label{fact:de_bd}
For all $e=\{v,p(v)\} \in E(T)$, $D_e+1=|\{e' \pl e:e'~\mbox{a stem incident to }p(v) \}|$. \qed
\end{fact}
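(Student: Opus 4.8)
The plan is to read the vertex labelling $Y$ off the corner labelling $\lambda=\lambda_{(T,\hat\xi)}$ along the clockwise contour exploration, and to do so I would first record how $\lambda$ behaves around a single inner vertex. Fix an inner vertex $w$ with children $c_1,\dots,c_k$ in lexicographic order, and let $\xi^{(0)},\xi^{(1)},\dots$ be the corners incident to $w$ in contour order: for $w$ not the root, $\xi^{(0)}$ is the corner at which one arrives at $w$ from $p(w)$, $\xi^{(j)}$ for $1\le j\le k-1$ is the corner reached upon returning from the subtree of $c_j$ (equivalently the corner between $\{w,c_j\}$ and $\{w,c_{j+1}\}$), and $\xi^{(k)}$ is the corner at which one leaves $w$ towards $p(w)$; for $w$ the root one has $\xi^{(0)}=\hat\xi$ and the list stops at $\xi^{(k-1)}$. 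In all cases $\xi^{(j-1)}$ is the corner of $w$ visited immediately before one descends to $c_j$.

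The first real step is to prove, by induction on the number of inner vertices in the subtree rooted at $w$ (the base case being an inner vertex all of whose children are blossoms), that $\lambda(\xi^{(k)})-\lambda(\xi^{(0)})=2$ for every non-root inner vertex $w$. Using the three update rules for $\lambda$, a visit to a blossom child contributes $0$ going down (inner-to-blossom) and $+1$ coming back up (blossom-to-inner), a net of $+1$; a visit to an inner child $c$ contributes $-1$ down, then $\lambda(\text{departure of }c)-\lambda(\text{arrival of }c)$ inside its subtree, then $-1$ up, a net of that inner quantity minus $2$, which is $0$ by induction. Since $w$ has exactly two blossom children ($T$ being $2$-blossoming) and the contributions over $c_1,\dots,c_k$ telescope to $\lambda(\xi^{(k)})-\lambda(\xi^{(0)})$, the total is $2$. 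Running the same telescoping only up to index $j$ yields, for every inner vertex $w$ (the root included, with $\lambda(\xi^{(0)})=\lambda(\hat\xi)=2$ there) and all admissible $j$,
\[
\lambda(\xi^{(j)})=\lambda(\xi^{(0)})+\bigl|\{\,1\le i\le j:\ c_i\in\cB(T)\,\}\bigr|.
\]

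Next I would read off $Y$. The displayed formula shows $\lambda$ is minimised over the corners of $w$ at $\xi^{(0)}$, so $Y(w)=\lambda(\xi^{(0)})$ (and $Y(\v(\hat\xi))=2$). If $v$ is a non-root inner vertex then $p(v)$ is inner (blossoms have no children), so $\{p(v),v\}$ is an inner edge and the step from ``the corner of $p(v)$ visited just before descending to $v$'' to the arrival corner $\xi^{(0)}$ of $v$ is inner-to-inner, lowering $\lambda$ by $1$. Writing $v=c_j$ as the $j$-th child of $w:=p(v)$, that corner of $p(v)$ is $\xi^{(j-1)}$, so $Y(v)=\lambda(\xi^{(j-1)})-1$. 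Then for an inner edge $e=\{v,p(v)\}=\{w,c_j\}$,
\[
D_e=Y(v)-Y(p(v))=\bigl(\lambda(\xi^{(j-1)})-1\bigr)-\lambda(\xi^{(0)})=\bigl|\{\,1\le i\le j-1:\ c_i\in\cB(T)\,\}\bigr|-1 .
\]
Finally I would check that a stem incident to $w$ is exactly an edge $\{w,c_i\}$ with $c_i\in\cB(T)$ and that, from the Ulam--Harris encoding and the induced lexicographic order on edges, $\{w,c_i\}\pl\{w,c_j\}=e$ precisely when $i<j$ (equality being impossible as $c_j$ is inner); this identifies the last count with $|\{e'\pl e:\ e'\text{ a stem incident to }p(v)\}|$, giving the claim.

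I expect the only delicate point to be the bookkeeping in the induction: one has to be sure that the corner of $w$ visited just before descending to $c_j$ is the very corner whose label reappears, shifted by the net change of the subtree of $c_{j-1}$, as the corner visited just after returning from $c_{j-1}$, and one has to treat the root as a boundary case where the first and last corners of the contour coincide and $\lambda(\hat\xi)=2$ plays the role of $\lambda(\xi^{(0)})$. Everything after the induction is direct substitution.
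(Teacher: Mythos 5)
Your argument is correct; the paper states this fact with the proof omitted, and your verification --- the induction showing that the corner label at an inner vertex increases by exactly $+1$ per blossom child as the contour returns, hence $Y$ is attained at the arrival corner and drops by $1$ along each inner edge --- is precisely the routine bookkeeping the authors leave to the reader. One small remark: $D_e$ is defined in the paper only for inner edges, which is how you read the statement, but your telescoping identity also yields the analogous identity for stems (where $e$ itself is then counted among the stems $e'\pl e$), which is the form in which the fact is actually invoked in the paragraph following it.
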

Now fix $v \in V(T)$, let $k=k_{(T,\hat{\xi})}(v)$, and for $1 \le i \le k$ let 
$e_i=\{v,vi\}$. 
It follows from the above fact that for $1 \le i \le k$ the number of stems $e$ incident to $v$ with $e \pl e_i$ is $D_{e_i}+1$. In particular $(D_{e_i},1 \le i \le k)$ is a non-decreasing sequence of elements of $\{-1,0,1\}$; this is what allows us to connect blossoming trees with validly labelled trees. 

\nomenclature[Phin]{$\phi_n$}{Bijection from $\mathcal{T}_n$ to $\mathcal{T}_n^{\mathrm{vl}}$}
For $n \ge 1$ define a map $\phi_n:\cT_n \to \cT^{\mathrm{vl}}_n$ as follows. 
Given $(T,\xi) \in \cT_n$, write ${\xi}=(e_-,e_+)$. Let $e$ be the last inner edge incident to $\v({\xi})$ preceding $e_-$ in clockwise order (with $e=e_-$ if $e_-$ is an inner edge), and let $e'$ be the first inner edge incident to $\v({\xi})$ following $e_+$ in clockwise order (with $e=e_+$ if $e_+$ is an inner edge). Write $\xi'=(e,e')$, let $T'$ be the subtree of $T$ induced by the inner vertices, let $D=(D_e(T,{\xi}), e \in E(T))$, and let $\phi_n(T,{\xi})=(T',\xi',D)$. The following proposition is an immediate consequence of Fact~\ref{fact:de_bd}. 

\begin{prop}\label{prop:label_bij}
The map $\phi_n:\cT_n \to \cT^{\mathrm{vl}}_n$ is a bijection. 
\qed
\end{prop}

The above bijection and definitions are illustrated in Figure~\ref{fig:vector-equiv}. In the next section, we explain how the above functions can be used to sample random simple triangulations with the aid of conditioned Galton--Watson trees. 
\begin{figure}[ht]
\hspace{-0.5cm}
 \subfigure[The corner labelling of a corner-rooted blossoming
tree $(T,\xi)$. ]{\includegraphics[width=.25\linewidth,page=1]{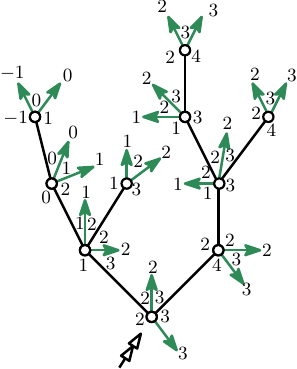}}\qquad
  \subfigure[The tree $T'$ with values $D^e$ marked on edges.]{\includegraphics[width=.25\linewidth,page=2]{Pictures/TreesDisp.pdf}}
\qquad
\subfigure[The tree $T'$ with values $Y(v)$ marked on vertices.
]{\includegraphics[width=.25\linewidth,page=3]{Pictures/TreesDisp.pdf}}
\caption{The equivalence between blossoming trees and validly vector-labelled plane trees. The root corner is indicated via a double arrow.} 
\label{fig:vector-equiv}
\end{figure}

\addtocontents{toc}{\SkipTocEntry}
\subsection{Corner-rooted triangulations via conditioned Galton--Watson trees}\label{sec:GW-trig}
Let $(T_n,\xi_n)$ be uniformly distributed on $\cT_n$. 
We are now able to describe the law of $(T_n,\xi_n)$ as a modification of the law of a critical Galton--Watson tree conditioned to have a given size. (Galton--Watson trees are naturally viewed as planted plane trees; see e.g.\ \cite{le2005random}.)
Let $G\eqdist \mathrm{Geometric}(3/4)$, and let $B$ have law given by 
\begin{equation}\label{eq:offspring}
\p{B=c} = \frac{{c+2 \choose 2} \p{G=c}}{\e{{G+2 \choose 2}}}\,, \mbox{ for } c \in \N.
\end{equation}
\begin{fact}
The distribution $B$ is critical, i.e. $\e{B}=1$. \qed
\end{fact}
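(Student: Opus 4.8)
The plan is to recognise the tilted law (\ref{eq:offspring}) as a size-biased geometric — in fact a negative binomial — and to read off its mean from a one-line summation. First one should pin down the convention: since $B$ is an offspring law, hence $\N$-valued (with $0\in\N$), $\mathrm{Geometric}(3/4)$ must mean $\p{G=c}=(3/4)(1/4)^c$ for $c\in\N$. This is the convention under which weighting by $\binom{c+2}{2}$ in (\ref{eq:offspring}) keeps $B$ supported on $\N$, and under which the counting of this section is consistent (recall that $\binom{c+2}{2}$ is exactly the number of nondecreasing $\{-1,0,1\}$-valued sequences of length $c$, i.e.\ of valid labellings around a vertex with $c$ children).

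Next I would record the two elementary generating-function identities
\[
\sum_{c\ge 0}\binom{c+2}{2}x^c=\frac{1}{(1-x)^3},\qquad
\sum_{c\ge 0}c\binom{c+2}{2}x^c=\frac{3x}{(1-x)^4}\qquad(|x|<1),
\]
the second obtained from the first by differentiation. Evaluating at $x=1/4$ gives both $\e{\binom{G+2}{2}}=\tfrac34\cdot(4/3)^3=\tfrac{16}{9}$ and $\e{G\binom{G+2}{2}}=\tfrac34\cdot 3\cdot\tfrac14\cdot(4/3)^4=\tfrac{16}{9}$, whence
\[
\e{B}=\frac{\e{G\binom{G+2}{2}}}{\e{\binom{G+2}{2}}}=1 .
\]
Equivalently, substituting $\e{\binom{G+2}{2}}=(3/4)^{-2}$ back into (\ref{eq:offspring}) identifies $B$ with $\mathrm{NegBin}(3,3/4)$ (the number of failures before the third success in independent $\mathrm{Bernoulli}(3/4)$ trials), whose mean is $3\cdot\frac{1/4}{3/4}=1$.

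There is no genuine obstacle here: the whole argument is the summation of a shifted geometric series and its first derivative. The only point warranting a word of care is the parametrisation of the geometric law (support $\{0,1,2,\dots\}$, parameter $3/4$, mean $1/3$); under the competing convention $\p{G=c}=(1/4)(3/4)^c$ the size-biased law would be $\mathrm{NegBin}(3,1/4)$, of mean $9$, so consistency with the criticality claim — and with the bijections established above — forces the convention stated here.
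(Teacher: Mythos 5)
Your computation is correct, and the paper in fact omits this proof entirely, remarking only that it follows from simple computations with the first three moments of a geometric law; your generating-function summation (equivalently, recognising $B$ as $\mathrm{NegBin}(3,3/4)$, so $\e{B}=3\cdot\frac{1/4}{3/4}=1$) is precisely that elementary calculation, and it is consistent with Appendix~\ref{sec:notes}, where the same tilting with $p=3/4$ yields $\e{B^2}=7/3$ and $\V{B}=4/3$. Your care about the parametrisation of $\mathrm{Geometric}(3/4)$ (support $\N$, success probability $3/4$) matches the convention the paper uses, as confirmed by the criticality computation in Appendix~\ref{sec:notes}.
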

This fact follows from simple computations involving the 3
first moments of a geometric law; its proof is omitted. 
\begin{prop}\label{prop:uniformtreelaw}
Let $(T',\xi')$ be a Galton--Watson tree with offspring distribution $B$ conditioned to have $n$ vertices. For each vertex $v$ of $T'$, add two stems incident to $v$, uniformly at random from among the ${k_{T'}(v)+2 \choose 2}$ possibilities. The resulting planted plane tree $(T,{\xi})$ is uniformly distributed over $\cT_n$. 
\end{prop}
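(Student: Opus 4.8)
The plan is to reduce Proposition~\ref{prop:uniformtreelaw} to a bijective decomposition of $\cT_n$ followed by a short probability computation that uses only the product form of Galton--Watson laws and the definition~\eqref{eq:offspring} of $B$.

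First I would set up the bijection. A planted blossoming tree $(T,\hat\xi)\in\cT_n$ is determined by, and determines, the following data: reattach a plant vertex in the corner $\hat\xi$ (using the plant-vertex description of planted plane trees recalled in Section~\ref{sub:PlaneTrees}) and delete all stems, producing an \emph{inner tree} $(T',\xi')$; and record, for each vertex $v$ of $T'$, the placement of the two stems of $T$ incident to $v$. Because $T$ is $2$-blossoming and $\v(\hat\xi)$ is inner, $(T',\xi')$ is a planted plane tree with exactly $n$ vertices, whose root corresponds to $\v(\hat\xi)$; and if $B_v$ denotes the number of children of $v$ in $T'$, then $v$ has $B_v+1$ corners in $T'$ once its plant is restored (for the root this includes the plant-corner; for a non-root vertex it includes the two corners flanking the parent edge; a leaf has one corner), so the number of placements of two indistinguishable stems at $v$ is $\binom{(B_v+1)+2-1}{2}=\binom{B_v+2}{2}$ --- precisely the quantity in the statement. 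Reinserting the stems and deleting the plant inverts this operation, so it is a bijection between $\cT_n$ and the set of pairs consisting of a planted plane tree $(T',\xi')$ on $n$ vertices together with a choice, for each $v\in V(T')$, of one of $\binom{B_v+2}{2}$ stem placements. I would carry this out at the level of isomorphism classes, on which all the objects involved are rigid, so that no overcounting arises.

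Given the bijection, the computation is immediate. Fix $(T,\hat\xi)\in\cT_n$ with corresponding data $\bigl((T',\xi'),(\sigma_v)_{v}\bigr)$. The construction outputs $(T,\hat\xi)$ precisely when the conditioned Galton--Watson tree equals $(T',\xi')$ and the independent uniform stem choices produce each $\sigma_v$; by the product formula for Galton--Watson laws this has probability
\[
\frac{1}{q_n}\prod_{v\in V(T')}\p{B=B_v}\cdot\prod_{v\in V(T')}\frac{1}{\binom{B_v+2}{2}}\,,
\]
where $q_n$ is the probability that the unconditioned Galton--Watson tree has $n$ vertices. Substituting~\eqref{eq:offspring} cancels the binomial coefficients and turns this into $q_n^{-1}\bigl(\e{\binom{G+2}{2}}\bigr)^{-n}\prod_{v\in V(T')}\p{G=B_v}$; and since $G$ is geometric and $\sum_{v\in V(T')}B_v=n-1$ (the number of edges of $T'$), the product $\prod_{v}\p{G=B_v}=(3/4)^n(1/4)^{n-1}$ does not depend on $(T,\hat\xi)$. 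Hence every element of $\cT_n$ is output with the same probability. (That $\e{\binom{G+2}{2}}<\infty$, so that $B$ is well defined, is clear since $G$ has finite moments of all orders.) The routine parts are the product formula for Galton--Watson laws and the substitution of~\eqref{eq:offspring}; the step I expect to be the main obstacle is the first --- verifying that ``attaching two stems to a vertex with $B_v$ children'' in the construction corresponds, bijectively and uniformly across the root, the internal vertices and the leaves, to the stem data carried by a member of $\cT_n$, and in particular that the root corner $\hat\xi$ of the output is the one correctly produced by the plant vertex. Once this correspondence is pinned down, the proposition follows by the substitution above.
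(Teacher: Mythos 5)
Your proposal is correct and follows essentially the same route as the paper: decompose a tree in $\cT_n$ into its inner tree plus the $\binom{B_v+2}{2}$ stem placements per vertex, then observe that substituting \eqref{eq:offspring} cancels the binomial factors and leaves $\prod_v \p{G=B_v}$, which is shape-independent since $G$ is geometric and $\sum_v B_v=n-1$. You merely spell out the stem-placement bijection (the stars-and-bars count over the $B_v+1$ corners) more explicitly than the paper does, which is fine.
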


\begin{proof}
Fix $\mathrm{t} \in \cT_n$ and let $\mathrm{t}'=\mathrm{t}\langle V(T)\setminus \mathcal{B}(T)\rangle$ be the tree $\mathrm{t}$ with its blossoms removed. List the vertices of $\mathrm{t}'$ in lexicographic order as $v_1,\ldots,v_n$ and recall that $k_{\mathrm{t}'}(v_i)$ is the number of children of $v_i$ in $\mathrm{t'}$. 

Then $(T,{\xi})$ is equal to $\mathrm{t}$ if and only if $(T',\xi')=\mathrm{t'}$ and for each $v \in V(T')$, the stems are inserted at the right place. Hence: 
\begin{align*}
\p{(T,{\xi})=\mathrm{t}} & \propto \prod_{i=1}^n\frac{1}{ {k_{\mathrm{t}'}(v_i) +2\choose 2}}\p{B = k_{\mathrm{t}'}(v_i)} \\
& =  \prod_{i=1}^n\frac{1}{ {k_{\mathrm{t}'}(v_i) +2\choose 2}}{k_{\mathrm{t}'}(v_i) +2\choose 2}\p{G = k_{\mathrm{t}'}(v_i)} \\
& = \frac{3^{n-1}}{4^{2n-1}}. 
\end{align*}
The last equality holds since $G$ is geometric and $\sum_{i=1}^n k_{\mathrm{t}'}(v_i) = n-1$. 
Since the last term 
does not depend on the shape of
$\mathrm{t}$, all elements of $\cT_n$ appear with
the same probability. 
\end{proof}

\begin{cor}\label{cor:uniftreelaw}
Let $(T,\hat{\xi})$ be uniformly random in $\cT_n$ and let $\xi^1,\xi^2 \in \cC(T)$ be such that $(T,\xi^i)$ is balanced for $i\in\{1,2\}$. Conditionally given $(T,\hat{\xi})$ choose $\xi \in \{\xi^1,\xi^2\}$ uniformly at random. Then $(G,c)=\chi(T,\xi)$ is 
uniformly distributed in~$\triangle_{n+2}^{\circ}$.  
\end{cor}

\begin{proof}
 Let $\rt=(t,c)$ be a balanced blossoming tree of size $n$. Consider the set of triples $\rt^\bullet = \{(t,c,\hat c)\,:\,\hat c \text{ an inner corner of }t\}$. Then, we have 
\[
\p{(T,\xi)=\rt} = \p{(T,\xi,\hat \xi) \in \rt^\bullet}=\frac{1}{2}\sum_{(t,c,\hat c)\in \rt^\bullet}\p{(T,\hat \xi)=(t,\hat c)} = \frac{2n-1}{|\cT_n|},
\]
where the last equality comes from the fact that $\rt$ has $4n-2$ inner corners (hence $|t^\bullet|=4n-2$) and that $(T,\hat \xi)$ is uniformly random in $\cT_n$. 
It follows that $(T,\xi)$ is uniformly random in $\cT_n^\circ$, which concludes the proof since $\chi$ is a bijection between $\cT_n^\circ$ and $\triangle_{n+2}^{\circ}$.
\end{proof}

Proposition~\ref{prop:label_bij} now allows us to describe the distribution of a uniformly random element $(T',\xi',D)$ of $\cT_n^{\mathrm{vl}}$. 
For each $k \ge 1$, let $\nu_k$ be the uniform law over non-decreasing vectors $(d_1,\ldots,d_k) \in \{-1,0,1\}^k$. 
\begin{cor}\label{cor:vldist}
Let $(T',\xi')$ be a Galton--Watson tree with offspring distribution $B$ conditioned to have $n$ vertices. Conditionally given $(T',\xi')$, independently for each $v \in V(T')$ let $(D_{\{v,vj)\}},1 \le j \le k(v))$ be a random vector with law $\nu_{k(v)}$. Finally, let $D=(D_e,e \in E(T'))$. Then $(T',\xi',D)$ is uniformly distributed in $\cT_n^{\mathrm{vl}}$. 
\end{cor}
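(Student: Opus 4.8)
The plan is to deduce the statement from Proposition~\ref{prop:label_bij} together with Proposition~\ref{prop:uniformtreelaw}. Since $\phi_n\colon\cT_n\to\cT_n^{\mathrm{vl}}$ is a bijection, the uniform law on $\cT_n^{\mathrm{vl}}$ is the image under $\phi_n$ of the uniform law on $\cT_n$; equivalently, writing $(T',\xi',D)$ for the random triple of the statement, it suffices to prove that $\phi_n^{-1}(T',\xi',D)$ is uniform on $\cT_n$. By Proposition~\ref{prop:uniformtreelaw}, a uniform element of $\cT_n$ is obtained from the very same conditioned Galton--Watson tree $(T',\xi')$ by inserting, independently at each vertex $v$, two stems in a uniformly chosen one of the $\binom{k_v+2}{2}$ ways, where $k_v$ denotes the number of children of $v$. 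Since the inverse map $\phi_n^{-1}$, as spelled out in Proposition~\ref{prop:label_bij}, leaves the inner-vertex tree equal to $T'$ and only inserts stems, the problem reduces to showing that the stem configuration built by $\phi_n^{-1}$ from the random labels $D$ has the same law as the uniform one appearing in Proposition~\ref{prop:uniformtreelaw}.

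Next I would observe, from the explicit form of $\phi_n^{-1}$, that the numbers of stems placed in the corners of a vertex $v$ depend only on the labels $(D_{e_1},\dots,D_{e_{k_v}})$ of the edges $e_1,\dots,e_{k_v}$ joining $v$ to its children in clockwise order: reading the corners of $v$ starting just after the parent edge, the successive counts are $D_{e_1}+1$, then $D_{e_{j+1}}-D_{e_j}$ between consecutive children $j$ and $j+1$, and finally $1-D_{e_{k_v}}$ (with the appropriate convention at the root vertex, discussed below). In the construction of the statement the vectors $(D_{e_1},\dots,D_{e_{k_v}})$ are independent over $v$, each with law $\nu_{k_v}$, so the resulting stem configurations at distinct vertices are independent, and it is enough to identify the law at a single vertex. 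For that I would verify that the map from a non-decreasing vector $(d_1,\dots,d_k)\in\{-1,0,1\}^k$ to the pattern of corner-counts above is a bijection onto the set of ways of placing two stems around a vertex with $k$ children: a placement is encoded equivalently by the non-decreasing sequence $(a_1,\dots,a_k)\in\{0,1,2\}^k$ where $a_j$ counts the stems preceding the $j$-th child, one has $a_j=d_j+1$, the listed counts sum to $2$, and both sets have cardinality $\binom{k+2}{2}$. Therefore $\nu_{k_v}$ pushes forward to the uniform law on the $\binom{k_v+2}{2}$ placements at $v$, and with independence this is exactly the law of Proposition~\ref{prop:uniformtreelaw}.

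The step I expect to demand the most care is the bookkeeping at the root corner: $\phi_n$ moves the root from the (possibly stem-adjacent) corner $\hat\xi$ of the blossoming tree to the corner $\xi'$ lying between two inner edges, so one must check that the root conventions built into $\phi_n$ and $\phi_n^{-1}$ are consistent with the rooting implicit in the conditioned Galton--Watson tree of Proposition~\ref{prop:uniformtreelaw} --- this is precisely the content of Fact~\ref{fact:de_bd}. An alternative route that bypasses this issue is a direct computation: the construction in the statement visibly produces an element of $\cT_n^{\mathrm{vl}}$ (the vectors with law $\nu_{k_v}$ range exactly over the valid label vectors at $v$), and by (\ref{eq:offspring}) together with $\sum_v k_v = n-1$ one finds $\p{(T',\xi',D)=(\mathrm t,\xi,\mathrm d)}$ equal to a constant times $\prod_v\binom{k_v+2}{2}$ (coming from the Galton--Watson offspring weights) times $\prod_v\binom{k_v+2}{2}^{-1}$ (coming from the laws $\nu_{k_v}$), hence independent of $(\mathrm t,\xi,\mathrm d)$; since it is a probability measure on $\cT_n^{\mathrm{vl}}$ it is the uniform one.
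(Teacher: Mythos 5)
Your proof is correct and follows essentially the same route as the paper: the paper's own two-line argument simply combines Proposition~\ref{prop:uniformtreelaw} with the bijection $\phi_n$ of Proposition~\ref{prop:label_bij}, the per-vertex correspondence between placements of two stems and non-decreasing vectors in $\{-1,0,1\}^{k_v}$ (your second paragraph) being exactly what Fact~\ref{fact:de_bd} and the surrounding discussion supply. Your closing direct computation is likewise just the calculation already carried out in the proof of Proposition~\ref{prop:uniformtreelaw}, transported through $\phi_n$.
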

\begin{proof}
By Proposition~\ref{prop:uniformtreelaw}, the tree $(T',\xi')$ has the same law as the subtree obtained from a uniformly random element of $\cT_n$ by removing all stems. The corollary then follows by Proposition~\ref{prop:label_bij}.
\end{proof}

For later use, we note the following fact. Recall the definition of the labelling function $X_{\rT}$ for $\rT$ a spatial plane tree, from Section~\ref{sec:contourproc}. 
\begin{fact}\label{fact:rerooting}
Fix two inner corners $\xi_1,\xi_2$ of $\cC(T)$, and let $\rT_1=(T',\xi_1',D_1)=\phi_n(T,\xi_1)$ and $\rT_2=(T',\xi_2',D_2)=\phi_n(T,\xi_2)$. Then for all $v \in V(T')$, $X_{\rT_1}(v)=Y_{(T,\xi_1)}(v)-2$, and 
$\left|\left(Y_{(T,\xi_1)}(v)-Y_{(T,\xi_2)}(v)\right)-X_{(T,\xi_1)}(\v(\xi_2))\right| \le 3$. 
\end{fact}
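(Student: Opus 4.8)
The statement has two parts: first the identity $X_{\rT_1}(v) = Y_{(T,\xi_1)}(v) - 2$, and second the estimate on how $Y$ changes when we reroot. For the first part, the plan is to unwind definitions. By construction of $\phi_n$, the tree $T'$ in $\rT_1$ is the subtree of $T$ induced by the inner vertices, $\xi_1'$ is obtained from $\xi_1$ by moving to adjacent inner edges, and the edge labels are $D_e = D_e(T,\xi_1) = Y(v) - Y(p(v))$ (with $p$ the parent in the rooting at $\xi_1$, equivalently at $\xi_1'$ since these induce the same parent structure on inner vertices). The function $X_{\rT_1}$ is defined by $X_{\rT_1}(\v(\xi_1')) = 0$ and $X_{\rT_1}(c(v,i)) = X_{\rT_1}(v) + D_{\{v,c(v,i)\}}$, i.e.\ $X_{\rT_1}$ telescopes the $D_e$ along the root-to-$v$ path. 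Since $D_e = Y(v) - Y(p(v))$ telescopes to $Y(v) - Y(\v(\xi_1'))$, I would obtain $X_{\rT_1}(v) = Y_{(T,\xi_1)}(v) - Y_{(T,\xi_1)}(\v(\xi_1'))$, so it remains to check $Y_{(T,\xi_1)}(\v(\xi_1')) = 2$. This should follow from the label construction of Section~\ref{sec:bij_with_labels}: when $(T,\xi_1)$ is balanced, $\lambda_{(T,\xi_1)}(\xi_1) = 2$ and all corner labels are $\ge 2$, and one checks that the root vertex $\v(\xi_1) = \v(\xi_1')$ has a corner (namely $\xi_1$ itself, or a neighbouring corner with the same label) attaining the minimum value $2$; hence $Y(\v(\xi_1')) = 2$.

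For the second part, the key observation is that $X_{\rT_1}$ is independent of the choice of root corner only up to an additive shift that tracks where the new root vertex sits. Concretely: rerooting a labelled plane tree from $\xi_1$ to $\xi_2$ changes the parent function, but the edge labels $D_e$, viewed as labels on undirected edges \emph{up to sign}, are governed by $Y$, which is itself a vertex labelling. The cleanest route is to write $Y_{(T,\xi_1)}(v) - Y_{(T,\xi_2)}(v) = (X_{\rT_1}(v) + 2) - (X_{\rT_2}(v) + 2) = X_{\rT_1}(v) - X_{\rT_2}(v)$ using the first part applied to both rootings. Now $X_{\rT_1}$ and $X_{\rT_2}$ are both ``sum of displacements along the path to $v$'' but for two different rootings of (essentially) the same labelled tree; the difference $X_{\rT_1}(v) - X_{\rT_2}(v)$ is then governed by the discrepancy along the tree path between $\v(\xi_1)$ and $\v(\xi_2)$, which is exactly $X_{(T,\xi_1)}(\v(\xi_2))$ up to boundary effects. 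More precisely, $Y_{(T,\xi_1)} - Y_{(T,\xi_2)}$ is close to a constant equal to $Y_{(T,\xi_1)}(\v(\xi_2)) - Y_{(T,\xi_2)}(\v(\xi_2)) = Y_{(T,\xi_1)}(\v(\xi_2)) - 2$, and $X_{(T,\xi_1)}(\v(\xi_2)) = Y_{(T,\xi_1)}(\v(\xi_2)) - 2$ by part one, so the difference $(Y_{(T,\xi_1)}(v) - Y_{(T,\xi_2)}(v)) - X_{(T,\xi_1)}(\v(\xi_2))$ should be small.

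To make the ``close to a constant'' precise and get the bound $3$, I would use the structure of $Y$: since $Y(v) = \min\{\lambda(c): \v(c) = v\}$ and consecutive corner labels $\lambda$ differ by at most $1$ (each step of the contour exploration changes $\lambda$ by $-1$, $0$, or $+1$, per the definition of $\lambda_\rT$), the vertex labelling $Y$ has Lipschitz-type control along the tree. The reroot from $\xi_1$ to $\xi_2$ only affects $\lambda$ by a cyclic shift of the starting point of the contour and a global additive renormalization (chosen so the new root corner gets label $2$); away from the small arc of corners between $\xi_1$ and $\xi_2$, the two labellings of the \emph{corners} differ by a fixed constant, and the only places where $Y$ (a minimum over corners at a vertex) can disagree with that constant are vertices incident to both ``sides'' of the exploration, near $\v(\xi_1)$ and $\v(\xi_2)$ — contributing the bounded error. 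I would quantify this by comparing $\lambda_{(T,\xi_1)}(c)$ and $\lambda_{(T,\xi_2)}(c)$ corner by corner, noting the difference is a constant $\kappa$ for all $c$ on one arc and $\kappa \pm 1$ on the other (the $\pm 1$ coming from the fact that $\lambda$ resets by $2$ at each root but accumulates a net drift of $\pm 1$ around a blossom-crossing), and then taking minima over corners at each vertex; the two arcs overlap in their minima at each vertex by at most one unit of slack, and combined with identifying $\kappa = X_{(T,\xi_1)}(\v(\xi_2))$ this yields the constant $3$.

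\textbf{Main obstacle.} The routine part is the telescoping in part one. The delicate part — and where I expect to spend the most care — is the bookkeeping in part two: precisely tracking how the corner labelling $\lambda$ transforms under rerooting of a \emph{blossoming} tree (as opposed to an ordinary labelled tree), pinning down the constant $\kappa$ as $X_{(T,\xi_1)}(\v(\xi_2))$, and showing the leftover discrepancy is at most $3$ rather than some larger absolute constant. Getting the exact value $3$ will require carefully accounting for the $+1/-1$ jumps at blossoms on the arc of the contour lying between $\xi_1$ and $\xi_2$, together with the boundary effect of taking $Y$ as a minimum over corners at each vertex near the two roots; a sign error or an off-by-one in the renormalization convention would change the bound, so I would verify it against the small example in Figure~\ref{subfig:LabelTree}.
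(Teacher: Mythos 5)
The paper gives no written proof of this fact (it is asserted as a direct consequence of Fact~\ref{fact:de_bd} and the definitions), so the question is only whether your sketch closes. Your overall route is the right one: telescoping the edge labels $D_e=Y(v)-Y(p(v))$ gives $X_{\rT_1}(v)=Y_{(T,\xi_1)}(v)-Y_{(T,\xi_1)}(\v(\xi_1))$, and the second part follows from a corner-by-corner comparison of $\lambda_{(T,\xi_1)}$ and $\lambda_{(T,\xi_2)}$. But there is a concrete gap in your justification of the anchoring step $Y_{(T,\xi_1)}(\v(\xi_1))=2$: you argue it via balancedness (``all corner labels are $\ge 2$''), whereas the fact is stated, and later used (e.g.\ with the arbitrary root $\hat\xi_n$ in Sections~\ref{sec:snake} and~\ref{sec:mainthm}), for rootings at \emph{arbitrary} inner corners, for which corner labels below $2$ do occur elsewhere in the tree. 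The correct argument is local to the root vertex: a complete excursion of the contour into the subtree of an inner child returns the label to its previous value (the $-2$ from traversing each inner edge twice is exactly compensated by the $+1$ per stem, each inner vertex carrying two stems), while each blossom at the root adds $+1$; hence the corners at $\v(\xi_1)$ carry labels in $\{2,3,4\}$ with minimum exactly $2$, for any inner-corner rooting. The same repair is needed where you use $Y_{(T,\xi_2)}(\v(\xi_2))=2$.

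In the second part your bookkeeping, which you flag but do not resolve, has the constants slightly wrong: the net drift of $\lambda$ around the full contour is $+2$ (not $\pm1$), so the pointwise difference $\lambda_{(T,\xi_1)}(c)-\lambda_{(T,\xi_2)}(c)$ equals $\lambda_{(T,\xi_1)}(\xi_2)-2$ on the arc of corners following $\xi_2$ and $\lambda_{(T,\xi_1)}(\xi_2)-4$ on the other arc; moreover the constant $\kappa$ is not exactly $X_{\rT_1}(\v(\xi_2))$ but only within $2$ of it, since the labels of corners incident to a fixed vertex exceed the vertex label by at most $2$, so $\lambda_{(T,\xi_1)}(\xi_2)\in[Y_{(T,\xi_1)}(\v(\xi_2)),\,Y_{(T,\xi_1)}(\v(\xi_2))+2]$. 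Taking minima over the corners at each vertex then gives $Y_{(T,\xi_1)}(v)-Y_{(T,\xi_2)}(v)\in[\lambda_{(T,\xi_1)}(\xi_2)-4,\,\lambda_{(T,\xi_1)}(\xi_2)-2]$, and combining with the previous sentence yields $\bigl|\bigl(Y_{(T,\xi_1)}(v)-Y_{(T,\xi_2)}(v)\bigr)-X_{\rT_1}(\v(\xi_2))\bigr|\le 2\le 3$. With these two corrections your argument is complete; as written, the appeal to balancedness and the unverified arc constants are genuine gaps.
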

In other words the labellings $X_{\rT}$ and $Y_{\rT}$ are related by an additive constant of $2$, and rerooting shifts all labels according to the label of the new root under the old labelling, up to an additive error of $3$. This is a direct consequence of Fact~\ref{fact:de_bd} and the definitions of $X_{\rT}$ and $Y_{\rT}$; its proof is omitted. 

We conclude Section~\ref{sec:bij-bloss-trees} by explaining the inverse of the bijection $\chi_n$. The description of the inverse relies the properties of so called 3-orientations for simple triangulations. We make use of such orientations in Section~\ref{sec:label_dist_determinist} when studying the relation between vertex labels and geodesics. 

\addtocontents{toc}{\SkipTocEntry}
\subsection{Orientations and the opening operation}\label{sec:def-ori}
In a planted map endowed with an orientation, a directed cycle is said to
be clockwise if the root corner is situated on its left and 
counterclockwise otherwise. An orientation is called \emph{minimal} if it has no counterclockwise cycles. 
Let $(G,\xi)$ be a planted planar triangulation, and recall from Section~\ref{sec:overview} that that $(G,\xi)$ admits a unique minimal $3$-orientation. We next describe how to obtain this $3$-orientation via the bijection described in Proposition~\ref{prop:bijGilles}. 
   
Given a balanced 2-blossoming tree
$\rT=(T,\xi)$, orient all stems towards their incident blossom, 
and orient all other edges towards $\v(\xi)$. In the triangulation $\chi(\rT)$, all edges except $\{A,B\}$ inherit an orientation from $T$; orient $\{A,B\}$ from $B$ to $A$. Then all inner vertices of $\rT$ not incident to $\xi$ have outdegree $3$ in $T$ and the closure operation does not change this outdegree. It follows easily that the resulting orientation of $\chi(\rT)$ is a $3$-orientation. Furthermore,  the ``clockwise direction'' of the local closures implies that closure never creates counterclockwise cycles, so the $3$-orientation is minimal. 

Given a planted planar triangulation $\rG=(G,\xi)$, the balanced blossoming tree $\chi^{-1}(G,\xi)$ can be recovered as follows. 
Let $\overrightarrow{E}$ be the unique minimal $3$-orientation of $E(G)$. 
List the vertices of the face incident to $\xi$ in clockwise order as $(\v(\xi),A,B)$. 
Remove the edge $\{A,B\}$, and perform a clockwise contour exploration of $\rG$ starting from $\xi$. Each time we see an edge $uv$ for the first time, if it is oriented in the opposite direction from the contour process then keep it; otherwise replace it by a stem $\{u,b_{uv}\}$. This procedure is depicted in Figure~\ref{fig:ope-tri}. 
\begin{figure}[ht]
\hspace{-0.5cm}
 \subfigure[A simple triangulation endowed with its 3-orientation.]{\includegraphics[width=.28\linewidth,page=1]{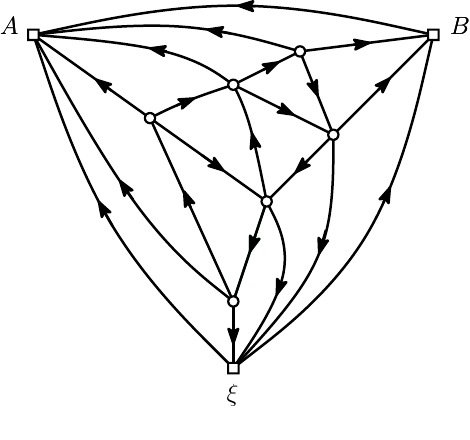}}\qquad
  \subfigure[Here the root edges have been removed and the contour process has
started.]{\includegraphics[width=.28\linewidth,page=2]{Pictures/opening-triangulation.pdf}}
\qquad
\subfigure[\label{fig:ope-tri-tree} The blossoming tree obtained after the
completion of the algorithm.]{\includegraphics[width=.28\linewidth,page=3]{Pictures/opening-triangulation.pdf}}
\caption{The opening of a simple triangulation into a 2-blossoming tree.} 
\label{fig:ope-tri}
\end{figure}

\section{Convergence to the Brownian snake}\label{sec:snake}
Fix a probability distribution $\mu$ on $\N$, and a sequence $\nu=(\nu_k,k \ge 1)$, where for each $k \ge 1$, $\nu_k$ is a probability distribution on $\R^k$. It is convenient to introduce the  notation $\nu_k=(\nu_k^i,1 \le i \le k)$, where $\nu_k^i$ are the marginals of $\nu_k$.

For $n \in \N$, we write 
$\LGW(\mu,\nu,n)$ for the law on spatial plane trees $\rT=(T,\xi,D)$ such that: 
\begin{itemize}
\item The pair $(T,\xi)$ has the law of the genealogical tree of a Galton-Watson process with offspring distribution $\mu$, conditioned to have total progeny $n$.\footnote{To avoid trivial technicalities, we assume $\mu$ is such that the support of $\mu$ has greatest common divisor $1$, so that such conditioning is well-defined for all $n$ sufficiently large.}
\item Conditionally on $(T,\xi)$, $D:E(T) \to \R$ has the following law. 
Independently for each $u\in V(T)$, if $u$ has $k$ children then $\big(D(\{u,u1\}),\ldots,D(\{u,uk\})\big)$
is distributed according to $\nu_k$. 
\end{itemize}
Here is the connection with random simple triangulations. If $(T_n,\xi_n,D_n)$ is uniformly distributed in $\cT_n^{\mathrm{vl}}$, then Corollary~\ref{cor:vldist} states that the law of $(T_n,\xi_n,D_n)$ is $\LGW(\mu,\nu,n)$, where $\mu$ is the law defined in (\ref{eq:offspring}) and for $k \ge 1$, $\nu_k$ is the uniform law on non-decreasing vectors in $\{-1,0,1\}^k$. 

Recall the definition of the pair $(\be,Z)$ from Section~\ref{sec:limitobject}, and the definitions of the functions $C_{\rT}$, $X_{\rT}$ and $Z_{\rT}$ from Section~\ref{sec:contourproc}. 
We establish the following convergence. 
\begin{prop}\label{prop:snake}
For $n \ge 1$ let $\rT_n=(T_n,\xi_n,D_n)$ be uniformly random in $\cT_n^{\mathrm{vl}}$. 
Then as $n \to \infty$, 
\begin{equation}\label{eq:cvsnake}
\left( (3n)^{-1/2}C_{\rT_n}(t), (4n/3)^{-1/4}Z_{\rT_n}(t)\right)_{0\le
t \le 1}\,
\convdist \, ({\bf e}(t), Z(t))_{0\le t \le 1},
\end{equation}
for the topology of uniform convergence on $C([0,1],\R)^2$.
\end{prop}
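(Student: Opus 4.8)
By Corollary~\ref{cor:vldist}, $\rT_n=(T_n,\xi_n,D_n)$ has law $\LGW(\mu,\nu,n)$, with $\mu$ the offspring law~(\ref{eq:offspring}) and $\nu_k$ the uniform law on non-decreasing vectors in $\{-1,0,1\}^k$. The plan is to establish the two coordinates of~(\ref{eq:cvsnake}) separately. For the contour process this is classical: $(T_n,\xi_n)$ is a critical Galton--Watson tree conditioned to have $n$ vertices, so $\big(n^{-1/2}C_{\rT_n}(t)\big)_{0\le t\le1}\convdist\big(2\sigma^{-1}\be(t)\big)_{0\le t\le1}$ in $C([0,1],\R)$, where $\sigma^2=\va(\mu)$; a short computation with the first four moments of a $\mathrm{Geometric}(3/4)$ variable gives $\sigma^2=4/3$, hence $2\sigma^{-1}=\sqrt3$ and $(3n)^{-1/2}C_{\rT_n}\convdist\be$. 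Since $C_{\rT_n}$ does not depend on $D_n$, this settles the first coordinate and also gives $C$-tightness of $\big((3n)^{-1/2}C_{\rT_n}\big)_{n\ge1}$.

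For the spatial process I would use partial symmetrization to reduce to a known invariance principle for discrete snakes with \emph{symmetric} displacements (of the kind discussed in the introduction). Let $\tilde\nu_k$ be the law on $\{-1,0,1\}^k$ obtained from $\nu_k$ by applying a uniformly random permutation to the coordinates; $\tilde\nu$ is exchangeable with bounded, mean-zero marginals, so $\LGW(\mu,\tilde\nu,n)$ falls within the scope of such a principle, and one checks --- from the description of $\tilde\nu$ together with Fact~\ref{fact:de_bd} --- that the asymptotic per-edge variance of the label increments along a typical ancestral line equals $2/3$, which is exactly the value making the normalization $(4n/3)^{-1/4}$ correct. To carry this over to the genuinely non-symmetric $\rT_n$, fix $K\ge1$ and, conditionally on $\rT_n$, let $R_n$ consist of $\v(\xi_n)$ together with $K$ independent uniformly random vertices. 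Let $\rT_n\angles{R_n}$ be the subtree of $\rT_n$ spanned by $R_n$, and define the \emph{partially symmetrized} tree $\tilde\rT_n^{(K)}$ by applying, independently, a uniformly random permutation to the children --- carrying along their pendant subtrees and the incident stem-displacements --- of every vertex that does not lie on $\rT_n\angles{R_n}$, leaving $\rT_n\angles{R_n}$ and all $D_n$-values on its edges untouched. The two features that make this work are: (i) for each $u\in R_n$ the root-to-$u$ path lies inside $\rT_n\angles{R_n}$, so the labels $(X(u))_{u\in R_n}$ and the mutual tree distances of vertices of $R_n$ are \emph{identical} for $\rT_n$ and for $\tilde\rT_n^{(K)}$; and (ii) conditionally on $\rT_n\angles{R_n}$ and the pendant-subtree sizes, the remainder of $\tilde\rT_n^{(K)}$ is (up to $O(1)$ boundary corrections at the attaching stems) a family of independent $\LGW(\mu,\tilde\nu,\cdot)$-forests grafted onto the fixed subtree, so the scaling limit of the rescaled contour and label processes of $\tilde\rT_n^{(K)}$, jointly with the reduced data $\big(\rT_n(R_n),(X(u))_{u\in R_n}\big)$, is computable from the symmetric invariance principle and equals the corresponding reduced object of $(\be,Z)$ (first $n\to\infty$ with $K$ fixed, then $K\to\infty$).

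Feature~(i) then transfers this to $\rT_n$: for $K$ independent uniformly random vertices of $T_n$ the rescaled vector of their labels together with their mutual tree distances converges to $(Z(U_1),\dots,Z(U_K))$ and the reduced-tree distances of $\be$ at i.i.d.\ uniform $U_1,\dots,U_K$. Tightness of $\big((4n/3)^{-1/4}Z_{\rT_n}\big)_{n\ge1}$ follows from the deterministic bound $|X_{\rT_n}(u)-X_{\rT_n}(w)|\le d_{T_n}(u,w)$, upgraded to $\E{(X_{\rT_n}(u)-X_{\rT_n}(w))^4}\le C\,\E{d_{T_n}(u,w)^2}$ --- valid since, conditionally on the tree, the displacements along the $u$--$w$ path are independent and bounded --- together with the $C$-tightness from the first paragraph; this is the standard Chassaing--Schaeffer/Le Gall argument. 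Tightness and convergence of all finite-dimensional marginals (obtained by letting $K\to\infty$ after $n\to\infty$) identify every subsequential limit of $\big((3n)^{-1/2}C_{\rT_n},(4n/3)^{-1/4}Z_{\rT_n}\big)$ with $(\be,Z)$, which is~(\ref{eq:cvsnake}).

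The crux, and the main obstacle, is the rigorous execution of the partial symmetrization: showing that re-permuting the children of off-subtree vertices genuinely leaves the marked-vertex labels and distances intact (this is why $\v(\xi_n)$ must be put into $R_n$, so that the ancestral line of every marked vertex stays inside $\rT_n\angles{R_n}$), and that the complement of $\rT_n\angles{R_n}$ decomposes, conditionally on the spanned subtree and the pendant-subtree sizes, into independent \emph{symmetric} $\LGW$-forests whose joint scaling limit with the retained subtree can be read off from known results. A further part of this is the bookkeeping --- using Fact~\ref{fact:de_bd} --- that the asymptotic per-edge label variance is the same $2/3$ whether or not one symmetrizes, so that the non-symmetry of $\nu$ is immaterial in the limit. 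The remaining ingredients (the contour estimate, the moment bound, and the double limit in $n$ and $K$) are routine.
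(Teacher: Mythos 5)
Your contour-process paragraph and the identification of the constants are fine, and the overall idea (symmetrize away from a spanned subtree of uniform marked vertices, apply the Janson--Marckert-type principle for the locally centred law $\hat\nu$, transfer back) is the same strategy as the paper's. But two steps in your transfer are genuinely gapped. First, your step (ii) begs the question: in your version of partial symmetrization the displacements on edges of $\rT_n\angles{R_n}$ (and from its vertices to their children) are left untouched, so the ``reduced data'' $(X(u))_{u\in R_n}$ is a sum of the \emph{unsymmetrized} spine displacements --- exactly the part of the tree that the symmetric invariance principle says nothing about. Your tree $\tilde\rT_n^{(K)}$ is therefore not $\LGW(\mu,\hat\nu,n)$, and the claim that the limit of the marked labels ``is computable from the symmetric invariance principle'' is unsupported. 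The paper resolves this differently: its $R$-symmetrization \emph{does} permute the displacements at spine vertices (Claim~\ref{cla:sym_law}), so the symmetrized tree has exactly the law $\LGW(\mu,\hat\nu,n)$ (Corollary~\ref{cor:conv_toosymmetric}); the marked labels then differ between the two trees, and the comparison is made \emph{in distribution} via Proposition~\ref{prop:random_subset}: conditionally on the spanned subtree, the spine child of each spine vertex is a uniformly random child, so each spine displacement is uniform on $\{-1,0,1\}$ in both trees, and the label sums $A_i$, $A_i^R$ agree in law up to an $O(j)$ branch-point error. Your only justification --- that ``the asymptotic per-edge label variance is the same $2/3$ whether or not one symmetrizes'' --- is not the relevant point; the issue is the conditional \emph{centring} of the increments along the ancestral lines of uniformly random vertices (the marginals of $\nu_k$ are not centred, e.g.\ the first coordinate of $\nu_2$ has mean $-1/6$), and this is precisely what Proposition~\ref{prop:random_subset} supplies and your argument omits.

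Second, your tightness argument is incorrect. It is not true that the bound $\E{(X_{\rT_n}(u)-X_{\rT_n}(w))^4}\le C\,\E{d_{T_n}(u,w)^2}$ follows from conditional independence and boundedness of the displacements: conditionally on the tree, the displacements along a root-to-vertex path are independent but \emph{not} centred, and along, say, the leftmost branch of length $h$ the label drifts linearly, giving a fourth moment of order $h^4$, not $h^2$. The standard Chassaing--Schaeffer/Le~Gall moment argument is available only for (locally) centred displacement laws; its failure here is exactly why the paper proves tightness (Lemma~\ref{lem:tightness}) by a separate partial-symmetrization argument, comparing $Z_{\rT_n}$ to $Z_{\rT_n^{R_n}}$ for $R_n$ a $\delta$-net of uniform random points via Claim~\ref{cla:delta_diff} and again invoking Proposition~\ref{prop:random_subset}. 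To repair your proposal you would need both of these ingredients, at which point it essentially becomes the paper's proof.
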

Before proving this proposition, we place it in the context of the existing literature on invariance principles for spatial branching processes. 
Fix $\mu$ and $\nu$ and let $(\rT_n,n \in \N)$ be such that $\rT_n=(T_n,\xi_n,D_n)$ has law $\LGW(\mu,\nu,n)$ for $n \in \N$. In what follows, given a measure $\eta$ on $\R$ and $p > 0$ write 
$|\eta|_p = (\int_{\R} |x|^p\mathrm{d}\eta)^{1/p}$. 
Aldous (\cite{AldCRT2}, Theorem 2) showed that if $|\mu|_1=1$ and $\sigma_{\mu}^2 := |\mu|_2^2-|\mu|_1^2 \in (0,\infty)$, then 
\begin{equation}\label{eq:aldous}
\left(\frac{\sigma_\mu}{2}\frac{C_{\rT_n}(t)}{n^{1/2}}\right)_{0 \le t \le 1}\convdist \be
\end{equation}
as $n \to \infty$, for the topology of uniform convergence on $C([0,1],\R)$. 
Now additionally suppose that for each $k$, the marginals $\{\nu_k^i:1 \le i \le k\}$ of $\nu_k$ are  identically distributed, that 
$|\nu_k^1|_1 < \infty$, that $\nu_k$ is {\em centred} (i.e., $\int_{\R} x \mathrm{d}\nu_k^1(x)=0$ 
for every $1\leq i\leq k$), and that 
\[
\sup_k \nu_k^1([y,\infty)] = o(y^{-4}) \quad \text{for every }k\geq 1. 
\]
Under these conditions, writing $\sigma_{\nu}=\sigma_{\nu_1^1}$, Janson and Marckert (\cite{JanMar}, Theorem 2) prove that 
\begin{equation}\label{eq:cvsnake_jm} 
\left(\frac{\sigma_{\mu}}{2}\frac{C_{\rT_n}(t)}{n^{1/2}}, \frac{(\sigma_\mu/2)^{1/2}}{\sigma_\nu}\frac{Z_{\rT_n}(t)}{n^{1/4}}\right)_{0\le
t \le 1}\,
\overset{(d)}{\underset{n\rightarrow \infty}\tend}\, ({\bf e}(t), Z(t))_{0\le t \le 1}\, .
\end{equation}
in the same topology as in Proposition~\ref{prop:snake}
(In fact Theorem~2 of~\cite{JanMar} is stated with the additional assumption that $\nu_k$ is a product measure for all $k$. However, it is not difficult to see, and was explicitly noted in \cite{JanMar}, that straightforward modifications of the proof allow this additional assumption to be removed.) Under the same assumptions, the convergence in (\ref{eq:cvsnake_jm}) can also be obtained as a special case of \cite[Theorem 8]{MarckertMiermont07}. In the latter article, the marginals of $\nu_k$ are not required to be identically distributed but they are assumed to be \emph{locally centred} meaning that for all $1\leq i \leq k$, $\int_{\R}x\mathrm{d}\nu_k^i(x)=0$. 
In our setting, the law of the spatial plane tree is given by Corollary~\ref{cor:vldist}. 
In this case the entries are clearly not identically distributed, and neither are they locally centred: observe for instance that $\int_{\R}x \mathrm{d}\nu_2^1(x)=-1/3$. 

In~\cite{MarckertLineage}, the ``locally centred'' assumption is replaced by a \emph{global} centering assumption, namely that
\[
\sum_{k\geq 0} \mu(\{k\}) \sum_{i=1}^k \int_{\R} x \mathrm{d}\nu_k^i(x)=0,
\]
which is satisfied by our model. However, \cite{MarckertLineage} requires that $\mu$ has bounded support, which is not the case in Corollary~\ref{cor:vldist}. 

We expect that the technique we use to prove Proposition~\ref{prop:snake} can be used to extend the results of~\cite{MarckertLineage} to a broad range of laws $\LGW(\mu,\nu,n)$ for which $\mu$ does not have compact support, under the slightly stronger centering assumption that $\sum_{i=1}^k \int_{\R} x \mathrm{d}\nu_k^i(x)=0$ for every $k$. However, for the sake of concision we have chosen to focus on the random spatial plane trees that arise from random simple triangulations (the treatment for random simple quadrangulations differs only microscopically and is omitted).
~\\

For the remainder of Section~\ref{sec:snake}, let $\mu$ be as defined in (\ref{eq:offspring}), and for $k \ge 1$ let $\nu_k$ be the uniform law on non-decreasing vectors in $\{-1,0,1\}^k$. For $n \ge 1$ let $\rT_n=(T_n,\xi_n,D_n)$ be uniformly random in $\cT_n^{\mathrm{vl}}$ as in Proposition~\ref{prop:snake}; by the comments preceding that proposition, $\rT_n$ has law $\LGW(\mu,\nu,n)$. 
To prove Proposition~\ref{prop:snake}, we establish the following facts. 
\begin{lem}[Random finite-dimensional distributions]\label{lem:fdds}
Let $(U_i,i \ge 1)$ be independent Uniform$[0,1]$ random variables, 
independent of the trees $(\rT_n,n \ge 1)$, and for $j \ge 1$ let 
$(U_i^{\uparrow},1 \le i \le j)$ be the increasing ordering of $U_1,\ldots,U_j$. Then for all $j \ge 1$, 
\begin{equation}\label{eq:fdds}
\left( (3n)^{-1/2}C_{\rT_n}(U_i^{\uparrow}), (4n/3)^{-1/4}Z_{\rT_n}(U_i^{\uparrow})\right)_{1 \le i \le j}\,
\convdist \, ({\bf e}(U_i^{\uparrow}), Z(U_i^{\uparrow}))_{1 \le i \le j}\, ,
\end{equation}
as $n \to \infty$. 
\end{lem}
\begin{lem}[Tightness]\label{lem:tightness}
The family of laws of the processes $((4n/3)^{-1/4}Z_{\rT_n},n \ge 1)$ is tight for the space of probability measures on $C([0,1],\R)$. 
\end{lem}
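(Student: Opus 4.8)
The plan is to establish tightness of the rescaled label processes $((4n/3)^{-1/4}Z_{\rT_n}, n \ge 1)$ in $C([0,1],\R)$ via the standard Kolmogorov-type moment criterion: it suffices to show there are constants $p > 4$ and $K < \infty$ and an exponent $\alpha > 1$ such that for all $0 \le s \le t \le 1$ and all $n$ large,
\[
\e{\left| (4n/3)^{-1/4}\big(Z_{\rT_n}(t) - Z_{\rT_n}(s)\big)\right|^p} \le K |t-s|^{\alpha},
\]
together with tightness of the one-point marginals (which follows from Lemma~\ref{lem:fdds}, say at $j=1$). Since $Z_{\rT_n}$ is a linear interpolation of the values $X_{\rT_n}(r_{\rT_n}(i))$ at the $m = 2|V(T_n)|-2$ mesh points, it is enough to control, for mesh points $i \le j$, the moments of $X_{\rT_n}(r_{\rT_n}(j)) - X_{\rT_n}(r_{\rT_n}(i))$ in terms of $(j-i)/m$.

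First I would set up the combinatorial structure of such an increment. If $u = r_{\rT_n}(i)$ and $w = r_{\rT_n}(j)$ are the tree vertices visited at contour times $i$ and $j$, and $z$ is their most recent common ancestor, then $X_{\rT_n}(w) - X_{\rT_n}(u) = (X_{\rT_n}(w) - X_{\rT_n}(z)) - (X_{\rT_n}(u) - X_{\rT_n}(z))$, a difference of two sums of displacements along the ancestral lines $\bbr{z,w}$ and $\bbr{z,u}$. Conditionally on the tree $(T_n,\xi_n)$, by Corollary~\ref{cor:vldist} the displacements on distinct edges are independent across vertices, with each vertex $v$ contributing a vector with law $\nu_{k_{(T_n,\xi_n)}(v)}$ of non-decreasing entries in $\{-1,0,1\}$; in particular each individual displacement is bounded by $1$ in absolute value, so $|X_{\rT_n}(w) - X_{\rT_n}(u)| \le d_{T_n}(z,w) + d_{T_n}(z,u) \le C_{\rT_n}(i/m) + C_{\rT_n}(j/m) - 2\check{C}_{\rT_n}(i/m,j/m)$ pathwise. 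The key point is then a conditional-variance estimate: conditionally on the tree, $X_{\rT_n}(w) - X_{\rT_n}(u)$ is a sum of centred-enough, bounded, conditionally independent increments indexed by the vertices on $\bbr{z,w} \cup \bbr{z,u}$, so its conditional moments of order $p$ are bounded (via a Marcinkiewicz--Zygmund / Rosenthal-type inequality for bounded independent summands) by a constant times $\big(d_{T_n}(z,w) + d_{T_n}(z,u)\big)^{p/2}$. Here a small subtlety arises: $\nu_k$ is not centred, so I would first pass to the centred version by writing each displacement as its mean plus a centred bounded part; the accumulated means along an ancestral path contribute a deterministic drift bounded by a constant times the path length, but by Fact~\ref{fact:de_bd} (or by the non-decreasing structure) this drift is itself a bounded quantity controlled by the local degree configuration, and in any case I would absorb the deterministic part into the bound and handle it by the same contour-increment estimate used for $C_{\rT_n}$. (Alternatively, and perhaps more cleanly, one observes the per-vertex contribution has mean of order $O(1)$ uniformly, and one handles the centred remainder by Rosenthal and the sum of means trivially.)

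Hence, after taking expectations and using Cauchy--Schwarz to split the tree-randomness from the label-randomness, the problem reduces to the analogous tightness-type moment bound for the contour process: I need
\[
\e{\left( (3n)^{-1/2}\big(C_{\rT_n}(i/m) + C_{\rT_n}(j/m) - 2\check{C}_{\rT_n}(i/m,j/m)\big)\right)^{q}} \le K_q\, \left|\tfrac{j-i}{m}\right|^{q/2}
\]
for $q$ large, uniformly in $n$. This is exactly the kind of estimate established for contour functions of conditioned critical Galton--Watson trees with an offspring law of finite exponential moments (here the offspring law $B$ of \eqref{eq:offspring} is a size-biased geometric, so has exponential tails), and it underlies Aldous's convergence \eqref{eq:aldous}; I would cite it from the literature on the CRT (e.g.\ the moment bounds in the work of Marckert--Mokkadem or Janson--Marckert, or re-derive it quickly from the spinal/many-to-one decomposition together with the local limit theorem for the conditioned tree size) rather than reprove it here. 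Combining the conditional label estimate ($\le$ const $\cdot (\text{contour increment})^{p/2}$) with the contour moment bound ($\text{contour increment} \sim n^{1/2}|t-s|^{1/2}$ in $L^q$) gives, with the correct scaling $(4n/3)^{-1/4} \asymp n^{-1/4}$ and $(3n)^{-1/2} \asymp n^{-1/2}$, a bound of the form $K|t-s|^{p/4}$; choosing $p$ large enough (any $p > 8$ suffices to get exponent $> 1$) yields Kolmogorov's criterion and hence tightness.

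The main obstacle I anticipate is the lack of centring (and of local centring) of the displacement laws $\nu_k$ — the same feature that, as the paper notes, obstructs a direct appeal to the existing invariance principles. Concretely, one must verify that the deterministic drift accumulated along ancestral lines does not destroy the $n^{1/4}$ scaling; this is where one genuinely uses the structural Fact~\ref{fact:de_bd}, which ties the edge-displacement $D_e$ to the number of stems preceding $e$, forcing a near-cancellation of drifts along any root-to-leaf path. I would isolate this as the one nontrivial lemma: that for the tree drawn from $\LGW(\mu,\nu,n)$, the partial sums $\sum_{e \in \bbr{z,v}} \e{D_e \mid T_n}$ along ancestral lines are bounded (deterministically, by a small constant, in fact), so that only the genuinely centred fluctuations contribute at scale $n^{1/4}$. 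Everything else is a routine combination of conditional Rosenthal inequalities with known moment estimates for conditioned Galton--Watson contour functions.
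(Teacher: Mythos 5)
There is a genuine gap, and it sits exactly where you placed your ``one nontrivial lemma.'' The claim that the accumulated conditional drift $\sum_{e \in \bbr{z,v}} \E{D_e \mid T_n}$ along an ancestral line is deterministically bounded by a constant is false. Given the plane tree, the conditional mean of the displacement to the $i$'th of $k$ children is a fixed nonzero number depending on $(i,k)$ (e.g.\ for $k=2$ the first-child edge has strictly negative mean under $\nu_2$), so a path that repeatedly passes through a first child accumulates drift linear in its length; nothing in the model forbids such configurations, and Fact~\ref{fact:de_bd} does not produce the ``near-cancellation'' you invoke — it relates the actual displacement to stem positions, not conditional means, and the actual displacement sums along ancestral paths are precisely the unbounded quantities one is trying to control. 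Even restricting to typical vertices of the conditioned Galton--Watson tree, the drift to a vertex at depth $\asymp n^{1/2}$ is a sum of $\asymp n^{1/2}$ bounded, globally-centred but not individually centred terms, so it fluctuates at order $n^{1/4}$ — the same order as $Z_{\rT_n}$ itself. Indeed it contributes a non-vanishing share of the limiting variance (decompose $\mathbf{Var}(D)=\E{\mathbf{Var}(D\mid \mathrm{index},k)}+\mathbf{Var}(\E{D\mid \mathrm{index},k})$; the second term is nonzero), so it cannot be ``absorbed'' or treated as a negligible remainder after a conditional Rosenthal bound. Controlling this drift term is equivalent to proving tightness for a globally-centred, non-locally-centred discrete snake with unbounded offspring — which is exactly the obstruction the paper identifies (the result of \cite{MarckertLineage} needs bounded offspring support), not a routine step.

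By contrast, the paper's proof never attempts a moment/Kolmogorov estimate and never separates drift from fluctuation. It compares $\rT_n$ with its partial symmetrization $\rT_n^{R_n}$ over a random $\delta$-net $R_n$: by Claim~\ref{cla:sym_law} the symmetrized tree has locally centred displacements, so its label process is tight by the Janson--Marckert invariance principle; Claim~\ref{cla:delta_diff} bounds the off-spine discrepancy pathwise by the modulus of continuity of the symmetrized process, and Proposition~\ref{prop:random_subset} shows the spine contributions $A$ and $A^{R_n}$ are equal in distribution, transferring the modulus-of-continuity bound back to $Z_{\rT_n}$. If you want to salvage a moment-based route, you would have to prove increment moment bounds for the drift functional of the conditioned tree itself (a nontrivial task comparable to the snake estimates you are trying to avoid), or else adopt some symmetrization/comparison device of the kind the paper uses; the conditionally centred Rosenthal bound plus contour moment estimates alone do not suffice.
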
 
\begin{proof}[Proof of Proposition~\ref{prop:snake}]
It is immediate from (\ref{eq:aldous}) and Lemma~\ref{lem:tightness} that the collection of laws of the processes $(((3n)^{-1/2} C_{\rT_n},(4n/3)^{-1/4}Z_{\rT_n}),n \ge 1)$ forms a tight family in the space of probability measures on $C([0,1],\R)^2$. It therefore remains to 
establish convergence of (non-random) finite-dimensional distributions. In other words, we must show that for all $m \ge 1$, $0 \le t_1 < t_2 < \ldots < t_m \le 1$ and  $c_1,\ldots,c_m > 0$, $z_1,\ldots,z_m \in \R$, 
\[
\begin{split}
\p{\bigcap_{i=1}^m \{ (3n)^{-1/2}C_{\rT_n}(t_i) \le c_i\} \cap \{ (4n/3)^{-1/4}Z_{\rT_n}(t_i) \le z_i\}}\\
\to
\p{\bigcap_{i=1}^m \{{\bf e}(t_i) \le c_i\} \cap \{Z(t_i) \le z_i\}}\, .
\end{split}
\]
For the remainder of the proof, we fix $m$ and $(t_i,c_i,z_i,1 \le i \le m)$ as above. 

Tightness implies (see \cite{billingsley}, Theorem 8.2) that for all $\delta > 0$ there exists $\alpha=\alpha(\delta)$ such that 
\begin{equation} \label{eq:fin_tight}
\limsup_{n \to \infty} \p{\sup_{x,y \in [0,1],|x-y| \le \alpha} \left( \frac{|C_{\rT_n}(x)-C_{\rT_n}(y)|}{(3n)^{1/2}}+\frac{|Z_{\rT_n}(x)-Z_{\rT_n}(y)|}{(4n/3)^{1/4}}\right) > \delta} < \delta\, . 
\end{equation} 
Since $\be$ and $Z$ are almost surely uniformly continuous, by decreasing $\alpha(\delta)$ if necessary we may additionally ensure that 
\begin{equation} \label{eq:lim_tight}
\p{\sup_{x,y \in [0,1],|x-y| \le \alpha} \left(|{\bf e}(x)-{\bf e}(y)| + |Z(x)-Z(y)|\right) > \delta} < \delta. 
\end{equation}

Since (\ref{eq:fin_tight}) holds, to prove Proposition~\ref{prop:snake} it remains to establish convergence of (non-random) finite-dimensional distributions. In other words, we must show that for all $m \ge 1$, $0 \le t_1 < t_2 < \ldots < t_m \le 1$ and $c_1,\ldots,c_m,z_1,\ldots,z_m \in \R$ such that $c_i$ and $z_i$ are continuity points of the distributions of ${\bf e}(t_i)$ and $Z(t_i)$, respectively, 
\[
\begin{split}
\p{\bigcap_{i=1}^m \{ (3n)^{-1/2}C_{\rT_n}(t_i) \le c_i\} \cap \{ (4n/3)^{-1/4}Z_{\rT_n}(t_i) \le z_i\}}\\
\to
\p{\bigcap_{i=1}^m \{{\bf e}(t_i) \le c_i\} \cap \{Z(t_i) \le z_i\}}\, .
\end{split}
\]
For the remainder of the proof, we fix $m$ and $(t_i,c_i,z_i,1 \le i \le m)$ as above. 
\smallskip

Given $\delta > 0$, let $\alpha=\alpha(\delta)$ be as above, and let $j=j(\delta)> 2/\alpha $ be large enough that 
\[
\p{\max_{1 \le i \le j} \left| U_j^{(i)} - \frac{i}{j} \right| \ge \frac{\alpha}{2}} < \delta. 
\]
Since $j > 2/\alpha$, we may choose integers $k_1,\ldots,k_m$ so that for $1 \le i \le m$, $|k_i/j-t_i| < \alpha/2$. It follows that 
\begin{equation}\label{eq:unif_llm}
\p{\max_{1 \le i \le m} |U_j^{(k_i)}-t_i|  \ge \alpha} < \delta\, .
\end{equation}
Write $A,B$ and $C$ for the events whose probabilities are bounded in (\ref{eq:fin_tight}), (\ref{eq:lim_tight}) and (\ref{eq:unif_llm}), respectively, and let $E=(A \cup B \cup C)^c$. Note that $\p{E} >1-3\delta$. 
Furthermore, when $E$ occurs, $|{\bf e}(t_i) - {\bf e}(U_j^{(k_i)})|+  |Z(t_i) - Z(U_j^{(k_i)})|< \delta$, so 
\[
E \cap \bigcap_{i=1}^m \{{\bf e}(t_i) \le c_i-\delta\} \cap \{Z(t_i) \le z_i-\delta\} \subset E \cap \bigcap_{i=1}^m \{{\bf e}(U_j^{(k_i)}) \le c_i\} \cap \{Z(U_j^{(k_i)}) \le z_i\}\, .
\]
We thus have 
\begin{align*}
& \p{\bigcap_{i=1}^m \{{\bf e}(t_i) \le c_i-\delta\} \cap \{Z(t_i) \le z_i-\delta\}}-3\delta\\
< & \p{E \cap \bigcap_{i=1}^m \{{\bf e}(t_i) \le c_i-\delta\} \cap \{Z(t_i) \le z_i-\delta\}} \\
\le & \p{E \cap \bigcap_{i=1}^m \{{\bf e}(U_j^{(k_i)}) \le c_i\} \cap \{Z(U_j^{(k_i)}) \le z_i\}} \\
\le & \p{\bigcap_{i=1}^m \{{\bf e}(U_j^{(k_i)}) \le c_i\} \cap \{Z(U_j^{(k_i)}) \le z_i\}}\, .
\end{align*}
A similar argument shows that 
\begin{align*}
& \p{\bigcap_{i=1}^m \{ (3n)^{-1/2}C_{\rT_n}(t_i) \le c_i+\delta\} \cap \{ (4n/3)^{-1/4}Z_{\rT_n}(t_i) \le z_i+\delta\}}\\
\ge &\p{E \cap \bigcap_{i=1}^m \{ (3n)^{-1/2}C_{\rT_n}(t_i) \le c_i+\delta\} \cap \{ (4n/3)^{-1/4}Z_{\rT_n}(t_i) \le z_i+\delta\}}\\
\ge&  \p{E \cap \bigcap_{i=1}^m \{ (3n)^{-1/2}C_{\rT_n}(U_j^{(k_i)}) \le c_i\} \cap \{ (4n/3)^{-1/4}Z_{\rT_n}(U_j^{(k_i)}) \le z_i\}} \\
> & \p{\bigcap_{i=1}^m \{ (3n)^{-1/2}C_{\rT_n}(U_j^{(k_i)}) \le c_i\} \cap \{ (4n/3)^{-1/4}Z_{\rT_n}(U_j^{(k_i)}) \le z_i\}}-3\delta \, .
\end{align*}
By Lemma~\ref{lem:fdds}, as $n \to \infty$, 
\[
\begin{split}
\p{\bigcap_{i=1}^m \{ (3n)^{-1/2}C_{\rT_n}(U_j^{(k_i)}) \le c_i\} \cap \{ (4n/3)^{-1/4}Z_{\rT_n}(U_j^{(k_i)}) \le z_i\}} \\
 \to \p{\bigcap_{i=1}^m \{{\bf e}(U_j^{(k_i)}) \le c_i\} \cap \{Z(U_j^{(k_i)}) \le z_i\}}\, ,
\end{split}
\]
which together with the preceding bounds implies that for all sufficiently large $n$, 
\begin{align*}
& \p{\bigcap_{i=1}^m \{{\bf e}(t_i) \le c_i-\delta\} \cap \{Z(t_i) \le z_i-\delta\}}\\
< & \p{\bigcap_{i=1}^m \{ (3n)^{-1/2}C_{\rT_n}(t_i) \le c_i+\delta\} \cap \{ (4n/3)^{-1/4}Z_{\rT_n}(t_i) \le z_i+\delta\}}+6\delta \, .
\end{align*}
A symmetric argument establishes that for all sufficiently large $n$, 
\begin{align*}
& \p{\bigcap_{i=1}^m \{{\bf e}(t_i) \le c_i+3\delta\} \cap \{Z(t_i) \le z_i+3\delta\}}\\
> & \p{\bigcap_{i=1}^m \{ (3n)^{-1/2}C_{\rT_n}(t_i) \le c_i+\delta\} \cap \{ (4n/3)^{-1/4}Z_{\rT_n}(t_i) \le z_i+\delta\}}-6\delta \, ,
\end{align*}
and the result follows. 
\end{proof}

The remainder of the section is thus devoted to proving Lemmas~\ref{lem:fdds} and~\ref{lem:tightness}. 
Before proceeding to this, we state a definition which plays a key role. Given a probability measure $\pi$ on $\R_k$, its {\em symmetrization} $\hat{\pi}$ is obtained by permuting the marginals uniformly at random. More precisely, if $(X_1,\ldots,X_k)$ has law $\pi$ and, independently, $\sigma$ is a uniformly random permutation of $\{1,\ldots,k\}$, then $(X_{\sigma(1)},\ldots,X_{\sigma(k)})$ has law $\hat{\pi}$. 

For the remainder of Section~\ref{sec:snake}, let $\mu$ be the law of the random variable $B$ defined in (\ref{eq:offspring}). Also, for $k \ge 1$ let $\nu_k$ be as in Corollary~\ref{cor:vldist}, and let $\hat{\nu}_k$ be the symmetrization of $\nu_k$. Note that since $\sum_i\int_{\R} x \mathrm{d}\nu_k^i(x)=0$, we have $\int_{\R} x \mathrm{d}\hat{\nu}_k^i(x)=0$ for each $1 \le i \le k$; in other words, $\hat{\nu}=(\hat{\nu}_k,k \ge 1)$ is locally centred. The proofs of Lemmas~\ref{lem:fdds} and~\ref{lem:tightness} both rely on couplings between $\LGW(\mu,\nu,n)$ and $\LGW(\mu,\hat\nu,n)$.

\subsection{Symmetrization of plane trees}
\addtocontents{toc}{\SkipTocEntry}
\label{sub:fulsym}
Fix a spatial plane tree $\rt=(t,\xi,d)$. For the remainder of the section it is convenient to conflate $t$ and its Ulam-Harris encoding. This allows us to identify $t$ with its vertex set; also, since with this coding the root vertex is always $\emptyset=\v(\xi)$, we write $\rt=(t,d)$ instead of $\rt=(t,\xi,d)$.

\nomenclature[Sigma]{$\mathfrak{S}(T)$}{Set of vectors of permutations indexed by the vertices of $T$}
Denote by $\mathfrak{S}(t)$ the set of vectors $\sigma=(\sigma^v:v \in t,k_t(v) > 0)$ indexed by the non-leaf vertices of $\rt$, with $\sigma^v$ a permutation of $\{1,\ldots,k_{t}(v)\}$. 
For $\sigma\in \mathfrak{S}(t)$, the \emph{symmetrization of $t$ with respect to $\sigma$} is the tree $t^{\sigma}$ obtained from $t$ by permuting the order of the subtrees rooted at the children of $v$ according to $\sigma^v$, for each $v \in t$. 
\nomenclature[Tsigma]{$\mathrm{T}^{\sigma}$}{``Permutation'' of $\mathrm{T}=(T,\xi,D)$ 
by $\sigma=(\sigma^v:v \in t,k_{\mathrm{t}}(v)>0)$}
More formally, 
\[
t^\sigma = \{\sigma(v),v \in t\},
\]
where if $v=n_1n_2\ldots n_k \in t$ then 
\[
\sigma(v) = \sigma^{\emptyset}(n_1)\sigma^{n_1}(n_2)\ldots \sigma^{n_1\ldots n_{k-1}}(n_k)\, .
\]
We then let $\rt^{\sigma}=(t^{\sigma},d^{\sigma})$, where 
 $d^{\sigma}(\sigma(u),\sigma(ui)) = d(u,ui)$ 
for all edges $\{u,ui\}$ of $t$. 
\nomenclature[Tsigma]{$\mathrm{T}^{\sigma}$}{``Permutation'' of $\mathrm{T}=(T,\xi,D)$ 
by $\sigma=(\sigma^v:v \in t,k_{\mathrm{t}}(v)>0)$}
Visually, displacements are attached to edges, and follow their edges when the tree is permuted. 
Observe that $\rt$ and $\rt^\sigma$ are isomorphic as rooted edge-labeled trees (but need not be isomorphic as spatial plane trees). The local effect of symmetrization is depicted in Figure~\ref{fig:fullsym}.

\begin{figure}[t]
\hspace{-0.5cm}
 \subfigure[\label{fig:fullsym}Symmetrization.]{\includegraphics[width=.35\linewidth,page=3]{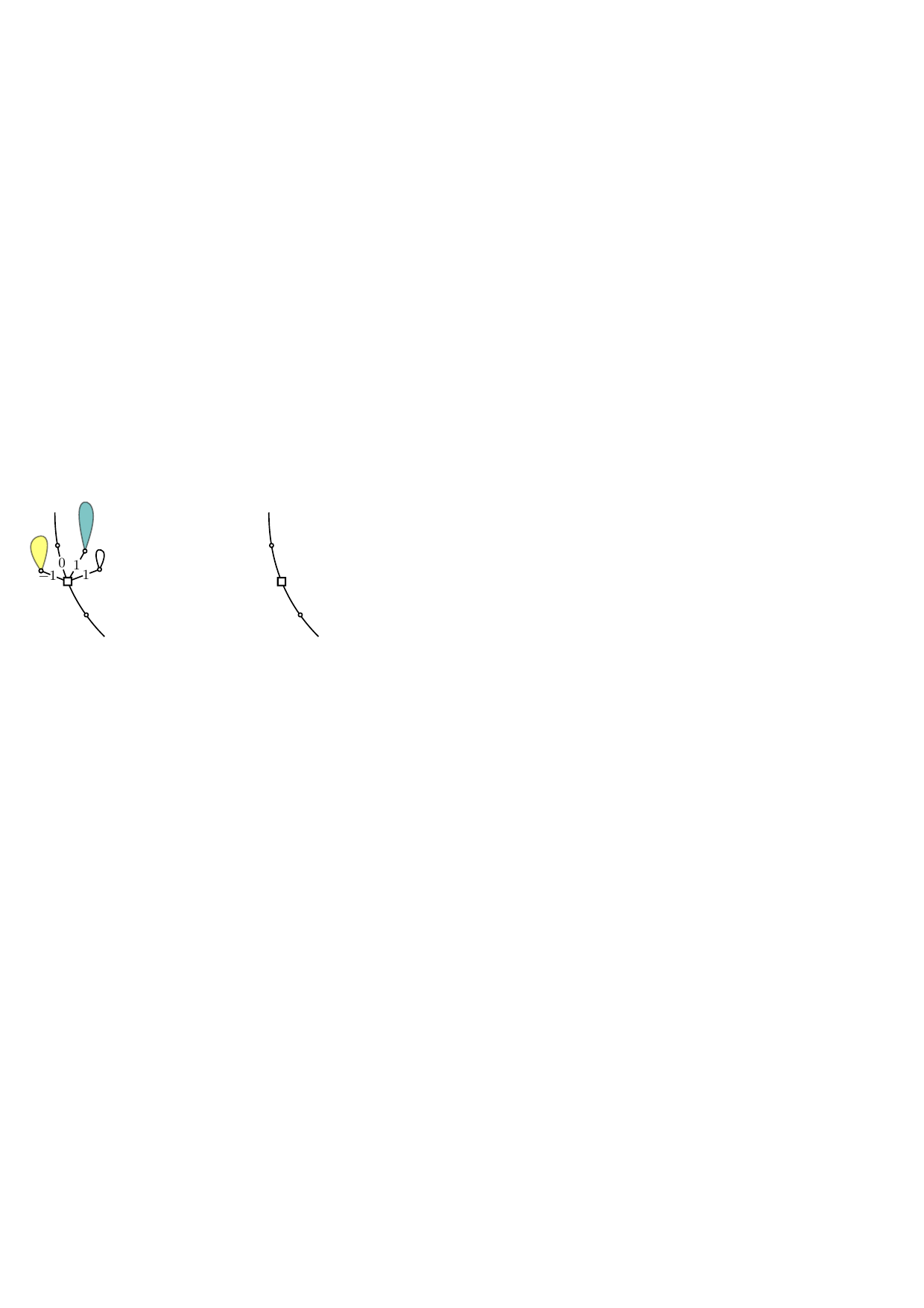}}\qquad \qquad \qquad
\subfigure[\label{fig:partsym}Partial symmetrization.]{\includegraphics[width=.35\linewidth,page=2]{Pictures/symmetrization.pdf}}
\caption{Examples of the local rules for symmetrization and partial symmetrization at the square vertex (which in~\ref{fig:partsym} is presumed to lie in $\branch{t}{\rr}$). 
}
\label{fig:comp-sym}
\end{figure}

\begin{claim}\label{cla:full_sym_law}
Let $\rT=(T,D)$ have law $\LGW(\mu,\nu,n)$, and let $\Sigma$ be a uniformly random element of $\mathfrak{S}(\rT)$. 
Then $\rT^{\Sigma}$ has law $\LGW(\mu,\hat \nu,n)$. 
\end{claim}
\begin{proof}
Since $T$ and $T^\Sigma$ are isomorphic as rooted trees, it follows from the branching property of Galton-Watson processes that they have the same law. The definition of $\hat\nu$, and the fact that $\Sigma$ uniformly permutes labels at every vertex, then imply that $\rT^{\Sigma}$ has law $\LGW(\mu,\hat \nu,n)$. 
\end{proof}

\begin{cor}\label{cor:conv_toosymmetric}
For $n \ge 1$ let $\rT_n=(T_n,D_n)$ be uniformly random in $\cT_n^{\mathrm{vl}}$, and let $\Sigma_n$ be a uniformly random element of $\mathfrak{S}(\rT_n)$. Then as $n \to \infty$, 
\begin{equation}\label{eq:cvsnake_toosymmetric}
\left( (3n)^{-1/2}C_{\rT_n^{\Sigma_n}}(t), (4n/3)^{-1/4}Z_{\rT_n^{\Sigma_n}}(t)\right)_{0\le
t \le 1}\,
\convdist \, ({\bf e}(t), Z(t))_{0\le t \le 1},
\end{equation}
for the topology of uniform convergence on $C([0,1],\R)^2$.
\end{cor}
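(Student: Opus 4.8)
The plan is to reduce the statement to a known invariance principle for spatial Galton--Watson trees by way of Claim~\ref{cla:sym_law}. Since $\rT_n$ is uniform in $\cT_n^{\mathrm{vl}}$, it has law $\LGW(\mu,\nu,n)$ by Corollary~\ref{cor:vldist}; hence, by Claim~\ref{cla:sym_law}, for \emph{every} choice of the index set $I_n$ (and thus of $R_n=\{r_{\rT_n}(i):i\in I_n\}$), the partially symmetrized tree $\rT_n^{R_n}$ has law $\LGW(\mu,\hat\nu,n)$, where $\hat\nu$ is the locally centred symmetrization of $\nu$. Crucially, the law of $\rT_n^{R_n}$ does not depend on $I_n$. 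So it suffices to prove that if $\rT$ is a labelled planted plane tree with law $\LGW(\mu,\hat\nu,n)$, then $\bigl((3n)^{-1/2}C_{\rT},(4n/3)^{-1/4}Z_{\rT}\bigr)\convdist(\be,Z)$ in the topology of uniform convergence on $C([0,1],\R)^2$. I would obtain this from the theorem of Janson and Marckert (\cite{JanMar}, Theorem~2)---invoking the removal of their independence hypothesis discussed earlier in the section, which is genuinely needed here because the entries of $\hat\nu_k$ are a random permutation of a random type-vector and hence dependent---or, equivalently, directly from \cite[Theorem~8]{MarckertMiermont07}, whose hypotheses ask only for local centering.

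It then remains to verify the hypotheses of that invariance principle for the pair $(\mu,\hat\nu)$. Criticality $|\mu|_1=1$ is the Fact following~(\ref{eq:offspring}), and $\sigma_\mu^2\in(0,\infty)$ because $\mu$ is the law of a geometric random variable reweighted by the quadratic polynomial $c\mapsto\binom{c+2}{2}$, hence has all moments finite and is non-degenerate. For the displacements, $|\hat\nu_k^i|\le1$ almost surely, so $|\hat\nu_k^1|_1<\infty$ and the tail bound $\sup_k\p{|\hat\nu_k^1|\ge y}=o(y^{-4})$ holds trivially. Finally, a short combinatorial computation---identify a non-decreasing vector in $\{-1,0,1\}^k$ with the triple $(a,b,c)$ counting its entries equal to $-1,0,1$, and use that the uniform law on $\{(a,b,c):a+b+c=k\}$ is invariant under cyclic relabelling of the three parts---shows that every marginal $\hat\nu_k^i$ is the uniform law on $\{-1,0,1\}$. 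Thus the marginals of $\hat\nu$ are all identically distributed, centred, and $\sigma_{\hat\nu}^2=\va(\hat\nu_1^1)=2/3$.

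Janson--Marckert (resp.\ Marckert--Miermont) then yields
\[
\left(\frac{\sigma_\mu}{2}\,\frac{C_{\rT_n^{R_n}}(t)}{n^{1/2}},\ \frac{(\sigma_\mu/2)^{1/2}}{\sigma_{\hat\nu}}\,\frac{Z_{\rT_n^{R_n}}(t)}{n^{1/4}}\right)_{0\le t\le1}\convdist(\be(t),Z(t))_{0\le t\le1}
\]
uniformly on $C([0,1],\R)^2$, and the proof finishes by matching the normalizing constants. By the computation carried out in Appendix~\ref{sec:notes}, $\sigma_\mu/2=3^{-1/2}$, so $\sigma_\mu/(2n^{1/2})=(3n)^{-1/2}$; and $(\sigma_\mu/2)^{1/2}/\sigma_{\hat\nu}=3^{-1/4}/\sqrt{2/3}=(3/4)^{1/4}=(4/3)^{-1/4}$, so that $(\sigma_\mu/2)^{1/2}/(\sigma_{\hat\nu}\,n^{1/4})=(4n/3)^{-1/4}$. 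This is precisely~(\ref{eq:cvsnake_toosymmetric}).

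There is no real obstacle here: the substantive content---that $\rT_n^{R_n}$ has a symmetric law, independent of $I_n$, to which classical spatial-tree invariance principles apply---is precisely Claim~\ref{cla:sym_law}, which is already proved. The only mildly delicate points are the two items of bookkeeping above: checking that the symmetrized displacement law $\hat\nu$ has identically distributed marginals with $\sigma_{\hat\nu}^2=2/3$, and tracking the scaling constants so that $(3n)^{-1/2}$ and $(4n/3)^{-1/4}$ come out exactly; plus, if one appeals to \cite{JanMar} rather than \cite{MarckertMiermont07}, the (already-noted) removal of the independence assumption.
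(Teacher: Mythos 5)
Your proposal is correct and follows essentially the same route as the paper: Corollary~\ref{cor:vldist} gives $\rT_n$ law $\LGW(\mu,\nu,n)$, Claim~\ref{cla:sym_law} gives $\rT_n^{R_n}$ law $\LGW(\mu,\hat\nu,n)$ independently of $I_n$, and the locally centred invariance principle (\ref{eq:cvsnake_jm}) of Janson--Marckert / Marckert--Miermont then yields the convergence, with the constants as computed in Appendix~\ref{sec:notes}. Your extra bookkeeping (uniform marginals of $\hat\nu_k$, $\sigma_{\hat\nu}^2=2/3$, matching $(3n)^{-1/2}$ and $(4n/3)^{-1/4}$) is exactly what the paper delegates to its surrounding discussion and appendix.
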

\begin{proof}
By Corollary~\ref{cor:vldist}, $\rT_n$ has law $\LGW(\mu,\nu,n)$, so $\rT_n^{\Sigma_n}$ has law $\LGW(\mu,\hat \nu,n)$ by Claim~\ref{cla:full_sym_law}. 
Since $\hat{\nu}$ is bounded and locally centred and its marginals are uniformly distributed on $\{-1,0,1\}$, the result follows by (\ref{eq:cvsnake_jm}), 
and the computation (see Appendix~\ref{sec:notes}) that $|\mu|_1=1$, $\sigma_{\mu}/2=3^{-1/2}$, and $\sigma_\nu=(2/3)^{1/2}$. 
\end{proof}

Now fix a vector $\rr$ of vertices of $t$, and let $\branch{t}{\rr}$\nomenclature[path]{$\branch{t}{\rr}$}{The set of ``path-points'' of $t\angles{\rr}$} be the set of ``path-points'' of $t\angles{\rr}$: the vertices of $t\angles{\rr}$ that have at exactly one child in $t\angles{\rr}$. 
Write 
$\mathfrak{S}(t,\rr)$ for the set of vectors $\sigma=(\sigma^v: v \in \branch{t}{\rr})$ with each $\sigma^v$ a permutation of $\{1,\ldots,k(v)\}$. 
Given $\sigma \in \mathfrak{S}(t,\rr)$, extend $\sigma$ to a vector $\tau \in \mathfrak{S}(t)$ by setting 
\[
\tau^v =    \begin{cases}
            \sigma^v    & \mbox{ if } v \in \branch t \rr\, ,\\
            \mathrm{Id}^{k(v)}    & \mbox{ otherwise. }   \\
        \end{cases}
\] 
Then the {\em partial} symmetrization of $\rt$ with respect to $\rr$ and $\sigma$ is the labeled tree $\psym\rt=(\psym t,\psym d)$\nomenclature[tbar]{$\psym\rt$}{The partial symmetrization of $\rt$} with vertices $\psym t = \{\tau(v), v \in t\}=t^{\tau}$ and displacements $\psym d$ given by $\psym d(\tau(u),\tau(u)i) = d(u,ui)$ for all edges $\{u,ui\}$ of $t$. 
Visually, the vector $(d(v,vi),1 \le i \le k(v))$ is now attached to the vertex $v$; this vector follows the vertex when the tree is permuted, but does not change the order of its entries. The partial symmetrization depends on $\sigma$ and on $\rr$, but we omit this from the notation. The local rule for partial symmetrization is illustrated in Figure~\ref{fig:partsym}, and Figure~\ref{fig:coupling} contains an example of partial symmetrization of an entire tree.

In what follows, for $v \in t$ we write $\psym v=\tau(v)$ for the image of $v$ under the partial symmetrization. If $\rr=(r_{1},\ldots,r_{j})$, then we write $\psym{\rr}=\big(\psym{r}_{1},\ldots,\psym{r}_{j}\big)$. We also let $\psym{\sigma}$ be the pushforward of $\sigma$ to $\branch{\psym{t}}{\psym{\rr}}$, so  $\psym{\sigma}( \psym{v} ) = \sigma(v)$ for 
$v \in \branch{\psym{t}}{\psym{\rr}}$. Note that we then have $(\psym{\sigma}(v): v \in \branch{\psym{t}}{\psym{\rr}}) \in \mathfrak{S}(\psym{t},\psym{\rr})$.

\begin{figure}[t]
\hspace{-0.5cm}
\includegraphics[page=2]{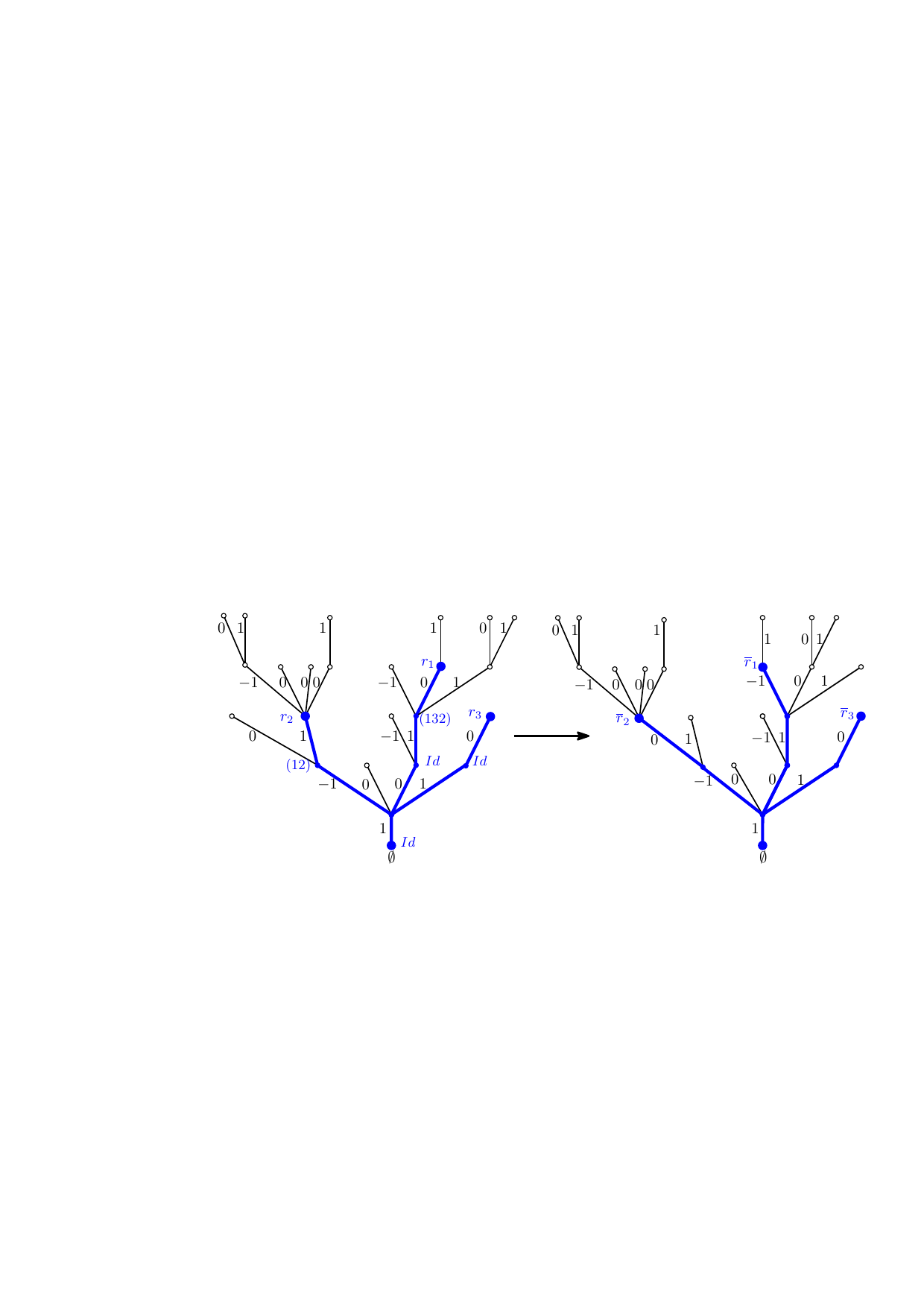}
  \caption{Illustration of partial symmetrization of a tree $\rt=(t,d)$. The vertices in $\rr$ are represented by bigger, solid blue disks. The edges of $t\angles{\rr}$ are thicker and are blue. On the left, $\sigma \in \mathfrak{S}(t,\rr)$ is indicated by listing the permutation at each vertex of $\branch{t}{\rr}$. On the right, the partial symmetrization $\overline{\rt}$ of $\rt$ with respect to $\rr$ and $\sigma$ is shown, as are the images of the elements of $\rr$.}
\label{fig:coupling}
\end{figure}

We remark that $t$ and $\psym t$ are isomorphic as rooted trees, but need not be isomorphic as plane trees or as labelled trees. Here are some comments regarding partial symmetrization. 
\begin{itemize}
\item In forming $\psym\rt$, the order of the children at branchpoints of $t\angles{\rr}$ is not changed. This implies that $t\langle \rr\rangle$ and $\psym t \langle \psym \rr \rangle$ are isomorphic as plane trees. 
\item In particular, if $\rr=(r_1,\ldots,r_k)$ is increasing with respect to lexicographic order in $t$ then $\psym{\rr}=(\psym{r}_1,\ldots,\psym{r}_k)$ is likewise ordered lexicographically in $\psym{t}$. 
\item Partial symmetrization is invertible: $\rt$, $\rr$ and $\sigma$ may be recovered from $\psym{\rt}, \psym{\rr}$ and $\psym{\sigma}$. When we wish to make the dependencies of the symmetrization more explicit, we write $\mathrm{sym}(\rt,r,\sigma)$ instead of $(\psym{\rt},\psym{r},\psym{\sigma})$. 
\end{itemize}

\begin{prop}\label{prop:random_subset}
Let $\rT=(T,D)$ have law $\LGW(\mu,\nu,n)$, let $\rR=(R_1,\ldots,R_j)$ be a vector of $j$ independent and uniformly random vertices of $T$, and let $\Sigma=(\Sigma^v,v \in \branch{T}{\rR})$ be a uniformly random element of $\mathfrak{S}(T,\rR)$. Write $\psym{\rT}=(\psym T,\psym D)$ for the partial symmetrization of $\rT$ with respect to $\Sigma$ and $\rR$, and write $\psym{\rR}=(\psym{R}_1,\ldots,\psym{R}_j)$ for the images of $(R_1,\ldots,R_j)$ in $\psym T$. Then $(\rT,\rR)$ and $(\psym \rT,\psym \rR)$ are identically distributed. 
\end{prop}

\begin{proof}
Fix any pair $\rt'=(t',d') \in \cT_n^{\mathrm{vl}}$, and any vector $\rr' =(r_1',\ldots,r_j')$ of nodes of $\rt'$. 
Next fix $\sigma' \in \mathfrak{S}(t',\rr')$, and 
let $(\rt,r,\sigma)$ be the unique triple for which $\mathrm{sym}(\rt,r,\sigma)=(\rt',r',\sigma')$. 
Then 
\[
\p{(\rT,R,\Sigma)=(\rt,r,\sigma)}
= \p{\rT=\rt} \cdot \frac{1}{n^j} \cdot \prod_{v \in \branch{t}{\rr}} \frac{1}{k_t(v)!}
= \p{\rT=\rt'} \cdot \frac{1}{n^j} \cdot \prod_{v'\in \branch{t'}{\rr'}} \frac{1}{k_{t'}(v')!}\, .
\]
The second equality holds since $\rt$ and $\rt'$ are isomorphic as rooted trees and the vector of labels at each vertex of $t$ is the same as at the one at its image in $t'$. Since $\sigma' \in \mathfrak{S}(t',\rr')$ is arbitrary and $|\mathfrak{S}(t',\rr')|= \prod_{v'\in \branch{t'}{\rr'}} k_{t'}(v')!$, it follows that 
\[
\p{(\psym{\rT},\psym{R})=(\rt',r')} = \frac{\p{\rT=\rt'}}{n^j}\, ,
\]
as required. 
\end{proof}
\begin{cor}\label{cor:fdd_identity}
Let $\rT=(T,D)$ and $\rR=(R_1,\ldots,R_j)$ be as in Proposition~\ref{prop:random_subset}. 
Let $\hat \rT=(\hat T,\hat D)$ have law $\LGW(\mu,\hat{\nu},n)$ and let $\hat\rR = (\hat R_1,\ldots,\hat R_j)$ be a vector of $j$ uniformly random vertices of $\hat T$. 

Write $(V_1,\ldots,V_j)$ and $(\hat V_1,\ldots,\hat V_j)$ for the lexicographic orderings of $\rR$ and of $\hat \rR$. For $1 \le i \le j$ let 
\[
A_i=\sum_{v \in \bbr{\emptyset,V_i}: p(v) \in \branch{T}{\rR}} D(p(v),v) \quad \mbox{and} \quad \hat A_i=\sum_{v \in \bbr{\emptyset,\hat V_i}: p(v) \in \branch{\hat T}{\hat \rR}} \hat D(p(v),v)\, .
\]
Then 
\[
(|V_1|,\ldots,|V_j|,A_1,\ldots,A_j) \eqdist (|\hat V_1|,\ldots,|\hat V_j|,\hat A_1,\ldots,\hat A_j)\, .
\]
\end{cor}
\begin{proof}
Let $(\psym{\rT},\psym{\rR})$ be as in Proposition~\ref{prop:random_subset}. 
If $\{p(u),u\}$ is an edge of $T$ and $p(u) \in \branch{T}{\rR}$ then the partial symmetrization uniformly permutes the children of $p(u)$. Since displacements are not permuted, and are independent on child edges of distinct vertices, it follows that the random variables
\[
\{\psym{D}(p(\psym{u}),\psym{u}):
p(u) \in \branch{T}{\rR}\} = 
\{\psym{D}(p(\psym{u}),\psym{u}):
p(\psym{u}) \in \branch{\psym{T}}{\psym{\rR}}\}. 
\] 
are independent and uniformly distributed on $\{-1,0,1\}$. The conclusion of Proposition~\ref{prop:random_subset} then implies the same holds for the random variables 
$
\{D(p(u),u):
p(u) \in \branch{T}{\rR}\}.
$

Finally, the trees $T$ and $\hat T$ have the same law, so $(|V_1|,\ldots,|V_j|) \eqdist (|\hat V_1|,\ldots,|\hat V_j|)$. More strongly, the subtrees $T\angles{\rR}=T\angles{\rV}$ and $\hat T\angles{\hat\rR}=\hat T\angles{\hat\rV}$ are identically distributed. By the definition of $\hat{\nu}$, the displacements $\{\hat D(p(\hat u),\hat u):
p(\hat u) \in \branch{\hat T}{\hat\rR}\}$ are independent and uniform on $\{-1,0,1\}$, and the result follows. 
\end{proof}

\addtocontents{toc}{\SkipTocEntry}
\subsection{Proof of Lemma~\ref{lem:fdds}}\label{sub:fdd}
For $n \ge 1$ let $\rT_n=(T_n,D_n)$ have law $\LGW(\mu,\nu,n)$. Fix $j \ge 1$, let $U_1,\ldots,U_j$ be independent Uniform$[0,1]$ random variables 
independent of the trees $\rT_n$, and let $U_1^{\uparrow}\ldots,U_{j}^{\uparrow}$ be the increasing ordering of $U_1,\ldots,U_j$.

For $1 \le i \le j$, let $u_i$ be such that 
\[
\{r_{\rT_n}(\floor{(2n-2)\cdot U_i}),r_{\rT_n}(\lceil (2n-2)\cdot U_i \rceil )\} = \{p(u_i),u_i\}\, ,
\]
so that $\{p(u_i),u_i\} \in E(T_n)$ is the edge of $T_n$ being traversed at time $U_i$ by $C_{\rT_n}$

Next, write $u_1^\uparrow,\ldots,u_j^\uparrow$ for the lexicographic ordering of $u_1,\ldots,u_j$. It is straightforward that if none of $u_1,\ldots,u_j$ is an ancestor of another, then the order statistics of $u_1,\ldots,u_j$ and of $U_1,\ldots,U_j$ coincide. In this case, for each $1 \le i \le j$, at time $U_i^\uparrow$ the edge $\{p(u_i^\uparrow),u_i^\uparrow\}$ is being traversed by $C_{\rT_n}$. Furthermore, the probability that one of $u_1,\ldots,u_j$ is an ancestor of another is easily seen to tend to zero as $n \to \infty$.

Recalling the notation $|u_i| = d_{\rT_n}(\emptyset,u_i)$, now observe that $|C_{\rT_n}(U_i)-|u_i|| \leq 1$ and $|Z_{\rT_n}(U_i)-X_{\rT_n}(u_i)| \le 1$ for all $1 \le i \le j$. If none of $u_1,\ldots,u_j$ is an ancestor of another then it follows from the preceding paragraph that $|C_{\rT_n}(U_i^\uparrow)-|u_i^\uparrow|| \leq 1$ and $|Z_{\rT_n}(U_i^\uparrow)-X_{\rT_n}(u_i^\uparrow)| \le 1$ for all $1 \le i \le j$. As this occurs with probability tending to one, to prove the lemma it suffices to show that 
\begin{equation}\label{eq:conv_toshow}
\Big((3n)^{-1/2}|u_i^\uparrow|,(4n/3)^{-1/4}X_{\rT_n}(u_i^\uparrow)\Big)_{1 \le i \le j} \convdist (\be(U_i^\uparrow),Z(U_i^\uparrow))_{1 \le i \le j}\, .
\end{equation}

The elements of $(u_1,\ldots,u_j)$ are independent and uniformly distributed over $T_n\setminus \{\emptyset\}$. We may thus couple $(u_1,\ldots,u_j)$ with a sequence $\rW=(w_1,\ldots,w_j)$ of independent uniformly random elements of $T_n$ so that 
$\p{(u_1,\ldots,u_j)\ne (w_1,\ldots,w_j)} \to 0$ as $n \to \infty$. (Here and below we suppress the dependence of $(w_1,\ldots,w_j)$ on $n$ for readability.) But if $(u_1,\ldots,u_j)= (w_1,\ldots,w_j)$ then the lexicographic reorderings of these vectors are also equal. 
Writing $(w_1^\uparrow,\ldots,w_j^\uparrow)$ for the lexicographic ordering of $(w_1^\uparrow,\ldots,w_j^\uparrow)$, it follows that replacing $u_i^\uparrow$ by $w_i^\uparrow$ for $1 \le i \le j$ does not affect the convergence (or lack thereof) in (\ref{eq:conv_toshow}). 

Next, for each $1 \le i \le j$ write 
$A_i = \sum_{v \in \bbr{\emptyset,w_i^\uparrow} : p(v) \in \branch{T_n}{\rW}} D_n(p(v),v)$. The tree $T_n\angles{R}$ has at most $j$ leaves, so $|A_i - X_{\rT_n}(w_i^\uparrow)| \le j-1$. It follows that replacing $X_{\rT_n}(w_i^\uparrow)$ by $A_i$ for each $1 \le i \le j$ likewise does not affect whether or not (\ref{eq:conv_toshow}) converges in distribution. 
It thus suffices to establish the convergence 
\[
\Big((3n)^{-1/2}|w_i^\uparrow|,(4n/3)^{-1/4}A_i\Big)_{1 \le i \le j} \convdist (\be(U_i^\uparrow),Z(U_i^\uparrow))_{1 \le i \le j}\, .
\]
   
Now let $\hat{\rT}_n=(\hat{T}_n,\hat{D}_n)$ have law $\LGW(\mu,\hat{\nu},n)$, let $\hat{\rW}=(\hat{w}_1,\ldots,\hat{w}_j)$ be uniformly random vertices of $T_n$ and let $(\hat{w}_1^\uparrow,\ldots,\hat{w}_j^\uparrow)$ be their lexicographic reordering. 
With $\hat{A}_i = \sum_{v \in \bbr{\emptyset,w_i^\uparrow}: p(v) \in \branch{\widehat{T}_n}{\widehat{\rR}}} \hat{D}_n(p(v),v)$, Corollary~\ref{cor:fdd_identity} implies that we may replace $w_i^\uparrow$ by $\hat{w}_i^\uparrow$ and $A_i$ by $\hat{A}_i$, without affecting distributional convergence. We may even replace $A_i$ by $X_{\widehat{T}_n}(\hat w_i^\uparrow)$,  since $|\hat{A}_i-X_{\widehat{T}_n}(\hat w_i^\uparrow)| \le j-1$.

In sum, by the above reductions, it suffices to prove that 
\begin{equation}\label{eq:conv_toshowc}
\left((3n)^{-1/2}|\hat{w}_i^\uparrow|,(4n/3)^{-1/4}X_{\widehat{\rT}_n}(\hat w_i^\uparrow)\right)_{1 \le i \le j} \convdist (\be(U_i^\uparrow),Z(U_i^\uparrow))_{1 \le i \le j}\, ,
\end{equation}
To accomplish this we essentially reverse the above chain of reductions, and conclude by applying a known convergence result for globally centered snakes. 

Let $V_1,\ldots,V_j$ be independent Uniform$[0,1]$ random variables 
independent of everything else and let $V_{1}^\uparrow,\ldots,V_{j}^\uparrow$ be the increasing ordering of $V_1,\ldots,V_j$. Then let $v_i$ be such that $\{p(v_i),v_i\}$ is being traversed at time $V_{i}$ by $C_{\hat \rT_n}$, and let $(v_1^\uparrow,\ldots,v_j^\uparrow)$ be the lexicographic ordering of $(v_1,\ldots,v_j)$. 

Reprising the argument from the start of the proof, we see that with probability tending to one, 
for all $1 \le i \le j$ the edge $\{p(v_i^\uparrow),v_i^\uparrow\}$ is being traversed at time $V_{i}^\uparrow$.  
When this occurs, we have $|C_{\widehat \rT_n}(V_i^\uparrow)-d_{\widehat \rT_n}(\emptyset,v_i^\uparrow)| \le 1$ and $|Z_{\widehat \rT_n}(V_i^\uparrow)-X_{\widehat \rT_n}(v_i^\uparrow)| \le 1$. It then follows from Corollary~\ref{cor:conv_toosymmetric}  that 
\begin{equation}\label{eq:convVi}
\left((3n)^{-1/2}d_{\widehat\rT_n}(\emptyset,v_i^\uparrow),(4n/3)^{-1/4}X_{\widehat \rT_n}(v_i^\uparrow)\right)_{1 \le i \le j} \convdist (\be(V_i^\uparrow),Z(V_i^\uparrow))_{1 \le i \le j}\, .
\end{equation}
Finally, $v_1,\ldots,v_j$ are independent uniformly random non-root vertices of $\hat{\rT}$, so the total variation distance between the laws of $(v_1,\ldots,v_j)$ and of $(\hat{w}_1,\ldots,\hat{w}_j)$ tends to zero. It follows that the total variation distance between the laws of $(v_1^\uparrow,\ldots,v_j^\uparrow)$ and $(\hat{w}_1^\uparrow,\ldots,\hat{w}_j^\uparrow)$ also tends to zero, so we may replace $v_i^\uparrow$ by $\hat w_i^{\uparrow}$ in (\ref{eq:convVi}) without changing the limit. The right-hand sides of (\ref{eq:conv_toshowc}) and (\ref{eq:convVi}) are identically distributed, so this completes the proof. \qed

\addtocontents{toc}{\SkipTocEntry}
\subsection{Proof of Lemma~\ref{lem:tightness}}

For $n \ge 1$ let $\rT_n=(T_n,D_n)$ have law $\LGW(\mu,\nu,n)$. 
Recall that $Z_{\rT_n}$ is obtained from $X_{\rT_n}$ by the identity $Z_{\rT_n}(i/(2n-2))=X_{\rT_n}(r(i))$ and by linear interpolation. We shall prove that for all $\eps > 0$ there exists $\delta > 0$ such that 
\begin{equation}\label{eq:tightness_tp}
\limsup_{n \to \infty} \p{\sup_{|i-j| \le \delta n} |X_{\rT_n}(r(i))-X_{\rT_n}(r(j))|>\eps n^{1/4}} < \eps.
\end{equation}
In the above supremum, it should be understood that we restrict to $i,j \in [2n-2]$, but we omit this from the notation. Due to the relation between $Z_{\rT_n}$ and $X_{\rT_n}$, this immediately implies tightness of the family laws of $(Z_{\rT_n},n \ge 1)$, and so proves the lemma.

For each $n \ge 1$ let $\Sigma_n$ be a uniformly random element of $\mathfrak{S}(\rT_n)$, and let $\hat{\rT}_n$ be the symmetrization of $\rT_n$ with respect to $\Sigma_n$. 
By Corollary~\ref{cor:conv_toosymmetric}, as $n \to \infty$, 
\begin{equation}\label{eq:limitingprocess}
\left( (3n)^{-1/2}C_{\widehat\rT_n}(t), (4n/3)^{-1/4}Z_{\widehat\rT_n}(t)\right)_{0\le
t \le 1}\,
\convdist \, ({\bf e}(t), Z(t))_{0\le t \le 1},
\end{equation}
for the topology of uniform convergence on $C([0,1],\R)^2$. 
It follows in particular that the family of laws of the processes $(Z_{\widehat \rT_n},n \ge 1)$ is tight. Since $Z_{\widehat{\rT}_n}$ and $X_{\widehat{\rT}_n}$ are related in the same way as $Z_{\rT_n}$ and $X_{\rT_n}$, this implies that for all $\eps > 0$ there exists $\alpha=\alpha(\eps) > 0$ such that 
\begin{equation}\label{eq:zrntight}
\sup_{n \ge 1} \p{\sup_{|i-j| \le \alpha n} |X_{\widehat \rT_n}(\widehat{r}(i))-X_{\widehat \rT_n}(\hat{r}(j))|>\eps n^{1/4}} < \eps,
\end{equation}
where we write $\hat{r}$ for the contour exploration of $\hat{\rT}_n$. 
We also fix $\eps > 0$ and let $\alpha=\alpha(\eps)$ be small enough that \eqref{eq:zrntight} holds. 

Recall from Section~\ref{sec:limitobject} the definition of the Brownian CRT $(\cT_{\be},d_{\cT_{\be}})$. As noted in that section, the process $Z$ can be seen as having domain $(\cT_{\be},d_{\cT_{\be}})$ and remains a.s uniformly continuous on this domain. It follows that for all $\eps>0$, there exists $\beta>0$ such that: 
\[
\p{\sup\{|Z(x)-Z(y)|,\ \text{for }x,y\in [0,1] \text{ such that }d_{\cT_{\be}}(x,y)\leq \beta\}>\eps}<\eps.
\]
Together with the convergence in (\ref{eq:limitingprocess}) and the relation between $Z_{\widehat \rT_n}$ and $X_{\widehat \rT_n}$, this implies that for all $\eps > 0$ there exists $\beta=\beta(\eps) > 0$ such that
\begin{equation}\label{eq:ConTnZn}
\sup_{n \ge 1} 
\p{
\sup_{u,v \in \widehat T_n: d_{\widehat\rT_{n}}(u,v)\leq \beta n^{1/2}}
|X_{\widehat{\rT}_n}(u)-X_{\widehat{\rT}_n}(v)|
> \eps n^{1/4}
}
< \eps\, .
\end{equation}

For $v \in T_n$ we write $\hat{v}$ for the image of $v$ in $\hat T_n$. The only subtlety of the proof is that $v$ and $\hat{v}$ may be visited at very different times in the contour explorations of $T_n$ and $\hat T_n$. In other words, we typically do not have $\hat r(i) = \widehat{r(i)}$. 
Observe, however, that for any $i$ and $j$, the paths $\bbr{r(i),r(j)}$ in $T_n$ and $\bbr{\widehat{r(i)},\widehat{r(j)}}$ in $\hat{T}_n$ are identical: they have the same length and visit edges with the same labels, in the same order. In particular, we have 
\begin{align*}
d_{T_n}(r(i),r(j))&=d_{\widehat{T}_n}(\widehat{r(i)},\widehat{r(j)})\, , \qquad
X_{\rT_n}(r(i))-X_{\rT_n}(r(j))
 =X_{\widehat{\rT}_n}(\widehat{r(i)})-X_{\widehat{\rT}_n}(\widehat{r(j)})\, .
\end{align*}
Taking $\beta=\beta(\eps)$ as above, (\ref{eq:ConTnZn}) then yields that for all $n \ge 1$,
\begin{align*}
 	& \p{\sup_{|i-j| \le \delta n} |X_{\rT_n}(r(i))-X_{\rT_n}(r(j))| > \eps n^{1/4}}\\
= & \p{\sup_{|i-j| \le \delta n} |X_{\hat \rT_n}(\widehat{r(i)})-X_{\widehat \rT_n}(\widehat{r(j)})| > \eps n^{1/4}}\\
\le & 
\p{\exists u,v \in \widehat T_n:d_{\widehat T_n}(u,v) \le \beta n^{1/2}, |X_{\widehat \rT_n}(u)-X_{\widehat \rT_n}(v)| \ge \eps n^{1/4}} \\
& +  \p{\exists i,j: |i-j| \le \delta n, d_{\widehat T_n}(\widehat{r(i)},\widehat{r(j)}) > \beta n^{1/2}} \\
\le & 
\eps + \p{\exists i,j: |i-j| \le \delta n, d_{\widehat T_n}(\widehat{r(i)},\widehat{r(j)}) > \beta n^{1/2}} \\
= & 
\eps + \p{\exists i,j: |i-j| \le \delta n, d_{T_n}(r(i),r(j)) > \beta n^{1/2}}. 
\end{align*}

Now note that 
\[
\sup\{d_{T_n}(r(i),r(j)): |i-j| \le \delta n\} \le 2 \sup\{|C_{T_n}(x)-C_{T_n}(y)|: |x-y| \le \delta\}. 
\]
By the distributional convergence in (\ref{eq:aldous}) and the a.s.\ continuity of Brownian excursion, it follows that if $\delta>0$ is sufficently small then 
\[
\sup_n \p{\exists i,j: |i-j| \le \delta n, d_{T_n}(r(i),r(j)) > \beta n^{1/2}} < \eps\, .
\]
For such $\delta$ we then have 
\[
\sup_n \p{\sup_{|i-j| \le \delta n} |X_{\rT_n}(r(i))-X_{\rT_n}(r(j))| > \eps n^{1/4}}< 2\eps\, ,
\]
which establishes (\ref{eq:tightness_tp}) and completes the proof. \qed 

\section{Blossoming trees, labelling, and distances}\label{sec:label_dist_determinist}

The goal of this section is to {\em deterministically} relate labels in a validly-labelled plane tree with the distances in the corresponding triangulation. For the remainder of Section~\ref{sec:label_dist_determinist}, we fix $n \in \N$ and $(T,\hat{\xi}) \in \cT_n$, let $\xi\in \cC(T)$ be such that $\rT=(T,\xi)$ is balanced. Finally, let $\rG=(G,c)=\chi(T,\xi)$ and let $(T',\xi',D)=\phi_n(T,\hat\xi)$.

Writing $\cB$ for the buds of $T$, we suppose throughout that $V(T')=V(T)\setminus\cB=V(G)\setminus \{A,B\}$. Finally, define $Y=Y_{\rT}$ as in Section~\ref{sec:labels-displacements}, and note that since $\rT$ is balanced, $Y(v) \ge 2$ for all $v \in V(T)$. It will be useful to extend the domain of $Y$ by setting $Y(A)=1$ and $Y(B)=2$, and we adopt this convention. 
\addtocontents{toc}{\SkipTocEntry}
\subsection{Bounding distances using leftmost paths}
To warm up, we prove a basic lemma bounding the difference between labels of adjacent vertices. 
\begin{lem}\label{lem:3diff}
For all 
$\{u,w\} \in E(G)$, $|Y(u)-Y(w)| \le 3$. 
\end{lem}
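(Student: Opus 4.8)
The plan is to classify each edge $\{u,w\}$ of $G$ by how it arises in the closure construction of Section~\ref{sec:clos-tree}, and to feed every case into one structural fact about the labelling $\lambda^*$.

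\emph{The key claim.} I would first prove: for every $x\in V(G)\setminus\{A,B\}=V(T)\setminus\cB$, every corner label $\lambda^*(c)$ with $\v(c)=x$ lies in $\{Y_{\rT}(x),Y_{\rT}(x)+1,Y_{\rT}(x)+2\}$, and $Y_{\rT}(x)$ is attained at the corner of $x$ first visited by the clockwise contour of $(T,\xi)$. The point is that the closure operation only removes corners incident to blossoms, splits corners incident to non-blossom vertices into copies carrying the same label, and (when creating and attaching $A,B$) relabels corners incident to $A$ or $B$; it never merges corners. Hence $\{\lambda^*(c):\v_G(c)=x\}=\{\lambda_{\rT}(c):\v_T(c)=x\}$, and it suffices to argue inside $T$ with $\lambda_{\rT}$. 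Reading the corners at $x$ in contour order, two consecutive corners differ by the net change of $\lambda_{\rT}$ accumulated over the subtree of $T$ explored in between: this is $+1$ when that subtree is a single blossom, and $0$ when it is the subtree rooted at a non-blossom child. For the latter I would prove, by induction on subtree size, the sublemma that $\lambda_{\rT}$ rises by exactly $2$ from the first to the last corner of any non-blossom subtree of $T$ — the base case is a node carrying only its two blossoms ($+1+1$), and the inductive step notes that descending into and returning from a non-blossom child costs $-1-1$, cancelling the $+2$ supplied by the inductive hypothesis. Since $x$ has exactly two blossom children, and (if $x\ne\v(\xi)$) one parent edge across which the labels close up by $-2$, the contour labels at $x$ are non-decreasing from the first corner and rise by a total of $2$, giving the stated range and minimum. (For $x=\v(\xi)$ the range is in fact $\le 1$, with $Y_{\rT}(\v(\xi))=2$.)

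\emph{Edges with $u,w\in V(T)\setminus\cB$.} Such an edge is either an inner edge of $T$ or an edge produced by a local closure. If $\{u,w\}\in E(T)$, say $u=p(w)$, then the first corner of $w$ (label $Y_{\rT}(w)$, by the claim) is obtained from the corner $\gamma$ of $u$ preceding $u\to w$ in the contour by the rule that $\lambda_{\rT}$ drops by $1$ when the contour leaves and enters non-blossom vertices, so $Y_{\rT}(w)=\lambda_{\rT}(\gamma)-1$; since $\lambda_{\rT}(\gamma)\in[Y_{\rT}(u),Y_{\rT}(u)+2]$ by the claim, $|Y_{\rT}(u)-Y_{\rT}(w)|\le 1$. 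If instead $\{u,w\}$ arises from the local closure of a stem $\{u,b\}$ (the contour thereafter traversing inner edges $uv$ and $vw$), write $L=\lambda_{\rT}(c_b)$ for the corner $c_b$ at the blossom $b$. The contour rule gives that the corner of $u$ just before the stem has $\lambda_{\rT}$-label $L$, and in $G$ this is a corner of $u$ on the edge $\{u,w\}$, still of label $L$; meanwhile $s(c_b)$ is a corner of $w$ of label $L-1$, which the local closure splits into corners of $w$ on $\{u,w\}$, still of label $L-1$. Hence $Y_{\rT}(u)\le L$ and $Y_{\rT}(w)\le L-1$, while the claim gives $Y_{\rT}(u)\ge L-2$ and $Y_{\rT}(w)\ge L-3$, so $|Y_{\rT}(u)-Y_{\rT}(w)|\le 3$.

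\emph{Edges meeting $\{A,B\}$.} By the conventions $Y_{\rT}(A)=1$ and $Y_{\rT}(B)=2$, so $\{A,B\}$ is harmless. A vertex $s\ne A,B$ is joined to $A$ only because some stem $\{s,b\}$ had $b$ unclosed with $\lambda_{\rT}(c_b)=2$, so $s$ carries a corner of label $2$ and $Y_{\rT}(s)\le 2$; combined with $Y_{\rT}(s)\ge 2$ (balancedness) this forces $Y_{\rT}(s)=2$. Similarly a neighbour of $B$ has $Y_{\rT}\in\{2,3\}$. In each case the difference from $Y_{\rT}(A)$, respectively $Y_{\rT}(B)$, is at most $1$, which completes the proof. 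The only real work is the key claim — in particular the inductive sublemma on the $+2$ behaviour of non-blossom subtrees, together with the bookkeeping that the closure preserves the set of corner labels at each non-blossom vertex.
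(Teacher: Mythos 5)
Your proof is correct and follows essentially the same route as the paper's: the same three-way case split (inner edges of $T$, edges created by local closures handled via the successor/corner-label relation, and edges meeting $\{A,B\}$ handled via the label-$2$ and label-$3$ unclosed blossoms), with the closure case reduced to the fact that corner labels at a single inner vertex span at most $2$. The only difference is that you supply an explicit inductive proof of that span-$2$ fact (and of the attainment of $Y_{\rT}$ at the first corner), which the paper simply asserts.
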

\begin{proof}
First, recall from Page~\pageref{eq:scdefine} that if $u \in V(T)$ and $\{u,A\} \in E(G)$ or $\{u,B\} \in E(G)$ then there is a corner $\zeta$ incident to $u$ with $\lambda(\zeta) \le 3$, so $Y(u) \le 3$. From this, if $\{u,w\} \cap \{A,B\} \ne \emptyset$ then the result is immediate. 
Next, if $\{u,v\} \in E(T)$ then it is an inner edge of $T$, in which case $Y(u)-Y(v)=D_{\{u,v\}}(T,\xi) \in \{-1,0,1\}$. 
Finally, if $\{u,w\} \not \in E(T)$ but $u,w \in V(T)$ then there are corners $c^1,c^2$ of $T$ such that $\v(c^1)=u$, $\v(c^2)=w$, and either $c^2=s(c^1)$ or $c^1=s(c^2)$. Assuming by symmetry that $c^2=s(c^1)$, we have $\lambda(c^2)=\lambda_{\rT}(c^1)-1$. Since the labels on corners incident to a single vertex differ by at most two, the result follows in this case. 
\end{proof}
The above lemma, though simple, already allows us to prove the labels provide a lower bound for the graph distance to $A$ in $G$, up to a constant factor.
\begin{cor}\label{cor:3diff}
For all $u \in V(G)$, $d_G(u,A) \ge Y(u)/3$.
\end{cor}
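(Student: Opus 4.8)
The plan is to combine Lemma~\ref{lem:3diff} (adjacent vertices of $G$ have $Y_{\rT}$-values differing by at most $3$) with the sharper input, already used inside the proof of Lemma~\ref{lem:3diff}, that \emph{every vertex of $G$ adjacent to $A$ has $Y_{\rT}$-value at most $3$}. First I would observe that for $u=A$ the statement is trivial (read with the convention $Y_{\rT}(A)=1$ it asserts $0\ge 1/3$, so the corollary is meant for $u\ne A$; there is nothing to prove otherwise). So assume $u\ne A$, put $k=d_G(u,A)\ge 1$, and fix a geodesic $u=x_0,x_1,\dots,x_k=A$ in $G$.

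The key step is to control the penultimate vertex $x_{k-1}$, which is adjacent to $A$ in $G$ and is distinct from $A$. If $x_{k-1}$ is an inner vertex of $T$, then (as recalled at the start of the proof of Lemma~\ref{lem:3diff}) there is a corner $c$ incident to $x_{k-1}$ with $\lambda_{\rT}(c)\le 3$, whence $Y_{\rT}(x_{k-1})\le 3$; the only remaining possibility is $x_{k-1}=B$, and then $Y_{\rT}(B)=2\le 3$ by our convention. In all cases $Y_{\rT}(x_{k-1})\le 3$.

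Finally I would telescope along the first $k-1$ edges of the geodesic. By Lemma~\ref{lem:3diff},
\[
Y_{\rT}(u)-Y_{\rT}(x_{k-1})=\sum_{i=0}^{k-2}\bigl(Y_{\rT}(x_i)-Y_{\rT}(x_{i+1})\bigr)\le 3(k-1),
\]
so $Y_{\rT}(u)\le Y_{\rT}(x_{k-1})+3(k-1)\le 3+3(k-1)=3k=3\,d_G(u,A)$, which rearranges to $d_G(u,A)\ge Y_{\rT}(u)/3$.

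I do not anticipate a genuine obstacle here; the corollary is a short consequence of Lemma~\ref{lem:3diff}. The one point that needs care is the off-by-one: a naive telescoping over all $k$ edges down to $Y_{\rT}(A)=1$ only yields $d_G(u,A)\ge (Y_{\rT}(u)-1)/3$, and it is precisely the extra bound $Y_{\rT}(x_{k-1})\le 3$ on the penultimate vertex that upgrades this to the stated inequality.
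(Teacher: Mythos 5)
Your proof is correct and takes essentially the same route as the paper: telescoping Lemma~\ref{lem:3diff} along a shortest path from $u$ to $A$. Your explicit bound $Y_{\rT}(x_{k-1})\le 3$ on the penultimate vertex is simply a cleaner way of obtaining the strict inequality that the paper's terse computation asserts, so there is no substantive difference between the two arguments.
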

\begin{proof}
Let $(u_0,u_1,\ldots,u_l)$ be a shortest path from $u=u_0$ to $A=u_l$ in $G$. 
By Lemma~\ref{lem:3diff}, since $Y(A)=1$ we have 
$Y(u) = |Y(u_0)-Y(u_l)-1| < 3l$, 
so $d_G(u,A)=l \ge Y(u)/3$. 
\end{proof}
We next aim to prove a corresponding upper bound. For this we use the leftmost paths briefly introduced in Section~\ref{sec:overview}. Let $(G,c)=\chi(T,\xi)$ as above, and let $\overrightarrow{E}$ be its unique minimal 3-orientation (defined in Section~\ref{sec:def-ori}). 
Given an oriented edge $e =uw$ with $\{u,w\} \in E$ and $x \in V(G)$, a {\em path} from $e$ to $x$ is a path $Q=(v_0,v_1,\ldots,v_m)$ in $G$ with $v_0v_1=uw$ and $v_m=x$. (In the preceding, we do not require that $uw \in \overrightarrow{E}$.) Given $e=\{u_0,u_1\} \in E(G)$ with $u_0u_1 \in \overrightarrow{E}$, the {\em leftmost path} from $e$ to $A$ is the unique directed path $P(e)=P_{(G,c)}(e)=(u_0,u_1,\ldots,u_\ell)$ with $u_{\ell}=A$ such that for each $1 \le i \le \ell-1$, $u_iu_{i+1}$ is the first outgoing edge incident to $u_i$ when considering the edges incident to $u_i$ in clockwise order starting from $\{u_{i-1},u_i\}$. The following fact establishes two basic properties of leftmost paths. 
\begin{fact}\label{fact:LMP}
For all $e \in E(G)$, $P(e)$ is a simple path. 
Furthermore, if $P(e)=(u_0,u_1,\ldots,u_{\ell})$ and $P(e')=(v_0,v_1,\ldots,v_m)$ are distinct leftmost paths to $A$ with $u_0=v_0=u$, and $u_i=v_j$ for some $i,j > 0$, then $u_{i+k}=v_{j+k}$ for all $0 \le k \le \ell-i=m-j$. 
\end{fact}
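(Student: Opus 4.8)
Everything hinges on unwinding the ``leftmost'' rule together with the fact that the minimal $3$-orientation $\overrightarrow{E}$ contains no counterclockwise directed cycle. First note that the leftmost walk out of $e$ is well defined one step at a time: every vertex other than $A$ has positive outdegree in $\overrightarrow{E}$, so the walk can always be continued until it reaches $A$, and each of its edges lies in $\overrightarrow{E}$. The local fact I will use repeatedly is: if a leftmost path enters a vertex $z$ along an edge $g$ and leaves along $g'$, then $g'$ is the first edge of $\overrightarrow{E}$ outgoing from $z$ met when the edges around $z$ are swept clockwise starting from $g$; in particular every edge of $z$ lying strictly between $g$ and $g'$ in this sweep is oriented into $z$.

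\noindent\textbf{Simplicity (and well-definedness).} I would argue that $P(e)$ is simple by contradiction. Using the determinism of the leftmost rule, a non-simple leftmost walk traces a directed cycle $C$ in $\overrightarrow{E}$, and one may arrange that at each vertex of $C$ the walk enters along the corresponding edge of $C$. By minimality $C$ is clockwise, so $A$, $B$ and the root vertex $\v(c)$ all lie in $\overline{F}$, where $F$ is the open region bounded by $C$ that contains the root corner; moreover $A$, having outdegree $0$, lies strictly inside $F$. Applying the local fact at every vertex of $C$, I claim that every edge incident to $V(C)$ other than the edges of $C$, provided it lies in $\overline F$, is oriented towards $C$ --- informally, the side of $C$ facing the root (and hence $A$) is ``sealed''. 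Granting this, count the edges of the near-triangulation $\overline F$ two ways: summing outdegrees (interior vertices contribute $3$, except $A,B,\v(c)$ which contribute $0,1,2$; each vertex of $C$ contributes only its outgoing $C$-edge) gives $|E(\overline F)| = |C| + 3n_F - \delta$, where $n_F$ is the number of interior vertices and $\delta \ge 3$; and Euler's formula for $\overline F$, whose outer face is bounded by the $|C|$-gon $C$ and all of whose other faces are triangles, gives $|E(\overline F)| = 2|C| + 3n_F - 3$. Together these force $|C| \le 0$, a contradiction. Hence the leftmost walk traverses no directed cycle; being finite and continuable until $A$, it reaches $A$, and the same reduction shows it has no repeated vertex, so $P(e)$ is well defined and simple.

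\noindent\textbf{Merging.} Let $P(e) = (u_0,\dots,u_\ell)$ and $P(e') = (v_0,\dots,v_m)$ be distinct leftmost paths to $A$ with $u_0 = v_0 = u$ (so $e$ and $e'$ are distinct outgoing edges of $u$), and suppose $u_i = v_j$ for some $i,j > 0$. Let $i^* \ge 1$ be minimal with $u_{i^*} \in \{v_1,\dots,v_m\}$ and let $j^*$ be the index with $v_{j^*} = u_{i^*} =: x$, which is unique by the simplicity just proved. If $x = A$ there is nothing to prove, so assume $x \ne A$; then, by simplicity and minimality of $i^*$, the segments $(u_0,\dots,u_{i^*})$ and $(v_0,\dots,v_{j^*})$ are internally disjoint and meet only at $u$ and $x$, hence form a cycle $C'$. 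Let $F'$ be the region bounded by $C'$ containing the root corner, so $A$ lies strictly inside $F'$. Both paths continue from $x$ towards $A$, so neither continuation edge can leave $\overline{F'}$; combining this with the local fact at $x$ and the positions of the two incoming $C'$-edges $\{u_{i^*-1},x\}$ and $\{v_{j^*-1},x\}$ relative to the corner of $x$ facing $\overline{F'}$, I would identify both continuation edges with the same edge of $x$, namely the first outgoing edge met while sweeping that corner. Thus the directed edge $x \to u_{i^*+1} = v_{j^*+1}$ is common to both paths, and since a leftmost path is determined by any one of its directed edges, $u_{i^*+k} = v_{j^*+k}$ for all $k$ up to reaching $A$. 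For a general pair $i,j > 0$ with $u_i = v_j$, simplicity of both paths together with the agreement of tails past $x$ gives $i - i^* = j - j^*$, and the conclusion follows.

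\noindent\textbf{Main obstacle.} In both parts the real work is the planar case analysis behind ``the root side of $C$ is sealed'' (simplicity) and ``the two continuation edges coincide'' (merging). Each comes down to matching the ``clockwise'' in the leftmost rule with the side of a directed cycle on which the root corner lies under the paper's drawing conventions, plus disposing of the minor cases in which the cycle passes through $B$ or $\v(c)$ (which only increase the deficit $\delta$). I expect this bookkeeping, rather than the Euler/outdegree count itself, to be where the difficulty lies.
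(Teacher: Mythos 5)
Your simplicity argument is in substance the paper's own: reduce to a directed cycle, use minimality of $\overrightarrow{E}$ to decide on which side of the cycle the root corner lies, use the leftmost rule to ``seal'' one side, and contradict an Euler-formula edge count. The paper counts on the enclosed side (all outdegrees $3$, all out-edges staying inside); you count on the root side using the outdegree deficits at $A,B,\v(c)$, which works just as well. One caveat: ``one may arrange that at each vertex of $C$ the walk enters along the corresponding edge of $C$'' is only automatic if the walk never reaches $A$ (then a state (vertex, incoming edge) repeats); for a path that does reach $A$ but repeats a vertex $u_i=u_j$, the cycle is entered at $u_i$ along a non-cycle edge and sealing at $u_i$ is not guaranteed. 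The paper excludes $u_i$ from its count; your count also survives one exceptional vertex because the deficits at $A$ and $B$ alone give ample slack, but this needs to be said rather than arranged away.

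The genuine gap is in the merging half, exactly the part the paper leaves to the reader. The step ``both paths continue from $x$ towards $A$, so neither continuation edge can leave $\overline{F'}$'' does not follow from $A$ lying inside $F'$: a continuation that starts into the other region $F''$ can return to $\overline{F'}$ by passing through an internal vertex of the \emph{other} arc of $C'$ (its own simplicity only forbids revisiting its own arc, $u$, and $x$). Worse, the local corner analysis at $x$ shows that if the two exit edges differ then each points into the region to the left of its own incoming arc, and exactly one of those two regions is $F''$; which one depends on the global configuration, so the bad case cannot be eliminated by matching drawing conventions --- ruling it out is essentially the content of the statement, and your identification of the two continuation edges via the corner of $x$ facing $F'$ is circular without it. A repair in the spirit of the paper's ``same lines'': if, say, the continuation of $P(e)$ left $x$ into $F''$, follow it to its first return to $C'$, necessarily at an internal vertex $w$ of the other arc; the excursion concatenated with that arc's segment from $w$ to $x$ is a directed cycle of $\overrightarrow{E}$, its enclosed region lies inside $F''$ (so contains none of $A,B,\v(c)$ in its interior) and receives the extra out-edges of all the cycle's internal vertices, and the same outdegree/Euler count, now with the two junctions $x$ and $w$ as exceptional vertices, gives a contradiction. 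Some global argument of this kind is needed; the purely local analysis at $x$ plus ``the paths head for $A$'' is not enough.
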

\begin{proof}
Let $P(e)=(u_0,u_1,\ldots,u_{\ell})$ be the leftmost path from $u_0u_1$ to $A$. Suppose there are $0 \le i < j \le \ell$ such that $u_i=u_j$, and choose such $i,j$ for which $|j-i|$ is minimum. Then $C=(u_i,u_{i+1},\ldots,u_j)$ is an oriented cycle with $j-i$ vertices; let $V' \subset V(G)$ be the vertices lying on or to the right of this cycle. Since $\overrightarrow{E}$ is minimal, $C$ is necessarily a clockwise cycle, so $\v(c) \not\in V'$. Also, neither $A$ nor $B$ are in any directed cycles, and it follows that $\{A,B,\v(c)\} \cap V'=\emptyset$. Since $\overrightarrow{E}$ is a 3-orientation it follows that for all $x \in V'$, $\deg_{\overrightarrow{E}}^+(x)=3$. Furthermore, for all $x \in V'\setminus \{u_i\}$, since $P(e)$ is a leftmost path, all out-neighbours of $x$ are elements of $V'$. Writing $G'$ for the sub-map of $G$ induced by $V$, it follows that $|E(G')| \ge 3|V'\setminus\{u_i\}|$. But $G'$ is a simple planar map, and $C$ is a face of $G'$ of degree $j-i \ge 3$. It follows by Euler's formula that $|E(G')| \le 3|V'|-3-(j-i) \le 3|V'\setminus\{u_i\}|-3$, a contradiction. 

The proof that two leftmost paths merge if they meet after their starting point follows the same lines and is left to the reader.
\end{proof}
The next proposition provides a key connection between the corner labelling $\lambda=\lambda_{\rT}$ and the lengths of leftmost paths. 
\begin{prop}\label{prop:LabelsLMP}
For any edge $e=\{u,w\} \in E(G)$ with $uw \in \overrightarrow{E}$ and $u\ne B$, $\lambda(\kappa^{\ell}(u,w)) = |P(e)|$. 
\end{prop}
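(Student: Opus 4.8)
The plan is to prove the statement by induction on $|P(e)|$; this is well-founded because $P(e)$ is a simple path (Fact~\ref{fact:LMP}) ending at $A$, so $|P(e)|\ge 2$. I will work with the push-forward labelling $\lambda^{*}=\lambda^{*}_{\rT}$ of $\cC(G)$ from Section~\ref{sec:bij_with_labels} (so $\lambda^{*}$ agrees with $\lambda_{\rT}$ on corners of $T$, and the assertion is $\lambda^{*}(\kappa^{\ell}(u,w))=|P(e)|$), and recall that a local closure of a stem at a vertex $x$ leaves its cyclic slot at $x$ unchanged and ``splits'' a single corner at the far endpoint into two halves inheriting that corner's label, while the two stems incident to $\xi$ simply become the edges $\{\v(\xi),A\}$ and $\{\v(\xi),B\}$ in their old slots. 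From this bookkeeping I extract two local facts about $\lambda^{*}$ and the minimal $3$-orientation $\overrightarrow{E}$: \emph{(i)} for every inner vertex $z$ of $T$, if $\gamma,\gamma'$ are the two corners of $z$ flanking an edge oriented into $z$, then $\lambda^{*}(\gamma)=\lambda^{*}(\gamma')$; and \emph{(ii)} for every $uz\in\overrightarrow{E}$ with $z\notin\{A,B\}$ (so $z$ is an inner vertex of $T$ and $\{u,z\}$ is either an inner edge of $T$ or an edge created by a local closure), $\lambda^{*}(\kappa^{\ell}(u,z))=\lambda^{*}(\kappa^{r}(z,u))+1$.

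Both facts come from tracking $\lambda_{\rT}$ along the contour of $T$. For \emph{(i)}: if the in-edge is the edge from $z$ to an inner child $c$, then the net change of $\lambda_{\rT}$ over the contour excursion into the subtree of $c$ equals $-2(s-1)+2s=2$ (that subtree has $s$ inner vertices, $2s$ stems, $s-1$ inner edges), while stepping down into and back up out of it contributes $-1$ each, for a total of $0$; and if $z$ instead acquires a new in-edge $\{z,z''\}$ by playing the role of the grandchild in some local closure, that edge is inserted alongside $\{z,p(z)\}$ and its flanking corners are precisely the two halves of a split corner. For \emph{(ii)}: when $\{u,z\}$ is an inner edge of $T$, compare the labels produced by the two contour traversals of the edge; when it is created by a local closure, compare the label of the corner of $u$ flanking the old stem with the labels of the two halves of the split corner at $z$.

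For the \textbf{base case} $w\in\{A,B\}$: then $P(e)=(u,A)$ with $|P(e)|=2$, or $P(e)=(u,B,A)$ with $|P(e)|=3$ (since $\{B,A\}$ is the only edge out of $B$), and $\{u,w\}$ comes from a stem $\{u,b\}$ of $u$ whose unclosed blossom $b$ is identified with $A$ when its corner has $\lambda_{\rT}$-label $2$ and with $B$ when that label is $3$; since $\lambda_{\rT}$ does not change between an inner corner and the adjacent blossom corner, $\lambda^{*}(\kappa^{\ell}(u,w))$ (the corner of $u$ clockwise-before that stem) equals that label, which is $|P(e)|$. For the \textbf{inductive step}, suppose $w=u_{1}\notin\{A,B\}$, so $u_{1}$ is an inner vertex and $P(e)=(u_{0},u_{1},\ldots,u_{\ell})$ with $\ell\ge 2$. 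Let $e'=u_{1}u_{2}$ be the first edge of $\overrightarrow{E}$ at $u_{1}$ clockwise from $\{u_{0},u_{1}\}$ --- that is, the second edge of $P(e)$. By Fact~\ref{fact:LMP}, $P(e')=(u_{1},\ldots,u_{\ell})$, so $|P(e')|=|P(e)|-1$ and the inductive hypothesis applies to $e'$ (note $u_{1}\ne B$). Every edge met strictly between $\{u_{0},u_{1}\}$ and $\{u_{1},u_{2}\}$ in clockwise order around $u_{1}$ is oriented into $u_{1}$ (otherwise $e'$ would not be the first outgoing edge), so by \emph{(i)} the corners $\kappa^{r}(u_{1},u_{0})$ and $\kappa^{\ell}(u_{1},u_{2})$ have equal $\lambda^{*}$-value; combining this with \emph{(ii)} for $u_{0}u_{1}\in\overrightarrow{E}$ and the inductive hypothesis gives
\begin{align*}
\lambda^{*}(\kappa^{\ell}(u,w))&=\lambda^{*}(\kappa^{\ell}(u_{0},u_{1}))=\lambda^{*}(\kappa^{r}(u_{1},u_{0}))+1\\
&=\lambda^{*}(\kappa^{\ell}(u_{1},u_{2}))+1=|P(e')|+1=|P(e)|.
\end{align*}

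The main obstacle is establishing \emph{(i)} and \emph{(ii)}: reading off the local behaviour of $\lambda^{*}_{\rT}$ from the closure operation forces one to keep careful track of which corners get split (and with what labels), of how stems turn into edges of $G$ (including the edges to $A$ and $B$), and of the label-transition rules of $\lambda_{\rT}$ along the contour of $T$, all while fixing the clockwise/contour conventions consistently. The vertices $A$, $B$ and the root $\v(\xi)$ (out-degree $2$, with $\xi$ a distinguished corner) are the potential trouble spots, but they do not disrupt the argument above: a step of $P(e)$ that leaves $\v(\xi)$ necessarily lands on $A$ or $B$, and $P(e)$ never needs to continue from $A$ or from $B$ except as already covered by the base case.
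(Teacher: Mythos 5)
Your proof is correct and follows essentially the same route as the paper's: your facts \emph{(i)} and \emph{(ii)} are exactly the paper's local observations that the label is unchanged across edges oriented into a vertex and drops by one on the left along each oriented edge, and your induction along $P(e)$ (anchored at the identification of unclosed blossoms with $A$ or $B$) is just the paper's telescoping along the path anchored at the label-$1$ corner of $A$. The left/right and contour conventions you flag are handled at the same level of detail in the paper itself, so nothing essential is missing.
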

\begin{proof}
First, a simple counting argument shows that if $\{x,y_1\}$ is an inner edge of $T$, and $\{x,y_2\}$ is the first stem following $\{x,y_1\}$ in clockwise order around $x$, then writing $\zeta$ for the corner of $T$ incident to $y_2$ we have $\lambda(\kappa^{r}(x,y_1))=\lambda(\zeta)$. 
Recall the definition of the {\em successor} function $s$ from (\ref{eq:label_bijection}) and the equivalent definition from Section~\ref{sec:bij_with_labels}. Since $\{x,y_2\}$ is a stem, in $G$, $y_2$ is identified with $s(\zeta)$, and by definition $\lambda(s(\zeta))=\lambda(c)-1$. 

Next, recall the definition of the labelling  $\lambda^*=\lambda^*_{\rT}:\cC(G) \to \Z^{\ge 0}$ from the end of Section~\ref{sec:bij_with_labels}. It follows from that definition that for any oriented edge $xy \in \overrightarrow{E}$, $\lambda^*(\kappa^r(y,x))=\lambda^*(\kappa^{\ell}(x,y))-1$. In other words, the label on the left decreases by exactly one when following any oriented edge. 

Now write $P(e)=(u_0,u_1,\ldots,u_{\ell})$. Since there are no edges oriented away from $P(e)$ leaving $P(e)$ to the left, it follows from the two preceding paragraphs that for  $0 < i < \ell$ we have $\lambda^*(\kappa^r(u_{i+1},u_i))=\lambda^*(\kappa_{\ell}(u_i,u_{i+1}))-1=\lambda^*(\kappa^r(u_i,u_{i-1}))-1$, 
so 
\[
\lambda^*(\kappa^r(u_{\ell},u_{\ell-1}))=\lambda^*(\kappa^{\ell}(u_0,u_1))-\ell=\lambda^*(\kappa^{\ell}(u,w))-\ell\, .
\] 
Finally, $\lambda^*(\kappa^r(u_{\ell},u_{\ell-1}))=1$ by definition since $u_{\ell}=A$ and $u_{\ell-1}\neq \v(\xi)$. We thus obtain 
$\lambda^*(\kappa^{\ell}(u,w))=\ell+1=|P(e)|$. 
\end{proof}
\begin{cor}\label{cor:LabelsLMP}
For all $u \in V(G)$, $d_G(u,A) \le Y(u)-1$.
\end{cor}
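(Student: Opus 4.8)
\emph{Proof plan.} The plan is to prove the inequality by strong induction on the integer $Y_{\rT}(u)$, using only the combinatorics of the corner labelling $\lambda_{\rT}$ and of the closure operation; Proposition~\ref{prop:LabelsLMP} motivates the statement but is not actually needed. Recall that $Y_{\rT}(u)\ge 2$ for every $u\in V(T)=V(G)\setminus\{A,B\}$, while $Y_{\rT}(A)=1$ and $Y_{\rT}(B)=2$. The base cases are immediate: $d_G(A,A)=0=Y_{\rT}(A)-1$, and since $\{A,B\}\in E(G)$ we get $d_G(B,A)=1=Y_{\rT}(B)-1$.

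For the inductive step, fix $u\in V(G)\setminus\{A,B\}$ and put $\ell:=Y_{\rT}(u)\ge 2$, assuming the result for all vertices of strictly smaller $Y_{\rT}$-value. Choose a corner $c^*$ of $u$ \emph{in $T$} with $\lambda_{\rT}(c^*)=\ell$ (one exists by definition of $Y_{\rT}$), and let $e_+$ be the edge of $T$ immediately clockwise after $c^*$ around $u$. The heart of the argument is to produce, in each of three exhaustive cases for $e_+$, a vertex $u'$ with $\{u,u'\}\in E(G)$ and $Y_{\rT}(u')\le \ell-1$, whence $d_G(u,A)\le d_G(u',A)+1\le \ell-1$ by induction. \textbf{(i)} If $e_+$ is a stem $\{u,b\}$ with $b\in\cB(T)$, then since the contour steps from $c^*$ down the stem to the unique corner $c_b$ of $b$ without changing the label, $\lambda_{\rT}(c_b)=\ell$. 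If $b$ is closed it is glued to $\v(s(c_b))$, which is an inner vertex carrying the corner $s(c_b)$ of label $\ell-1$, and the stem becomes the edge $\{u,\v(s(c_b))\}\in E(G)$; take $u'=\v(s(c_b))$. If $b$ is unclosed then $\ell\in\{2,3\}$ and $b$ is identified with $A$ (if $\ell=2$) or with $B$ (if $\ell=3$), giving $d_G(u,A)=1$ or $d_G(u,A)\le 2$ respectively. \textbf{(ii)} If $e_+$ is an inner edge to a child $w$ of $u$, then $\{u,w\}\in E(G)$ and the first corner of $w$ visited by the contour has label $\ell-1$, so $Y_{\rT}(w)\le \ell-1$; take $u'=w$. \textbf{(iii)} If $e_+$ is the edge $\{u,p(u)\}$ to the parent of $u$ (so $c^*$ is the last corner of $u$ visited by the contour, and $u\ne\v(\xi)$), then $\{u,p(u)\}\in E(G)$ and upon stepping up the contour visits a corner of $p(u)$ of label $\ell-1$, so $Y_{\rT}(p(u))\le\ell-1$; take $u'=p(u)$.

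The inputs are all recorded earlier: the local behaviour of $\lambda_{\rT}$ (the contour decreases the label by $1$ on an inner$\to$inner step, keeps it fixed on an inner$\to$blossom step, and increases it by $1$ on a blossom$\to$inner step), the fact from Section~\ref{sec:bij_with_labels} that an unclosed blossom has label $2$ or $3$, the fact that a closed blossom $b$ is glued to $\v(s(c_b))$ (a vertex carrying $s(c_b)$, of label $\lambda_{\rT}(c_b)-1$), and the fact that every stem of $T$ becomes an edge of $G$, oriented away from the inner vertex it was attached to. The essential point ensuring the induction closes is that $c^*$ is chosen \emph{minimal} among corners of $u$: this is what forces the neighbour $u'$ found in cases (ii) and (iii) to satisfy $Y_{\rT}(u')\le Y_{\rT}(u)-1$ rather than merely $Y_{\rT}(u')\le\lambda_{\rT}(c^*)-1$. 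There is no serious obstacle; the only thing that needs care is checking that the three possibilities for $e_+$ are exhaustive and correctly matched with the contour's behaviour around $u$ (in particular that $e_+=\{u,p(u)\}$ forces $c^*$ to be the last corner of $u$ in the exploration), which is a routine verification of the clockwise bookkeeping.
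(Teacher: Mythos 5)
Your proof is correct, but it takes a genuinely different route from the paper's. The paper deduces the corollary in two lines from Proposition~\ref{prop:LabelsLMP}: for the first stem $\{u,w\}$ after the parent edge one has $Y_{\rT}(u)=\lambda_{\rT}(\kl{u}{w})$, and that label equals the number of vertices of the leftmost oriented path $P(e)$ from $u$ to $A$ in $G$ --- a statement whose proof goes through the minimal $3$-orientation and the corner labelling $\lambda^*$ of the closure. You avoid Proposition~\ref{prop:LabelsLMP} (and the orientation machinery) altogether and induct on $Y_{\rT}(u)$, using only the local dynamics of $\lambda_{\rT}$ along the contour and the closure rules recorded in Section~\ref{sec:bij_with_labels} (a closed blossom is glued to $\v(s(c))$ with $\lambda_{\rT}(s(c))=\lambda_{\rT}(c)-1$; an unclosed blossom has label $2$ or $3$ and is glued to $A$ or $B$). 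Your three cases for the edge following a minimal-label corner are indeed exhaustive (for the root only cases (i)--(ii) can occur, and case (iii) does force the last corner of $u$), and each produces a $G$-neighbour $u'$ with $Y_{\rT}(u')\le Y_{\rT}(u)-1$ or reaches $A$ or $B$ outright; the induction closes because inner labels are at least $2$ in a balanced tree, so cases (ii)--(iii) can only arise when $Y_{\rT}(u)\ge 3$, and the unclosed-blossom subcase uses $\{A,B\}\in E(G)$. In effect you build, one step at a time, a path to $A$ along which labels drop by at least one per edge; this is a path-free cousin of the modified leftmost paths of Section~\ref{sub:2points} (Facts~\ref{fact:lengthMLMP} and~\ref{fact:mincorner}), along which labels drop by exactly one. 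The trade-off: your argument is more elementary and self-contained, but yields only the upper bound, whereas the paper's exact identity $\lambda_{\rT}(\kl u w)=|P(e)|$ is what the winding-number lower bounds (Propositions~\ref{prop:windingbound}--\ref{prop:cyclecut2}) and the two-point bound (Proposition~\ref{prop:2points}) are built on, so for the paper the corollary is essentially a free by-product of machinery it needs anyway.
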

\begin{proof}
Recall the convention that $Y(B)=2$ and $Y(A)=1$; since also $Y(\v(\xi))=2$, it suffices to prove the result for $u \in V(G)\setminus\{A,B,\v(\xi)\}$. For such $u$, if $\{u,w\}$ is the first stem incident to $u$ in clockwise order around $u$ starting from $\{u,p(u)\}$, then $Y(u)=\lambda(\kappa^{\ell}(u,w))$. The claim then follows from Proposition~\ref{prop:LabelsLMP}. 
\end{proof}

\addtocontents{toc}{\SkipTocEntry}
\subsection{Bounding distances between two points using modified leftmost paths.}\label{sub:2points}
In this section we use arguments similar to those of the preceding section, this time to prove deterministic upper bounds on pairwise distances in $G$. 
Fix two inner corners $\zeta_1,\zeta_2$ of $T$, and define \nomenclature[YTc]{$\check Y_\rT(\zeta_1,\zeta_2)$}{For $\zeta_1,\zeta_2 \in \mathcal{C}(\rT)\backslash \mathcal{B}$, $\check Y_\rT(\zeta_1,\zeta_2) = \min\{Y_\rT(w): \exists\zeta \in [\zeta_1,\zeta_2]_{\mathrm{cyc}}, w=\v(\zeta)\}$. 
}
\[
\check Y_\rT(\zeta_1,\zeta_2) = \min\{Y_\rT(w): \exists
\zeta \in [\zeta_1,\zeta_2]_{\mathrm{cyc}}, w=\v(\zeta)\}. 
\]
\begin{prop}\label{prop:2points}
For all $u,v\in V(G)\backslash \{A,B\}$, and for any corners $\zeta_u,\zeta_v$ of $T$ respectively incident to $u$ and $v$, we have
\[
d_G(u,v) \le Y_\rT(u)+Y_\rT(v) 
- 2\max\{\check Y(\zeta_u,\zeta_v), \check Y(\zeta_v,\zeta_u)\}+ 6.
\]
\end{prop}
 Before proving the proposition, we establish some preliminary results. Given an edge $e=u_0u_1$ with $\{u_0,u_1\}\in E(T)$ for which $u_0\notin \cB(T)$, we define the \emph{modified leftmost path} from $e$ to $A$ to be the unique (not necessarily oriented) path $Q(e)=(u_0,u_1,\ldots,u_\ell)$ in $G$ with $u_\ell = A$ and such that for each $1\le i\le \ell-1$, $u_iu_{i+1}$ is the first edge (considering the edges incident to $u_i$ in clockwise order starting from $\{u_{i-1},u_i\}$) which is either an outgoing edge (with respect to the orientation $\overrightarrow{E}$) incident to $u_i$ or an \emph{inner edge of $T$}. Equivalently, it is the leftmost oriented path, with the modified orientation obtained by viewing edges of $E(T)$ as unoriented (or as oriented in both directions).

We view $Q(e)$ as an oriented path from $e$ to $A$ (though the edge orientations given by the path need not agree with $\overrightarrow{E}$); we may thus speak of the left and right side of $Q(e)$.
\begin{fact}\label{fact:lengthMLMP}
For $1\leq i\leq\ell-1$, $ \lambda^*(\kl {u_{i}}{u_{i+1}})= \lambda^*(\kl {u_{i-1}}{u_{i}}) -1 $. In other words, the labels along the left of a modified leftmost path decrease by one along each edge. 
\end{fact}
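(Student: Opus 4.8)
The plan is to transcribe the interior-vertex computation from the proof of Proposition~\ref{prop:LabelsLMP}, reading ``first edge that is outgoing in $\overrightarrow{E}$ or an inner edge of $T$'' in place of ``first outgoing edge''. From that proof I reuse two local statements about the corner labelling $\lambda^*$ of $G$: (a) for every oriented edge $xy\in\overrightarrow{E}$ one has $\lambda^*(\kappa^{r}(y,x))=\lambda^*(\kappa^{\ell}(x,y))-1$, i.e.\ the label on the left decreases by exactly one along an oriented edge; and (b) at a vertex of $G$, the value of $\lambda^*$ is unchanged as one sweeps clockwise across a corner flanked by an incoming \emph{closure} edge (an edge created when some blossom's stem is locally closed) --- this being exactly the content of the counting argument there relating $\lambda_{\rT}(\kappa^{r}(x,y_1))$, $\lambda_{\rT}(c)$ and $\lambda_{\rT}(s(c))$.

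Fix $1\le i\le\ell-1$ and list the edges incident to $u_i$ in clockwise order, from $\{u_{i-1},u_i\}$ to $\{u_i,u_{i+1}\}$. By the definition of the modified leftmost path $Q(e)$, every edge strictly between these two is incoming with respect to $\overrightarrow{E}$ and is not an inner edge of $T$; since $u_i$ is an inner vertex of $T$ for $i<\ell$ (a modified leftmost path leaves $A$ or $B$ only via $\{B,A\}$, hence $A,B$ occur among $u_0,\dots,u_\ell$ only as $u_\ell$, or as $u_{\ell-1}=B$, which case is treated at the end), each such edge is therefore an incoming closure edge at $u_i$. By (b), $\lambda^*$ is constant along the corresponding clockwise arc of corners of $u_i$, whose two ends are $\kappa^{r}(u_i,u_{i-1})$ and $\kappa^{\ell}(u_i,u_{i+1})$; hence $\lambda^*(\kappa^{\ell}(u_i,u_{i+1}))=\lambda^*(\kappa^{r}(u_i,u_{i-1}))$. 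It remains to show that crossing the path-edge $\{u_{i-1},u_i\}$ from $u_{i-1}$ to $u_i$ lowers the label by one. If $u_{i-1}u_i\in\overrightarrow{E}$, this is (a) applied to that edge. Otherwise $\{u_{i-1},u_i\}\in E(T)$ with $u_i$ a child of $u_{i-1}$ in $T$; then $\{u_{i-1},u_i\}$ is crossed from $u_{i-1}$ to $u_i$ the first time the clockwise contour exploration of $T$ traverses it, and (with the conventions of Section~\ref{sec:pg}) $\kappa^{\ell}(u_{i-1},u_i)$ and $\kappa^{r}(u_i,u_{i-1})$ are split pieces of the ``leaving'' and ``arriving'' corners of that traversal, the closure only splitting corners while preserving their $\lambda$-values. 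Since $u_{i-1}$ and $u_i$ are both inner vertices, the defining recursion of $\lambda_{\rT}$ (``leaving an inner vertex, arriving at an inner vertex: decrease by one'') gives $\lambda_{\rT}(\kappa^{r}(u_i,u_{i-1}))=\lambda_{\rT}(\kappa^{\ell}(u_{i-1},u_i))-1$, and this passes to $\lambda^*$. In either case $\lambda^*(\kappa^{\ell}(u_i,u_{i+1}))=\lambda^*(\kappa^{\ell}(u_{i-1},u_i))-1$, as claimed. The one case needing a separate word is $i=\ell-1$ with $u_i=B$: then the clockwise arc at $B$ ends at $\{B,A\}$ and may contain the distinguished corner of $B$ on the outer face, and there one reads the conclusion off directly from the explicit labels assigned to the corners of $A$ and $B$ at the end of Section~\ref{sec:bij_with_labels}.

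I expect the main obstacle to be bookkeeping rather than conceptual: correctly identifying, for each edge of $G$ and each of its two sides, the pair of corners of $G$ flanking it, and tracking how the local closures --- which split corners at the images of blossoms' successors (preserving $\lambda$-values) and insert the new vertices $A,B$ --- affect $\lambda^*$ along these clockwise arcs. All of this is already carried out for genuine leftmost paths in the proof of Proposition~\ref{prop:LabelsLMP}; the only genuinely new input here is the case of an inner edge of $T$ traversed from a parent to one of its children, which the contour-exploration description of $\lambda_{\rT}$ handles as above.
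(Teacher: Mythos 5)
Your argument is correct and is essentially the paper's own proof, just written out in more detail: the paper likewise observes that crossing each edge of the modified leftmost path lowers the left label by one (by the definitions of $\lambda$ and $\lambda^*$, covering both the oriented-edge and tree-edge cases you separate), and that no stem of $T$ (hence only incoming closure edges, which split corners while preserving labels) lies clockwise between the arrival and departure edges at $u_i$, so the two relevant corners carry the same label. Your extra remark about the case $u_{\ell-1}=B$ is a harmless additional check that the paper leaves implicit.
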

\begin{proof}
First, by the definitions of $\lambda$ and $\lambda^*$, for any edge $\{u_{i-1},u_{i}\}$ of a modified leftmost path, $\lambda^*(\kr{u_{i}}{u_{i-1}})=\lambda^*(\kl{u_{i-1}}{u_{i}})-1$. 
Moreover, from the definition of a modified leftmost path, there is no stem incident to $u_{i}$ in $T$ that lies strictly between $\{u_{i-1},u_{i}\}$ and $\{u_{i},u_{i+1}\}$ (in clockwise order around $u_i$ starting from $\{u_{i-1},u_i\}$).  Hence $\kl {u_{i}}{u_{i+1}}=\kr{u_{i}}{u_{i-1}}$ in $T$ (see the proof of Proposition~\ref{prop:LabelsLMP} for more details). The result follows. 
\end{proof}
Given $\{u,v\} \in E(G)$, if $\{u,v\} \not\in E(T)$ and $\{u,v\} \ne \{A,B\}$ then by symmetry we may assume there is an edge $\{u,b\} \in E(T)$ such that $v=\v(s(b))$. In this case, by a slight abuse of notation we write $\kl u v = \kl u b$.

In the statement and proof of the next fact, write $L=\lambda^*(\kl{u_0}{u_1})$ and let $M = \min\{\lambda(\xi):\xi \in \cC(T),\kl{u_0}{u_1} \pc \xi\}$, where we view $\{u_0,u_1\}$ as an edge of $E(T)$. By the discussion on Page~\pageref{eq:scdefine}, $M \in \{2,3\}$ and $M=3$ precisely if $\bar \xi \pc \kl{u_0}{u_1}$, where $\bar \xi$ is the unique element of $\cC(T)\setminus \{\xi\}$ for which $(T,\bar \xi)$ is balanced.

Let $c^*_e(0)=\kl{u_0}{u_1}$, and for $1 \le j \le L-M$ let $c^*_e(j)$ be the first corner $\zeta$ following $c^*_e(0)$ in $T$ for which $\lambda(\zeta)=L-j$. 
For $1 \le j \le L-M$, $c^*_e(j)$ is necessarily an inner corner of $T$. 
\begin{fact}\label{fact:mincorner}
For all $0\leq j \leq L-M$, $c^*_e(j) = \kl{u_j}{u_{j+1}}$, so $\v_T(c^*_e(j))=u_j\in Q(e)$.
\end{fact}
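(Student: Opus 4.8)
The plan is to argue by induction on $j$, strengthening the statement to also record that the corners $\kl{u_0}{u_1} \pcst \kl{u_1}{u_2} \pcst \cdots \pcst \kl{u_{L-M}}{u_{L-M+1}}$ occur in strictly increasing contour order and that, for each $0 \le j < L-M$, no corner $c$ of $T$ with $\kl{u_j}{u_{j+1}} \pcst c \pcst \kl{u_{j+1}}{u_{j+2}}$ satisfies $\lambda_{\rT}(c) < \lambda_{\rT}(\kl{u_j}{u_{j+1}})$. Throughout I would read each $\kl{u_i}{u_{i+1}}$ as a corner of $T$ via the convention from Page~\pageref{eq:scdefine} and use that $\lambda^*$ restricts to $\lambda_{\rT}$ on these corners; by Fact~\ref{fact:lengthMLMP} this already gives $\lambda_{\rT}(\kl{u_i}{u_{i+1}}) = L-i$ for $0 \le i \le L-M$, which is the label that $c^*_e(i)$ is required to have. (One also checks, as in the proof of Proposition~\ref{prop:LabelsLMP}, that the modified leftmost path is long enough that $L - M \le \ell - 1$, so that all the corners in question exist.)

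Granting the strengthened claim, the induction is short. The base case $j = 0$ is the definition $c^*_e(0) = \kl{u_0}{u_1}$. Suppose $c^*_e(j) = \kl{u_j}{u_{j+1}}$. The corner $\kl{u_{j+1}}{u_{j+2}}$ has $\lambda_{\rT}$-label $L - j - 1$ and follows $c^*_e(0) = \kl{u_0}{u_1}$ in contour order; and every corner $c$ with $c^*_e(0) \pcst c \pcst \kl{u_{j+1}}{u_{j+2}}$ either equals some $\kl{u_i}{u_{i+1}}$ with $0 \le i \le j$ (label $L - i \ge L - j > L - j - 1$) or lies strictly between two consecutive corners $\kl{u_i}{u_{i+1}}, \kl{u_{i+1}}{u_{i+2}}$ with $0 \le i \le j$ (label $\ge \lambda_{\rT}(\kl{u_i}{u_{i+1}}) = L - i > L - j - 1$, by the strengthened claim). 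Hence $\kl{u_{j+1}}{u_{j+2}}$ is the first corner following $c^*_e(0)$ with label $L - j - 1$, i.e. $c^*_e(j+1) = \kl{u_{j+1}}{u_{j+2}}$, which completes the induction. Since then $v^*_e(j) = \v_{\rT}(c^*_e(j)) = \v_{\rT}(\kl{u_j}{u_{j+1}}) = u_j$, a vertex of $Q(e)$, the Fact follows.

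The substance is the strengthened claim: at each step, $\kl{u_{j+1}}{u_{j+2}}$ is the next ``record low'' corner of $T$ after $\kl{u_j}{u_{j+1}}$. I would prove this by inspecting the clockwise contour exploration of $T$ around $u_{j+1}$, in the spirit of the proofs of Proposition~\ref{prop:LabelsLMP} and Fact~\ref{fact:lengthMLMP}. If $\{u_j, u_{j+1}\}$ is an inner edge of $T$, then since the modified leftmost path leaves $u_{j+1}$ by the first edge clockwise from $\{u_j, u_{j+1}\}$ that is outgoing in $\overrightarrow{E}$ or an inner edge of $T$ --- the skipped edges being closure edges, which are not present in $T$ --- the corner $\kl{u_{j+1}}{u_{j+2}}$ is exactly the corner of $T$ visited immediately after $\kl{u_j}{u_{j+1}}$, so nothing lies between them. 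If $\{u_j, u_{j+1}\}$ is a closure edge, then by the convention of Page~\pageref{eq:scdefine} we have $\kl{u_j}{u_{j+1}} = \kl{u_j}{b}$ for a stem $\{u_j, b\}$ with $u_{j+1} = \v_{\rT}(s(c_b))$, where $c_b$ is the corner at the blossom $b$; one checks $\lambda_{\rT}(c_b) = \lambda_{\rT}(\kl{u_j}{u_{j+1}}) = L - j$, that $c_b$ immediately follows $\kl{u_j}{u_{j+1}}$ in contour order, and that $\kl{u_{j+1}}{u_{j+2}}$ is precisely the corner $s(c_b)$ of $T$ split by this closure edge in $G$. The defining property of $s$ recalled on Page~\pageref{eq:scdefine} --- that $s(c_b)$ is the first corner following $c_b$ with $\lambda_{\rT}(s(c_b)) = \lambda_{\rT}(c_b) - 1$ --- then yields that $s(c_b)$ follows $\kl{u_j}{u_{j+1}}$ and that every corner strictly between them has label $\ge \lambda_{\rT}(c_b) = L - j$; strict monotonicity of the corners in contour order comes out of the same analysis. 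The main obstacle I anticipate is precisely the bookkeeping in this last step: pinning down the dictionary between the edges of $G$ at $u_{j+1}$ (and which of them the modified leftmost path takes) and the edges and corners of $T$ at $u_{j+1}$, keeping track of where incoming closure edges split corners of $T$, and handling the degenerate behaviour near $A$ and $B$.
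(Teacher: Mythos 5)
Your proposal is correct and follows essentially the same route as the paper: induction on $j$ with the same two-case split according to whether the preceding edge is an inner edge of $T$ (in which case $\kl{u_{j+1}}{u_{j+2}}$ is the very next corner in the contour, and Fact~\ref{fact:lengthMLMP} gives the label drop) or a closure edge (in which case one invokes the first-corner-with-label-one-less characterization of the successor $s$). Your strengthened induction hypothesis (monotone contour order, no smaller label strictly in between) is just explicit bookkeeping for what the paper extracts directly from the definition of $c^*_e(j)$ as the first corner after $c^*_e(0)$ with label $L-j$, together with the fact that corner labels step down by one.
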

Before giving the proof, observe that this property need not hold for a regular leftmost path; this is the reason we require modified leftmost paths.
\begin{proof}
For $j=0$ this holds by definition; we now fix $j \ge 1$ and argue by induction. The definition of $\lambda$ yields that $c^*_e(j)\pct c^*_e(j+1)$, for any $0\le j<L-2$. We consider two cases. First, suppose $\{u_{j-1},u_{j}\}$ is an inner edge of $T$. Let $w \in V(T)$ be such that $\kr{u_{j}}{u_{j-1}}=(\{u_{j-1},u_{j}\},\{u_{j},w\})$. If $w$ is an inner vertex then 
$w=u_{j+1}$. Likewise, if $w$ is a blossom then $\v(s(w))=u_{j+1}$. In either case, $\kl{u_{j}}{u_{j+1}}=\kr{u_{j}}{u_{j-1}}$ in $T$.  
Hence $\kl{u_{j}}{u_{j+1}}$ is the corner immediately following $\kl{u_{j-1}}{u_{j}}$ in the contour exploration of $T$.
By Fact~\ref{fact:lengthMLMP}, $\lambda^*(\kl{u_{j}}{u_{j+1}})=\lambda^*(\kl{u_{j-1}}{u_{j}})-1$ , and $c^*_e(j-1)=\kl{u_{j-1}}{u_{j}}$ by the inductive hypothesis. It follows that $c^*_e(j)=\kl{u_{j}}{u_{j+1}}$. 

Second, suppose $\{u_{j-1},u_{j}\}$ is not an inner edge. By definition, there is no edge in $T$ incident to $u_{j}$ and lying strictly between $\{u_{j-1},u_{j}\}$ and $\{u_{j},u_{j+1}\}$ in clockwise order around $u_{j}$. Hence, in $T$, $s(u_{j-1})=\kl{u_{j}}{u_{j+1}}$. In this case the result follows by the definition of $s(u_{j-1})$ and by induction.
\end{proof}

\begin{centering}
\begin{figure}[ht]
\hspace{-0.5cm}
 \includegraphics[width=0.8\linewidth,page=3]{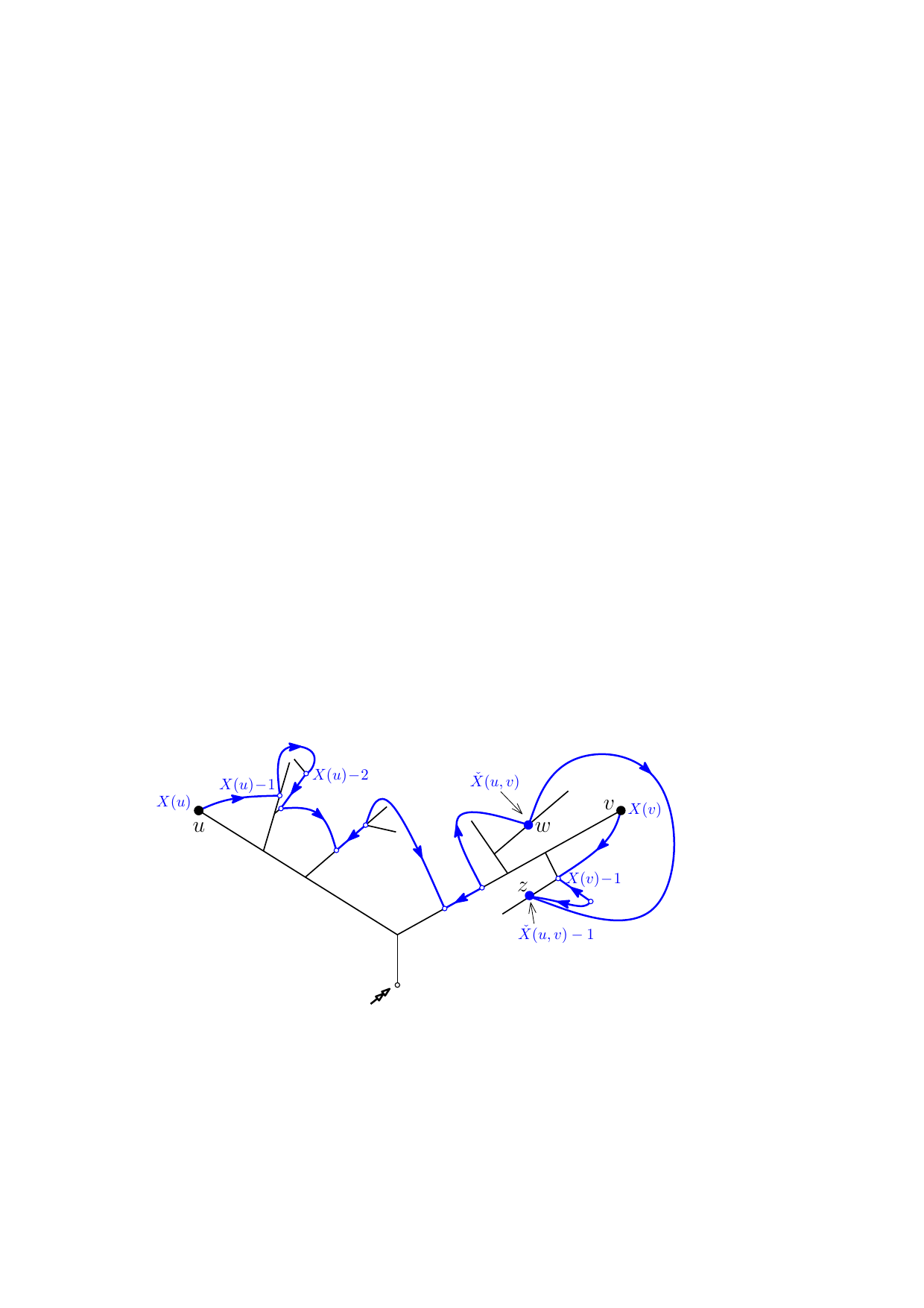}
\caption{\label{fig:MLMP}Path between $u$ and $v$ formed by concatenating sections of two modified leftmost paths. Arrows indicate orientation in $\protect \overrightarrow{E}$ . Straight arrows along the path are edges of $E(T)$; curved arrows are edges of $E(G)\setminus E(T)$.}
\end{figure}
\end{centering}

\begin{proof}[Proof of Proposition~\ref{prop:2points}]
By symmetry, we may assume that $\zeta_u \pct \zeta_v$. Let $s_u$ and $s_v$ be the edges lying to the right of these corners in $T$, and let $e_u$ and $e_v$ be the corresponding edges in $G$. Write $Q(e_u)=(u_0,u_1,\ldots,u_{\lambda(\zeta_u)-1})$ with $u_0=u$ and $e_u=\{u_0,u_1\}$, and likewise write $Q(e_v)=(v_0,v_1,\ldots,v_{\lambda(\zeta_v)-1})$. 
Observe that $\lambda^*(\kl{u_0}{u_1})=\lambda(\zeta_u)$ and likewise $\lambda^*(\kl{u_0}{u_1})=\lambda(\zeta_u)$.

We assume for simplicity that $\check Y(u,v) > 3$ (when $\check Y(u,v) \le 3$ there is a minor case analysis involving the presence of vertices $A$ and $B$ in $Q(e_u)$ and $Q(e_v)$; the details are straightforward and we omit them). 
By Fact~\ref{fact:mincorner} and the definition of $\check Y(u,v)$, necessarily $c_{e_u}(\lambda(\zeta_u)-\check Y(u,v)+1)$ and 
$c^{*}_{e_v}(\lambda(\zeta_v)-\check Y(u,v)+1)$ are incident to the same vertex $z=u_{\lambda(\zeta_u)-\check Y(u,v)+1} = v_{\lambda(\zeta_v)-\check Y(u,v)+1}$. Let $P$ be the concatenation of the subpath of $Q(e_u)$ from $u=u_0$ to $z$ with the subpath of $Q(e_v)$ from $z$ to $v_0=v$ (see Figure~\ref{fig:MLMP} for an illustration). Then $P$ connects $u$ and $v$ in $G$, so 
\begin{align*}
d_G(u,v) &\le |P|-1\\
&=\lambda(\zeta_u)+\lambda(\zeta_v)-2\check Y(\zeta_u,\zeta_v)+2\\
&\leq Y(u)+Y(v) - 2\check Y(\zeta_u,\zeta_v)+ 6,
\end{align*} where the last inequality comes from the fact that $|\lambda(\zeta_u)-Y(u)|\leq 2$ and $|\lambda(\zeta_v)-Y(v)|\leq 2$. A symmetric argument proves the existence of a path $P'$ in $G$ between $v$ and $u$ of length $\lambda(\zeta_u)+\lambda(\zeta_v)-2\check Y(v,u)+6$; this gives the desired bound.
\end{proof}


\addtocontents{toc}{\SkipTocEntry}
\subsection{Winding numbers and distance lower bounds}
 It turns out that the lower bound on $d_G(u,A)$ given by Corollary~\ref{cor:3diff} can be improved by considering winding numbers around $u$; we now remind the reader of their definition. 

Consider a closed curve $\gamma:[0,1] \to \R^2\setminus \{0\}$, and parametrize $\gamma$ in polar coordinates as $((r(t),\theta(t)),0 \le t \le 1)$ so that $\theta$ is a continuous function. We define the {\em winding number of $\gamma$} around zero to be $(\theta(1)-\theta(0))/(2\pi)$. 
Next, fix a reference point $r \in \S^2$. For $x \in \S^2\setminus\{r\}$ and a closed curve $\gamma:[0,1] \to \S^2\setminus\{r,x\}$, let $\varphi:\S^2\setminus\{r\}$ to $\R^2$ be a homeomorphism with $\varphi(x)=0$, and 
 define the winding number $\mathrm{wind}_r(x,\gamma)$ of $\gamma$ around $x$ to be the winding number of $\varphi\circ\gamma:[0,1]\to \R^2$ around zero. It is straightforward that this definition does not depend on the choice of $\varphi(x)$. 
 
In what follows, it is useful to imagine having chosen a particular representative from the equivalence class of $G$, or in other words a particular planar embedding (it is straightforward to verify that the coming arguments do not depend on which embedding is chosen).  Let $r$ be any point in the interior of the face of $G$ incident to $c$.
\begin{dfn}\label{dfn:windingnumber}
Fix an oriented edge $e=uw \in \overrightarrow{E}$ and a simple path $Q=(v_0,v_1,\ldots,v_m)$ from $u$ to $A$. 
Define the {\em winding number $w(Q,e)=w_{\rG}(Q,e)$ of $Q$ around $e$} as follows. Write $P(e)=(u_0,u_1,\ldots,u_{\ell})$. Note that $u_0=v_0=u$, $u_1=w$ and $u_{\ell}=v_m=A$. Form a cycle $C=(v_0,v_1,\ldots,v_m=u_{\ell},u_{\ell-1},\ldots,u_0)$. Then 
fix a point $x$ in the interior of the face incident to $\kr{u}{w}$, 
and let $w(Q,e)=\mathrm{wind}_r(x,C)$. 
\end{dfn}
In the preceding definition, we conflate $C$ with its image in $\S^2$ under the embedding of $G$ (and likewise with $x$); 
it is straightforward to verify that $w(Q,e)$ does not depend on the choice of such embeddings. 
\begin{prop}\label{prop:windingbound}
For all $e=uw \in \overrightarrow{E}$, if $Q$ is a simple path from $u$ to $A$ then $|Q| \ge |P(e)|+2(w(Q,e)-2)$. 
\end{prop}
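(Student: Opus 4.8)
The plan is to read the inequality off the corner labelling $\lambda^*=\lambda^*_{\rT}$ of $G$ and its interaction with $\overrightarrow{E}$, as developed in the proof of Proposition~\ref{prop:LabelsLMP}. Recall from there that across any oriented edge $xy\in\overrightarrow{E}$ the ``left label'' drops by exactly one, $\lambda^*(\kappa^r(y,x))=\lambda^*(\kappa^{\ell}(x,y))-1$; that along the leftmost path $P(e)=(u_0,\dots,u_\ell)$ one has $\lambda^*(\kappa^{\ell}(u_i,u_{i+1}))=|P(e)|-i$ for $0\le i<\ell$; and that the corner of $P(e)$ at $A=u_\ell$ carries label $1$, while the corner of $A$ on the root face (the face incident to $c$) carries label $0$. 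Consequently the face $f_x$ on the right of $e$, where $x$ lies by Definition~\ref{dfn:windingnumber}, carries a corner of label $|P(e)|+O(1)$, whereas the faces near $A$ carry labels $O(1)$.

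First I would recast $w(Q,e)=\mathrm{wind}_r(x,C)$ as a signed edge-count. Since $\mathrm{wind}_r(x,\cdot)$ is locally constant on $\S^2\setminus(C\cup\{r\})$, jumps by $\pm1$ across each edge of $C$, and vanishes on the root face (which contains $r$ and is incident to $A$), its value equals, for any path in the dual graph of $G$ from the root face to $f_x$, the sum over the primal edges met of the signed number of times $C$ traverses them. The key step is to choose that dual path well: I would take one that hugs the left side of $P(e)$, running from the root face around $A$, then along the left of $P(e)$ edge by edge down to the left face of $e$, and finally across $e$ into $f_x$. Leftmostness of $P(e)$ means that at each $u_i$ every edge strictly between $\{u_{i-1},u_i\}$ and $\{u_i,u_{i+1}\}$ in clockwise order is incoming; combined with the left-label rule this forces the successive faces along this dual path to have labels running monotonically through (roughly) $1,2,\dots,|P(e)|$ --- a ``staircase'' --- and Fact~\ref{fact:LMP}, that $P(e)$ is simple, guarantees the construction is embedded.

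Because $Q$ is a simple path, it traverses each edge of this dual path at most once, and the only edges of $C$ lying on $P(e)$ that the dual path can meet (essentially just $e$ itself) contribute a bounded correction; hence each term of the sum is $0$ or $\pm1$ up to $O(1)$, and a $+1$ followed later by a $-1$ cancels in pairs. So a net winding of $w(Q,e)$ forces $Q$ to realize at least $2(w(Q,e)-2)$ ``extra'' crossings of the staircase beyond the $|P(e)|-O(1)$ that $Q$ must already make simply to descend from $f_x$ to the root face. Each extra pair of crossings corresponds to a detour of $Q$ that leaves the staircase on its low-label side and rejoins on its high-label side, which costs two edges --- this is exactly the shortcut phenomenon of Section~\ref{sec:overview}. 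Adding up these contributions yields $|Q|\ge |P(e)|+2(w(Q,e)-2)$.

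The main obstacle I anticipate is making the staircase and its label sequence rigorous: one must control the fans of incoming edges at the $u_i$, verify that following the left side of $P(e)$ really produces the claimed consecutive labels, and dispose of the degenerate configurations where $P(e)$ or $Q$ runs close to $A$, to $B$, or where $f_x$ coincides with the root face. Relatedly, pinning the additive loss down to exactly $4$ (rather than some larger constant) is a finicky endpoint bookkeeping at $u$ and at $A$, of the same flavour as the case analysis deliberately suppressed in the proof of Proposition~\ref{prop:2points}.
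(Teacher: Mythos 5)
Your reformulation of $w(Q,e)$ as a signed count of crossings of $C$ with a dual path hugging the left of $P(e)$ is sound in principle, and it is essentially equivalent to the step at the very end of the paper's argument, where $w(Q,e)$ is identified with $n_3-n_1$, the difference between the number of excursions of $Q$ away from $P(e)$ that leave on the left and return on the right and the number that leave on the right and return on the left. But the heart of the proposition is quantitative, and that part is missing from your sketch. The paper decomposes $Q$ into edge-disjoint pieces $R_1,\dots,R_t$, each either a sub-path of $P(e)$ or meeting $P(e)$ only at its endpoints $u_i,u_j$, and proves four sharp inequalities for an excursion of length $k=|R|-1$: $k\ge j-i-2$ (leaves right, returns left), $k\ge j-i$ (left/left), $k\ge j-i-1+2(\mathbf{1}_{\{i>0\}}+\mathbf{1}_{\{j<\ell\}})$ (left/right), and $k\ge j-i-1+2\,\mathbf{1}_{\{j<\ell\}}$ (right/right). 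These are obtained by applying Euler's formula to the submap induced by the vertices on one side of the cycle formed by $R_s$ and the corresponding arc of $P(e)$, and counting outgoing edges there using the minimal $3$-orientation together with leftmostness of $P(e)$; a telescoping sum then gives $|Q|\ge |P(e)|-2n_1+3n_3+n_4-O(1)\ge |P(e)|+2(w(Q,e)-2)$. In your proposal this step is replaced by an appeal to ``the shortcut phenomenon'' of Section~\ref{sec:overview}, but that overview paragraph is an informal statement of precisely these inequalities, not a proof, so the key step is assumed rather than established.

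Moreover, the label/staircase mechanism cannot substitute for that counting as stated. Corner labels drop by exactly one only along oriented edges and along $P(e)$; along an arbitrary edge of $Q$ the vertex labels may change by up to $3$ (Lemma~\ref{lem:3diff}), so a label-drop argument for a detour between $u_i$ and $u_j$ only yields a bound of order $(j-i)/3$, far weaker than $j-i-2$, and it produces no extra $+2$ penalty for detours attaching on the ``wrong'' sides, which is exactly where the term $2(w(Q,e)-2)$ comes from. Your accounting also asserts that $Q$ ``must already make $|P(e)|-O(1)$ crossings of the staircase to descend from $f_x$ to the root face,'' but it is the dual path, not $Q$, that descends; a path $Q$ lying entirely to the right of $P(e)$ crosses your staircase essentially never, and yet for such a path with $w(Q,e)$ near zero the proposition still makes the non-trivial claim $|Q|\ge |P(e)|-4$ (a shortcut of more than bounded gain forces winding). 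Nothing in the crossing bookkeeping delivers this; it is supplied in the paper by the Euler-formula and outdegree counts, which is the genuine gap you would need to fill.
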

\begin{proof}
Write $P(e)=(u_0,u_1,\ldots,u_{\ell})$. Let $R=(w_0,w_1,\ldots,w_k)$ be a simple path meeting $P(e)$ only at $w_0$ and $w_k$, with $w_0=u_i$, $w_k=u_j$ for some $0 \le i < j \le \ell$. 
If $j < \ell$ then let $\hat{c}=\kappa^r(u_j,u_{j+1})$ and if $j=\ell$ (so $u_j=A$) then let $\hat{c}$ be the corner of the root face of $(G,c)$ incident to $A$.

We say $R$ {\em leaves $P(e)$ from the right} if $i > 0$ and the corner $\kappa^r_G(u_i,u_{i+1})$ precedes $\kappa^r_G(u_{i},w_1)$ in clockwise order around $u_i$ starting from $\kappa^r_G(u_i,u_{i-1})$. Otherwise say that $R$ leaves $P(e)$ from the left; in particular, if $i=0$ then $R$ leaves from the left by convention. Likewise, $R$ {\em returns to $P(e)$ from the right if $\hat{c}$ precedes $\kappa^r(u_j,w_{k-1})$} in clockwise order around $u_j$ starting from $\kappa^r(u_j,u_{j-1})$; otherwise say that $R$ returns to $P(e)$ from the left. 

The key to the proof is the following set of inequalities. Note that $k=|R|-1$.
\begin{enumerate}
\item If $R$ leaves $P(e)$ from the right and returns from the left then $k \ge j-i-2$.
\item If $R$ leaves $P(e)$ from the left and returns from the left then $k \ge j-i$.
\item If $R$ leaves $P(e)$ from the left and returns from the right then $k \ge j-i-1+2(\I{i>0}+\I{j < \ell})$.
\item If $R$ leaves $P(e)$ from the right and returns from the right then $k \ge j-i-1+2\I{j < \ell}$.
\end{enumerate}
For later use, we say $R$ has {\em type 1} if $R$ leaves $P(e)$ from the right and returns from the left, and define types $2,3$, and $4$ accordingly. Let $C=(w_0,\ldots,w_k=u_j,\ldots,u_i)$ be the cycle contained in the union of $R$ and $P(e)$. We provide the details of the bounds from (1) and (3), as (2) and (4) are respectively similar. 

Note that although $C$ does not respect the orientation of edges given by $\overrightarrow{E}$, it is nonetheless an oriented cycle, so it makes sense to speak of the right- and left-hand sides of $C$. 
For~(1), let $V'$ be the set of vertices on or to the right of $C$, and let $G'$ be the submap of $G$ induced by $V'$. All faces of $G'$ have degree three except $C$, which has degree $k+j-i$. By Euler's formula it follows that $|E(G')|=3|V'|-3-(k+j-i)$. 

For $i < m < j$, since $P(e)$ is a leftmost path, $|\{x \in V': u_mx \in \overrightarrow{E}\}|=1$. Also, since $R$ returns from the left, we must have $w_{k-1}u_j \in \overrightarrow{E}$ (or else $w_{k-1}=u_{j+1}$, which contradicts that $P(e)$ meets $R$ only at its endpoints), so $|\{x \in V': u_jx \in \overrightarrow{E}\}|=0$.
Since $\overrightarrow{E}$ is a $3$-orientation, it follows that $E(G') \le 3|V'|-2(j-i)-1$, which combined with the equality of the preceding paragraph yields that $k \ge j-i-2$. 

For (3) let $V'$ be the set of vertices on or to the left of $C$. Euler's formula again yields $|E(G')|=3|V'|-3-(k+j-i)$. For $x \in V'$ not lying on $C$, we have $x \not\in\{A,B,\v(c)\}$, so since $\overrightarrow{E}$ is a $3$-orientation, $|\{y \in V':xy \in \overrightarrow{E}\}|=3$. 
For $i < m < j-1$ we have $m < \ell-1$, so $u_m$ is not on the root face; since $R$ returns from the right, it follows that $|\{y \in V': u_my \in \overrightarrow{E}\}|=3$. Lastly, $|\{x \in V': u_{j-1}x \in \overrightarrow{E}\}| \ge 1$ since $u_{j-1},u_j$ lies on $C$, and likewise $|\{x \in V': u_{i}x \in \overrightarrow{E}\}| \ge 1$. The edges of $R$ are disjoint from the sets of edges counted above, so 
\[
|E(G')| \ge 3|V'\setminus\{w_1,\ldots,w_{k},u_i,u_{j-1}\}|+2+k = 3|V'|-2k-4. 
\]
Combined with the equality given by Euler's formula this yields $k \ge (j-i)-1$. Next, since $P(e)$ is leftmost, $u_m \not\in\{A,B,\v(c)\}$ for $m < \ell-1$, which is straightforwardly seen. Thus, if $j < \ell$ then since $\overrightarrow{E}$ is a 3-orientation, we in fact have $|\{x \in V': u_{j-1}x \in \overrightarrow{E}\}|=3$, and the same counting argument yields that $k \ge (j-i)+1$. Similarly, if $i > 0$ then 
$|\{x \in V': u_{i}x \in \overrightarrow{E}\}| =3$ and again $k \ge (j-i)+1$. Finally, if $0 < i < j < \ell$ then the same argument yields $k \ge (j-i)+3$. 

To conclude, subdivide the path $Q$ into edge-disjoint sub-paths $R_1,\ldots,R_t$, each of which is either a sub-path of $P(e)$ or else meets $P(e)$ only at its endpoints. We assume $R_1,\ldots,R_t$ are ordered so that $Q$ is the concatenation of $R_1,\ldots,R_t$, so in particular, $u=u_0$ is the first vertex of $R_1$, $A=u_{\ell}$ is the last vertex of $R_t$, and for $1 \le s < t$ the last vertex of $R_s$ is the first vertex of $R_{s+1}$.  

For $1 \le i \le 4$, let $n_i$ be the number of sub-paths of type $i$ among $\{R_1,\ldots,R_t\}$. Since $R_t$ is the only sub-path that intersects the root face, and $R_1$ is the only sub-path which may contain $u=u_0$, the above inequalities and a telescoping sum give
\[
|Q|= 1+\sum_{s=1}^t (|R_i|-1) \ge |P(e)|- 2n_1 +3n_3 +n_4-2(\I{R_1~\mathrm{has~type}~3}+\I{R_t~\mathrm{has~type}~3~\mathrm{or}~4})\, .
\]
In particular, we obtain the bound $|Q| \ge |P(e)|+2(n_3-n_1-2)$. 
Finally, sub-paths that leave from the right and return from the left correspond to clockwise windings of $C$ around $u$, and subpaths that leave from the left and return from the right correspond to counterclockwise windings of $C$ around $u$. It follows that $n_3-n_1$ 
is precisely the winding number $w(Q,e)$; this completes the proof. 
\end{proof}
In what follows, if $C$ is an oriented cycle in $G$ then we write $V^l(C)$ (resp.\ $V^r(C)$) for the sets of vertices lying on or to the left (resp.\ on or to the right) of $C$, and note that $V^l(C) \cap V^r(C)= V(C)$. 
\begin{prop}\label{prop:cyclecut1}
For all $e=uw \in \overrightarrow{E}$, if $Q$ is a shortest path from $u$ to $A$ and $w(Q,e)<0$ then 
there is a cycle $C$ in $G$ such that $G[V^l(C)]$ and $G[V^r(C)]$ each have diameter at least $\lfloor -w(Q,e)/2 \rfloor - 2$, and such that $\max_{y \in V^h(C)} Y_{\rT}(y) - \min_{y \in V^h(C)} Y_{\rT}(y) \ge \lfloor -w(Q,e)/2 \rfloor$ for $h \in \{l,r\}$. 
\end{prop}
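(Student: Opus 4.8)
The plan is to locate a cycle inside the union of $P(e)$ and $Q$ that separates two "far" regions of $G$, using the hypothesis $w(Q,e) < 0$ to force the presence of enough counterclockwise windings. Write $P(e)=(u_0,u_1,\ldots,u_\ell)$ and, as in the proof of Proposition~\ref{prop:windingbound}, subdivide $Q$ into edge-disjoint sub-paths $R_1,\ldots,R_t$, each of which is either a sub-path of $P(e)$ or else meets $P(e)$ only at its endpoints; classify each sub-path meeting $P(e)$ only at its endpoints as type $1,2,3$, or $4$ as before. Recall from that proof that $n_3-n_1 = w(Q,e)$, where $n_i$ counts the sub-paths of type $i$. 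Since $w(Q,e) < 0$ we have $n_1 > n_3 \ge 0$, so there is at least one type-$1$ sub-path; more precisely $n_1 \ge -w(Q,e) = |w(Q,e)|$, and in fact $n_1 \ge n_3 + |w(Q,e)|$. Each type-$1$ sub-path $R$ leaves $P(e)$ from the right at some $u_i$ and returns from the left at some $u_j$ with $i < j$, and together with the sub-path $(u_i,u_{i+1},\ldots,u_j)$ of $P(e)$ forms a cycle $C_R$; I will select one of these type-$1$ cycles appropriately.

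First I would argue that among the type-$1$ sub-paths, one can be chosen so that the associated indices $(i,j)$ satisfy $j - i$ large. The key point: since $Q$ is a \emph{shortest} path, inequality~(1) from the proof of Proposition~\ref{prop:windingbound} ($|R| - 1 \ge j-i-2$) is essentially tight, and summing the telescoping estimate over all sub-paths shows that the total "savings" $2n_1 - 3n_3 - n_4 + O(1)$ coming from windings must be matched by the corresponding index spans; combined with $|Q| \le |P(e)|$ (which holds up to an additive constant since $Q$ is shortest and $|P(e)|-1 \ge d_G(u,A)$ by Corollary~\ref{cor:LabelsLMP}), this forces the type-$1$ cycles to collectively cover a span of $P(e)$ of total size at least $2|w(Q,e)|$, roughly. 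Hence there is a type-$1$ sub-path $R$ with $j - i \ge \lfloor |w(Q,e)|/2 \rfloor + $ (constant); wait, more carefully: since the type-$1$ cycles have pairwise interior-disjoint spans along $P(e)$ (two distinct sub-paths of $Q$ are edge-disjoint and each is essentially nested or disjoint relative to $P(e)$), and there are at least $|w(Q,e)|$ of them filling a span of size $\le \ell = |P(e)|-1$, some pigeonholing gives one with span $j-i \ge $ a controlled quantity — but that's the wrong direction. The correct mechanism is the reverse: I take the type-$1$ sub-path $R$ for which $j - i$ is \emph{maximum}, and show $j-i \ge \lfloor |w(Q,e)|/2\rfloor$ by observing that the portion of $Q$ "between" $u_i$ and $u_j$ along $P(e)$ must itself wind at least $|w(Q,e)| - 1$ times (the windings outside this span cancel), and each winding of a shortest path costs at least $2$ steps in $j-i$ by the Euler-formula argument of Proposition~\ref{prop:windingbound}.

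Having fixed such a cycle $C = C_R$ with $j - i \ge \lfloor -w(Q,e)/2\rfloor$, I would then establish the two stated properties of $C$. For the diameter bound: the sub-path $(u_i,\ldots,u_j)$ of $P(e)$ lies on $C$ and has $j-i+1$ vertices; I claim the \emph{graph} distance in $G[V^l(C)]$ and in $G[V^r(C)]$ between $u_i$ and $u_j$ is at least $\lfloor -w(Q,e)/2\rfloor - 2$. One direction is easy: $u_i,\ldots,u_j$ is a sub-path of a leftmost path hence of a geodesic-length path from $u$ to $A$ (Corollary~\ref{cor:LabelsLMP} together with the leftmost-path length formula, Proposition~\ref{prop:LabelsLMP}), so $d_G(u_i,u_j) \ge$ something like $j-i$, actually $d_G(u_i, u_j) \geq |\lambda^*(\kappa^\ell(u_{i-1},u_i)) - \lambda^*(\kappa^\ell(u_{j-1},u_j))|/3 \cdot$ hmm — more directly, by Fact~\ref{fact:LMP} and Proposition~\ref{prop:LabelsLMP} the left-label drops by exactly one per step along $P(e)$, so $|Y_{\rT}(u_i) - Y_{\rT}(u_j)| = j-i$ up to an additive constant, and then Lemma~\ref{lem:3diff} gives $d_G(u_i,u_j) \ge (j-i)/3 - O(1)$; and the same bound restricted to $G[V^l(C)]$ or $G[V^r(C)]$ holds since \emph{any} path between $u_i$ and $u_j$ inside one of these submaps still has its $Y_{\rT}$-increments bounded by Lemma~\ref{lem:3diff}. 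The label-span property $\max_{y\in V^h(C)} Y_{\rT}(y) - \min_{y \in V^h(C)} Y_{\rT}(y) \ge \lfloor -w(Q,e)/2\rfloor$ then follows immediately because $u_i,\ldots,u_j \in V(C) \subseteq V^h(C)$ for $h\in\{l,r\}$ and $|Y_{\rT}(u_i)-Y_{\rT}(u_j)| = j-i \ge \lfloor -w(Q,e)/2\rfloor$ (here using that the left-label decreases by exactly $1$ along $P(e)$, so no $O(1)$ loss).

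\textbf{Main obstacle.} The delicate step is extracting a \emph{single} type-$1$ cycle with index-span $j-i \ge \lfloor -w(Q,e)/2\rfloor$; this requires carefully bookkeeping how the type-$1$, type-$3$ and type-$4$ sub-paths interleave along $P(e)$ and using the shortest-path hypothesis to prevent the windings from being "spread thin" over many short sub-paths. I expect this to hinge on the observation that a shortest path cannot perform a winding cheaply — precisely inequalities (1)--(4) of Proposition~\ref{prop:windingbound} — so that $|w(Q,e)|$ windings inside a span of size $j-i$ force $j - i \ge 2|w(Q,e)| - O(1)$, hence after dividing by $2$ (to account for possibly two cycles' worth) one obtains $j-i \ge \lfloor -w(Q,e)/2 \rfloor$ with room to spare for the $-2$ in the diameter bound; the factor-of-two and the additive $-2$ in the statement are exactly the slack that makes this work, so I anticipate the arithmetic is forgiving but must be done with care.
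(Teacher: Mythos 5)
Your argument hinges on extracting a \emph{single} type-$1$ sub-path whose $P(e)$-span satisfies $j-i \ge \lfloor -w(Q,e)/2\rfloor$, and this step is not only unjustified but appears false in general. The winding number is realized by \emph{many distinct} type-$1$ sub-paths crossing $P(e)$ at different places; nothing forces these crossings to concentrate within the span of any one of them. The ``cost'' of a winding that the shortest-path hypothesis controls is measured in edges of $Q$ (each type-$1$ sub-path has $k\ge j-i-2$, and encircling the reference point costs a bounded number of edges), not in the $P(e)$-span of a single crossing: one can have $\approx|w(Q,e)|$ type-$1$ sub-paths each with $j-i\le 3$ (think of a geodesic spiralling along a long, combinatorially twisted tube while the leftmost path runs lengthwise), in which case your maximal-span cycle has bounded span and your argument gives nothing. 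Your heuristic that ``the windings outside this span cancel'' has no basis — the sub-paths outside the chosen span contribute their own $\pm1$ windings independently. There is also a second, quantitative gap even if the claim were granted: since $P(e)$ is a leftmost path, not a geodesic, the best lower bound you have on $d_G(u_i,u_j)$ is via Lemma~\ref{lem:3diff}, namely $(j-i)/3-O(1)$, which falls short of the stated diameter bound $\lfloor -w(Q,e)/2\rfloor-2$ by a factor of $3$ (the label-span part of your conclusion, by contrast, would be fine, since labels decrease by exactly one along $P(e)$ and $V(C)\subset V^l(C)\cap V^r(C)$).

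The paper's proof selects the cycle by a different criterion: with $m=\lfloor -w(Q,e)/2\rfloor$, take the \emph{first} type-$1$ sub-path $R_s$ at which the running count $n_1(s)-n_3(s)$ reaches $m$. Then at least $m$ sub-paths of $Q$ precede $R_s$ and at least $m$ follow it, so, because $Q$ is a geodesic and its prefixes/suffixes are geodesics, every vertex of the cycle $C$ (formed by $R_s$ and the corresponding segment of $P(e)$; a short extra argument handles the $P(e)$-segment vertices) lies at $G$-distance at least $m-O(1)$ from both $u$ and $A$. Since one side of $C$ contains the initial portion of $Q$ (hence $u$) and the other contains the final portion (hence $A$), this yields the diameter bound for both $G[V^l(C)]$ and $G[V^r(C)]$ without ever bounding distances along $P(e)$. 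The label-span bound comes from counting: each side contains the endpoints of at least $m$ of the sub-paths $R_1,\dots,R_{s-1}$ (resp.\ $R_{s+1},\dots,R_t$), i.e.\ at least $m$ distinct vertices of $P(e)$, and labels strictly decrease along $P(e)$ by Proposition~\ref{prop:LabelsLMP}. If you want to salvage your plan, you would need to replace the maximal-span selection by some such ``median winding'' selection; as written, the central claim fails.
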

\begin{proof}
We write $Q=(u_0,u_1,\ldots,u_{\ell})$, and 
partition $Q$ into edge-disjoint sub-paths $R_1,\ldots,R_t$ as at the end of the proof of Proposition~\ref{prop:windingbound}. For $1 \le s \le t$ and $1 \le i \le 4$, let $n_i(s)$ be the number of sub-paths of type $i$ among $\{R_1,\ldots,R_s\}$. 
If $u_0u_1 \ne e$ then by definition $R_1$ leaves $P(e)$ from the left, so $n_1(1)=0$. 

Let $m = \lfloor -w(Q,e)/2 \rfloor$, 
and let $s$ be minimal so that $n_1(s)-n_3(s)=m$; necessarily, $R_s$ has type 1. 
Also, $s \ge m+1$, and since $n_1(t)-n_3(t)=-w(Q,e)\ge 2m$ we also have $m \le t-m$. 
Write $R_s=(w_0,w_1,\ldots,w_k)$, with $w_0=u_i$, $w_1=u_j$ for distinct $i,j \in \{1,\ldots,\ell\}$. By reversing 
$R_s$ if necessary, we may assume $i < j$,\footnote{It is not hard to prove that there is always some shortest path $Q$ for which the ordered sequence of intersections with $P(e)$ respect the orientation of $P(e)$, so that there is no need to reverse $R_s$ to ensure $i < j$. However, we do not require such a property for the current proof.} 
and write 
$C=(w_0,w_1,\ldots,w_k,u_{j-1},\ldots,u_i=w_0)$.
Since $m+1 \le s \le t-m$, the concatenation of $R_1,\ldots,R_{s-1}$ has length at least $m$ and so does the concatenation of $R_{s+1},\ldots,R_t$. Since $Q$ is a shortest path from $u$ to $A$, it follows that 
 for $0 \le i \le k$, $d_G(u,w_i) \ge m+i+1$ and $d_G(A,w_i) \ge m+(k-i)$. 

Fix $0 < a < j-i$ and let $S$ be a shortest path from $u$ to $u_{i+a}$. The concatenation of $S$, $(u_{i+a},\ldots,u_j)$, and $R_{s+1},\ldots,R_t$ has $d_G(u,u_{i+a})+(j-i-a)+d_G(u_j,A)$ edges. 
On the other hand, by the inequality in (1) from the proof of Proposition~\ref{prop:windingbound}, we have $k \ge j-i-2$, so $Q$ has at least $d_G(u,u_i)+j-i-2+d_G(u_j,A)$ edges. Since $Q$ is a shortest path, it follows that 
\[
d_G(u,u_{i+a}) \ge d_G(w,u_i)+a-2 \ge m+a-2 \ge m-1. 
\]
A similar argument shows that for all $0 < a < j-i$, $d_G(A,u_{i+a}) \ge m-1$. 
Finally, one of $G[V^l(C)]$ or $G[V^r(C)]$ contains $R_1,\ldots,R_s$, and the other contains $R_{s},\ldots,R_t$. Therefore, each of $G[V^l(C)]$ and $G[V^r(C)]$ contains at least $m$ vertices of $P(e)$; since vertex labels strictly decrease along $P(e)$, the final claim of the proposition follows. 
\end{proof}
\begin{prop}\label{prop:cyclecut2}
For all $e=uw \in \overrightarrow{E}$, if $Q$ is a shortest path from $u$ to $A$ and $w(Q,e)< -2$ then 
there is an oriented cycle $C$ in $G$ of length at most $6(|Q|-1)/(-w(Q,e)-2)$ such that $G[V^l(C)]$ and $G[V^r(C)]$ each have diameter at least $\lfloor -w(Q,e)/3 \rfloor - 2$ 
and such that $\max_{y \in V^h(C)} Y_{\rT}(y) - \min_{y \in V^h(C)} Y_{\rT}(y) \ge \lfloor -w(Q,e)/3 \rfloor$ for $h \in \{l,r\}$.
\end{prop}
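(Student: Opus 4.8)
The plan is to refine the argument of Proposition~\ref{prop:cyclecut1} by choosing, among the many ``type-1'' subpaths that contribute to the negative winding number, one whose associated cycle $C$ is \emph{short}, rather than just taking the first such subpath. Recall that in the proof of Proposition~\ref{prop:cyclecut1} we partition the shortest path $Q=(u_0,\ldots,u_\ell)$ into edge-disjoint subpaths $R_1,\ldots,R_t$, each either a subpath of $P(e)$ or meeting $P(e)$ only at its endpoints, and for each subpath $R_s$ of type~$1$ we obtain (from inequality~(1) in the proof of Proposition~\ref{prop:windingbound}) a cycle $C_s\subset R_s\cup P(e)$. Writing $R_s=(w_0,\ldots,w_k)$ with $w_0=u_{i_s}$, $w_k=u_{j_s}$, inequality~(1) gives $|R_s|-1=k\ge j_s-i_s-2$, so the length of $C_s$ is $k+(j_s-i_s)\le 2(|R_s|-1)+2$.

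First I would count: if there are $n_1$ subpaths of type~$1$ then, since $n_3-n_1\le w(Q,e)$ (from the bound $|Q|\ge|P(e)|+2(n_3-n_1-2)$ rearranged with $w(Q,e)=n_3-n_1$ as established at the end of the proof of Proposition~\ref{prop:windingbound}), we have $n_1\ge -w(Q,e)$. Among these $n_1\ge -w(Q,e)$ type-1 subpaths, the total number of edges they contain is at most $|Q|-1$, so by pigeonhole at least one type-1 subpath $R_s$ has $|R_s|-1\le (|Q|-1)/(-w(Q,e))$, whence its cycle $C=C_s$ has length at most $2(|Q|-1)/(-w(Q,e))+2\le 6(|Q|-1)/(-w(Q,e)-2)$ once $-w(Q,e)>2$ (the final inequality is a routine check: $2x+2\le 6x/(x-2)$ for $x\ge 3$ is equivalent to $x^2-x-2\ge 0$... wait, one should verify this bound holds in the stated range; I will do the elementary algebra carefully, but morally $2x+2$ and $6x/(x-2)$ are both $\Theta(x)$ for bounded $x$ and the constant $6$ is chosen to absorb the small cases).

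Next I would establish the diameter and label-spread lower bounds for $G[V^l(C)]$ and $G[V^r(C)]$. Here the subtlety compared to Proposition~\ref{prop:cyclecut1} is that the chosen $R_s$ need not be the first subpath achieving a prescribed ``depth'' $m=\lfloor -w(Q,e)/2\rfloor$ in the winding, so I cannot directly say that $m$ of the $R_1,\ldots,R_{s-1}$ (and $m$ of $R_{s+1},\ldots,R_t$) lie on opposite sides. Instead, I would argue as follows: set $m'=\lfloor -w(Q,e)/3\rfloor$; since the $n_1\ge -w(Q,e)\ge 3m'$ type-1 subpaths are linearly ordered along $Q$, and we are picking \emph{one} of them, in the worst case it is the $\lceil n_1/2\rceil$-th, so at least $\lfloor n_1/2\rfloor - (\text{number of type-3 subpaths})\ge$... this is where care is needed. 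The cleanest route: choose $R_s$ to be a \emph{shortest} type-1 subpath; then since all type-1 subpaths have length $\ge$ something, and there are $\ge -w(Q,e)$ of them with total length $\le |Q|-1$, the shortest has the length bound above, \emph{and} the running count $n_1(\cdot)-n_3(\cdot)$ is monotone-ish so that on at least one side of $R_s$ the winding contribution from the subpaths on that side is at least $m'=\lfloor -w(Q,e)/3\rfloor$ — concretely, $\{R_1,\ldots,R_{s-1}\}$ together contribute a net clockwise winding $n_1(s-1)-n_3(s-1)$ and $\{R_{s+1},\ldots,R_t\}$ contribute $(n_1(t)-n_3(t))-(n_1(s)-n_3(s))$, and these two quantities sum to at least $-w(Q,e)-1\ge 3m'-1$ wait — at least $-w(Q,e)$ minus the contribution of $R_s$ itself which is exactly $1$, so they sum to $\ge -w(Q,e)-1$; hence at least one is $\ge m'$ provided $2m'\le -w(Q,e)-1$, which holds since $m'=\lfloor -w(Q,e)/3\rfloor$. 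Since a net winding of $\ge m'$ on one side forces (by the same leftmost-path edge-counting as in Proposition~\ref{prop:cyclecut1}, applied recursively down the nested cycles) that side to contain a vertex of $P(e)$ at ``depth'' $\ge m'$, and using $d_G(u,u_{i+a})\ge m'-1$, $d_G(A,u_{i+a})\ge m'-1$ exactly as in that proof, we get $\mathrm{diam}(G[V^h(C)])\ge m'-2=\lfloor -w(Q,e)/3\rfloor-2$; and since vertex labels $Y_{\rT}$ strictly decrease along $P(e)$ (Proposition~\ref{prop:LabelsLMP}), the presence of $\ge m'$ vertices of $P(e)$ on side $h$ gives $\max_{y\in V^h(C)}Y_{\rT}(y)-\min_{y\in V^h(C)}Y_{\rT}(y)\ge m'=\lfloor -w(Q,e)/3\rfloor$, as required.

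The main obstacle I expect is making the side-splitting argument fully rigorous: verifying that choosing a \emph{shortest} type-1 subpath is compatible with the winding-count argument that controls which side of $C$ receives which subpaths, and handling the bookkeeping of nested cycles (a type-1 subpath earlier in $Q$ may re-enter the region bounded by $C$, so ``the subpaths before $R_s$ lie on one side'' is not literally true and must be replaced by a winding-number statement that is robust to this). The factor-$3$ (versus $2$) degradation in the diameter and label bounds, and the factor-$6$ in the cycle length, are precisely the slack we need to absorb this imprecision, so the accounting should go through; but the details of the nested-region edge-counting via Euler's formula will need to be written out with some care, mirroring the type-(1)–(4) case analysis in the proof of Proposition~\ref{prop:windingbound}.
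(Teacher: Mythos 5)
Your overall strategy (pigeonhole over the type-$1$ subpaths to find a short one, then rerun the machinery of Proposition~\ref{prop:cyclecut1}) is the right one, and your length accounting for the cycle is fine: since $n_1-n_3=-w(Q,e)$, there are at least $-w(Q,e)$ type-$1$ subpaths, the shortest has at most $(|Q|-1)/(-w(Q,e))$ edges, and by inequality~(1) of Proposition~\ref{prop:windingbound} the corresponding cycle is short enough to be absorbed by the constant $6$. But there is a genuine gap in the second half. The proposition demands that \emph{both} $G[V^l(C)]$ and $G[V^r(C)]$ have diameter at least $\lfloor -w(Q,e)/3\rfloor-2$ and label spread at least $\lfloor -w(Q,e)/3\rfloor$, and in the scheme of Proposition~\ref{prop:cyclecut1} this requires the chosen subpath $R_{s}$ to sit at intermediate winding depth: at least $m$ type-$1$ subpaths must occur strictly before it (so the side containing $u$, and the $\ge m$ vertices of $P(e)$ hit before $R_s$, is large) \emph{and} at least $m$ strictly after it (so the side containing $A$ is large). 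Your selection rule --- the globally shortest type-$1$ subpath --- gives no control on its position; it may well be the very first winding, in which case one side of $C$ is tiny. Your proposed repair only shows that ``at least one'' of the two sides receives net winding $\ge m'$, which proves the bounds on one side only and therefore does not yield the statement; no argument is offered for why the other side should also be large, and in general it need not be.

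The fix, which is what the paper does, is to restrict the pigeonhole to the middle band of winding depths: with $m=\lfloor -w(Q,e)/3\rfloor$, consider only those type-$1$ subpaths $R_s$ with $m+1\le n_1(s)-n_3(s)\le 2m$. Since the running count increases by one at each type-$1$ subpath and reaches $-w(Q,e)\ge 3m$, this band contains at least $m$ type-$1$ subpaths, so the shortest among them, $R_{s^\star}$, satisfies $|R_{s^\star}|-1\le (|Q|-1)/m\le 3(|Q|-1)/(-w(Q,e)-2)$ and the cycle it spans with the corresponding sub-path of $P(e)$ has at most $2|R_{s^\star}|\le 6(|Q|-1)/(-w(Q,e)-2)$ edges. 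At the same time the band condition guarantees at least $m$ type-$1$ subpaths before $R_{s^\star}$ and at least $m$ after it, so the distance, diameter, and label-spread arguments of Proposition~\ref{prop:cyclecut1} apply verbatim to \emph{both} sides of $C$ with $m=\lfloor -w(Q,e)/3\rfloor$ in place of $\lfloor -w(Q,e)/2\rfloor$; this single change of selection rule removes exactly the obstacle you flagged, and is where the factors $3$ and $6$ in the statement come from.
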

\begin{proof}
The proof is very similar to that of Proposition~\ref{prop:cyclecut1}, so we omit most details. 
Partition $Q$ into $R_1,\ldots,R_t$ and define $n_i(s)$, $1 \le s \le t$, $1 \le i \le 4$ as before. 
Let $m = \lfloor -w(Q,e)/3 \rfloor$. There are at least $m$ values of $s$ such that $R_s$ has type 1 and 
$m+1 \le n_1(s)-n_3(s) \le 2m$; among these, let $s^{\star}$ minimize $|R_{s^{\star}}|$. Then 
\[
d_G(u,A) \ge m\cdot (|R_{s^{\star}}|-1) \ge \frac{-(w(Q,e)+2)}{3} \cdot (|R_{s^{\star}}|-1). 
\]
The sub-path of $P(e)$ joining the endpoints of $R_{s^{\star}}$ has at most two more edges than $R_{s^{\star}}$, so the cycle formed by this sub-path of $P(e)$ and $R_{s^{\star}}$ has at most $2|R_{s^{\star}}| \le 6(d_G(w,A)+1)/(-w(Q,e)-2)=6(|Q|-1)/(-w(Q,e)-2)$ vertices. The remainder of the proof closely follows that of Proposition~\ref{prop:cyclecut1}. 
\end{proof}

\section{Labels approximate distances for random triangulations}\label{sec:label_approx}
Fix $n \in \N$, and let $(T_,\hat{\xi})$ be uniformly distributed in $\cT_n$. (We will later take $n \to \infty$, but suppress the dependence of $(T,\hat{\xi})$ on $n$ for readability.) Define $(T',\xi',D)=\phi_n(T,\hat \xi)$. Next, as in Corollary~\ref{cor:uniftreelaw}, let $\xi^1,\xi^2\in \cC(T)$ be such that $(T,\xi^i)$ is balanced for $i\in\{1,2\}$. Conditionally given $(T,\hat \xi)$, choose $\xi\in \{\xi_1,\xi_2\}$ uniformly at random. Write $\rT=(T,\xi)$ and define $\rG=(G,c)=\chi(\rT)$. By Corollary~\ref{cor:uniftreelaw}, $(G,c)$ is uniformly distributed in $\triangle_n^\circ$ and $\rT' = (T',\xi',D')$ is uniformly distributed in $\cT_n^{\mathrm{vl}}$.

Again define $Y=Y_{\rT}$ as in Section~\ref{sec:labels-displacements} and again extend $Y$ to $V(G)$ by taking $Y(A)=1$ and $Y(B)=2$. We note that, since the function $\phi_n$ identifies $T'$ as a subtree of $T$, $Y(v)$ is defined for $v \in V(T')$. 

Using Corollary~\ref{cor:LabelsLMP} and Proposition~\ref{prop:windingbound}, we now show that with high probability, the labelling $Y:V(G) \to \Z^{\ge 0}$ gives distances to $A$ in $G$ up to a uniform $o(n^{1/4})$ correction. 
\begin{thm}\label{thm:distance}
For all $\eps > 0$, 
\[
\lim_{n \to \infty} \p{\exists~u \in V(G): d_G(u,A) \not\in [Y(u)-\eps n^{1/4},Y(u)-1]}=0\, .
\]
\end{thm}
The upper bound $d_G(u,A)\le Y(u)-1$ holds deterministically by Corollary~\ref{cor:LabelsLMP}. To prove the lower bound (in probability), we begin by stating a lemma whose proof, postponed to the end of the section, is based on soft convergence arguments and the continuity of the Brownian snake. 
Recall the definition of the contour exploration $(r_{\rT}(j),0 \le j \le 2n-2)$. 
Given $0 \le i \le 2n-2=2|V(T)|-2$ and $\Delta > 0$, let 
\begin{align*}
g_{\rT}(i,\Delta) &= \sup\left\{j < i: \left|Y(r_{\rT}(j))-Y(r_{\rT}(i))\right| \ge \Delta \mbox{ or } j=0\right\} \\
d_{\rT}(i,\Delta) & = \inf\left\{j > i: \left|Y(r_{\rT}(j))-Y_{\rT}(r_{\rT}(i))\right| \ge \Delta \mbox{ or } j=2n-2\right\}\, .
\end{align*}
Then let $N(i,\Delta)=\{v \in V(T): \exists~g_{\rT}(i,\delta) \le j \le d_{\rT}(i,\Delta), r_{\rT}(j)=v\}$ 
be the set of vertices of $T$ visited by the contour exploration between times 
$g_{\rT}(i,\Delta)$ and $d_{\rT}(i,\Delta)$. 
\begin{lem}\label{lem:tree_displacement_size_bound}
For all $\eps > 0$ and $\beta > 0$, there exist $\alpha > 0$ and $n_0 \in \N$ such that for $n \ge n_0$, 
\[
\p{\inf\left\{|N(i,\beta n^{1/4})|:0 \le i \le 2n-2\right\} \ge \alpha n} \ge 1-\eps\, .
\]
\end{lem}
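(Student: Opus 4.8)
The plan is to show that with high probability the rescaled label process along the tree contour cannot move by $\beta n^{1/4}$ over a short time window, and then to convert a long time window into many distinct explored vertices. For the latter I would use a purely combinatorial bound: for any $0 \le a \le b \le 2n-2$, the entries $r_\rT(a),r_\rT(a+1),\dots,r_\rT(b)$ form the vertex set of a subtree $S$ of $T$ (consecutive entries are adjacent), and the sub-walk $r_\rT(a),\dots,r_\rT(b)$ uses each edge of $S$ at most twice, since every edge is used exactly twice in the full contour exploration; hence $S$ has at least $(b-a)/2$ edges, so at least $(b-a)/2$ vertices. Taking $a=g_\rT(i,\beta n^{1/4})$ and $b=d_\rT(i,\beta n^{1/4})$ gives
\[
|N(i,\beta n^{1/4})| \ \ge\ \tfrac12\bigl(d_\rT(i,\beta n^{1/4})-g_\rT(i,\beta n^{1/4})\bigr),
\]
so it suffices to prove that, with probability at least $1-\eps$, $d_\rT(i,\beta n^{1/4})-g_\rT(i,\beta n^{1/4}) \ge 2\alpha n$ holds simultaneously for all $i$.

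To control these window lengths I would invoke the tightness of Lemma~\ref{lem:tightness}. Let $\zeta_n\colon[0,1]\to\R$ be the map $t\mapsto (4n/3)^{-1/4}Y_\rT(r_\rT(\lfloor(2n-2)t\rfloor))$, linearly interpolated. Re-rooting a plane tree at a different corner cyclically shifts its contour exploration and shifts its corner labelling (hence $Y$) by an additive constant; so, using Fact~\ref{fact:rerooting} together with $Z_{\rT_n}(0)=Z_{\rT_n}(1)=0$ (whence a cyclic shift does not change the modulus of continuity), $\zeta_n$ coincides with $(4n/3)^{-1/4}Z_{\rT_n}$ up to an additive constant, a cyclic time-shift, and the usual $O(n^{-1/4})$ interpolation error, where $\rT_n$ is uniform in $\cT_n^{\mathrm{vl}}$. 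None of these affects tightness, so by Lemma~\ref{lem:tightness} the laws of $\zeta_n$ are tight in $C([0,1],\R)$, and tightness there yields asymptotic equicontinuity: for every $\beta'>0$ and $\eps>0$ there are $\alpha'>0$ and $n_0$ with $\p{\sup_{|s-t|\le\alpha'}|\zeta_n(s)-\zeta_n(t)|\ge\beta'}\le\eps$ for $n\ge n_0$. Choosing $\beta'=(3/4)^{1/4}\beta/2$, on the complementary event one has $|Y_\rT(r_\rT(j))-Y_\rT(r_\rT(i))|<\beta n^{1/4}$ whenever $|i-j|\le\alpha'(2n-2)$ and $n$ is large. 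This transfer step — in particular pinning down the exact relation between the process $Y_\rT$ used in Sections~\ref{sec:label_dist_determinist}--\ref{sec:label_approx} (built from the balanced rooting, restricted to inner vertices) and the displacement process $Z_{\rT_n}$ of Lemma~\ref{lem:tightness} — is the only part that needs genuine care; everything else is soft.

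To conclude, work on this good event and fix $i$; set $\Delta=\beta n^{1/4}$ and $\rho_n=\lfloor\alpha'(2n-2)\rfloor$. Any time $j$ with $|Y_\rT(r_\rT(j))-Y_\rT(r_\rT(i))|\ge\Delta$ satisfies $|i-j|>\rho_n$, so $g_\rT(i,\Delta)\le\max(0,i-\rho_n)$ and $d_\rT(i,\Delta)\ge\min(2n-2,i+\rho_n)$. Assuming (without loss of generality) $\alpha'<1/4$, so that $2\rho_n\le 2n-2$, a short case analysis according to whether $i<\rho_n$, $i>2n-2-\rho_n$, or $\rho_n\le i\le 2n-2-\rho_n$ shows that in every case $d_\rT(i,\Delta)-g_\rT(i,\Delta)\ge\rho_n$, which is at least $\alpha' n$ for $n$ large. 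Combined with the combinatorial bound of the first paragraph, this gives $|N(i,\Delta)|\ge\alpha' n/2$ for all $i$ simultaneously, on an event of probability at least $1-\eps$ once $n$ is large; taking $\alpha=\alpha'/2$ finishes the proof.
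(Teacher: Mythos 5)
Your proof is correct, and while it follows the same two-step skeleton as the paper's argument (first convert the contour-time window $d_{\rT}(i,\Delta)-g_{\rT}(i,\Delta)$ into a count of distinct vertices, then bound the window length below by $\alpha n$ uniformly in $i$ using the scaling behaviour of the label process), both steps are executed differently. For the first step the paper argues probabilistically: it shows via the height process, following the argument for equations (12)--(13) of \cite{le2005random}, that $\sup_{0\le t\le 1}|i_n(\lfloor tn\rfloor)/(2n-2)-t|\to 0$ in probability, and deduces that with high probability $|N(i,\beta n^{1/4})|\ge \tfrac12(d_{\rT}(i,\beta n^{1/4})-g_{\rT}(i,\beta n^{1/4}))-\delta n$ for all $i$; your observation that each edge is traversed at most twice by any contour sub-walk gives the deterministic bound $|N(i,\Delta)|\ge\tfrac12(d_{\rT}(i,\Delta)-g_{\rT}(i,\Delta))$ with no probabilistic input and no error term, which is genuinely simpler. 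For the second step the paper invokes the full invariance principle of Proposition~\ref{prop:snake} together with Skorokhod's representation theorem and the almost sure uniform continuity of the limiting process $Z$, whereas you only use the tightness of Lemma~\ref{lem:tightness} through asymptotic equicontinuity; this is weaker input yielding the same conclusion, and either is available at this point of the paper. Finally, you are more explicit than the paper about the transfer between the labelling $Y_{\rT}$ along the contour used in Sections~\ref{sec:label_dist_determinist}--\ref{sec:label_approx} and the process $Z_{\rT_n}$ of the uniform element of $\cT_n^{\mathrm{vl}}$ (additive shift up to the error $3$ of Fact~\ref{fact:rerooting}, a cyclic time shift harmless because $Z_{\rT_n}(0)=Z_{\rT_n}(1)=0$, and interpolation errors), a bookkeeping point the paper's proof passes over silently; your boundary case analysis for $i$ within $\rho_n$ of $0$ or $2n-2$ is also correct, since $g_{\rT}$ and $d_{\rT}$ are capped at the endpoints.
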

\begin{proof}[Proof of Theorem~\ref{thm:distance}]
As mentioned, we need only prove the lower bound. It suffices to show that 
for all $\eps > 0$, 
\[
\limsup_{n \to \infty} \p{\exists~e=uv \in \overrightarrow{E}: d_G(u,A) < Y_{\rT}(u)-6(\eps n^{1/4}+2)} \le 4\eps
\] 
(we have done a little anticipatory selection of constants in the preceding formula). 
Write $\diam(G)$ for greatest distance between any two vertices of $G$. 
By Corollary~\ref{cor:LabelsLMP} , $\diam(G) \le 2\max_{u \in V(G)} (Y_{\rT}(u)-1)=2(\max_{u \in V(T)} Y_{\rT}(u)-\min_{u \in V(T)} Y_{\rT}(u))+2$, 
so by Fact~\ref{fact:rerooting}, $\diam(G) \le \max_{u \in V(T')} X_{(T',\xi',D)}(u) - \min_{u \in V(T')} X_{(T',\xi',D)}(u)+8$. 
Finally, 
\[
\max_{u \in V(T')} X_{\rT'}(u) - \min_{u \in V(T')} X_{\rT'}(u) = 
\max_{x \in [0,1]} Z_{\rT'}(x)-\min_{x \in [0,1]} Z_{\rT'}(x)\, ,
\]
and Proposition~\ref{prop:snake} implies that $(\max_{x \in [0,1]} Z_{\rT'}(x)-\min_{x \in [0,1]} Z_{\rT'}(x))n^{-1/4}$ converges in distribution as $n=|V(T')| \to \infty$, to an almost surely finite random variable. It follows that there is $y=y(\eps) > 0$ such that $\p{\mathrm{diam}(G) \ge y n^{1/4}} < \eps$. Choose such $y$, and let $B$ be the event that $G$ contains a cycle $C$ of length at most $2y/\eps$ 
such that with $V^l(C)$ and $V^r(C)$ as defined earlier, for $h \in \{l,r\}$ we have 
\[
\max_{u \in V^h(C)} Y_{\rT}(u) - \min_{u \in V^h(C)} Y_{\rT}(u) \ge \eps n^{1/4}. 
\]
Next, suppose there exists $e=uv\in \overrightarrow{E}$ for which $d_G(u,A) < Y_{\rT}(u)-6(\eps n^{1/4}+2)$.  Fix such an edge $e$, and any shortest path $Q$ from $u$ to $A$; by Proposition~\ref{prop:windingbound} we have 
$w(Q,e) \le -3\eps n^{1/4}-2$. It follows from Proposition~\ref{prop:cyclecut2}
that either $\mathrm{diam}(G) \ge y n^{1/4}$ or else $B$ occurs. 
It thus suffices to show that 
\begin{equation}\label{eq:inviewof}
\p{B,\mathrm{diam}(G) \le y n^{1/4}} \le 3\eps. 
\end{equation}
Suppose $B$ occurs, let $C$ be as in the definition of $B$, and 
let $F$ be the subgraph of $T$ induced by $V(T)\setminus V(C)$. Then $F$ is a forest, 
and each component of $F$ is contained within $G[V^l(C)]$ or $G[V^r(C)]$ since $T$ is a subgraph of $G$. Also, for $\{u,w\} \in E(G)$ we have $|Y_{\rT}(u)-Y_{\rT}(w)| \le 1$. It follows that, for $h\in \{l,r\}$, if $G[V^h(C)]$ contains $k$ components of $F$ then one such component $T_h$ must have 
\[
 \max_{u \in V(T_h)} Y_{\rT}(u) - \min_{u \in V(T_h)} Y_{\rT}(u) > \eps n^{1/4}/k-1.
\]
 But $F$ has at most $|E(C)| \le 2(|Q|-1)/(\eps n^{1/4})$ connected components. 
 When $\mathrm{diam}(G) \le y n^{1/4}$ we have $2(|Q|-1)/(\eps n^{1/4}) \le 2y/\eps$, 
 so for each $h \in \{l,r\}$, some component $T_h$ of $F$ contained in $G[V^h(C)]$ must have 
 \[
 \max_{u \in V(T_h)} Y_{\rT}(u) - \min_{u \in V(T_h)} Y_{\rT}(u) \ge \frac{\eps^2 n^{1/4}}{2y}-1. 
 \]
Using again that labels of adjacent vertices differ by at most one, if $\mathrm{diam}(G) \le y n^{1/4}$ then 
for $h \in \{l,r\}$ there is $v_h \in V^h(C)$ such that 
\[
\min_{v \in V(C)} |Y(v_h)-Y_{\rT}(v)| \ge \frac{\eps^2 n^{1/4}}{4y}-\frac{1}{2} - \frac{2y}{\eps}\, .
\]
Now for $h \in \{l,r\}$ let $j_h=j_h(\rT)=\inf\{0 \le i \le 2n-2: r_{\rT}(i)=v_h\}$. 
Also, fix any $\beta \in (0,\eps^2/2y)$. By Lemma~\ref{lem:tree_displacement_size_bound} there is $\alpha > 0$ such that for $n$ sufficiently large, 
\[
\p{\min(N(j_l,\beta n^{1/4}),N(j_r,\beta n^{1/4})) \le \alpha n} \le \eps. 
\]
For $n$ large enough that $\eps^2 n^{1/4}/(4y)-1/2 - 2y/\eps > \beta n^{1/4}$, 
for $h \in \{l,r\}$ we also have $N(j_h,\beta n^{1/4}) \subset V^h(C)$, and it follows that for $n$ sufficiently large 
\begin{align}
& \p{B,\mathrm{diam}(G) \le y n^{1/4}} \nonumber\\
\le & 2\eps + \p{\exists~C~\mbox{a cycle in}~G, |C| \le\frac{2y}{\eps},~\min(|V^l(C)|,|V^r(C)|)\ge\alpha n}\, .\label{eq:final_bound}
\end{align}  
The event in the last probability is that $G$ 
contains a separating cycle of length at most $2y/\eps$ that separates $G$ into two subtriangulations, each of size at least $\alpha n$. The number $t_{n,m}$ of simple triangulations of an $(m+2)$-gon with $n$ inner vertices has been computed in \cite{Brown64}, and has the asymptotic form $t_{n,m}\sim A_m \alpha^n n^{-5/2}$, where $A_m$ and $\alpha$ are explicit constants. (Observe that, in this notation, the number of rooted simple triangulations with $n$ vertices is equal to $t_{n-3,1}$.) For $K \in \N$ and $\alpha > 0$, denote by $\Gamma_K(\alpha)$ the event that a random simple triangulation with $n$ vertices admits a separating cycle $\gamma_n$ of length at most $K$ that separates $G_n$ into two components each of size at least $\alpha n$. Then 
\begin{equation}\label{eq:dectrig}
\p{\Gamma_K(\alpha)} \sim (t_{n-3,1})^{-1}
\sum_{k=1}^{K-2}\int_\alpha^{1-\alpha}t_{\floor{un},k}t_{\floor{(1-u)n},k}du \sim A_{K,\alpha}n^{-5/2}, 
\end{equation}
where $A_{K,\alpha}$ depends only on $\alpha$ and $K$. 
The event within the last probability in (\ref{eq:final_bound}) is contained within the event $\Gamma_{\lceil 2y/\eps\rceil}(\alpha)$, so for $n$ sufficiently large its probability is at most~$\eps$. 
In view of (\ref{eq:inviewof}), this completes the proof. 
\end{proof}
\begin{proof}[Proof of Lemma~\ref{lem:tree_displacement_size_bound}] 
Fix $\eps > 0$ and $\beta > 0$ as in the statement of the lemma. List the elements of $V(T_n')$ according to lexicographic order in $\rT_n$ as $v_n(1),\ldots,v_n(n)$, and for $1 \le m \le n$ let $i_n(m) = \inf\{i \ge 0:r_{\rT_n}(i)=v_n(m)\}$.  

By considering the height process, a straightforward argument (almost identical that given for equations (12) and (13) of \cite{le2005random}) shows that 
\[
\sup_{0 \le t \le 1} \Big|\frac{i_n(\lfloor tn \rfloor)}{2n-2}-t\Big| \convdist 0\, . 
\]
from which it follows that for any $\delta > 0$, 
\[
\p{\frac{\inf\left\{|N(i,\beta n^{1/4})|:0 \le i \le 2n-2\right\}+\delta n}{\inf\{d_{\rT_n}(i,\beta n^{1/4})-g_{\rT_n}(i,\beta n^{1/4}):0 \le i \le 2n-2\}} < \frac 1 2} \to 0. 
\]
In particular, given $\alpha > 0$, for $n$ large, if $d_{\rT_n}(i,\beta n^{1/4})-g_{\rT_n}(i,\beta n^{1/4}) > \alpha n$ for all $0 \le i \le 2n-2$ then with high probability $\inf\left\{|N(i,\beta n^{1/4})|:0 \le i \le 2n-2\right\} > \alpha n/3$. It therefore suffices to prove there exists $\alpha > 0$ such that for all $n$ sufficiently large, 
\begin{equation}\label{eq:lemmaproof_toprove}
\p{\inf\{d_{\rT_n}(i,\beta n^{1/4})-g_{\rT_n}(i,\beta n^{1/4}):0 \le i \le 2n-2\} \ge \alpha n} > 1-\eps\, .
\end{equation}
By Proposition~\ref{prop:snake} and Skorohod's embedding theorem, we now work in a space in which 
\begin{equation}\label{eq:lemmaproof_skorohod}
\left( (3n)^{-1/2}C_{\rT_n}(t), (4n/3)^{-1/4}Z_{\rT_n}(t)\right)_{0\le
t \le 1}\,
\convas \, ({\bf e}(t), Z(t))_{0\le t \le 1}\, .
\end{equation}
Let $A=A(Z)=\inf\{|x-y|: x,y \in [0,1], |Z(x)-Z(y)| > \beta/(2\cdot(4/3)^{1/4})\}$, or let $A(Z)=1$ if the set in the preceding infimum is empty. When $(d_{\rT_n}(i,\beta n^{1/4})-g_{\rT_n}(i,\beta n^{1/4}))/(2n-2)< 1$ either $d_{\rT_n}(i,\beta n^{1/4})\neq 0$ or $g_{\rT_n}(i,\beta ^{1/4})\neq 2j-2$, so either $Z_{\rT_n}(d_{\rT_n}(i,\beta n^{1/4})/(2n-2))-Z_{\rT_n}(i/(2n-2)) > \beta n^{1/4}$ or 
$Z_{\rT_n}(i/(2n-2))-Z_{\rT_n}(g_{\rT_n}(i,\beta n^{1/4})/(2n-2)) > \beta n^{1/4}$. 
By (\ref{eq:lemmaproof_skorohod}), it follows that a.s.\ 
\[
(2n-2)^{-1}\cdot \inf\{d_{\rT_n}(i,\beta n^{1/4})-g_{\rT_n}(i,\beta n^{1/4}):0 \le i \le 2n-2\} > A
\]
for all $n$ sufficiently large. Finally, since $Z$ is a.s.\ uniformly continuous on $[0,1]$, almost surely $A > 0$, and (\ref{eq:lemmaproof_toprove}) follows immediately. 
\end{proof}

\section{The proof of Theorem~\ref{thm:main} for triangulations}\label{sec:mainthm}
%
For each $n \in \N$, construct a map encoding $\rP_n=(\rM_n,\rT_n)$ as follows. Let $(T,\hat{\xi})$ be uniformly random in $\cT_n$ and let $\rT_n=(T_n,\xi_n,D_n) := \phi_n(T,\hat{\xi})$.  
Next let $\xi^1,\xi^2 \in \cC(T)$ be such that $(T,\xi^i)$ is balanced for $i\in\{1,2\}$. Conditionally given $(T,\hat{\xi})$ choose $\xi \in \{\xi^1,\xi^2\}$ uniformly at random. Then let $\rM_n = (M_n,\zeta_n) :=\chi_n(T,\xi)$. 

To prove Theorem~\ref{thm:main} for triangulations, we verify that $\rP=(\rP_n,\ n\in \N)$ is a good sequence of map encodings, with sequences $a_n = (3n)^{-1/2}$ and $b_n = (4n/3)^{-1/4}$. Assuming this, to conclude note that, by Corollary~\ref{cor:uniftreelaw}, $(M_n,\zeta_n)$ is a uniformly random element of $\triangle_{n+2}^{\circ}$. 
Since $b_{n+2}/b_n \to 1$ as $n \to \infty$, the result then follows from Theorem~\ref{prop:CSTriple}. 

In what follows, our usage of the notation $C_n$, $Z_n$, $X_n$ and $r_n$ agrees with that of Section~\ref{sec:cstriples}.
By Proposition~\ref{prop:label_bij} and~Corollaries~\ref{cor:uniftreelaw} and~\ref{cor:vldist}, $\rT_n$ has law $\LGW(\mu,\nu,n)$, where the law of $\mu$ is given by (\ref{eq:offspring}) and $\nu=(\nu_k,k \ge 1)$ is as in Corollary~\ref{cor:vldist}. 
Condition~{\bf 1.}\ then holds by Proposition~\ref{prop:snake}. Condition~{\bf 2.}(i)\ is immediate from the construction as $\rM_n$ contains only two vertices ($A$ and $B$) that are not elements of $V(T_n)$.  
\smallskip

Next, recall that $m=m(n)=2|V(T_n)|-2$. For $u,v \in V(M_n)\backslash\{A,B\}$, let $i$ and $j$ be such that $u=r_n(i)$ and $v=r_n(j)$. Then, combining Fact~\ref{fact:rerooting} and Proposition~\ref{prop:2points} (with $\zeta_u=\xi_{\rT_n}(i)$ and $\zeta_v = \xi_{\rT_n(j)}$) yields:
\[
d_{M_n}(u,v)\le Z_{n}(i/m) + Z_{n}(j/m) - 
2\max\left(\check{Z}_{n}(i/m,j/m),\check{Z}_{n}(j/m,i/m)\right)+18\, ,
\]
where the additive constant $18$ arises from the $6$ in Proposition~\ref{prop:2points}, plus four times the additive error of $3$ from Fact~\ref{fact:rerooting}.
It follows that for all $0\le i,j \le m$,
\[
d_{M_n}(r_n(i),r_n(j)) \le Z_{n}(i/m) + Z_{n}(j/m) - 
2\max\left(\check{Z}_{n}(i/m,j/m),\check{Z}_{n}(j/m,i/m)\right)+18\, ,
\]
which verifies {\bf 3.}(i). Condition~{\bf 3.}(ii) follows directly from Theorem~\ref{thm:distance}. It remains to establish {\bf 2.}(ii). 
Since $X_n( \xi_n)=0$, it follows from (\ref{eq:3cons}) that 
\[
b_n\cdot |d_{M_n}(\zeta_n,\xi_n)+\check{Z}_n(0,1)| \convdist 0\, ,
\]
so by {\bf 1.}, 
\begin{equation}\label{eq:triang_conv1}
b_n d_{M_n}(\zeta_n,\xi_n) \convdist -\check{Z}(0,1) \eqdist Z(V)-\check{Z}(0,1)\, ,
\end{equation}
where $V \eqdist \mathrm{Uniform}[0,1]$ is independent of $Z$; the last equality in distribution is from~(\ref{eq:invariance_rerooting}). 
Now let $V_n$ be a uniformly random element of $V(T_n)$. Arguing from (\ref{eq:3cons}) and {\bf 1.}\ as above, we obtain 
\begin{equation}\label{eq:triang_conv2}
b_n d_{M_n}(V_n,\zeta_n) \convdist Z(V)-\check{Z}(0,1). 
\end{equation}

Next, recall that $(M_n,\zeta_n)$ is uniformly random in $\triangle_{n+2}^{\circ}$. It follows that, conditionally given $M_n$, $\zeta_n$ is a uniformly random element of $\cC(M_n)$; let $c_n$ be another uniformly random element of $\cC(M_n)$, independent of $\zeta_n$ and of $V_n$. It follows that 
\begin{equation}\label{eq:triang_conv3}
d_{M_n}(V_n,\zeta_n) \eqdist d_{M_n}(V_n,c_n). 
\end{equation}

Let $\overrightarrow{E}$ be the minimal $3$-orientation associated to $\rM_n$. 
Writing $c_n=(\{x_n,y_n\},\{y_n,z_n\})$, let $\tilde \v(c_n)=y_n$ if $y_nz_n \in \overrightarrow{E}$, 
and $\tilde \v(c_n)=z_n$ otherwise. Note that $\tilde \v(c_n)$ is either equal to or incident to $\v(c_n)$, so $|d_{M_n}(V_n,\v(c_n))-d_{M_n}(V_n,\tilde\v(c_n))|\leq 1$. 
Further, since $c_n$ is a uniformly random corner of $M_n$, $\{y_n,z_n\}$ is a uniformly random edge of $M_n$, so for all $v \in V(M_n)$, $\p{\tilde\v(c_n)=v}$ is proportional to the outdegree of $v$ in $\overrightarrow{E}$. Since all inner vertices of $M_n$ have outdegree 3 in $\overrightarrow E$, and $c_n$ is independent of $V_n$, we may couple $\tilde\v(c_n)$ with a uniformly random element $U_n$ of $V(T_n)$, independent of $V_n$, such that $\p{U_n\neq \tilde \v(c_n)} \to 0$ as $n \to \infty$. 
Furthermore, $d_{M_n}(\v(c_n),U_n) \le 1$ on $\{U_n=\tilde\v(c_n)\}$, so 
$b_n d_{M_n}(c_n,U_n) \to 0$ 
in probability, as $n \to \infty$.
It then follows from (\ref{eq:triang_conv2}) and (\ref{eq:triang_conv3}) that 
\[
b_nd_{M_n}(V_n,U_n) \convdist Z(V)-\check{Z}(0,1)
\]

With (\ref{eq:triang_conv1}), this establishes {\bf 2.}(ii) and completes the proof. \qed

\section{The proof of Theorem~\ref{thm:main} for quadrangulations}\label{seq:quad}
The results on which the proof for simple triangulations rely all have nearly exact analogues for simple quadrangulations, which makes the proof for quadrangulations quite straightforward. In this section, we state the required results, with an emphasis on the details that differ between the two cases.

\addtocontents{toc}{\SkipTocEntry}
\subsection{Simple quadrangulations and blossoming trees}\label{sub:bijquad}
The counterpart of the bijection between simple triangulations and $2$-blossoming trees is a bijection between simple quadrangulations and 1-blossoming trees, due to Fusy \cite{FusyPhD}. In this section, by ``blossoming trees'' we mean 1-blossoming trees, and write $\cTq n$ for the set of blossoming trees with $n$ inner vertices. 
Fix a blossoming tree $T$. Given a stem $\{b,u\}$ with $b \in \cB(T)$, if $bu$ is followed by \emph{three} inner edges in a clockwise contour exploration of $T$ -- $uv$, $vw$ and $wz$, say -- then the \emph{local closure} of $\{b,u\}$ 
consists in removing the blossom $b$ (from both $V(T)$ and $\cB$) and its stem, and adding a new edge $\{u,z\}$. 

After all local closures have been performed, all unclosed blossoms are incident to a single face $f$. A simple counting argument shows that there exist exactly two edges $\{u,v\}$ and $\{x,y\}$ of $f$ such that $u,v,x$ and $y$ are each incident to one unclosed stem; between any two other consecutive unclosed stems, there are two edges of $f$. Assume by symmetry that $f$ lies to the left of both $uv$ and $xy$, and write $\xi_C=\kr v u $, $\xi_D = \kr y x$, $C=\v(\xi_C)$ and $D=\v(\xi_D)$ (see Figure~\ref{fig:part-clo-quad}). 

Given $\xi \in \cC(T)$, the planted blossoming tree $(T,\xi)$ is {\em balanced} if $\xi=\xi_C$ or $\xi=\xi_D$. Suppose $\xi \in \{\xi_C,\xi_D\}$ and write $v=\v(\xi)$. Let $S_{CD}$ (resp.\ $S_{DC}$) be the set of non-blossom vertices $u$ incident to an unclosed blossom in the partial closure, such that in the planted tree $(T,C)$ (resp.\ $(T,D)$) we have $C \pc v \pcst D$ (resp.\ $D\pc v \pcst C$).

To finish the construction, remove the remaining blossoms and their stems. 
Add two additional vertices $A$ and $B$ within the outer face, 
and an edge between $A$ (resp.~$B$) and each of the vertices of $S_{CD}$ (resp.\ of $S_{DC}$). 
In the resulting map, define a corner $c$ by $c=(\{C,B\},\{C,A\})$ if $v=C$ or $c=(\{D,A\},\{D,B\})$ if $v=D$. 
Finally, add an edge between 
$A$ and $B$ in such a way that, after its addition, 
$c$ lies on the same face as $A,B$, and $v$ (see Figure~\ref{fig:clo-quad-ori}). Write $\chi_{\ssq}(\rT)$ for the resulting map.

Fix a planted planar quadrangulation $(Q,\xi)$, and view $(Q,\xi)$ as embedded in $\R^2$ so that the face containing $\xi$ is the unique unbounded face. A \emph{$2$-orientation} of a $(Q,\xi)$ is an orientation for which $\alpha(v)=2$ for each vertex $v$ not incident to the root face and, listing the vertices of the unbounded face in clockwise order as $v,A,B,w$ with $v=\v(\xi)$, we have $\alpha(A)=0$, $\alpha(B)=\alpha(v)=1$ and $\alpha(w)=2$. Write $\overrightarrow{E}_{\ssq}$ for the resulting quadrangulation. Ossona de Mendez \cite{OssonaThesis} showed that a quadrangulation admits a 2-orientation if and only if it is simple, and in this case admits a unique {\em minimal} 2-orientation.
\begin{prop}[\cite{FusyPhD}]\label{prop:bijEric}
The closure operation $\chi_{\ssq,n}$ is a bijection between the set $\cT_{\ssq,n}^{\circ}$ of balanced 1-blossoming trees with $n$ inner vertices and the set $\square^{\circ}_{n+2}$ of planted quadrangulations with $n+2$ vertices. 
Furthermore, for $\rT\in \cT_{\ssq,n}$, $\chi_{\ssq,n}(\rT)$ is naturally endowed with its minimal 2-orientation by viewing stems of $\rT$ as oriented toward blossoms, and all other edges as oriented toward the root. 
\end{prop}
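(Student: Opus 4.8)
This proposition is \citep{FusyPhD}; here I sketch the argument, which follows the template of the proof of Proposition~\ref{prop:bijGilles} (compare Section~\ref{sec:def-ori}), highlighting only what changes in the $1$-blossoming setting. The plan is first to check that a single local closure behaves correctly: when a stem $\{b,u\}$ is followed in the clockwise contour exploration of $T$ by three inner edges $uv$, $vw$, $wz$, deleting $b$ together with its stem and adding $\{u,z\}$ creates a quadrilateral face with boundary $u,v,w,z$ to the right of $uz$, and treating $\{u,z\}$ as an inner edge in subsequent closures keeps the object a planar map with a single face of degree exceeding four. As in the triangulation case one then gives an equivalent description of the partial closure by a ``successor'' function on corners --- the analogue of~(\ref{eq:label_bijection}), now requiring that four times the number of blossom corners in a clockwise arc be strictly smaller than the total number of corners in that arc --- which shows that the partial closure does not depend on the order in which local closures are performed, and which identifies $s(c)$ with the first corner after $c$ at which a suitable corner labelling, the $1$-blossoming analogue of $\lambda_{\rT}$, drops below its value at $c$.

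Second, I would analyse the unique non-quadrilateral face $f$ of the partial closure. Each local closure removes a blossom, a stem and a side of an edge and shortens $f$, and a short counting argument, together with the fact that every inner corner incident to $f$ touches an unclosed stem, forces the structure stated before the proposition: an even number of unclosed stems around $f$, exactly two consecutive pairs of which are one edge of $f$ apart and every other consecutive pair two edges of $f$ apart, with $\xi_C$ and $\xi_D$ at the two short gaps. Inserting $A$ and $B$, joining $A$ to every vertex of $S_{CD}$ and $B$ to every vertex of $S_{DC}$, and finally adding $\{A,B\}$, subdivides $f$ into quadrilaterals as well, so $\chi_{\ssq}(\rT)$ is a quadrangulation, with $n+2$ vertices. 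Simplicity follows from the local structure produced by the closures together with Euler's formula, exactly as in the proof of Fact~\ref{fact:LMP}.

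Third, for injectivity and surjectivity I would exhibit the inverse as an opening operation, mirroring Section~\ref{sec:def-ori}. Given $(Q,\xi)\in\square^{\circ}_{n+2}$, take its minimal $2$-orientation $\overrightarrow{E}_{\ssq}$, whose existence and uniqueness are \citet{OssonaThesis}; list the outer face as $v,A,B,w$ with $v=\v(\xi)$, record $S_{CD}$ and $S_{DC}$ as the neighbourhoods of $A$ and $B$, delete $A$, $B$ and all their incident edges, then run a clockwise contour exploration from $\xi$, keeping each edge seen for the first time if it is directed against the exploration and otherwise replacing it by a stem. One checks that this produces a balanced $1$-blossoming tree with $n$ inner vertices, that $\chi_{\ssq}$ sends it back to $Q$, and conversely that opening the closure of a balanced tree returns that tree; minimality of $\overrightarrow{E}_{\ssq}$ is precisely what ensures the opening creates no cycle. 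The orientation claim then follows by inspection: orienting stems toward their blossoms and all inner edges toward the root gives each non-root inner vertex outdegree $2$ (its parent edge and its stem), the edges from $S_{CD}$ to $A$ and from $S_{DC}$ to $B$ inherit the orientations of the unclosed stems they replace, the edge $\{A,B\}$ is oriented from $B$ to $A$, and because all local closures proceed ``clockwise'' no counterclockwise cycle is ever created, so the $2$-orientation obtained is the minimal one.

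The main obstacle is the combinatorial bookkeeping of the two middle paragraphs: pinning down the exact structure of the distinguished face $f$ --- in particular that it has exactly two short gaps, which is what makes the notion of ``balanced'' well posed --- and checking that closure and opening are mutually inverse, including that the opening always yields a tree rather than a map containing cycles. As in the triangulation case the most economical route is the successor-function reformulation, which reduces both order-independence and invertibility to elementary monotonicity properties of the corner labelling running around $f$; once the map is known to be a bijection onto simple quadrangulations, Euler's formula and Ossona de Mendez's characterisation of $2$-orientable quadrangulations handle the remaining structural claims. Since a fully detailed proof is given in \cite{FusyPhD}, I would include only the steps at which the quadrangulation argument genuinely departs from the one for triangulations.
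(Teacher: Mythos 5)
The paper itself gives no proof of this proposition: like Proposition~\ref{prop:bijGilles}, it is imported wholesale from the cited thesis (\cite{FusyPhD}), so there is no argument of the authors' to compare yours against line by line. Your sketch follows the natural route, i.e.\ the quadrangulation analogue of the triangulation closure and of the opening in Section~\ref{sec:def-ori}, and most of it is sound: in particular your successor condition is the right one (with the corner labelling of Section~\ref{seq:quad}, which steps by $-1$, $0$, $+2$, the label change over an arc containing $b$ blossom corners and $m$ steps is $4b-m-1$, so ``$4\times(\#\text{blossom corners})<\#\text{corners}$'' is exactly the analogue of (\ref{eq:label_bijection})), and your verification of the orientation claim (root outdegree $1$, $A$ outdegree $0$, $B$ outdegree $1$, other inner vertices outdegree $2$ via parent edge plus stem, $\{A,B\}$ oriented $B\to A$, no counterclockwise cycles because closures proceed clockwise) matches the argument the paper gives for triangulations.

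There is, however, one concrete slip in your inverse construction. You propose to ``delete $A$, $B$ and all their incident edges'' before running the contour exploration. But the edges from $S_{CD}$ to $A$ and from $S_{DC}$ to $B$ are precisely the unclosed stems of the tree you are trying to recover: every inner vertex of a $1$-blossoming tree carries exactly one stem, so deleting these edges outright leaves the vertices of $S_{CD}\cup S_{DC}$ with no blossom, and the output of your opening is not a $1$-blossoming tree (and cannot be the inverse image under $\chi_{\ssq,n}$). The correct procedure, as in the triangulation opening of Section~\ref{sec:def-ori}, removes only the edge $\{A,B\}$ and lets the exploration convert each remaining edge incident to $A$ or $B$ (all of which are oriented toward $A$ or $B$ in the minimal $2$-orientation) into a stem at its other endpoint, after which $A$ and $B$ are isolated and discarded; equivalently, replace each $S_{CD}$--$A$ and $S_{DC}$--$B$ edge by a stem before deleting $A$ and $B$. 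A second, minor point: Fact~\ref{fact:LMP} proves simplicity of leftmost paths, not simplicity of the closure, so it is not quite the right reference for that step, although an Euler-formula count in the same spirit does the job.
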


\begin{figure}[ht]
\hspace{-0.5cm}
 \subfigure[A balanced 1-blossoming
tree,]{\includegraphics[width=.3\linewidth,page=1]{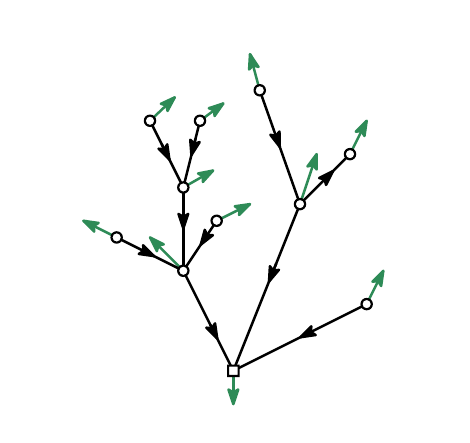}}\qquad
  \subfigure[\label{fig:part-clo-quad}its partial closure,]{\includegraphics[width=.3\linewidth,page=3]{Pictures/cloture-quadrangulation.pdf}}
\qquad
\subfigure[\label{fig:clo-quad-ori}the rooted simple quadrangulation obtained, endowed with its unique
minimal 2-orientation]{\includegraphics[width=.3\linewidth,page=4]{Pictures/cloture-quadrangulation.pdf}}
\caption{The closure of a balanced 1-blossoming tree into a simple quadrangulation.} 
\label{fig:clo-quad}
\end{figure}

\addtocontents{toc}{\SkipTocEntry}
\subsection{Sampling simple quadrangulations}\label{sub:treequad}
Given a blossoming tree $\rT_\ssq=(T,\xi)$, define $\lambda_{\ssq}:=\lambda_{\ssq,\rT_\ssq}:\cC(T) \to \Z$ as follows. Let $(\xi_{\ssq}(i),0 \le i \le 2|V(T)|-2)$ be the contour ordering from Section~\ref{sub:PlaneTrees}, with $\xi_0=\xi$. 
Let $\lambda_{\ssq}(\xi_0)=2$ and, for $0 \le i < 2|V(T)|-3$, set 
\[
\lambda_{\ssq}(\xi_{\ssq}(i+1)) = 
\begin{cases}
\lambda_{\ssq}(\xi_{\ssq}(i)) -1  & \text{if } \xi_{\ssq}(i) \not\in \cB,\  \xi_{\ssq}(i+1) \not\in \cB , \\
\lambda_{\ssq}(\xi_{\ssq}(i))  & \text{if } \xi_{\ssq}(i) \not\in \cB,\ \xi_{\ssq}(i+1) \in \cB , \\
\lambda_{\ssq}(\xi_{\ssq}(i)) +2  & \text{if } \xi_{\ssq}(i) \in \cB,\ \xi_{\ssq}(i+1) \not\in \cB , 
\end{cases}
\]
As opposed to Section~\ref{sec:bij_with_labels}, here the label increases by 2 after each stem.  

It is not hard to see that $\rT_\ssq=(T,\xi)$ is balanced if and only if $\xi$ is incident to one stem and $\lambda_{\ssq}(c) \ge 2$ for all $c \in \cC(T)$. With the same definition of successors for corners, and the same construction as in Section~\ref{sec:bij_with_labels}, this labelling yields another description of the bijection from~Section~\ref{sub:bijquad}.
Let $\cT^{\mathrm{vl}}_{\ssq,n}$ be the set of triples $(T,\xi',d)$ where $(T,\xi')$ is a planted plane tree and $d=(d_e, e \in E(T))$ is a $\pm 1$ labeling of $E(T)$ such that for all $v \in V(T)$, listing the edges from $v$ to its children in lexicographic order as $e_1,\ldots,e_k$, the sequence $d_{e_1},\ldots,d_{e_k}$ is non-decreasing.

Let $X_{\ssq} \eqdist \mathrm{Geometric}(2/3)$, and let $B_{\ssq}$ have law given by 
\begin{equation}\label{eq:offspring_q}
\p{B_{\ssq}=k} = \frac{(k+1)\p{X_\ssq=k}}{\e{(X_\ssq+1)}}\,, \mbox{ for } k \in \N.
\end{equation}
The following 
is the analogue of Corollary~\ref{cor:vldist} for quadrangulations. 
\begin{prop}
Let $(T',\xi')$ be a Galton--Watson tree with offspring distribution $B_{\ssq}$ conditioned to have $n$ vertices. Conditionally given $(T',\xi')$, independently for each $v \in V(T')$, list the children of $v$ in clockwise order as $v1,\ldots,vk$ and let $(D_{\{v,vj\}},1 \le j \le k$ be a random vector with law $\nu_{\ssq,k}$, where $\nu_{\ssq,k}$ is the uniform law over non-decreasing vectors $(d_1,\ldots,d_k) \in \{-1,1\}^k$. Finally, let $D=(D_e,e \in E(T'))$. 
Then $(T',\xi',D)$ is uniformly distributed in $\cT_{\ssq,n}^{\mathrm{vl}}$ and the closure $\chi_{\ssq,n}(T',\xi',D)$ is uniformly distributed in $\square_{n+2}^{\circ}$.
\end{prop}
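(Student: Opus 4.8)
The plan is to mirror the triangulation development of Sections~\ref{sec:labels-displacements} and~\ref{sec:GW-trig}, replacing $2$-blossoming trees by $1$-blossoming trees, so that each inner vertex $v$ carries a single stem occupying one of the $k+1$ corners at $v$ (rather than two stems occupying one of $\binom{k+2}{2}$ positions). First I would note that $B_{\ssq}$ is critical, i.e.\ $\e{B_{\ssq}}=1$: this is a one-line computation from (\ref{eq:offspring_q}) using the first two moments of the Geometric$(2/3)$ law $X_{\ssq}$, and it is what makes the conditioning in the statement well-defined for all large $n$. Next I would record the combinatorial dictionary. The quadrangulation analogue of Fact~\ref{fact:de_bd} asserts that for an inner edge $e=\{v,p(v)\}$ the displacement $D_e=Y(v)-Y(p(v))$ (with $Y$ the vertex labelling induced by $\lambda_{\ssq}$, as in Section~\ref{sec:labels-displacements}) equals $+1$ if the unique stem incident to $p(v)$ precedes $e$ in clockwise order around $p(v)$, and $-1$ otherwise; this is forced by the fact that $\lambda_{\ssq}$ decreases by $1$ along each inner edge and jumps up by $2$ across the stem during a clockwise contour exploration, exactly as on Page~\pageref{eq:scdefine}. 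Hence, for a fixed $v$ with children $c(v,1),\dots,c(v,k)$ in clockwise (equivalently lexicographic) order, the vector $(D_{\{v,c(v,j)\}},1\le j\le k)$ is a non-decreasing element of $\{-1,1\}^k$ whose switch position records the position of the stem among the $k+1$ corners at $v$. This yields the analogue of Proposition~\ref{prop:label_bij}: erasing the blossoms and recording the displacements is a bijection $\phi_{\ssq,n}\colon \cT_{\ssq,n}\to\cT_{\ssq,n}^{\mathrm{vl}}$, whose inverse reinserts the unique stem of each inner vertex $v$ at the corner dictated by the switch of its displacement vector.

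With this dictionary, part~(a) of the proposition is the quadrangulation form of Proposition~\ref{prop:uniformtreelaw} together with Corollary~\ref{cor:vldist}. Fix a $1$-blossoming tree $\mathrm t\in\cT_{\ssq,n}$, let $\mathrm t'$ be $\mathrm t$ with its blossoms removed, and list $V(\mathrm t')$ in lexicographic order as $v_1,\dots,v_n$. Running a Galton--Watson$(B_{\ssq})$ tree conditioned to have $n$ vertices and then inserting, independently at each $v_i$, one stem uniformly among the $k_{\mathrm t'}(v_i)+1$ corners of $v_i$ produces $\mathrm t$ with probability proportional to
\[
\prod_{i=1}^n\frac{1}{k_{\mathrm t'}(v_i)+1}\,\p{B_{\ssq}=k_{\mathrm t'}(v_i)}=\prod_{i=1}^n\frac{\p{X_{\ssq}=k_{\mathrm t'}(v_i)}}{\e{X_{\ssq}+1}},
\]
by (\ref{eq:offspring_q}); since $X_{\ssq}$ is geometric and $\sum_{i=1}^n k_{\mathrm t'}(v_i)=n-1$, the right-hand side depends only on $n$, so the resulting planted $1$-blossoming tree is uniform over $\cT_{\ssq,n}$. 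Transporting this law along $\phi_{\ssq,n}$ and noting that, under the uniform stem insertion at $v$, the displacement vector from $v$ to its children is precisely uniform over the $k+1$ non-decreasing vectors in $\{-1,1\}^k$ — that is, has law $\nu_{\ssq,k}$ — gives that $(T',\xi',D)$ is uniform over $\cT_{\ssq,n}^{\mathrm{vl}}$, which is part~(a).

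Part~(b) is the quadrangulation form of Corollary~\ref{cor:uniftreelaw}, and requires the analogues of the maps $\psi_n$ and $\chi_n^{\bullet}$ and of the diagram (\ref{eq:diagram}). Writing $\cT_{\ssq,n}^{\bullet}=\{(T,\xi,\hat\xi):(T,\xi)\in\cT_{\ssq,n}^{\circ},\ (T,\hat\xi)\in\cT_{\ssq,n}\}$, the map $\psi_{\ssq,n}\colon\cT_{\ssq,n}^{\bullet}\to\cT_{\ssq,n}$ forgetting $\xi$ is two-to-one (there are exactly two balanced corners, by the counting recalled in Section~\ref{sub:bijquad}, with the usual separate treatment — as in Proposition~\ref{prop:two_to_one} — of the case where the two balanced plantings are isomorphic); the closure $\chi_{\ssq,n}$ of Proposition~\ref{prop:bijEric} is a bijection $\cT_{\ssq,n}^{\circ}\to\square_{n+2}^{\circ}$; it lifts to a bijection $\chi_{\ssq,n}^{\bullet}\colon\cT_{\ssq,n}^{\bullet}\to\square_{n+2}^{\bullet}$ onto the set of suitably pointed quadrangulations; and the projections $\cT_{\ssq,n}^{\bullet}\to\cT_{\ssq,n}^{\circ}$ and $\square_{n+2}^{\bullet}\to\square_{n+2}^{\circ}$ are both $(3n-2)$-to-one, since a $1$-blossoming tree with $n$ inner vertices has $3n-2$ inner corners. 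Composing: a uniform $(T',\xi',D)\in\cT_{\ssq,n}^{\mathrm{vl}}$ is carried by $\phi_{\ssq,n}^{-1}$ to a uniform $(T,\hat\xi)\in\cT_{\ssq,n}$; pairing it with a uniformly random choice of one of the two balanced corners $\xi$ gives a uniform $(T,\xi,\hat\xi)\in\cT_{\ssq,n}^{\bullet}$; this maps under $\chi_{\ssq,n}^{\bullet}$ to a uniform element of $\square_{n+2}^{\bullet}$; and projecting gives a uniform $(Q,c)=\chi_{\ssq,n}(T,\xi)\in\square_{n+2}^{\circ}$. Here, just as in Corollary~\ref{cor:uniftreelaw}, the expression ``$\chi_{\ssq,n}(T',\xi',D)$'' is read as this composite (convert to the $1$-blossoming tree, re-root at one of the two balanced corners chosen uniformly, close), and all objects are taken up to the relevant planar-map isomorphisms.

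The main obstacle is not a single hard estimate but the bookkeeping: one must carefully verify the $1$-blossoming analogues of Fact~\ref{fact:de_bd}, Proposition~\ref{prop:label_bij}, and the bijective diagram (\ref{eq:diagram}) — in particular that the ``one stem, $k+1$ positions'' correspondence is exact and that the multiplicities along the diagram are $2$ and $3n-2$ (rather than $2$ and $4n-2$) — and to pin down the mild abuse by which ``$\chi_{\ssq,n}$'' is applied to a validly labelled tree. All of this is routine given the triangulation case, which is precisely why the paper states only the end results here.
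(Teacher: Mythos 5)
Your proposal is correct and follows exactly the route the paper intends: the paper states this proposition without proof precisely because it is the quadrangulation analogue of Proposition~\ref{prop:uniformtreelaw}, Corollaries~\ref{cor:uniftreelaw}--\ref{cor:vldist} and diagram~(\ref{eq:diagram}), with the ``one stem among $k+1$ gaps'' $\leftrightarrow$ ``non-decreasing $\{-1,1\}^k$ vector'' dictionary replacing the two-stem/$\binom{k+2}{2}$ one, and your adapted computations (criticality of $B_{\ssq}$, the constant product $\prod_i\p{X_{\ssq}=k_i}/\e{X_{\ssq}+1}$, the $2$-to-$1$ and $(3n-2)$-to-$1$ multiplicities, and the reading of $\chi_{\ssq,n}$ via a uniform choice of balanced corner) are all accurate.
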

The proof of Proposition~\ref{prop:snake} extends immediately to this setting and we obtain the following convergence (see Appendix~\ref{sec:notes} for the computation of the constants). 
\begin{prop}\label{prop:snakequad}
For $n \in \N$ let $\rT_n=(T_n,\xi_n,D_n)$ be a uniformly random element of $\cT_{\ssq,n}^{\mathrm{vl}}$.   
Then as $n \to \infty$, 
\begin{equation}\label{eq:cvsnake_q}
\left( \frac{3}{4n^{1/2}}C_{\rT_n}(t), \left(\frac{3}{8n}\right)^{1/4}Z_{\rT_n}(t)\right)_{0 \le t \le 1}
\,
\convdist \, ({\bf e}(t), Z(t))_{0\le t \le 1},
\end{equation}
for the topology of uniform convergence on $C([0,1],\R)^2$.
\end{prop}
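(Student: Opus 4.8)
The plan is to follow verbatim the strategy that established Proposition~\ref{prop:snake} for triangulations, so the proof is essentially a matter of checking that each ingredient has a quadrangulation analogue and that the normalizing constants are the ones asserted. First I would record the combinatorial input: by the preceding proposition of Section~\ref{sub:treequad}, if $\rT_n=(T_n,\xi_n,D_n)$ is uniform in $\cT_{\ssq,n}^{\mathrm{vl}}$ then its law is $\LGW(\mu_{\ssq},\nu_{\ssq},n)$, where $\mu_{\ssq}$ is the law of $B_{\ssq}$ from~(\ref{eq:offspring_q}) and $\nu_{\ssq,k}$ is the uniform law on non-decreasing vectors in $\{-1,1\}^k$. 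As in Section~\ref{sec:snake} the displacements are neither identically distributed nor locally centred (for instance $\int x\,\mathrm d\nu_{\ssq,2}^1=0$ but $\int x\,\mathrm d\nu_{\ssq,1}^1=0$ while $\nu_{\ssq,2}$ has marginals with means $0$ on the symmetric coordinates but a negative first marginal for larger $k$), so one genuinely needs partial symmetrization rather than an off-the-shelf invariance principle.

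Next I would carry out the partial symmetrization machinery exactly as in Sections~\ref{sub:coupling}--\ref{sub:symm}: define $\hat\nu_{\ssq,k}$ as the law of a uniformly random permutation of a $\nu_{\ssq,k}$-vector, observe it is locally centred, and note that the analogue of Claim~\ref{cla:sym_law} holds verbatim — for each non-decreasing $d\in\{-1,1\}^k$ with $a$ entries $-1$ and $c=k-a$ entries $+1$, every valid permutation has exactly $a!\,c!$ ordinary permutations producing the same ordered vector, a count independent of the valid permutation chosen, so $R$-symmetrizing a $\LGW(\mu_{\ssq},\nu_{\ssq},n)$ tree yields a $\LGW(\mu_{\ssq},\hat\nu_{\ssq},n)$ tree. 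Proposition~\ref{prop:random_subset} (children of a node of $T\angles R$ appear in uniformly random order under the stated conditioning) depends only on the branching property and on $v_1,\dots,v_j$ being uniform vertices, so it transfers without change. Then the proofs of Lemma~\ref{lem:fdds} (random FDDs) and Lemma~\ref{lem:tightness} (tightness) go through line by line: one couples the edges traversed at $U_1,\dots,U_j$ with uniform vertices, symmetrizes over their spanned subtree, invokes the known convergence~(\ref{eq:cvsnake_jm}) for the locally centred law $\hat\nu_{\ssq}$, and uses that $T_n\angles R$ has at most $j-1$ branch nodes and displacements bounded by $1$ to replace the symmetrized $X$-values by the original ones up to an $O(1)$ error; the tightness argument uses the deterministic fluctuation comparison of Claim~\ref{cla:delta_diff}, whose proof only uses $|D(e)|\le 1$. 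For~(\ref{eq:cvsnake_jm}) to apply one needs $|\mu_{\ssq}|_1=1$ (criticality of $B_{\ssq}$, a three-moment computation with $X_{\ssq}\sim\mathrm{Geometric}(2/3)$), $\sigma_{\mu_{\ssq}}^2\in(0,\infty)$, and $\nu_{\ssq,k}^1$ centred with $o(y^{-4})$ tails — the last being trivial since $|D|\le 1$. The remaining task is pure bookkeeping: combine Aldous's scaling constant $\sigma_{\mu_{\ssq}}/2$ and the snake constant $(\sigma_{\mu_{\ssq}}/2)^{1/2}/\sigma_{\hat\nu_{\ssq}}$ from~(\ref{eq:cvsnake_jm}), where $\sigma_{\hat\nu_{\ssq}}^2=\int x^2\,\mathrm d\hat\nu_{\ssq,1}^1=1$, and check that this produces exactly the prefactors $3/(4n^{1/2})$ and $(3/(8n))^{1/4}$; I would defer these numerical identities to Appendix~\ref{sec:notes} as the statement already does.

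The main obstacle, as in the triangulation case, is not any single computation but confirming that the partial symmetrization coupling really does preserve the law of the displacements along the spanned subtree $T_n\angles{R_n}$ in the $\{-1,1\}$ setting — i.e.\ that the effect of symmetrization on those finitely many edges is distributionally null. Concretely, one must recheck that in the $1$-blossoming model a uniformly random child of a node still has a uniform $\{-1,1\}$ displacement (this follows from the non-decreasing-vector description of $\nu_{\ssq,k}$ together with Proposition~\ref{prop:random_subset}), and that Fact~\ref{fac:explore_agree} — contour explorations of $\rT$ and $\rT^R$ visit $V(T\angles R)$ at the same times — still holds, which it does since it only uses that $\tau^v=\mathrm{Id}$ on $V(T\angles R)$ preserves the ordered list of subtree sizes. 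Once these two points are in place the rest is a transcription of Section~\ref{sec:snake} with $3/4$ replaced by $2/3$ and $2$-blossoming by $1$-blossoming, and the proof of Proposition~\ref{prop:snakequad} is complete.
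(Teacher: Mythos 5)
Your proposal is correct and follows essentially the same route as the paper, whose own proof is just the observation that the argument for Proposition~\ref{prop:snake} (partial symmetrization, Claim~\ref{cla:sym_law}, Proposition~\ref{prop:random_subset}, Lemmas~\ref{lem:fdds} and~\ref{lem:tightness}) transfers verbatim to the law $\LGW(\mu_{\ssq},\nu_{\ssq},n)$, with the normalizing constants computed in Appendix~\ref{sec:notes}. The only blemish is your garbled parenthetical about the marginals: in fact $\int x\,\mathrm{d}\nu_{\ssq,2}^1=-1/3\neq 0$ (the first marginal is negative for every $k\ge 2$), which is exactly why the law fails to be locally centred and partial symmetrization is needed; this slip does not affect the argument.
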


\addtocontents{toc}{\SkipTocEntry}
\subsection{Labels and distance in simple quadrangulations}\label{sub:disquad}
We next state analogues of the results of Sections~\ref{sec:label_dist_determinist} and~\ref{sec:label_approx} for quadrangulations. Fix $n \in \N$ and $(T,\xi) \in \cT_{\ssq,n}^{\circ}$, let $(Q,c)=\chi_{\ssq,n}(T,\xi)$ be endowed with its minimal 2-orientation $\overrightarrow{E}_{\ssq}$ and let $(T',\xi',D)\in \cT^{\mathrm{vl}}_{\ssq,n}$ be the validly-labelled tree associated to $(T,\xi)$. Finally, write $\rQ=(Q,c)$ and $\rT_\ssq=(T,\xi)$. 

The definition of leftmost paths for simple quadrangulations is an obvious modification of that for triangulations. Together with the fact that (with $Y_T$ defined as before) for $\{u,w\}\in E(Q)$, $|Y_T(u)-Y_T(w)|\leq 3$, we obtain the following facts. The lemma is a counterpart of Lemma~\ref{lem:3diff} and Corollary~\ref{cor:LabelsLMP}; the proposition is a counterpart of Proposition~\ref{prop:2points}, and uses an identical definition for $\check{Y}_{\rT_{\ssq}}(u,v)$. 
\begin{lem}\label{lem:LabelsLMPquad}
For all $u \in V(Q)$, $Y_{\rT_\ssq}(u)/3\le d_Q(u,A)\le Y_{\rT_\ssq}(u)-1$.
\end{lem}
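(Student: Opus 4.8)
This Lemma is the quadrangulation counterpart of Lemma~\ref{lem:3diff} and Corollary~\ref{cor:LabelsLMP} combined, so the plan is to mirror the triangulation arguments, tracking the three structural differences in the quadrangulation setting: the closure operation absorbs \emph{three} inner edges per local closure rather than two, the label increment across a stem is $+2$ rather than $+1$, and the validly-labelled trees use displacements in $\{-1,1\}$ rather than $\{-1,0,1\}$. The section already asserts the needed adjacent-label bound, $|Y_{\rT_\ssq}(u)-Y_{\rT_\ssq}(w)|\le 3$ for $\{u,w\}\in E(Q)$; I would take this as given (it is the analogue of Lemma~\ref{lem:3diff}, with the $+2$ stem increment explaining why the bound is $3$ rather than a smaller constant) and use it exactly as Corollary~\ref{cor:3diff} used Lemma~\ref{lem:3diff}.

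\textbf{Lower bound.} First I would establish $d_Q(u,A)\ge Y_{\rT_\ssq}(u)/3$. Recalling the convention $Y_{\rT_\ssq}(A)=1$, let $(u_0,u_1,\ldots,u_l)$ be a shortest path in $Q$ from $u=u_0$ to $A=u_l$, so $l=d_Q(u,A)$. Summing the adjacent-label bound along the path and using $Y_{\rT_\ssq}(A)=1$ gives $Y_{\rT_\ssq}(u)=|Y_{\rT_\ssq}(u_0)-Y_{\rT_\ssq}(u_l)-1| \le 3l$ (with a small notational adjustment for the additive constants as in the proof of Corollary~\ref{cor:3diff}), hence $d_Q(u,A)=l\ge Y_{\rT_\ssq}(u)/3$. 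This is verbatim the argument of Corollary~\ref{cor:3diff}.

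\textbf{Upper bound.} For $d_Q(u,A)\le Y_{\rT_\ssq}(u)-1$ I would run the leftmost-path machinery. Using the minimal $2$-orientation $\overrightarrow{E}_\ssq$ from Proposition~\ref{prop:bijEric}, define the leftmost path $P_{\rQ}(e)$ from an oriented edge $e=u_0u_1\in\overrightarrow{E}_\ssq$ to $A$ exactly as in Section~\ref{sec:label_dist_determinist}: at each interior vertex take the first outgoing edge in clockwise order starting from the incoming edge. The analogue of Fact~\ref{fact:LMP} (simplicity of $P_{\rQ}(e)$, and merging of leftmost paths that meet) goes through with the same Euler-formula argument, now using that interior vertices have outdegree $2$ rather than $3$ and that a directed $k$-cycle would enclose a region all of whose non-boundary vertices have outdegree $2$, again forcing too many edges for a simple planar map whose only large face is the cycle. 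Then the analogue of Proposition~\ref{prop:LabelsLMP}, relating $\lambda^*_{\ssq,\rT_\ssq}(\kappa^\ell(u,w))$ to $|P_{\rQ}(e)|$, holds: the left-label decreases by exactly one across each oriented edge (this uses the quadrangulation version of $\lambda^*$ and the fact that no stem of $T$ lies strictly between consecutive edges of a leftmost path), and terminates at label $1$ at $A$, giving $\lambda^*_{\ssq,\rT_\ssq}(\kappa^\ell(u,w))=|P_{\rQ}(e)|$. Finally, as in Corollary~\ref{cor:LabelsLMP}, for $u\notin\{A,B,\v(\xi)\}$ taking $\{u,w\}$ to be the first stem incident to $u$ in clockwise order from $\{u,p_T(u)\}$ yields $Y_{\rT_\ssq}(u)=\lambda^*_{\ssq,\rT_\ssq}(\kappa^\ell(u,w))=|P_{\rQ}(e)|$, and $P_{\rQ}(e)$ is a path of $Y_{\rT_\ssq}(u)$ vertices from $u$ to $A$, so $d_Q(u,A)\le Y_{\rT_\ssq}(u)-1$; the cases $u\in\{A,B,\v(\xi)\}$ follow from the conventions $Y_{\rT_\ssq}(A)=1$, $Y_{\rT_\ssq}(B)=2$, $Y_{\rT_\ssq}(\v(\xi))=2$.

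The genuinely new bookkeeping — and the only place I expect friction — is checking that the ``left-label decreases by exactly one across an oriented edge'' identity survives the change to a $2$-orientation and a $+2$ stem increment. In the triangulation case this rested on a counting identity matching the label on the right of an inner edge with the label of the corner of the next stem, combined with $\lambda_{\rT}(s(c))=\lambda_{\rT}(c)-1$. For quadrangulations the successor relation still drops the label by one (it is the first later corner with strictly smaller label), but the raw label recursion now jumps by $+2$ across a stem, so I would need to re-derive the corresponding counting lemma for $1$-blossoming trees with the $+2$ convention to confirm the net effect along a leftmost edge is still exactly $-1$. Once that single identity is in hand, the rest of the proof is a mechanical transcription of Section~\ref{sec:label_dist_determinist}.
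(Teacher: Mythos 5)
Your proposal is correct and takes essentially the same route as the paper, which proves this lemma only by remarking that the leftmost-path machinery of Section~\ref{sec:label_dist_determinist} transfers to the minimal $2$-orientation and by invoking the adjacent-label bound $|Y_{\rT_\ssq}(u)-Y_{\rT_\ssq}(w)|\le 3$ for $\{u,w\}\in E(Q)$, exactly as you do. The one friction point you flag does go through: even though the corner label jumps by $+2$ across a stem, inner-to-inner steps still decrease the label by exactly one, so the successor of a bud corner still carries label one less than the corner itself and the left label drops by exactly one along each edge of a leftmost path, yielding $\lambda^*(\kappa^{\ell}(u,w))=|P(e)|$ as in Proposition~\ref{prop:LabelsLMP}.
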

\begin{prop}\label{prop:2pointsquad}
For all $u,v \in V(Q)$, 
\[
d_Q(u,v)\leq Y_{\rT_{\ssq}}(u)+Y_{\rT_{\ssq}}(v)-2\max\{\check Y_{\rT_{\ssq}}(u,v), \check Y_{\rT_{\ssq}}(v,u)\}+2. 
\]
\end{prop}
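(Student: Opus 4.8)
The plan is to transcribe the proof of Proposition~\ref{prop:2points} essentially line by line; the quadrangulation setting differs only in that local closures absorb three inner edges rather than two, the minimal orientation is a $2$-orientation $\overrightarrow{E}_{\ssq}$ rather than a $3$-orientation (Proposition~\ref{prop:bijEric}), and the corner labelling $\lambda_{\ssq}$ of Section~\ref{sub:treequad} jumps by $+2$ across a stem instead of by $+1$. First I would record the quadrangulation version of the \emph{modified leftmost path}: given an oriented edge $e=u_0u_1$ with $\{u_0,u_1\}\in E(T)$, let $Q(e)=(u_0,u_1,\ldots,u_\ell)$ with $u_\ell=A$ be the path in $Q$ such that, at each internal vertex $u_i$, the edge $u_iu_{i+1}$ is the first edge (clockwise around $u_i$ starting from $\{u_{i-1},u_i\}$) that is either outgoing with respect to $\overrightarrow{E}_{\ssq}$ or an inner edge of $T$, and view $Q(e)$ as oriented from $e$ toward $A$.

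Next I would establish, in the same order as in Section~\ref{sec:label_dist_determinist}, the two facts that drive the triangulation argument. The first is the analogue of Fact~\ref{fact:lengthMLMP}: the pushed-forward corner labelling $\lambda^*_{\ssq}$ of $\chi_{\ssq}(\rT_{\ssq})$ decreases by exactly one along the left of $Q(e)$, that is $\lambda^*_{\ssq}(\kl{u_i}{u_{i+1}})=\lambda^*_{\ssq}(\kl{u_{i-1}}{u_i})-1$ for $1\le i\le\ell-1$. As in the triangulation case this reduces to (a) the local identity $\lambda^*_{\ssq}(\kr{y}{x})=\lambda^*_{\ssq}(\kl{x}{y})-1$ for every edge $xy\in\overrightarrow{E}_{\ssq}$ (the $2$-orientation counterpart of the identity established inside the proof of Proposition~\ref{prop:LabelsLMP}), together with (b) the observation that no stem of $T$ incident to $u_i$ lies strictly between $\{u_{i-1},u_i\}$ and $\{u_i,u_{i+1}\}$ in clockwise order, so that $\kl{u_i}{u_{i+1}}=\kr{u_i}{u_{i-1}}$ in $T$. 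The second is the analogue of Fact~\ref{fact:mincorner}: writing $L=\lambda^*_{\ssq}(\kl{u_0}{u_1})$ and letting $c^*_e(j)$ be the first corner after $\kl{u_0}{u_1}$ in the contour order of $T$ whose $\lambda_{\ssq}$-label equals $L-j$, one has $c^*_e(j)=\kl{u_j}{u_{j+1}}$ and hence $\v(c^*_e(j))\in Q(e)$. Its proof is the same induction on $j$, case-split on whether $\{u_{j-1},u_j\}$ is an inner edge of $T$ or a stem; the only arithmetic to re-check is that $\lambda_{\ssq}$ never drops by more than one in a single step (its increments being $-1$, $0$, $+2$), which is exactly what makes $c^*_e(j)\pct c^*_e(j+1)$ and keeps the induction running.

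With these in hand the conclusion follows as for Proposition~\ref{prop:2points}. Given $u,v\in V(Q)$, take $e_u,e_v$ to be the lexicographically first stem edges at $u$ and $v$ and write $Q(e_u)=(u_0,u_1,\ldots)$, $Q(e_v)=(v_0,v_1,\ldots)$ with $u_0=u$, $v_0=v$; exactly as in the triangulation case these paths terminate after $Y_{\rT_{\ssq}}(u)=\lambda^*_{\ssq}(\kl{u_0}{u_1})$ and $Y_{\rT_{\ssq}}(v)=\lambda^*_{\ssq}(\kl{v_0}{v_1})$ vertices. Assuming first $\check Y_{\rT_{\ssq}}(u,v)>3$, the second fact together with the definition of $\check Y_{\rT_{\ssq}}(u,v)$ forces the vertex of $Q(e_u)$ at index $Y_{\rT_{\ssq}}(u)-\check Y_{\rT_{\ssq}}(u,v)+1$ to coincide with the vertex of $Q(e_v)$ at index $Y_{\rT_{\ssq}}(v)-\check Y_{\rT_{\ssq}}(u,v)+1$; concatenating the initial segment of $Q(e_u)$ up to this common vertex with the reversed initial segment of $Q(e_v)$ gives a path $P$ in $Q$ from $u$ to $v$ with $|P|-1=Y_{\rT_{\ssq}}(u)+Y_{\rT_{\ssq}}(v)-2\check Y_{\rT_{\ssq}}(u,v)+2$, and the symmetric construction yields the same bound with $\check Y_{\rT_{\ssq}}(v,u)$; taking the better of the two proves the claim. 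The case $\check Y_{\rT_{\ssq}}(u,v)\le3$ is handled by the same short case analysis as in the triangulation proof, tracking whether $A$ or $B$ appears on $Q(e_u)$ or $Q(e_v)$, and I would omit the details exactly as is done there.

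The one step I expect to require genuine (if brief) work is ingredient (a): that following an oriented edge of the minimal $2$-orientation decreases the left corner label of $\lambda^*_{\ssq}$ by exactly one. This is the only place where the two real differences between the models — quadrilateral faces, and the $+2$ jump of $\lambda_{\ssq}$ across a stem — interact, and one has to check that they compensate so that the net change on the left of an oriented edge is still $-1$. I would verify it by re-running the face-by-face counting from the proof of Proposition~\ref{prop:LabelsLMP} in the quadrangulation, reading off $\overrightarrow{E}_{\ssq}$ from the blossoming tree via Proposition~\ref{prop:bijEric}. Everything downstream — the two facts, the concatenation, the $A$/$B$ bookkeeping — is then a mechanical translation; in particular the additive constant $2$ in the statement is unaffected, since it arises purely from gluing two modified leftmost paths at a shared vertex.
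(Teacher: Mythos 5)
Your proposal is correct and follows exactly the route the paper intends: the paper gives no separate proof of Proposition~\ref{prop:2pointsquad}, asserting it as a direct counterpart of Proposition~\ref{prop:2points} obtained by transposing the modified-leftmost-path argument to the $1$-blossoming/$2$-orientation setting, which is precisely what you carry out. Your identification of the one point needing verification — that the $+2$ jump of $\lambda_{\ssq}$ across a stem still yields a decrease of exactly one in the left corner label along each oriented edge of $\overrightarrow{E}_{\ssq}$ — is the right place to focus, and it does check out, so the concatenation argument and the additive constant $2$ go through unchanged.
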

The winding number introduced in Definition~\ref{dfn:windingnumber} is used in the following analogue of Proposition~\ref{prop:windingbound}. 
\begin{prop}\label{prop:windingboundquad}
For all $e=uw \in \overrightarrow{E}_{\ssq}$, if $Q$ is a simple path from $e$ to $A$ then $|Q| \ge |P(e)|+2(w(Q,e)-1)$. 
\end{prop}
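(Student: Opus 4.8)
The plan is to mirror the proof of Proposition~\ref{prop:windingbound} for triangulations, making the adjustments forced by the $2$-orientation $\overrightarrow{E}_{\ssq}$ and the fact that faces of $Q$ have degree $4$ rather than $3$. As before, write $P(e)=(u_0,u_1,\ldots,u_\ell)$ for the leftmost path from $e$ to $A$ in $Q$ (with respect to $\overrightarrow{E}_{\ssq}$), and decompose the simple path $Q$ into edge-disjoint sub-paths $R_1,\ldots,R_t$, each of which either lies along $P(e)$ or meets $P(e)$ only at its two endpoints. For a sub-path $R_s=(w_0,\ldots,w_k)$ meeting $P(e)$ only at $w_0=u_i$ and $w_k=u_j$ with $i<j$, one classifies $R_s$ into one of four types according to whether it leaves/returns from the left or right of $P(e)$, exactly as in Definition and the proof of Proposition~\ref{prop:windingbound}.

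The key step is to re-derive the four inequalities bounding $k=|R_s|-1$ from below in terms of $j-i$. Here I would again let $C$ be the cycle in $R_s\cup P(e)$, let $V'$ be the vertices on (or to the appropriate side of) $C$, let $G'=Q[V']$, and apply Euler's formula to $G'$. The differences from the triangulation case are: (i) interior faces of $G'$ now have degree $4$, not $3$, so Euler's formula gives $|E(G')| = 2|V'| - 2 - \tfrac{1}{2}(\text{number of boundary edges not on a degree-}4\text{ face})$ type bookkeeping; and (ii) interior vertices have outdegree $2$ in $\overrightarrow{E}_{\ssq}$ rather than $3$, and vertices on the left of a leftmost path contribute their outgoing edges to the interior. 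Carrying out the same counting — leftmost-ness of $P(e)$ forces all out-edges of interior path vertices to stay inside, and the ``return from the left'' condition forces $w_{k-1}u_j\in\overrightarrow{E}_{\ssq}$ — yields, for the type-1 case (leave right, return left), a bound of the shape $k \ge j-i-1$ instead of $k\ge j-i-2$, and analogously tightened bounds in the other three cases. Summing over $R_1,\ldots,R_t$ with the telescoping argument, using that only $R_t$ touches the root face and only $R_1$ may contain $u_0$, gives $|Q| \ge |P(e)| + 2(n_3-n_1) - 2$ where $n_1,n_3$ count type-1 and type-3 sub-paths; since $n_3-n_1=w(Q,e)$, this is exactly $|Q|\ge |P(e)|+2(w(Q,e)-1)$, with the constant $2$ replaced by $1$ precisely because the degree-$4$/outdegree-$2$ arithmetic saves one unit in each boundary term.

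The main obstacle I expect is getting the constants right in the Euler's-formula count for quadrangulations: unlike triangulations, where every interior face has exactly $3$ sides, one must be careful that the submap $G'$ bounded by $C$ genuinely has all interior faces of degree $4$ (this uses that $Q$ is a simple quadrangulation and that $C$ is a cycle, so no interior face of $G'$ can be ``pinched''), and one must track how the boundary cycle $C$ of length $k+(j-i)$ enters the face-degree sum. A secondary subtlety is verifying that $A$, $B$, and $\v(c)$ lie outside the relevant region $V'$ (so that every interior vertex really does have outdegree $2$), which follows as in Fact~\ref{fact:LMP} and its analogue from the minimality of $\overrightarrow{E}_{\ssq}$ together with the fact that in a minimal $2$-orientation no directed cycle is counterclockwise and $A,B,\v(c)$ lie on no directed cycle. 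Once these structural points are pinned down, the telescoping and the identification of $n_3-n_1$ with the winding number are formally identical to the triangulation case, so I would simply refer to the proof of Proposition~\ref{prop:windingbound} for those parts and present only the modified inequalities in detail.
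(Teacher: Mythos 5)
Your overall strategy is the paper's: decompose $Q$ into sub-paths $R_1,\ldots,R_t$, classify them into the four types, re-derive the four length inequalities by Euler's formula plus an outdegree count adapted to the $2$-orientation, then telescope and identify $n_3-n_1$ with $w(Q,e)$. The paper's proof of Proposition~\ref{prop:windingboundquad} does exactly this, stating the four quadrangulation inequalities and leaving their verification to the reader. The genuine gap is in the one quantitative claim you make: that the type-1 case (leave right, return left) tightens from $k\ge j-i-2$ to $k\ge j-i-1$, ``and analogously'' in the other cases, the final constant improving because ``the degree-$4$/outdegree-$2$ arithmetic saves one unit in each boundary term''. This is not what the count gives. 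For a type-1 sub-path, with $V'$ the vertices on or to the right of $C$, Euler's formula for quadrangular faces yields $|E(G')|=2|V'|-2-\tfrac12(k+j-i)$ (the boundary cycle has even length by bipartiteness), while the outdegree count gives $|E(G')|\le 2|V'|-(j-i)-1$ (each $u_m$ with $i<m<j$ contributes at most one of its two out-edges, and $u_j$ contributes none). Combining the two gives $k\ge j-i-2$, exactly as in the triangulation case and exactly as in inequality (1) of the paper's proof; the type-2 bound $k\ge j-i$ is likewise unchanged. The one-unit improvements occur only in the ``return from the right'' cases (3) and (4), where the count is a lower bound on $|E(G')|$ and the halved boundary term converts each additional counted out-edge into a gain of $2$ in $k$: one obtains $k\ge j-i+2(\I{i>0}+\I{j<\ell})$ and $k\ge j-i+2\I{j<\ell}$, each one better than its triangulation counterpart.

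So the mechanism you announce for the improved constant is wrong, and since you propose to ``present only the modified inequalities in detail'', the proof as planned would present incorrect inequalities (note also the parity point: in a bipartite map $k\equiv j-i\pmod 2$, so a type-1 bound of $j-i-1$ would amount to claiming $j-i$, which this method cannot deliver). There is in addition an internal inconsistency: if all four bounds really tightened by one, the telescoping would give something strictly stronger than $|Q|\ge |P(e)|+2(n_3-n_1)-2$, so your final line does not follow from your stated inequalities either — a sign the count was not carried through. The fix is simply to redo the four case counts as above (types 1 and 2 unchanged, types 3 and 4 improved by one); with those, the telescoping with the boundary corrections $2(\I{R_1\text{ has type }3}+\I{R_t\text{ has type 3 or 4}})$ and the identification $n_3-n_1=w(Q,e)$ go through verbatim and yield $|Q|\ge |P(e)|+2(w(Q,e)-1)$ as required.
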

\begin{proof}
The proof of Proposition~\ref{prop:windingbound} extends readily to the case of quadrangulations. Keeping the same notation, the following inequalities (whose proofs are left to the reader) allow one to conclude along the same lines. 
\begin{enumerate}
\item If $R$ leaves $P(e)$ from the right and returns from the left then $k \ge j-i-2$.
\item If $R$ leaves $P(e)$ from the left and returns from the left then $k \ge j-i$.
\item If $R$ leaves $P(e)$ from the left and returns from the right then $k \ge j-i+2(\I{i>0}+\I{j<\ell})$. 
\item If $R$ leaves $P(e)$ from the right and returns from the right then $k \ge j-i+2\I{j<\ell}$. 
\qedhere
\end{enumerate}
\end{proof}
Combining Lemma~\ref{lem:LabelsLMPquad} and Proposition~\ref{prop:windingboundquad}, we obtain that with probability tending to one, distances to $A$ in $Q$ are given by labels in $\rT_{\ssq}$ up to a $o(n^{1/4})$ perturbation. 
\begin{thm}\label{thm:distancequad}
For all $\eps > 0$, 
\[
\lim_{n \to \infty} \p{\exists~u \in V(Q): d_Q(u,A) \not\in [Y_{\rT_\ssq}(u)-\eps n^{1/4},Y_{\rT_\ssq}(u)-1]}=0\, .
\]
\end{thm}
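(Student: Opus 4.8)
\textbf{Proof proposal for Theorem~\ref{thm:distancequad}.}
The plan is to mirror the proof of Theorem~\ref{thm:distance} line by line, substituting the quadrangulation analogues at each step. The upper bound $d_Q(u,A)\le Y_{\rT_\ssq}(u)-1$ is deterministic and is already contained in Lemma~\ref{lem:LabelsLMPquad}, so only the lower bound (in probability) needs argument. As in the triangulation case, it suffices to show that for all $\eps>0$,
\[
\limsup_{n\to\infty}\p{\exists~e=uw\in\overrightarrow{E}_{\ssq}:d_Q(u,A)<Y_{\rT_\ssq}(u)-6(\eps n^{1/4}+2)}\le 4\eps,
\]
since every vertex $u$ of $Q$ (other than $A,B,\v(\xi)$, which are handled trivially by the conventions $Y(A)=1$, $Y(B)=2$) is the tail of an oriented edge $e$ whose leftmost path has length $Y_{\rT_\ssq}(u)$, and by Proposition~\ref{prop:windingboundquad}, if $d_Q(u,A)$ is smaller than $|P(e)|$ by more than $6\eps n^{1/4}$ then the winding number $w(Q,e)$ is very negative along any shortest path $Q$ from $u$ to $A$.

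First I would establish the diameter bound: by Lemma~\ref{lem:LabelsLMPquad}, $\diam(Q)\le 2(\max_{u}Y_{\rT_\ssq}(u)-\min_u Y_{\rT_\ssq}(u))+O(1)$, and by the analogue of Fact~\ref{fact:rerooting} for quadrangulations (the displacement labelling and vertex labelling differ by an additive constant, with rerooting shifting labels up to a bounded error), this is controlled by $\max_x Z_{(T',\xi',D)}(x)-\min_x Z_{(T',\xi',D)}(x)$. Proposition~\ref{prop:snakequad} then gives that $\diam(Q)n^{-1/4}$ is tight, so there is $c=c(\eps)$ with $\p{\diam(Q)\ge cn^{1/4}}<\eps$. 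Next I would need the quadrangulation versions of Propositions~\ref{prop:cyclecut1} and~\ref{prop:cyclecut2}: a very negative winding number forces a short separating cycle $C$ (of length $O(|Q|/(-w(Q,e)))$, hence $O(c/\eps)$ when the diameter is $O(n^{1/4})$) such that $G[V^l(C)]$ and $G[V^r(C)]$ each contain a large range of $Y$-values. These follow from the inequalities listed in the proof of Proposition~\ref{prop:windingboundquad} by exactly the same Euler-formula counting as in Section~\ref{sec:label_dist_determinist}, using that $\overrightarrow{E}_\ssq$ is a $2$-orientation instead of a $3$-orientation; the constants shift (type-3 subpaths now contribute $2$ instead of $3$ to the length surplus) but the structure is identical.

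Having a short separating cycle carrying a macroscopic spread of labels, I would then run the same two-pronged argument as at the end of the proof of Theorem~\ref{thm:distance}. On one hand, Lemma~\ref{lem:tree_displacement_size_bound} applies verbatim to the quadrangulation labelled tree (its proof uses only Proposition~\ref{prop:snake}, replaced here by Proposition~\ref{prop:snakequad}, and the continuity of the Brownian snake), so each side of $C$ must contain $\Omega(n)$ vertices once a vertex on that side has a label differing from all labels on $C$ by $\gg n^{1/4}$. On the other hand, the enumerative input changes only cosmetically: the number of simple quadrangulations of an $m$-gon with $n$ inner vertices has the same polynomial-times-exponential asymptotics $t_{n,m}\sim A_m\alpha^n n^{-5/2}$ (this is classical; see the references in the triangulation case), so the probability that a uniform random simple quadrangulation has a separating cycle of bounded length cutting it into two pieces each of size $\ge\alpha n$ is $O(n^{-5/2})\to 0$. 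Combining these bounds as in~\eqref{eq:final_bound} and~\eqref{eq:dectrig} yields~\eqref{eq:inviewof} for quadrangulations and completes the proof.

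The main obstacle is not any single new idea but the bookkeeping in the quadrangulation counterparts of Propositions~\ref{prop:cyclecut1}--\ref{prop:cyclecut2}: one must recheck that with a $2$-orientation the outdegree counting in the Euler-formula argument still produces a surplus proportional to $-w(Q,e)$ rather than degenerating, and track how the constants $\lfloor-w/2\rfloor$, $\lfloor-w/3\rfloor$ and the cycle-length bound $6(|Q|-1)/(-w-2)$ change. These are the places where the difference between degree-$3$ and degree-$4$ faces, and between out-degree-$3$ and out-degree-$2$ vertices, actually enters the inequalities; everything downstream (the diameter estimate, Lemma~\ref{lem:tree_displacement_size_bound}, and the enumeration) is either already proved or a routine transcription.
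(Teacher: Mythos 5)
Your proposal is correct and follows essentially the same route as the paper: the paper's proof of Theorem~\ref{thm:distancequad} simply observes that the argument for Theorem~\ref{thm:distance} carries over verbatim (via Lemma~\ref{lem:LabelsLMPquad}, Proposition~\ref{prop:windingboundquad}, Proposition~\ref{prop:snakequad} and Lemma~\ref{lem:tree_displacement_size_bound}), the only substitution being the enumeration $q_{n,m}\sim A_m\alpha^n n^{-5/2}$ of simple quadrangulations of a $2m$-gon from \cite{Brownquad} in place of $t_{n,m}$ in the separating-cycle estimate. Your extra care about re-deriving the cyclecut propositions under the $2$-orientation is a point the paper leaves implicit, but it is the same argument, not a different one.
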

\begin{proof}
The only element of the proof of Theorem~\ref{thm:distance} that cannot be directly applied here is the approximation of $\p{\Gamma_K(\alpha)}$ given in~(\ref{eq:dectriq}) that relies on the number $t_{n,m}$ of simple triangulations of an $(m+2)$-gon. 
This has an easy fix: for $\alpha>0$, write $\Gamma_{\ssq,K}(\alpha)$ for the event that a uniformly random simple quadrangulation $Q_n$ with $n$ faces admits a separating cycle of length at most $K$, separating $Q_n$ into two components each of size at least $\alpha n$. An explicit expression for the number $q_{n,m}$ of simple quadrangulations of a $2m$-gon with $n$ inner vertices is derived in \cite{Brownquad}, and has the asymptotic form $q_{n,m}\sim A_m \alpha^n n^{-5/2}$, where $A_m$ and $\alpha$ are explicit constants. (Observe that, in this notation, the number of rooted simple quadrangulations with $n$ vertices is equal to $q_{n-4,2}$.). Then 
\begin{equation}\label{eq:dectriq}
\p{\Gamma_K(\alpha)} \sim (q_{n-4,2})^{-1}
\sum_{k=0}^{K}\int_\alpha^{1-\alpha}t_{\floor{un},k}t_{\floor{(1-u)n},k}du \sim A_{\ssq,K,\alpha}n^{-5/2}, 
\end{equation}
where $A_{\ssq,K,\alpha}$ depends only on $\alpha$ and $K$. 
\end{proof}

\section{Acknowledgements}
LAB thanks Gr\'egory Miermont, Gilles Schaeffer, and Jean-Fran\c{c}ois Le Gall for enlightening discussions before and during the preparation of the manuscript. LAB received support from NSERC and FQRNT while conducting this research, and thanks both. LAB also thanks Gilles Schaeffer and CNRS UMI 3457 for facilitating his visit to the Laboratoire d'Informatique de l'\'Ecole Polytechnique in May 2012, during which the research progressed substantially. 

MA thanks \'Eric Fusy, Dominique Poulalhon and Gilles Schaeffer for fruitful and stimulating discussions. 
MA acknowledges the support of the ERC under the agreement ``ERC StG 208471 - ExploreMap'' and of the ANR under the agreement ``ANR 12-JS02-001-01 - Cartaplus''. MA also thanks Bruce Reed, CNRS UMI 3457 and LIRCO for facilitating her visits to the mathematics department of McGill University, which played a major role in this work.

Both LAB and MA thank an anonymous referee, whose detailed comments and suggestions  greatly improved the paper.

\appendix

\section{Notes about constants}\label{sec:notes}
In this section we briefly derive the forms of the constant coefficients arising in Theorem~\ref{thm:main} and Proposition~\ref{prop:snake}. 

For simple triangulations, we work with a critical Galton--Watson tree with a offspring distribution $B$ uniquely specified by the following facts.
\begin{enumerate}
\item Criticality: $\e{B}=1$. 
\item There exists $p \in (0,1)$ such that if $G$ is Geometric$(p)$ then the law of $B$ is given by setting, for each $k \in \N$, 
\[
\p{B=k} = \frac{{k+2 \choose 2} \p{G=k}}{\e{{G+2 \choose 2}}}\, .
\]
\end{enumerate}
From these conditions, a straightforward calculation shows that $p=3/4$, and another easy computation yields that $\E{B^2} = 7/3$ so $\V{B}=4/3$. In the notation of Section~\ref{sec:snake}, this yields $\sigma_{\mu}=2/3^{1/2}$.

Next, the displacement $D$ between a node in our tree and a uniformly selected child is equal to one of $\{-1,0,1\}$, each with equal probability; it follows that $\E{D^2}=2/3$. 
Let $\nu_k$ be the law of the displacement vector for a vertex with $k$ children, then let $\hat\nu_k$ be its symmetrization (as defined in Section~\ref{sec:snake}), and write $\hat\nu_k^i$ for the $i$'th marginal of $\hat\nu_k$. It follows that 
$\sigma^2_{\hat\nu_k^i}=2/3$ for all $1 \le i \le k$, so $\sigma_{\hat\nu}^2 =\sigma_{\nu}^2=2/3$ and 
$(\sigma_{\mu}/2)^{1/2}/\sigma_{\nu}=(3/4)^{1/4}$. Together with Theorem~\ref{prop:CSTriple}, this explains the values of constants relating to triangulations. 

We remark that the scaling required for convergence of triangulations in Theorem~\ref{thm:main} agrees with the intuition described in \cite{bouttier10distance}, Section~4.1.
It differs by a factor $8^{1/4}$ from the scaling for general triangulations that arises in Theorem~1.1 of \cite{jf}, which can be understood as follows. First, in \cite{jf}, the index $n$ denotes the number of faces rather than the number of vertices, which accounts for a factor $2^{1/4}$. The size of the simple core of a loopless triangulation with $m$ vertices is typically $\sim m/2$ (see Table~4 of \cite{banderier2001maps}); this explains another factor $2^{1/4}$. Finally, the loopless core of a simple triangulation with $m$ vertices is again typically of order $\sim m/2$ (this is not proved in \cite{banderier2001maps} but may be handled using the same technology); this explains the final factor $2^{1/4}$. 
The latter factor does not arise in considering quadrangulations, which can not contain loops; this may be viewed as explaining the different form of the constant for triangulations versus those of bipartite maps in Theorem~1.1 of \cite{jf}.

For simple quadrangulations, we work with a critical Galton--Watson tree with offspring distribution $B$ uniquely specified by the following facts.
\begin{enumerate}
\item Criticality: $\e{B}=1$. 
\item There exists $p \in (0,1)$ such that if $G$ is Geometric$(p)$ then the law of $B$ is given by setting, for each $c \in \N$, 
\[
\p{B=c} = \frac{(c+1) \p{G=c}}{\E{G+1}}\, .
\]
\end{enumerate}
From these calculations, a straightforward calculation shows that $p=2/3$, and another easy computation then yields that $\E{B^2}=\frac{5}{2}$, so $\V{B}=3/2$.  Next, the displacement $D$ between a node $v$ and a uniformly selected child is equal to $-1$ or to $1$, each with equal probability, so has $\E{D}=0$ and $\V{D}=1$. 
Using Theorem~\ref{prop:CSTriple} as above then yields the scaling for quadrangulations in Theorem~\ref{thm:main}, and agrees with the two-point calculation for simple quadrangulations by Bouttier and Guitter \cite{bouttier10distance}. 


\section{Convergence for good sequences of map encodings}\label{sec:cstripleproof}
Throughout the section we let $\rP=(\rP_n, n \ge 0)$ be a good sequence of random map encodings. We write $\rP_n=(\rM_n,\rT_n)$, $\rM_n=(M_n,\zeta_n)$ and $\rT_n=(T_n,\xi_n)$, and let $C_n,Z_n,r_n$ and $X_n$ be as in Section~\ref{sec:cstriples}.

Next, for $n \ge 1$, list the the vertices of $T_n$ according to their lexicographic order as $v_{n}(1),\ldots,v_{n}(|T_n|)$. Given $1 \le j \le |V(T_n)|$, let $i_n(j) = \inf\{i: r_n(i)=v_n(j)\}$ be the index at which $v_n(j)$ first appears in the contour exploration. Let $m=m_n=2|V(T_n)|-2$
\begin{lem}\label{lem:cscontour}
As $n \to \infty$, we have 
\[
\sup_{0 \le t \le 1} \left|\frac{i_n(\lfloor t\cdot |V(T_n)|\rfloor)}{m_n} - t\right| \convdist 0\, . 
\]
\end{lem}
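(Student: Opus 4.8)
\textbf{Plan of proof for Lemma~\ref{lem:cscontour}.}

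The plan is to reduce this statement to the convergence of the contour process $C_n = C_{\rP_n}$, which holds by property \textbf{1.} of the definition of a CS family (after rescaling by $a_n$). The key observation is that $i_n(m)$, the first time the contour exploration of $\rT_n(R_n)$ visits the $m$'th vertex in lexicographic order, is closely tied to the structure of the contour process: if a clockwise contour exploration has visited $i_n(m)$ contour-steps by the time it first reaches $v_n(m)$, then $i_n(m)$ counts (roughly) twice the number of edges in the subtree explored so far, which is at least $m-1$ (the $m$ vertices seen) and at most $2(|R_n|-1) - 2(|R_n| - m) + O(\text{something controlled by the height})$. More precisely, the standard identity is that $i_n(m) - 2(m-1)$ equals (twice) the number of vertices of $\rT_n(R_n)$ that lie strictly to the left of $v_n(m)$ in the tree but are descendants... — the cleanest way to phrase it is: for each $m$, the difference $i_n(m) - 2(m-1)$ is exactly the number of edges on the ancestral line of $v_n(m)$ in $\rT_n(R_n)$ that have already been traversed downward but not yet upward, which is bounded by the height $d_{T_n(R_n)}(\v(\xi_n), v_n(m))$, i.e.\ by $C_n(i_n(m)/(2|R_n|-2))$.

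Thus the argument runs as follows. First I would record the exact combinatorial identity $i_n(m) = 2(m-1) + h_n(m)$ where $0 \le h_n(m) \le C_n(i_n(m)/(2|R_n|-2))$ is the ``currently-open'' height at the first visit to $v_n(m)$; this follows directly from the definition of the contour exploration $r_{\rT}$ in Section~\ref{sub:PlaneTrees}, since each of the $2|R_n|-2$ steps either descends to a new vertex or ascends along a previously-descended edge, and at time $i_n(m)$ exactly $m$ vertices have been seen while the open edges are precisely the edges of the path from the root to $v_n(m)$. Dividing by $2|R_n|-2$, writing $m = \lfloor t\cdot|R_n|\rfloor$, and using that $|R_n|\to\infty$, the term $2(m-1)/(2|R_n|-2) \to t$ uniformly in $t$, so it suffices to show $\sup_{0\le t\le1} h_n(\lfloor t|R_n|\rfloor)/(2|R_n|-2) \convdist 0$. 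Since $0 \le h_n(m) \le C_n(i_n(m)/(2|R_n|-2)) \le \sup_{x\in[0,1]} C_n(x)$, and property \textbf{1.}\ gives $a_n C_n \convdist \be$ uniformly, we get $\sup_x C_n(x) = O_{\mathbb P}(a_n^{-1})$; since $a_n^{-1} = o(|R_n|)$ (as $a_n^{-1}$ is the spatial scaling of the contour process, which is $O((|R_n|)^{1/2})$ in all cases of interest, and more generally $a_n \to 0$ with $a_n^{-1}/|R_n| \to 0$ follows from the fact that $C_n$ takes values in $[0, |R_n|]$ and its rescaling converges to a nondegenerate limit), we conclude $h_n(m)/(2|R_n|-2) \to 0$ uniformly in probability.

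The main obstacle — which is really a bookkeeping point rather than a deep difficulty — is making the bound $a_n^{-1} = o(|R_n|)$ genuinely follow from the hypotheses, rather than from the specific values $a_n = (3n)^{-1/2}$: one should note that since $a_n C_n \convdist \be$ and $\be$ is not identically zero, while $C_n \le |R_n| =: N_n$ pointwise, the random variables $a_n \sup_x C_n(x)$ are tight and bounded below in probability by a positive constant, forcing $a_n N_n \to \infty$, i.e.\ $a_n^{-1} = o(N_n)$; this is exactly what is needed and requires no further input. Care is also needed at the two endpoints $t=0$ and $t=1$ and with the floor function, but these contribute errors of size $O(1/N_n)$ and are harmless. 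A small additional remark: one must check that $h_n(m) \le C_n(i_n(m)/(2N_n-2))$ and not merely $\le \max_x C_n$, if one wants the slicker comparison; but for the statement as given, dominating by $\max_x C_n$ suffices, so I would take that route for brevity.
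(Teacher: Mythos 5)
Your deterministic reduction is essentially the paper's: $|i_n(m)-2(m-1)|$ equals the height of $v_n(m)$ in the reduced tree $\rT_n(R_n)$ (your sign is off --- the identity is $i_n(m)=2(m-1)-h_n(m)$ --- but that is harmless), so everything comes down to showing that the maximal reduced-tree height is $o_{\mathbf P}(|R_n|)$. The gap is in how you establish this. You bound the reduced-tree height by $\sup_x C_n(x)=O_{\mathbf P}(a_n^{-1})$ and then need $a_n^{-1}=o(|R_n|)$, and your argument for that fails on two counts. First, the pointwise bound $C_n\le |R_n|$ is not available in the general CS setting: by definition $C_{\rP_n}(x)=d_{T_n}(\v_{T_n}(\xi_n),r_{\rT_n(R_n)}(x\cdot(2|R_n|-2)))$ is a distance in the \emph{full} tree $T_n$, not in the reduced tree, and nothing in the axioms bounds heights in $T_n$ by $|R_n|$ (in the triangulation application one happens to have $R_n=V(T_n')$, but the lemma is invoked inside the proof of Theorem~\ref{prop:CSTriple} for arbitrary CS families). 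Second, even granting $C_n\le|R_n|$, the observation that $a_n\sup_xC_n(x)$ is tight and bounded below in probability only yields $\liminf_n a_n|R_n|>0$, i.e.\ $a_n^{-1}=O(|R_n|)$; it does not force $a_n|R_n|\to\infty$, which is what you need. Since $a_n^{-1}=o(|R_n|)$ is not among the CS axioms, this step is a genuine gap, not bookkeeping.

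The paper's proof avoids any comparison between $a_n^{-1}$ and $|R_n|$ by arguing scale-invariantly, directly on the reduced tree: if along a subsequence some root-to-vertex path contained a macroscopic proportion of $R_n$ with probability bounded away from $0$, then for independent uniforms $U<V$ one would have $\limsup_n\p{U<V,\ C_n(U)=\min_{U\le x\le V}C_n(x)}>0$; this event is unchanged by rescaling $C_n$ by $a_n$, and it has probability $0$ for the Brownian excursion, contradicting property {\bf 1}. Note also that your route is intrinsically lossier: the reduced-tree height can be $o(|R_n|)$ (so the lemma holds) even when $\sup_x C_n(x)$ is of order $|R_n|$ or larger, so dominating the reduced height by $\sup_x C_n(x)$ cannot give the statement at its stated generality. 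To repair your argument you would either have to add $a_n^{-1}=o(|R_n|)$ as a hypothesis (enough for the triangulation and quadrangulation applications, but weakening the general lemma) or replace the final step by an argument in the spirit of the paper's minimum-over-intervals contradiction.
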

\begin{proof}
As in Lemma~\ref{lem:tree_displacement_size_bound}, a straightforward argument using the height process (following (12) and (13) of \cite{le2005random}) shows that when $m_n \ge 2$,  deterministically 
\[
\sup_{0 \le t \le 1} \left|\frac{i_n(\lfloor t\cdot |V(T_n)|\rfloor)}{m_n} - t\right|
\le 
\frac{\max\{|v|, v \in V(T_n)\}+4}{m_n}\, .
\]
Since $m_n \to \infty$ it thus suffices to show that $(\max\{|v|, v \in V(T_n)\}+4)/m_n \convdist 0$. To see this, let $U$ and $V$ be independent Uniform$[0,1]$ random variables.
If the latter convergence fails to hold then for infinitely many $n$, with uniformly positive probability, a single path from the root in $\rT_n$ contains a macroscopic proportion of the elements of the vertices of $T_n$. It follows easily that 
\[
\limsup_{n \to \infty} \p{U < V,C_{n}(U) = \min_{U \le x \le V} C_{n}(x)} > 0\, .
\]
On the other hand, $\p{U < V,\be(U) = \min_{U \le x \le V} \be(x)} = 0$, 
so the preceding equation implies that $\be$ is not the distributional limit of any rescaling of $C_{n}$. Thus {\bf 1.} does not hold, a contradiction. 
\end{proof}

\begin{proof}[Proof of Theorem~\ref{prop:CSTriple}]
We claim that it suffices to establish 
\begin{equation}\label{eq:rhomu}
(V(T_n),b_nd_{M_n},\mu_n) \convdist (S,d,\mu). 
\end{equation}
for $\dghp$. 
(In the above, by $d_{M_n}$ we really mean the distance on $V(T_n)$ induced by $d_{M_n}$. This slight notational abuse should cause no confusion.) 
Indeed, suppose the latter convergence holds. By Skorohod's representation theorem, we may work in a space in which the convergence (\ref{eq:rhomu}) is almost sure. Fix $\eps > 0$, and let 
$E_n$ be the event that $\max_{v \in V(M_n)} b_n\cdot d_{M_n}(v,V(T_n)) \le \eps/2$ 
and $\dgh^2(V(T_n),b_nd_{M_n},\v(\xi_n),\v(\zeta_n)),(S,d,\rho,u^\star)) \le \eps/2$. Now let $\cR_n^0 = \{(x,y) \in V(T_n) \times V(M_n); b_n d_{M_n}(x,y) \le \eps/2\}$; then $\cR_n^0$ has distortion at most~$\eps$. Furthermore, $(\v(\zeta_n),\v(\zeta_n)) \in \cR_n^0$ and $(\v(\xi_n),\v(\xi_n) \in \cR_n^0$). Let $\nu_n$ be the probability measure on $V(T_n) \times V(M_n)$ whose restriction to $\{(v,v): v \in V(T_n)\}$ is the uniform probability measure. Then $\nu_n$ is a coupling of $\mu_n$ (as a measure on $V(T_n)$) and $\mu_n$ (as a measure on $V(M_n)$), and $\nu_n(\cR_n^0)=1$. On $E_n$ we have that $\cR_n^0$ is a correspondence, so on $E_n$, \[
\dghp((V(M_n),b_nd_{M_n},\mu_n),(V(T_n),b_nd_{M_n},\mu_n)) \le 
\eps/2,
\]
and by the triangle inequality it follows that on $E_n$, 
\[
\dghp((V(M_n),b_nd_{M_n},\mu_n),(S,d,\mu)) \le \eps\, .
\]
Finally, in this space, since $\rP$ is good sequence of map encodings, $\p{E_n} \to 1$ as $n \to \infty$, and it follows that 
$(V(M_n),b_nd_{M_n},\mu_n) \convdist (S,d,\mu)$ for $\dghp$. We thus turn our attention to proving (\ref{eq:rhomu}). 
\medskip

The first part of our argument is based on that of \cite{legall07topological}, Proposition 3.2; the second part follows closely the argument of Section 8.3 of \cite{jf}. 
Define a function $d_n:[0,1]^2 \to [0,\infty)$ as follows. 
Define as above $m=m_n=2|V(T_n)|-2$, and for $i,j \in [m]$, let 
$d_n(i/m,j/m)=d_{M_n}(r_n(i),r_n(j))$. 
Then extend $d_n$ to $[0,1]^2$ by ``bilinear interpolation'': 
if $(x,y) = ((i+\alpha)/m,(j+\beta)/m)$ for $0 \le \alpha < 1$, $0 \le \beta < 1$ then let 
\begin{align*}
d_n(x,y) & = \alpha \beta d_n((i+1)/m,(j+1)/m) + 
\alpha (1-\beta) d_n((i+1)/m,j/m) \\
& + (1-\alpha) \beta d_n(i/m,(j+1)/m) + 
(1-\alpha)(1- \beta) d_n(i/m,j/m)\, .
\end{align*}
Using {\bf 1.}, we now work in a space in which 
\begin{equation}\label{eq:joint_conv_1}
\pran{a_nC_{n},b_nZ_{n}} \convas (\be,Z). 
\end{equation}
We will show that in such a space, additionally 
\begin{equation}\label{eq:joint_conv_toprove}
b_nd_n \convas d^* \,, 
\end{equation}
for the topology of uniform convergence on $C([0,1]^2)$. 
Assume (\ref{eq:joint_conv_toprove}) holds, and for $n \in \N$, consider the correspondence $\cR_n$ between $(S,d)$ and $(V(T_n),b_nd_{M_n})$ given by letting $[s] \in [0,1]/\{d^*=0\}=S$ correspond to $r_n(i)$ if and only if $\lceil s\cdot m \rceil=i$, for $0 \le i \le m$. \footnote{A similar technique is used at the end of Section~8 of \cite{jf}.} 
By (\ref{eq:joint_conv_toprove}), the distortion of $\cR_n$ tends to zero. 

Let $\mu_n^-$ be the uniform probability measure on $V(T_n) \setminus \{\v(\xi_n)\}$. Define a coupling between $\mu_n^-$ and $\mu$ as follows. Fix $s \in [0,1]$. Let $f_1(s) = [s] \in S$. If $s=i/m$ then let $f_2(s)=r_n(i)$. If $s \in (i/m,(i+1)/m)$ and 
$\{r_n(i),r_n(i+1)\}=\{u,p(u)\} \in E(T_n)$ then let $f_2(s)=u$. Finally, let $f=(f_1,f_2):[0,1] \to S \times V(T_n)$, let $\lambda$ denote one-dimensional Lebesgue measure on $[0,1]$, and let $\nu=f_*\lambda$. Write $\pi$ and $\pi'$ for the projection maps from $S\times V(T_n)$ to $S$ and to $V(T_n)$, respectively. We clearly have $\pi_* \nu= \mu$. Also, for each edge $e \in E(T)$, there are precisely two indices 
$i,j \in \{0,1,\ldots,m_n\}$ for which $\{r_n(i),r_n(i+1)\}=\{u,p(u)\}$; it follows that $\pi'_*\nu=\mu_n^-$. 

For any pair $([s],r_n(i)) \in \cR_n$, either $f_2(s)=r_n(i)$ or $f_2(s)=p(r_n(i))$, the two possibilities due to the two directions in which the edge $\{p(r_n(i)),r_n(i)\}$ is traversed during the contour exploration. We thus let 
\[
\cR_n^+ = \{([s],w): ([s],w) \in \cR_n\mbox{ or }([s],p(w)) \in \cR_n\}\, .
\]
Since $\cR_n$ was a correspondence, $\cR_n^+$ is again a correspondence, and $\nu(\cR_n^+)=1$. Finally, $\dis(\cR_n^+) \le \dis(\cR_n)+2b_n$ so $\dis(\cR_n^+) \to 0$ as $n \to \infty$. It follows by definition that 
\[
(V(M_n),b_nd_{M_n},\mu_n^-) \convdist (S,d,\mu)
\]
for $\dghp$. Finally, the Prokhorov distance between $\mu_n^-$ and $\mu_n$ is $1/|V(T_n)|$, which tends to zero as $n \to \infty$. We may therefore replace $\mu_n^-$ by $\mu_n$ and the preceding convergence still holds, which establishes (\ref{eq:rhomu}) and so proves the theorem. It thus remains to prove  (\ref{eq:joint_conv_toprove}).

Define a function $d_n^{\circ}:[0,1]^2 \to [0,\infty)$ as follows: for $x,y \in \{i/m,0 \le i \le m\}$, let 
\[
d_n^{\circ}(x,y) = 
Z_{n}(x)+ Z_{n}(y) - 
2\max\left(\check{Z}_{n}(x,y),\check{Z}_{n}(y,x)\right)\, .
\]
Then extend $d_n^{\circ}$ to $[0,1]^2$ by bilinear interpolation as with $d_n$.
Recalling that for integer $0\le i\le m$, $Z_{n}(i/m) = X_n(r_n(i))$, it follows straightforwardly from {\bf 1.}\ that for all $\eps, \delta > 0$, 
\[
\limsup_{n \to \infty} \p{\sup_{|x-y| \le \delta} b_n d_n^{\circ}(x,y) \ge \eps } 
\le \p{\sup_{|x-y| \le \delta} (Z(x) + Z(y) - 2\max(\check{Z}(x,y),\check{Z}(y,x))) \ge \eps}\, 
\]
(the derivation of this inequality is spelled out in a little more detail in \cite{legall07topological}, Section 3). Since $Z$ is almost surely continuous, it follows that for any $\eta > 0$ and $k \in \N$, there exist $\delta_k > 0$ and $n_k \in \N$ such that for all $n \ge n_k$, 
\[
\p{\sup_{|x-y| \le \delta_k} b_n d_n^{\circ}(x,y) \ge 2^{-(k+1)}} \le \frac{\eta}{2^{k+1}}\, .
\]
Next, by {\bf 3}(i), after increasing $n_k$ if necessary, for $n \ge n_k$, 
\begin{equation}\label{eq:eps-bound}
\p{\sup_{x,y \in [0,1]} b_n(d_n(x,y) - d_n^{\circ}(x,y)) \ge 2^{-(k+1)}} \le \frac{\eta}{2^{k+1}}\, .
\end{equation}
By decreasing $\delta_k$ if necessary, we may also ensure that for $n < n_k$, 
\[
\p{\sup_{|x-y| \le \delta_k} b_n\max(d_n(x,y),d_n^{\circ}(x,y)) \le 2^{-(k+1)}} = 1\, .
\]
Combining the three preceding estimates yields that for all $n \ge 1$, 
\[
\p{\sup_{|x-y| \le \delta_k} b_n d_n(x,y) \ge 2^{-k}} \le \frac{\eta}{2^k}\, ,
\]
so for all $n$,  
\[
\p{\forall k,\sup_{|x-y| \le \delta_k} b_nd_n(x,y) < 2^{-k}} \ge 1-\eta\, .
\]
In other words, with $(\delta_k)_{k\geq 0}$ as above, for all $n$, with probability at least $1-\eta$ the function
$b_nd_n$ belongs to the compact 
\[
K = \{ f \in C([0,1]^2,\R): f(0,0)=0, \forall k,\sup_{|x-y| \le \delta_k} f(x,y)\le 2^{-k}\}\, ,
\]
which implies that $\{b_nd_n,n \in \N\}$ is tight in $C([0,1]^2,\R)$\, .
For the remainder of the proof, we let $\tilde{d} \in C([0,1]^2,\R)$ be any almost sure subsequential limit of $b_nd_n$; we suppress the subsequence from the notation for readability. 

Recall that we work in a space where (\ref{eq:joint_conv_1}) holds. 
In such a space, it follows from the continuity of $Z$ that 
$b_nd_n^{\circ} \convas d_Z$, where $d_Z:[0,1]^2 \to \R$ is as defined in Section~\ref{sec:limitobject}. 
By (\ref{eq:eps-bound}), it follows that for any $\eta > 0$ and $p \ge 1$, 
\[
\limsup_{n \to \infty} 
\p{\sup_{x,y \in [0,1]} (b_nd_n(x,y) - d_Z(x,y)) \ge 2^{-p}} \le\eta2^{-p}\, ,
\]
so a.s.\ $\tilde{d} \le d_Z$. 

We next claim that a.s.\ $\tilde{d}(x,y)=0$ for all $x \ne y \in [0,1]$ for which $x \sim_{\be} y$. 
To see this, suppose that $x \sim_{\be} y$ for some $x,y \in [0,1]$, and 
assume by symmetry that $x < y$. Continuing to write $m=m_n=2|V(T_n)|-2$, 
(\ref{eq:joint_conv_1}) implies there exist random integer sequences $(x_n,n \in \N)$ and $(y_n,n \in \N)$ such that $x_n/m_n \convas x$, $y_n/m_n \convas y$, and 
\[
C_{n}(x_n/m_n) = C_{n}(y_n/m_n) = \min\{C_{n}(z): x_n \le m_n \cdot z \le y_n\}\, . 
\]
It follows that $r_n(x_n)=r_n(y_n)$,or equivalently that $i_n(x_n)=i_n(y_n)$,
so 
\[
d_n(x_n/m_n,y_n/m_n) = d_{M_n}(r_n(i_n(x_n)),r_n(i_n(y_n)))=0\, .
\]
Since $b_nd_n \convas \tilde{d}$ (along a subsequence) and $x_n/m_n \convas x$, $y_n/m_n \convas y$, this implies that 
\[
0= d_n(x_n/m_n,y_n/m_n)\convas \tilde{d}(x,y)\, , 
\]
so $\tilde{d}(x,y) \aseq 0$ as claimed. 

Since, almost surely, $\tilde{d} =0$ on $\{\{x,y\}: x \sim_{\be} y\}$, and $\tilde{d} \le d_Z$, we must have $\tilde{d} \le d^*$ since $d^*$ is the largest pseudo-metric on $[0,1]$ satisfying these constraints. We now show that in fact, almost surely $\tilde{d}=d^*$. 

Let $U,V$ be independent and uniform on $[0,1]$. Letting $I_n$ be minimal such that $Z_{n}(I_n/m) = \check{Z}_n(0,1)$, then by {\bf 1.}\ we have 
\[
b_n X_{n}(r_n(I_n)) = b_n \check{Z}_{n}(0,1) \convdist 
\check{Z}(0,1) \eqdist -d^*(U,V)\, ,
\]
the last identity by (\ref{eq:invariance_rerooting}) (which is Corollary~7.3 of \cite{jf}). 
Since $\v(\xi_n)=r_n(0)$
and $X_n(r_n(0))=0$, by (\ref{eq:3cons}) we also have 
\[
\lim_{n \to \infty} \p{b_n\cdot|d_{M_n}(\zeta_n,\xi_n) + X_n(r_n(I_n))|> \eps} = 0\, ,
\]
so since $X_n(r_n(I_n))=\check{Z}_{n}(0,1)$, we obtain 
\[
b_n d_{M_n}(\zeta_n,\xi_n) \convdist d^*(U,V)\, .
\]
Since the Prokhorov metric topologizes weak convergence, 
by {\bf 2.}(ii) it follows that for $U_n$ and $V_n$ two independent random elements of $R_n$, then
\[
b_nd_{M_n}(U_n,V_n) \convdist d^*(U,V)\, .
\]
Now let $1\leq J_n,K_n\leq |v(T_n)|$ be such that $v_n(J_n)=U_n$ and $v_n(K_n)=V_n$. 
The preceding convergence implies $b_nd_n(J_n,K_n) \convdist d^*(U,V)$. 
Lemma~\ref{lem:cscontour} implies that $(J_n,K_n) \convdist (U,V)$, 
so the tightness of the collection $(b_nd_{n},n \ge 1)$ then yields 
\[
b_nd_n(U,V) \convdist d^*(U,V)\, .
\]
Finally, along the subsequence where $b_nd_n \convas \tilde{d}$, we also have 
$b_nd_n(U,V) \convdist \tilde{d}(U,V)$, 
so it must be that $\tilde{d}(U,V) \eqdist d^{*}(U,V)$. Since a.s.\ 
$\tilde{d} \le d^{*}$, it must therefore hold that $\tilde{d} \aseq d^{*}$. 

We have now shown that in the space where (\ref{eq:joint_conv_1}) holds, any subsequential limit $\tilde{d}$ of $b_nd_n$ must satisfy $\tilde{d} \aseq d^*$; this implies that in fact, in this space we have $b_nd_n \convas \tilde{d}$, which establishes (\ref{eq:joint_conv_toprove}) and so completes the proof. 
\end{proof}

\small
\addtocontents{toc}{\SkipTocEntry}
\printnomenclature[3.1cm]
\normalsize

\small

\normalsize

\end{document}